\tikzstyle{cblack}=[circle, draw, thin,fill=black!10, scale=0.8]
\tikzstyle{cb}=[circle, draw, thin,fill=black!100, scale=0.4]
\tikzstyle{g}=[circle, draw=green, thin,fill=green!100, scale=0.4]
\tikzstyle{cw}=[rectangle, draw, thin,fill=white, scale=0.8]
\tikzstyle{cbl}=[rectangle,blue, draw=blue, thin,fill=white, scale=0.8]
\tikzstyle{cg}=[rectangle, green, draw=green, thin, fill=white, scale=0.8]
\newcommand\dimTwo[2]{
	\begin{smallmatrix}
		#1 \\
		#2 
	\end{smallmatrix}
}
\newcommand\dimThree[3]{
	\begin{smallmatrix}
		#1 \\
		#2 \\
		#3 
	\end{smallmatrix}
}
\newcommand\matrixTwo[4]{
	\begin{smallmatrix}
		  & #1 & \\
		#2&  & #3 \\
		  & #4 & 
	\end{smallmatrix}
}
\newcommand\dimFour[4]{
	\begin{smallmatrix}
		#1 \\
		#2 \\
		#3 \\
		#4
	\end{smallmatrix}
}	
	\newcommand\dimFive[5]{
	\begin{smallmatrix}
		#1 \\
		#2 \\
		#3 \\
		#4 \\
		#5
	\end{smallmatrix}
}
\newcommand\dimEight[8]{
	\begin{smallmatrix}
		#1 \\
		#2 \\
		#3 \\
		#4 \\
		#5 \\
		#6 \\
		#7 \\
		#8
	\end{smallmatrix}
}
\newtheorem{proposition}{Proposition}[section]
\newtheorem{theorem}[proposition]{Theorem}
\newtheorem{lemma}[proposition]{Lemma}
\newtheorem{corollary}[proposition]{Corollary}
\newtheorem{definition}[proposition]{{Definition}}
\newtheorem{remark}[proposition]{{Remark}}
\newtheorem{Question}[proposition]{Question}
\newtheorem{Example}[proposition]{Example}
\newtheorem{Examples}[proposition]{Examples}
\newcommand{\cD}{{\mathcal D}}
\newcommand{\cF}{{\mathcal F}}
\newcommand{\cM}{{\mathcal M}}
\newcommand{\cK}{{\mathcal K}}
\newcommand{\cC}{{\mathcal C}}
\newcommand{\calo}{{\mathfrak o}}
\renewcommand{\aa}
\newcommand{\End}{\operatorname{End}\nolimits}
\newcommand{\Hom}{\operatorname{Hom}\nolimits}
\renewcommand{\Im}{\operatorname{Im}\nolimits}
\newcommand{\rad}{\operatorname{rad}\nolimits}
\renewcommand{\mod}{\operatorname{\mathbf{mod}}\nolimits}
\newcommand{\Ext}{\operatorname{Ext}\nolimits}
\renewcommand{\dim}{\operatorname{dim}\nolimits}
\newcommand{\mo}{\mathfrak o}
\definecolor{candyapplered}{rgb}{1.0, 0.03, 0.0}
\patchcmd{\section}{\if@openright\cleardoublepage\else\clearpage\fi}{}{}{}
\def\thm@space@setup{%
  \thm@preskip=0.3cm \thm@postskip=0.1cm
}
\begin{document}

\setcounter{tocdepth}{5}

\date{today}
\title{Brauer graph algebras}
\title{%
  Brauer graph algebras \\
  \large A survey on Brauer graph algebras, associated gentle algebras   and their connections to cluster theory\\
   }

\author{Sibylle Schroll} 


\date{\today}

\setcounter{tocdepth}{2}
\maketitle         

\begin{abstract}
These lecture notes on Brauer graph algebras are the result of a series of four lectures given at the CIMPA research school in Mar del Plata, Argentina, in March 2016. After motivating the study of Brauer graph algebras by relating them to special biserial algebras, the definition of Brauer graph algebras is given in great detail with many examples to illustrate the concepts. This is followed by a short section on the interpretation of Brauer graphs as decorated ribbon graphs. A section on gentle algebras and their graphs, trivial extensions of gentle algebras, admissible cuts of Brauer graph algebras and a first connection of Brauer graph algebras with Jacobian algebras associated to triangulations of marked oriented surfaces follows. The interpretation of flips of diagonals in triangulations of marked oriented surfaces as derived equivalences of Brauer graph algebras and the comparison of derived equivalences of Brauer graph algebras with derived equivalences of frozen Jacobian algebras is the topic of the next section. In the last section, after defining Green's walk around the Brauer graph, a complete description of the Auslander-Reiten quiver of a Brauer graph algebra is given.
\end{abstract}

\newpage
\tableofcontents                        


\bigskip





\nocite{*}

\begin{addmargin}[-1.5em]{-1.5em}

 \section*{Introduction}

Brauer graph algebras originate in the modular representation theory of finite groups where they first appear in the form of Brauer tree algebras in the work of Janusz \cite{J} based on work of Dade \cite{Dade}.  Brauer graph algebras in general, are defined by Donovan and  Freislich  in \cite{DF}. In particular, they classify the indecomposable representations of a Brauer graph algebra in terms of canonical modules of the first and second kind (string and band modules respectively). This classification is based on the work of Ringel \cite{Ringel} on indecomposable representations of dihedral 2-groups and the work  of Gel'fand and Ponomarev \cite{GP} on indecomposable representations of the Lorentz group.

Brauer graph algebras are defined by combinatorial  data based on graphs:  Underlying every Brauer graph algebra is a finite graph with a cyclic orientation of the edges at every vertex and a multiplicity function. This combinatorial data encodes much of the representation theory of Brauer graph algebras and is part of the reason for  the ongoing interest in this class of algebras. 

In \cite{MS} the point of view of interpreting Brauer graphs as ribbon graphs has been introduced adding a geometric perspective to the representation theory of Brauer graph algebras and relating Brauer graph algebras with surface cluster theory. The idea of relating Brauer graphs to surfaces  was already suggested in \cite{DF} where a description of Brauer graphs is given as graphs embedded in oriented surfaces. In \cite{AAC} the geometric approach based on ribbon graphs has been used to classify two-term tilting complexes over Brauer graph algebras.

The class of Brauer graph algebras coincides with the class of symmetric special biserial algebras \cite{Ro, S}. This connection has  introduced string combinatorics to the subject  as well as a large body of literature on biserial and special biserial algebras, for an overview see for example,  the webpages maintained by Julian K\"ulshammer at  \sloppy 
http://www.iaz.uni-stuttgart.de/LstAGeoAlg/Kuelshammer/topics/biserial.html and by Jan Schr\"oer at \sloppy http://www.math.uni-bonn.de/people/schroer/fd-atlas.html. 

In recent years there has been a renewed interest in Brauer graph algebras. In addition to the results mentioned in these lecture notes, there are new results on 2-term tilting complexes of Brauer graph algebras \cite{AAC, Zvonareva1, Zvonareva2}, on Brauer graph algebras associated to partial triangulations \cite{Demonet},  on coverings of Brauer graphs \cite{GSS},  on the finite generation of the Yoneda (or Ext) algebra of a Brauer graph algebra \cite{Antipov, GSST} and the generalised Koszulity of Brauer graph algebras \cite{GSST}, as well as new results on the non-periodicity of modules of finite complexity over weakly symmetric Brauer graph algebras \cite{Erdmann}.  Note also that  Brauer graph algebras play a central role in the recent survey   on the connection between the representation theory of finite groups and the theory of cluster algebras \cite{Ladkani3}. 

On the other hand from the point of view of modular representation theory of finite groups,  there has been much recent work to identify specific Brauer trees and Brauer graphs arising in that context, see for example \cite{Ds1, Ds2, DR, C}.

In these lecture notes, after motivating the study of Brauer graph algebras from the perspective of symmetric special biserial algebras in Section 1, in Section 2 we give a detailed definition of Brauer graph algebras.  Section 3 focuses on the connection of Brauer graph algebras with gentle algebras and the connection of Brauer graph algebras with surface cluster theory. In Section 4, mutation of Brauer graph algebras and derived equivalences are discussed.  Section 5  describes the Auslander-Reiten theory of Brauer graph algebras.

{\bf Acknowledgements. } At the origin of these lecture notes is a series of four lectures on Brauer graph algebras and cluster theory that I gave at the CIMPA research school in Mar del Plata in March 2016 and my heartfelt thanks goes to the organisers, Ibrahim Assem, Patrick Le Meur and Sonia Trepode, as well as to the scientific and local organising committees.  I also particularly thank Cristian Arturo Chaparro Acosta who contributed all of the tikz-pictures in these lecture notes (mostly at very short notice). Without his extensive tikz skills these lecture notes would not have been possible in this form.  I thank Sefi Ladkani for providing me with a summary of the quoted results of his papers  \cite{Ladkani1, Ladkani3} and for pointing out the links of this survey with his survey on triangulation quivers \cite{Ladkani3}. I further thank him, Drew Duffield, Julian K\"ulshammer, Eduardo Marcos, Klaus Lux, Rowena Paget, Rachel Taillefer as well as two anonymous referees for taking the time to read these lecture notes and for the  helpful comments and suggestions that they have provided. 

\end{addmargin}

\begin{addmargin}[-3em]{-3em}

\newpage \section{Motivation and Connections}

\subsection{Special biserial  algebras}

Let $K$ be an algebraically closed field. A quiver $Q = (Q_0, Q_1, s, t)$ is given by a  set of vertices $Q_0$, a  set of arrows $Q_1$, a function $s: Q_1 \to Q_0$ denoting the start of an arrow and $t: Q_1 \to Q_0$ denoting the end of an arrow. A path in $Q$ is a sequence of arrows $p=\alpha_0 \alpha_1 \ldots \alpha_n$ such  that $t(\alpha_i) = s(\alpha_{i+1})$, for all $ 0 \leq i \leq n-1$. We set $s(p) = s(\alpha_0)$ and $t(p) = t(\alpha_n)$.  Furthermore, we denote by $\ell(p)$ the length of $p$, that is $\ell(p) = n$. 

The \emph{path algebra} of a quiver $KQ$ is the algebra whose underlying vector space has a basis given by all possible paths in the quiver $Q$. This includes a trivial path $e_v$ for every vertex $v \in Q_0$. The multiplication is given by linearly extending the multiplication  of two paths $p$ and $q$ in $Q$, which is given by concatenation if possible and zero otherwise. That is, 

$$p \cdot q = \left\{ \begin{array} {ll}
p  q \mbox{ if } t(p) = s(q) \\
0 \mbox{ otherwise.}
\end{array} \right.$$

The algebra $KQ$ has been extensively studied and we refer the reader to classical representation theory textbooks, see for example \cite{ASS,  ARS,  Ba, BensonBook, Sch}. It follows directly from the definition, that if the quiver $Q$ has a loop or an oriented cycle, the algebra $KQ$ is infinite dimensional. 

Denote by $\mathcal R$  the two-sided ideal of $KQ$  generated by the arrows in $Q$.  We call a two-sided ideal $I$ of $KQ$ \emph{admissible} if there exists a strictly positive integer $n \geq 2$ such that 

$$ {\mathcal R}^n \subseteq I \subseteq {\mathcal R}^2. $$

If $I$ is an admissible ideal of $KQ$, then the algebra $KQ/I$ is a  finite dimensional algebra and we say that $KQ/I$ is given by quiver and relations. The algebra $KQ/ I$ is indecomposable if and only if the quiver $Q$ is connected.

The motivation behind studying the representation theory of algebras by studying algebras given by quiver and relations is the following theorem due to Gabriel. 

\begin{theorem}\cite{Gabriel}
Every connected finite dimensional $K$-algebra is Morita equivalent to an algebra $KQ/I$ for a unique quiver $Q$ and where $I$ is an admissible ideal of $KQ$.  
\end{theorem}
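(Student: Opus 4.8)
The plan is to reduce to the \emph{basic} case and then construct the quiver together with a surjection $KQ \to A$ whose kernel is admissible. First I would pass to a basic algebra: writing the identity of $A$ as a sum $1 = e_1 + \cdots + e_r$ of primitive orthogonal idempotents, I group the $e_i$ by the isomorphism type of the indecomposable projective $Ae_i$, keep one idempotent $f_1, \dots, f_n$ from each class, and set $f = f_1 + \cdots + f_n$. Then $fAf$ is basic and Morita equivalent to $A$, so I may assume $A$ is itself basic with primitive orthogonal idempotents $e_1, \dots, e_n$ summing to $1$. Writing $J = \rad A$, basicness together with $K$ algebraically closed gives the crucial splitting $A/J \cong K^n$; this is the only point where algebraic closedness is used.

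Next I would define the quiver intrinsically, which is what forces its uniqueness. I take $Q_0 = \{1, \dots, n\}$, indexed by the idempotents $e_i$ (equivalently by the simple modules $S_i = \Top(Ae_i)$), and I declare the number of arrows $i \to j$ to be $\dim_K e_i (J/J^2) e_j$, a number that also admits the intrinsic description as the dimension of a suitable $\Ext^1$ between simple modules. Since these numbers depend only on $A$ and not on any choices, the quiver $Q$ is determined up to isomorphism, which will yield the uniqueness statement; the ideal $I$, by contrast, will not be unique. Connectedness of $A$ corresponds to connectedness of $Q$, as recorded above.

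The heart of the argument is the construction of a surjective homomorphism $\varphi : KQ \to A$. I send each trivial path $e_i$ to the idempotent $e_i$, and I send the arrows $i \to j$ to elements of $e_i J e_j$ whose residues form a basis of $e_i (J/J^2) e_j$; extending multiplicatively and linearly defines the algebra map $\varphi$. Surjectivity is the technical core. Let $V \subseteq J$ be the span of the images of the arrows; by construction $V + J^2 = J$. An induction then yields $J^k = V^k + J^{k+1}$ for every $k$, where $V^k$ denotes the span of $k$-fold products from $V$ (these lie in $\Im \varphi$, being images of paths of length $k$). Iterating and invoking the nilpotency $J^N = 0$ coming from Nakayama's lemma gives $J = V + V^2 + \cdots + V^{N-1} \subseteq \Im \varphi$; combined with the idempotents and the splitting $A = \bigoplus_i K e_i \oplus J$, this proves $\varphi$ surjective. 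I expect this nilpotency-driven spanning argument to be the main obstacle, as it is where the path combinatorics must be matched precisely against the radical filtration.

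Finally I would verify that $I = \Ker \varphi$ is admissible. For $I \subseteq \cR^2$: an element $x \in I$ has vanishing ``constant part'' because the $e_i$ are linearly independent modulo $J$, so $x \in \cR$; and since $\varphi$ induces an isomorphism $\cR/\cR^2 \xrightarrow{\sim} J/J^2$ (the arrows being chosen as a basis), the relation $\varphi(x) \in J^2$ forces $x \in \cR^2$. For $\cR^N \subseteq I$: the arrows map into $J$, hence $\varphi(\cR^N) \subseteq J^N = 0$. Taking $n = \max(2, N)$ gives $\cR^n \subseteq I \subseteq \cR^2$, so $I$ is admissible and $KQ/I \cong A$, completing both existence and, via the intrinsic description of $Q$, uniqueness.
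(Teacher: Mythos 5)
The paper states this result only as a citation to Gabriel and gives no proof of its own, so there is nothing internal to compare against; your argument is the standard textbook proof (essentially the one in Assem--Simson--Skowro\'nski, via reduction to the basic algebra, the splitting $A/J\cong K^n$, and the lifting of a basis of $e_i(J/J^2)e_j$ to arrows), and it is correct. Two small points are worth making explicit. First, in the surjectivity step the products spanning $V^k$ are honest images of paths only when the arrows compose; but a non-composable product $\varphi(\alpha)\varphi(\beta)$ with $\varphi(\alpha)\in e_iJe_j$ and $\varphi(\beta)\in e_{j'}Je_k$, $j\neq j'$, vanishes because $e_je_{j'}=0$, so $V^k\subseteq\Im\varphi$ does hold as you assert. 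Second, for the uniqueness of $Q$ you should note that the defining data --- vertices indexed by isomorphism classes of simple modules and $\dim_K e_i(J/J^2)e_j=\dim_K\Ext^1_A(S_i,S_j)$ arrows --- are Morita invariants of $A$ itself, not merely invariants of the chosen basic algebra $fAf$; this is what makes $Q$ independent of all choices, while $I$ (as you correctly observe) need not be unique.
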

 
The algebras of the form $KQ/I$, $I$ admissible, are still arbitrarily complicated and therefore  subclasses of these algebras are often considered. For example, to restrict the class of finite dimensional algebras considered, further restrictions on $Q$ and on $I$ can be imposed. 

One example is the restriction of the number of arrows starting and ending at each vertex in the quiver. If at each vertex $v \in Q_0$ there is at most one arrow starting and at most one arrow ending at $v$, then $KQ/I$ is monomial  and of {\it finite representation type} that is, up to isomorphism, there are only finitely many distinct indecomposable $KQ/I$-modules. 
These algebras are the so-called \textit{Nakayama algebras} \cite{Nak, ARS, ASS}.
An algebra $KQ/I$ is {\it monomial}, if the ideal $I$ is generated by paths. Note that this definition depends on the presentation of the algebra, that is on the choice of the ideal $I$. In general, there might be many different ideals $J$, such that $KQ/I \simeq KQ/J$. 
It is an open question,  raised by Auslander, to find a homological  characterisation that implies that a given algebra is monomial. 
Nevertheless, monomial algebras have been extensively studied and many open problems  have been answered in the case of  monomial algebras. 

The next level of complexity is to allow at most two arrows to begin and end at every vertex in $Q$: 

{\bf (S0)} At every vertex $v$ in $Q$ there are at most two arrows starting at $v$ and there are at most two arrows ending at $v$. 

While this is  a very strong restriction on the algebras one can consider, most algebras whose quiver satisifies condition (S0) are of {\it wild representation type} (see Section~\ref{Section AR components} for the definition of the representation type of an algebra). 

To pose a further restriction on the class of algebras we consider, we set

{\bf (S1)} For every arrow $\alpha$ in $Q$ there exists at most one arrow $\beta$ such that $\alpha \beta \notin I$ and  there exists at most one arrow $\gamma$ such that $\gamma \alpha \notin I$. 

Almost all algebras satisfying condition (S1) (and not necessarily condition (S0)) are of wild representation type. In fact, algebras satisfying (S1)  are called \textit{special multiserial algebras}. They were first defined in \cite{VHW} and their representation theory has been studied in \cite{VHW, GS1, GS2, GS3, GS4}.

Together conditions (S0) and (S1) are very strong and the corresponding class of algebras has many special properties.

\begin{definition}
{\rm A finite dimensional $K$-algebra $A$ is called {\it special biserial} if  there is a quiver $Q$ and an admissible ideal $I$ in $KQ$ such that $A$ is Morita equivalent to $KQ/I$ and such that $KQ/I$ satisfies conditions (S0) and (S1). }
\end{definition}

Examples of special biserial algebras include the algebras appearing in the work of Gel'fand and Ponomarev \cite{GP} on the classification of Harish-Chandra modules  over the Lorentz group and they are closely linked to the modular representation theory of finite groups, see for example, Ringel's classification of the indecomposable modules over dihedral 2-groups, see e.g. \cite{Ringel}. Other examples of special biserial algebras are string algebras (monomial special biserial algebras) see for example \cite{BR}, discrete derived  algebras (classified by Vossieck in \cite{V}), Jacobian algebras of triangulations of marked oriented surfaces with boundary where all marked points lie in the boundary \cite{ABCP} and Brauer graph algebras (see Theorem~\ref{BGAspecialbiserial}).

Special biserial algebras have been intensely studied. We now give a list of some of their most important properties and results.  

\begin{theorem}\cite{WW}
Special biserial algebras are of tame representation type. 
\end{theorem}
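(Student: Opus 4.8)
The plan is to reduce the problem to the class of \emph{string algebras} (the monomial special biserial algebras) and then to invoke an explicit combinatorial description of their indecomposable modules. The first step is a structural reduction, essentially the one underlying the cited theorem. Given a special biserial algebra $A = KQ/I$, I would isolate the finitely many indecomposable projective-injective modules $P_1, \dots, P_r$ that are non-uniserial; these are exactly the modules responsible for the binomial relations $p - q \in I$. I then pass to the quotient $B = A/\mathfrak{s}$, where $\mathfrak{s}$ is the two-sided ideal generated by the socles of $P_1, \dots, P_r$. The key claim is that $B$ is a string algebra (its defining relations become purely monomial, while conditions (S0) and (S1) are inherited) and that every indecomposable $A$-module not isomorphic to one of $P_1, \dots, P_r$ is annihilated by $\mathfrak{s}$, hence is an indecomposable $B$-module. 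Thus the classification of $\smod A$ reduces, up to the $r$ exceptional modules $P_i$, to the classification of $\smod B$.

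The second step is the classification of indecomposable modules over the string algebra $B$. Here I would use the functorial filtration method of Gel'fand--Ponomarev, as adapted by Butler and Ringel: one encodes walks in $Q$ that avoid the relations (\emph{strings}) and the cyclic such walks (\emph{bands}), and shows that every indecomposable $B$-module is either a \emph{string module} $M(w)$, determined combinatorially by a string $w$, or a \emph{band module} $M(b,\lambda,n)$, determined by a band $b$, a scalar $\lambda \in K^*$ and an integer $n \geq 1$. The string modules form a discrete family, whereas for a fixed band $b$ and fixed $n$ the modules $M(b,\lambda,n)$ form a one-parameter family indexed by $\lambda \in K^* \subseteq \mathbb{A}^1_K$.

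The final step is to read off tameness from this list. For a fixed dimension $d$ there are only finitely many strings $w$ with $\dim M(w) = d$, contributing finitely many isolated indecomposables, and only finitely many pairs $(b,n)$ with $\dim M(b,\lambda,n) = d$; each such pair yields a single one-parameter family realised by an $A$-$K[x]$-bimodule that is free of finite rank over $K[x]$. Together with the finitely many projective-injective modules $P_i$, this exhibits all but finitely many indecomposable $A$-modules of dimension $d$ as members of finitely many one-parameter families, which is exactly the definition of tame representation type.

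I expect the main obstacle to lie in the first step: verifying that $B = A/\mathfrak{s}$ is genuinely a string algebra and that no indecomposable beyond the $P_i$ survives the passage to $B$, so that the two module categories really do agree outside the exceptional projective-injectives. Alongside this sits the technical heart of the second step, namely proving the \emph{completeness} of the string/band list through the functorial filtration argument, which is where all the delicate work resides. Once both are in place, the tameness count itself is routine bookkeeping.
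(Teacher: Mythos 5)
Your proposal is correct and follows essentially the same route as the proof the paper attributes to Wald--Waschb\"usch: reduce to string algebras by factoring out the socles of the non-uniserial projective-injectives, classify the remaining indecomposables as string and band modules via the Gel'fand--Ponomarev/Butler--Ringel functorial filtration method, and read off tameness from the resulting discrete-plus-one-parameter-families list. The paper itself gives no proof beyond this citation and a one-sentence description of exactly this strategy, so there is nothing to contrast.
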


Wald and Waschb\"usch show that special biserial algebras are of tame representation type by  classifying their indecomposable representations. These are given by the so-called \textit{string and band modules} (which first appear in \cite{GP} as modules of the first and second kind, see also \cite{Ringel, DF} and also \cite{WW}). Based on strings and bands a nice combinatorial description of the morphism between string modules is given in \cite{CB, Kr}.

Furthermore, Wald and Waschb\"usch \cite{WW} and Butler and Ringel \cite{BR} give a combinatorial description of the irreducible maps between string modules  and between band modules in terms of string combinatorics giving a description of  Auslander-Reiten sequences.  

\begin{definition}
{\rm A finite dimensional $K$-algebra $A$  is called {\it biserial} if every indecomposable projective left and right $A$-module $P$ is such that $\rad(P) = U + V$, where $U$ and $V$ are uniserial modules and $U \cap V$ is either a simple $A$-module or zero. }
\end{definition}

\begin{theorem}\cite{SW}
A special biserial algebra is biserial. 
\end{theorem}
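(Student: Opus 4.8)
The plan is to reduce to a skeletal presentation and then read off the radical structure of the indecomposable projectives directly from (S0) and (S1). First I would note that being biserial is a Morita-invariant property, since it is phrased entirely in terms of indecomposable projective modules, their radicals, and the notions of uniseriality and simplicity, all of which are preserved under Morita equivalence. Hence I may assume $A = KQ/I$ with $I$ admissible and with (S0), (S1) holding. Moreover (S0) and (S1) are symmetric under reversing all arrows, so $A\op = KQ\op/I\op$ is again special biserial; since left $A$-modules are right $A\op$-modules, it suffices to prove the radical condition for the right indecomposable projectives $P = e_v A$, the statement for left projectives following by applying the result to $A\op$.

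Next, fix a vertex $v$ and consider $P = e_v A$, whose radical $\rad P$ is spanned by the nonzero paths of length $\geq 1$ starting at $v$. By (S0) there are at most two arrows $\alpha, \beta$ with source $v$, so with $U := \alpha A$ and $V := \beta A$ (the cyclic submodules they generate inside $P$) we get $\rad P = U + V$; if there are fewer than two arrows out of $v$ the condition is immediate, so assume there are exactly two. I would then show $U$ is uniserial using (S1): a nonzero path $p$ starting with $\alpha$ ends in some arrow $a$, and by (S1) there is at most one arrow $\eta$ with $a\eta \notin I$, hence (as suffixes of nonzero paths are nonzero) at most one $\eta$ with $p\eta \notin I$. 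Thus each radical power $\rad^i U$ is generated by the unique nonzero path of length $i$ that starts with $\alpha$, so every layer $\rad^i U/\rad^{i+1} U$ is simple or zero and $U$ is uniserial; likewise $V$.

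It remains to control $U \cap V$. Since $\alpha$ and $\beta$ are distinct arrows, their classes are linearly independent in $\rad P/\rad^2 P$, and the images of $U$ and $V$ there are the two distinct lines they span; hence $U \cap V \subseteq \rad^2 P$. Being a submodule of the uniserial module $U$, the intersection $U \cap V$ is itself uniserial, and if it is nonzero its socle must coincide with $\soc U$; symmetrically with $\soc V$, so the two branches can only meet in a common socle.

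The main obstacle is to prove that $U \cap V$ has length at most one, i.e.\ is simple or zero. Here is the mechanism I would exploit. Suppose $U \cap V$ had length $\geq 2$; then it equals $\langle p\rangle = \langle q\rangle$ for a nonzero $\alpha$-path $p$ and $\beta$-path $q$ that are not yet the socle, so each admits a nonzero one-step extension inside its branch. Comparing generators of this common cyclic module and extending the resulting relation one arrow to the right, I would argue that the terminal arrows of $p$ and $q$ must coincide: otherwise two distinct arrows $a \neq b$ would both satisfy $a\eta, b\eta \notin I$ for the common continuing arrow $\eta$, contradicting the co-continuation bound in (S1). Tracing this coincidence backwards arrow by arrow, again using the ``at most one $\gamma$ with $\gamma\alpha' \notin I$'' half of (S1), forces the two paths to agree all the way back to their initial arrows, giving $\alpha = \beta$, a contradiction. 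Consequently $U \cap V$ is simple or zero, so $\rad P = U + V$ exhibits $P$ as biserial, completing the proof. The delicate point --- and the step I expect to require the most care --- is exactly this backward-tracing argument, since one must handle the possibility of inhomogeneous binomial relations $p - q \in I$ and confirm that such relations can only involve the maximal (socle-reaching) paths.
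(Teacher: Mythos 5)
The paper itself contains no proof of this statement---it is quoted directly from \cite{SW}---so your argument can only be judged on its own terms; your direct verification is in the spirit of the original reference. The reduction to a presentation satisfying (S0)--(S1), the passage to the opposite algebra for the left-module half, the decomposition $\rad P=\alpha A+\beta A$, the proof that each branch is uniserial (every radical layer of $\alpha A$ is spanned by the image of the unique path of length $i$ beginning with $\alpha$ and not lying in $I$), and the observation that $U\cap V\subseteq\rad^2P$ is a uniserial submodule of both branches with socle $\soc U=\soc V$ are all correct.

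The gap sits exactly where you predicted, and it is a genuine one: the backward-tracing argument does not by itself yield $\alpha=\beta$. Write $U\cap V=\alpha p\,A=\beta q\,A$ with $\ell(\alpha p)=j+1$ and $\ell(\beta q)=k+1$. Your forward step (multiplying the congruence $\alpha p\equiv c\,\beta q$ modulo $\rad(U\cap V)$ by the unique continuing arrow) correctly forces the terminal arrows of $p$ and $q$ to coincide, and the backward step via the second half of (S1) then matches the two paths arrow by arrow---but only for $\min(j,k)+1$ steps. If $j=k$ this gives $\alpha=\beta$ and the contradiction you want; if $j\neq k$, say $j<k$, you only conclude that $\alpha p$ is a proper terminal segment of $\beta q$, i.e.\ $\beta q=t\cdot\alpha p$ as paths for a nontrivial cycle $t$ at $v$, and no conflict with (S1) arises. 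Nothing in the setup rules out $j\neq k$ a priori, since the two branches may have different Loewy lengths above their intersection. To close this case one needs a different mechanism: the element identity $\alpha p=c\,t\,\alpha p+z$ with $t\in\rad A$ and $z\in\rad(U\cap V)$ gives $\alpha p=\bigl(\sum_{i\geq 0}c^{i}t^{i}\bigr)z$, and since $z\in\rad(U\cap V)$ an easy induction shows $U\cap V\subseteq\rad^{s}P$ for every $s$, forcing $U\cap V=0$, a contradiction. With this Nakayama-type argument inserted your proof is complete; without it, the ``inhomogeneous binomial relation'' case you flagged at the end is genuinely unhandled rather than merely delicate.
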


Pogorzaly and Skowro\'nski prove a partial converse in case that the algebra is {\it standard}, that is if it admits a simply connected Galois covering, see e.g. \cite{Skowronski}.  

\begin{theorem}\cite{PS} 
Let A be a representation-infinite self-injective $K$-algebra. Then $A$ is standard biserial if and only if $A$ is special biserial. 

\end{theorem}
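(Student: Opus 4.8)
The plan is to establish the two implications separately; ``special biserial $\Rightarrow$ standard biserial'' is the more direct one, while ``standard biserial $\Rightarrow$ special biserial'' carries essentially all of the difficulty. For the former, I would first invoke the already-quoted fact that every special biserial algebra is biserial \cite{SW}, so that only standardness remains to be checked. Since conditions (S0) and (S1) make the quiver and the generating relations completely explicit, I would construct a Galois covering of $KQ/I$ directly by unfolding the quiver along its arrows and lifting the relations, and then verify that the resulting covering algebra is simply connected, the point being that the biserial relations leave no independent cycles in the associated fundamental group.

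For the hard implication, assume $A = KQ/I$ is basic, self-injective, representation-infinite and biserial, and that it admits a simply connected Galois covering with group $G$. I would first deduce condition (S0). By biseriality, $\rad(P) = U + V$ with $U$, $V$ uniserial and $U \cap V$ simple or zero for every indecomposable projective $P$, so $\rad(P)/\rad^2(P)$ has at most two composition factors; this says that at most two arrows start at each vertex. Applying the same reasoning to the opposite algebra, which is again biserial since biseriality is left--right symmetric, and using self-injectivity to identify projectives with injectives, gives at most two arrows ending at each vertex. This is precisely (S0).

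The core of the argument is condition (S1), and here I would pass to the universal Galois cover $\tilde{A}$, a locally bounded simply connected algebra inheriting the biserial structure of $A$ on each fibre. The only obstruction to (S1) is the presence of genuine commutativity-type relations, since these are exactly what separates biserial from special biserial algebras. The strategy is to show that such relations cannot occur in $\tilde{A}$: simple connectedness forbids the cyclic configuration of arrows needed to support a nontrivial commutativity relation, while self-injectivity, which forces a symmetric socle and top structure, together with representation-infinity pins down the Loewy shape of each indecomposable projective to the form consisting of two uniserial arms joined by monomial relations. For every arrow one then selects the unique arrow prolonging it to a nonzero composite, checks that this selection is $G$-equivariant, and pushes the resulting presentation back down to $A$, yielding (S1).

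I expect this last step to be the main obstacle. Biserial algebras that are genuinely not special biserial do exist, so all three hypotheses---representation-infinity, self-injectivity and standardness---must enter in an essential way, and the delicate point is the structural classification in the cover showing that a self-injective biserial algebra carrying a commutativity relation is forced to be either representation-finite or non-standard. This is where the bulk of the casework lives, naturally organised according to the possible Loewy structures of the indecomposable projectives of $\tilde{A}$.
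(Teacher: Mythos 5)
The survey quotes this theorem from \cite{PS} without proof, so there is no in-paper argument to measure yours against; judged on its own terms, your text is a programme rather than a proof, and the gap sits exactly where the theorem's content lies. For the implication ``standard biserial $\Rightarrow$ special biserial'' you reduce everything to the assertion that ``simple connectedness forbids the cyclic configuration of arrows needed to support a nontrivial commutativity relation,'' but this is both unargued and not an accurate reduction. Condition (S1) is a property of a chosen presentation, and the obstruction to achieving it for a biserial algebra is a multiplicative-basis problem: whether the arrows can be scaled and the relations rewritten so that each arrow has at most one successor with nonzero composite. The standard examples of biserial algebras that are not special biserial are obtained by \emph{deforming} the relations of a special biserial algebra (replacing a zero relation $\alpha\beta$ by $\alpha\beta - c$ for a suitable socle element $c$, in small characteristic); such a deformation changes neither the quiver nor any ``cyclic configuration of arrows,'' so no purely topological statement about the cover can rule it out. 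Excluding these deformations for representation-infinite self-injective standard algebras is precisely the structural classification that \cite{PS} carries out, and it is the part you explicitly defer (``this is where the bulk of the casework lives''). Until that classification is supplied, the hard direction is not proved.

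There is also a smaller problem in the direction you call direct. To show that a representation-infinite self-injective special biserial algebra is standard you say you would ``verify that the resulting covering algebra is simply connected, the point being that the biserial relations leave no independent cycles in the associated fundamental group.'' The quiver of a self-injective algebra always has oriented cycles and the fundamental group of the presentation of $A$ itself is in general nontrivial; what one must actually do is construct the universal Galois covering (an infinite, locally bounded, special biserial category built from the Brauer-graph data) and check that \emph{it} is simply connected and that the covering is Galois with the right group. Your sentence conflates the algebra with its cover, so even the easy direction needs the construction spelled out. By contrast, your derivation of (S0) from $\rad(P)=U+V$ with $U,V$ uniserial, applied to $A$ and to $A\op$ together with self-injectivity, is correct and matches the standard argument.
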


Recall that a linear form $f: A \mapsto K$ is {\it symmetric} if $f(ab) = f(ba)$ for all $a, b \in A$ and that a finite dimensional $K$-algebra $A$ is {\it symmetric} if there exists a symmetric linear form $f: A \mapsto K$ such that the kernel  of $f$ contains no non-zero left or right ideal.  An equivalent formulation of this is that $A$ considered as an $A$-$A$-bimodule is isomorphic  to its $K$-linear dual $D(A) = \Hom_K(A,K)$ as an $A$-$A$-bimodule. More details on equivalent definitions  can be found, for example, in \cite{RickardKZ}.

Every finite dimensional $K$-algebra $A$ is a quotient of a symmetric  $K$-algebra, for example, $A$ is a quotient of its trivial extension $T(A)$, where $T(A)$ is given by the semidirect product of $A$ with its minimal injective co-generator $D(A)$ (see Section~\ref{Section gentle} for more details on trivial extensions).
However, the class of special biserial algebras is special in that every special biserial algebra is a quotient of a symmetric algebra, such that the symmetric algebra is again special biserial. 

\begin{theorem}\cite{WW}\label{SymmetricQuotient}
Every special biserial algebra is a quotient of a symmetric special biserial algebra. 
\end{theorem}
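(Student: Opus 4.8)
The plan is to realise the required symmetric algebra as the trivial extension of a suitable monomial approximation of $A$. For any finite dimensional algebra $B$, the trivial extension $T(B)=B\ltimes D(B)$, with $D(B)=\Hom_K(B,K)$, is symmetric, and $B$ is a quotient of $T(B)$ via the projection with kernel $0\ltimes D(B)$. So it suffices to produce a \emph{string} algebra (a monomial special biserial algebra) $\hat A$ such that $A$ is a quotient of $\hat A$ and such that $T(\hat A)$ is again special biserial; then $A$ is a quotient of the symmetric special biserial algebra $T(\hat A)$, as required.

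One cannot take $\hat A=A$ itself. For a general special biserial $A=KQ/I$ the trivial extension $T(A)$ can fail to be special biserial in two ways. A commutativity relation $p-q\in I$ makes the two uniserial arms of some indecomposable projective of $T(A)$ meet in more than their simple socle, so $T(A)$ is not even biserial, against \cite{SW}. And at a vertex where a maximal nonzero path terminates alongside two outgoing arrows, $T(A)$ acquires a third arrow, violating (S0). The idea is to replace $A$ by a monomial algebra in which neither phenomenon occurs.

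Concretely, I would first discard the non-monomial generators of $I$: let $\tilde I$ be the ideal generated by all paths lying in $I$. Since $I$ is admissible, $\mathcal R^{N}\subseteq I$ for some $N$, so $\tilde I$ is admissible, and as $\tilde I\subseteq I$ the algebra $A$ is a quotient of the string algebra $KQ/\tilde I$; the commutativity relations are thereby reimposed only in that quotient. I would then modify $\tilde I$ — shrinking its length-two part, and if necessary adjoining a few arrows to $Q$ — so that at each vertex the outgoing arrows together with the maximal nonzero paths ending there number at most two, and dually for incoming arrows, all while keeping $A$ a quotient. For the resulting $\hat A$, the quiver of $T(\hat A)$ is that of $\hat A$ with one closing arrow adjoined per maximal nonzero path; condition (S0) holds by the numerical constraint just imposed, and (S1) together with biseriality holds because $\hat A$ is monomial, so that each closing arrow has a unique nonzero continuation and the two arms of every indecomposable projective of $T(\hat A)$ reconverge only in the one-dimensional socle.

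The main obstacle is this modification. It must be performed so that $\hat A$ remains a finite dimensional string algebra satisfying (S1) and still surjects onto $A$; the awkward configurations are dangling arms and branch points lying on cycles that are truncated by a relation of length greater than two, which can force one to adjoin arrows rather than merely to delete relations. Proving that a suitable $\hat A$ can always be produced, and then verifying (S0) and (S1) for $T(\hat A)$ in detail, is the technical heart of the argument; the symmetry of $T(\hat A)$ and the surjections onto $\hat A$ and $A$ are then formal.
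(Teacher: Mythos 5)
The paper offers no proof of this statement: it is quoted from Wald--Waschb\"usch \cite{WW}, whose construction adjoins arrows to $Q$ so that the arrows organise into cycles and then forms a symmetric special biserial algebra with a suitably large \emph{multiplicity} on each cycle. Your route through trivial extensions cannot reach that generality, and this is a structural obstruction, not a technicality. By the theorem of Pogorza\l{}y--Skowro\'nski quoted in Section~3, $T(\hat A)$ is special biserial if and only if $\hat A$ is \emph{gentle}; monomiality is not enough. Indeed, for the string algebra $\hat A=K[x]/(x^3)$ one has $x\cdot x\neq 0$ and $x\cdot (x^2)^{*}\neq 0$ in $T(\hat A)$, so (S1) already fails, contrary to your claim that the monomial case is safe. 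Granting gentleness, Theorem~\ref{trivialextension} says $T(\hat A)$ is a Brauer graph algebra with multiplicity function identically one, and not every special biserial algebra is a quotient of such an algebra. Take $A=K[x]/(x^3)$: if $\phi\colon B\to A$ is surjective with $B$ a multiplicity-free Brauer graph algebra, then, $A$ being local, exactly one primitive idempotent $e_i$ of $B$ survives and $\phi$ factors through $e_iBe_i$ modulo the paths passing through other vertices; that algebra is $K[x]/(x^2)$ if the edge $i$ is not a loop, and $K\langle p,q\rangle/(p^2,q^2,pq-qp)$ if it is, and every proper quotient of the latter has radical square zero. So $K[x]/(x^3)$ is not a quotient of any $T(\hat A)$, although it is of course a quotient of the symmetric special biserial algebra $K[x]/(x^4)$, the Brauer graph algebra of a single truncated edge whose other vertex has multiplicity $3$. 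The multiplicities are essential, and no choice of gentle or string $\hat A$ can supply them.

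Beyond this, the step you yourself identify as the technical heart --- shrinking the length-two part of $\tilde I$ and adjoining arrows so that the relevant counts at each vertex are at most two while $A$ remains a quotient --- is precisely where all the content of the theorem sits, and it is not carried out. Your first reduction is sound: since for a special biserial presentation a product of two arrows lies in $I$ if and only if it lies in the ideal $\tilde I$ generated by the paths contained in $I$, the algebra $KQ/\tilde I$ is an admissible string algebra surjecting onto $A$. The viable continuation, which is essentially the argument of \cite{WW}, is then to close each maximal nonzero path of $KQ/\tilde I$ into a cycle by adding arrows, declare these cycles the special cycles of a Brauer graph, and assign each cycle a multiplicity large enough that every nonzero path of $KQ/\tilde I$ survives in the resulting Brauer graph algebra; this symmetric special biserial algebra then surjects onto $KQ/\tilde I$ and hence onto $A$.
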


Symmetric special biserial algebras are well understood and are ubiquitous in the modular representation theory of finite groups where they have appeared under a different name, that of Brauer graph algebras. While we will define Brauer graph algebras in Section~\ref{Definition of BGA}, we state the connection of special biserial algebras and Brauer graph algebras here. 

\begin{theorem}\cite{Ro,S}\label{Symmetric=BGA}
Let $KQ/I$ be a finite dimensional $K$-algebra. Then $KQ/I$ is a symmetric special biserial algebra  if and only if $KQ/I$ is a Brauer graph algebra.  
\end{theorem}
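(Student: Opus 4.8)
The plan is to prove both implications through the explicit construction of the Brauer graph algebra $\Ag = K\Qg/\Ig$ associated to a Brauer graph $\Gamma$, as it will be set up in Section~\ref{Definition of BGA}. For the \emph{if} direction I would start from $\Gamma$ and read off the quiver: its vertices are the edges of $\Gamma$, and its arrows encode the cyclic ordering of the edges around each vertex of the graph. Since every edge of $\Gamma$ is incident to at most two vertices of the graph, at each vertex of $\Qg$ there can be at most two arrows starting and at most two arrows ending, which is exactly condition (S0). Condition (S1) is then immediate from the defining relations, which for every arrow prescribe a unique nonzero continuation inside its cycle, while admissibility follows because the multiplicity function bounds the powers of the cyclic paths. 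Thus $\Ag$ is special biserial by construction.

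The harder half of the \emph{if} direction is to verify that $\Ag$ is \emph{symmetric}. Here I would first exhibit the standard basis of $\Ag$ given by the nonzero paths, then identify the socle: it is spanned by the maximal cyclic paths around the vertices of $\Gamma$, all of which coincide in $\Ag$ up to the commutativity relations. One then defines a linear form $f \colon \Ag \to K$ sending every basis path to $0$ except these top socle elements, which are sent to $1$, and checks that $f(ab) = f(ba)$ and that $\ker f$ contains no nonzero one-sided ideal. Equivalently, one produces the bimodule isomorphism $\Ag \cong D(\Ag)$, which is cleanest to read off from the fact that each indecomposable projective is also injective with $\soc P \cong \Top P$.

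For the \emph{only if} direction I would reverse this construction. Starting from a symmetric special biserial algebra $KQ/I$, conditions (S0) and (S1) organize the arrows at each vertex of $Q$ into at most two in--out strands, so that each indecomposable projective has the biserial shape $\rad P = U + V$ with $U, V$ uniserial. Using symmetry, so that $KQ/I \cong D(KQ/I)$ and hence every projective is injective with $\soc P \cong \Top P$, one shows that the relations must be precisely the commutativity of the two maximal paths of each projective together with the appropriate zero relations. From this data one builds the Brauer graph: its edges are the vertices of $Q$, its vertices are the equivalence classes of arrows under ``being a nonzero composable pair'' (the cyclic cycles of $Q$), the cyclic ordering at each graph vertex is induced by the biserial ordering of the strands, and the multiplicity function records how many times each cycle is traversed before reaching the socle.

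I expect the main obstacle to be this converse, and specifically the step where symmetry is used to force the relations into exactly Brauer graph form, ruling out the extra relations that special biseriality alone would permit. The delicate point, treated carefully in \cite{S}, is to dispose of the degenerate and small cases (few arrows, loops at truncated edges) and to confirm that the reconstructed combinatorial data is genuinely inverse to the forward construction, so that one recovers the original algebra up to isomorphism.
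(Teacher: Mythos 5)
A point of calibration first: the survey does not actually prove this theorem --- it is quoted from \cite{Ro} and \cite{S}, with the remark that \cite{Ro} treats quivers without parallel arrows and \cite{S} the general case. So there is no in-paper proof to match your proposal against line by line. What the paper does prove separately is the forward direction in pieces: that $A_G$ is finite dimensional and symmetric, via exactly the linear form you describe ($f(C_v^{m(v)})=1$ and $f=0$ on all other basis paths, with no nonzero one-sided ideal in the kernel), and, as a quoted theorem, that Brauer graph algebras are special biserial. Your treatment of that direction is consistent with the paper's, and your observation that (S0) comes from each edge having at most two endpoints and (S1) from the unique continuation inside a special cycle is the right mechanism.

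The converse is where your proposal is only an outline, and the two points you pass over are precisely the ones that carry the weight. First, you assert that symmetry forces the relations to be ``precisely the commutativity of the two maximal paths of each projective together with the appropriate zero relations.'' A priori, special biseriality plus self-injectivity only yields binomial relations of the form $p - \lambda q$ with $\lambda \in K^{\times}$; one must show that for a genuinely symmetric (not merely weakly symmetric) algebra the arrows can be rescaled so that every $\lambda$ becomes $1$. The paper's own remark on introducing scalars into the relations flags that arbitrary scalars produce weakly symmetric algebras that need not be symmetric, so this normalization is a real step, not a formality. Second, the reconstruction of the Brauer graph from the cycles of composable arrows requires care exactly where \cite{S} goes beyond \cite{Ro}: parallel arrows in $Q$, loops, and edges truncated at a vertex with $m(v)\operatorname{val}(v)=1$, where the ``cycle'' degenerates and the multiplicity must be read off from the length of the uniserial strand rather than from a genuine cyclic path. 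Writing that this is ``treated carefully in \cite{S}'' defers the argument rather than supplying it. As a plan your proposal is sound and follows the standard route; as a proof it is incomplete at these two steps.
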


This Theorem was proved in \cite{Ro} for the case that the algebra $KQ/I$ is such that the quiver 
$Q$ contains no parallel arrows. In \cite{S} it was proved that the result holds for all quivers. The following directly follows from Theorems~\ref{SymmetricQuotient} and~\ref{Symmetric=BGA}

\begin{corollary}
Every special biserial algebra is a quotient of a Brauer graph algebra. 
\end{corollary}

\section{Brauer graph algebras}\label{Definition of BGA}

Recall from the introduction that Brauer graph algebras have their origin in the modular representation theory of finite groups. We will start by briefly recalling how they  appear in this context. 

\subsection{Brauer graph algebras and modular representation theory of finite groups} Let $G$ be a finite group. Suppose that the characteristic of $K$ is  equal to some prime number $p$ and suppose that $p$ divides the order of $G$. Then it follows from Maschke's Theorem that the group algebra $KG$ is not semi-simple. Instead it decomposes into a direct sum of indecomposable two-sided ideals $B_i$ such that $KG = B_1 \oplus \ldots \oplus B_n$. The  identity element $e$ of $G$ decomposes as $e = e_1 + \ldots + e_n$ where the $e_i$ are orthogonal central idempotents of $KG$ such that $e_i \in B_i$. Each $B_i$ is called a {\it $p$-block of $G$} (or of $KG$) and it is a symmetric finite dimensional $K$-algebra with identity element $e_i$. Each $p$-block has an associated invariant in the form of a $p$-group $D_i$ called the {\it defect group of $B_i$}.
Dade showed that if the defect group $D$ of a $p$-block $B$ of some finite group $G$ is cyclic then $B$ is a Brauer tree algebra \cite{Dade} and  Donovan showed that if the characteristic $K$ is two and if $D$ is a dihedral 2-group then $B$ is a Brauer graph algebra \cite{Donovan}.  We note that the blocks with cyclic defect groups are precisely the $p$-blocks of $G$ which are of finite representation type.

\subsection{Definition of Brauer graphs}

\begin{definition}
{\rm A {\it Brauer graph} $G$ is  a tuple $ G=(G_0, G_1, m, \calo)$ where 
\begin{itemize} 
\item   $(G_0, G_1)$ is a finite (unoriented) connected graph with vertex set $G_0$ and edge set  $G_1$,  
\item  $m: G_0 \to \mathbb{Z}_{>0 }$ is a function, called the \emph{multiplicity} or \emph{multiplicity function} of $G$, 
\item $\mo$ is called the \emph{orientation} of $G$  and  is given, for every vertex $v \in G_0$, by  a cyclic ordering of the edges incident with $v$ such that if $v$ is a vertex incident to a single edge $i$ then if $m(v)  = 1$,  the cyclic ordering at $v$ is given by $ i$ and if $m(v) >1$  the cyclic ordering at $v$ is given by $i<i$.
\end{itemize}

A {\it Brauer tree} is a Brauer graph $G = (G_0, G_1, m, \calo)$ such that $(G_0, G_1)$ is a tree and $m(v)  = 1$, for all but at most one $v \in G_0$.  }

\end{definition}

We note that the graph $(G_0, G_1)$ which (by abuse of notation) we will also simply refer to as $G$ may contain loops and multiple edges. Denote by ${\rm val}(v)$  the {\it valency} of the vertex $v \in G_0$. It is defined to be the number of edges in $G$ incident to $v$, with the convention that a loop is counted twice (see Example~\ref{Examples Brauer graph} (2)). Equivalently we can define  ${\rm val}(v)$ to be  the number of half-edges incident with vertex $v$. 
We call the edge $i$ with vertex $v$ {\it truncated at $v$} if   $m(v) {\rm val}(v) =1$. Note that if $i$ is truncated at both vertices $v$ and $w$, that is if both $m(v) {\rm val}(v) =1$ and $m(w) {\rm val}(w) =1$ then $G$ is the Brauer graph given by a single edge with both vertices $v$ and $w$  of multiplicity 1 and the corresponding Brauer graph algebra is defined to be  $k[x]/(x^2)$.

\begin{Examples}\label{Examples Brauer graph} {\rm In all four examples below, the orientation of the Brauer graph is given by locally embedding each vertex of the Brauer graph into the clockwise oriented plane. If for some $v \in G_0$, we have $m(v) >1$, then we record the value of $m(v)$ in a square box next to the corresponding vertex on the graph. 

(1) This is an example of what is called a  {\it generalised Brauer tree}, that is the underlying graph is a tree with at least two vertices with multiplicity greater than one. For example, let $G = (G_0, G_1, m, \calo)$ be given by

\begin{figure}[H]
	\centering 
\begin{tikzpicture}[auto, thick]

	\node[cblack] (c) at (0,0) {c};
	\node[cw] at ($(c)+(-45:.6)$) {3};

	\node[cblack] (a) at ($(c)+(225:2)$) {a};
	\node[cw] at ($(a)+(-30:.6)$) {2};
	
	\node[cblack] (b) at ($(c)+(135:2)$) {b};
	\node[cblack] (d) at ($(c)+(0:2)$) {d};
	\node[cblack] (e) at ($(d)+(45:2)$-6,3.5) {e};
	\node[cblack] (f) at ($(d)+(0:2)$-6,3.5) {f};
	\node[cblack] (g) at ($(d)+(-45:2)$-6,3.5) {g};
	
	\draw  (a) to node {$1$} (c);
	\draw  (b) to node {$2$} (c);
	\draw  (c) to node[above] {$3$} (d);
	\draw  (d) to node {$4$} (e);
	\draw  (d) to node {$5$} (f);
	\draw  (d) to node[swap] {$6$} (g);
\end{tikzpicture}
\caption{Brauer tree $G_1$.}
\end{figure}
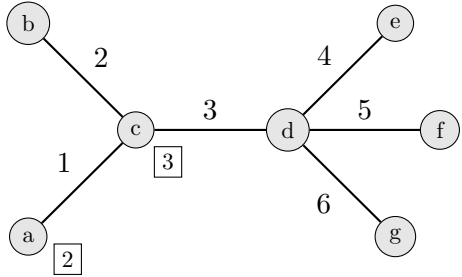

We set $m(i) = 1$ for all $i \in G_0$, $i \neq a,c$ and $m(a) = 2$ and $m(c) = 3$. 
The cyclic ordering of the edges incident with vertex $a$ is given by $1 < 1$, with vertex $b$ it is given by $2$, with vertex $c$ it is given by $1 < 2 < 3 < 1$, with vertex $d$ it is given by $3 < 4 < 5 < 6 <3 $, etc. We have the following valencies for vertices of $G$: val($a$) = val($b$) = val($e$) = val($f$) = val($g$) = 1, val($c$) =3 and val($d$) = 4.

(2) Let the Brauer graph $G_2 = (G_0, G_1, m, \calo)$ be given by

\begin{figure}[H]
	\centering
\begin{tikzpicture}[auto, thick, bend angle=40]
	\node[cblack] (a) at (-4.5,2) {a};
	\node[cblack] (b) at ($(a)+(2,0)$) {b};
	\node[cblack] (c) at ($(b)+(30:2)$) {c};
	
	\draw (a) to [out=150,in=210,looseness=14] node[swap] {1} (a);
	\draw (a) to [bend left] node {$2$}(b) ;
	\draw (a) to [bend right] node[below] {$3$} (b) ;
	\draw (b) to  node {$4$} (c);
\end{tikzpicture}
\caption{Brauer graph $G_2$.}
\end{figure}

We set $m(i) =1$ for $i = a,b,c$. The cyclic ordering of the edges incident with vertex $a$ is given by $1 < 1 < 2< 3 < 1$, with vertex $b$ is given by $2 < 4 < 3 < 2$ and with $c$ is given by $4$.   We have val($a$) = 4, val($b$) =3 and val($c$) =1. Note that the edge 4 is a truncated edge since the vertex $c$ is such that $m(c) val(c) = 1$.

(3)   Let the Brauer graph $G_3 = (G_0, G_1, m, \calo)$ be given by

\begin{figure}[H]
	\centering
\begin{tikzpicture}[auto, thick, bend angle=50]
	\node[cblack] (a) at (0,0) {a};
	\node[cblack] (b) at ($(a)+(0:2.5)$) {b};
	
	\draw (a) to [bend left] node {1} (b) ;
	\draw (a) to node{2} (b);
	\draw (a) to [bend right] node[below] {3} (b) ;
\end{tikzpicture}
\caption{Brauer graph $G_2$.}
\end{figure}

We set $m(a) = m(b) =1$. The cyclic ordering at vertex $a$ is given by $1 < 2 < 3 < 1$ and at vertex  $b$ by $ 1 < 3 < 2 < 1$ and val($a$) = val($b$)  = 3.

(4)  Let the Brauer graph $G_4 = (G_0, G_1, m, \calo)$ be given by

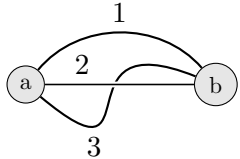
\begin{figure}[H]
	\centering
\begin{tikzpicture}[auto, thick, bend angle=50]
	\node[cblack] (a) at (0,0) {a};
	\node[cblack] (b) at ($(a)+(0:2.5)$) {b};
	
	\draw (a) to [bend left] node {1} (b);
	\draw (a) to [out=320,in=158,looseness=2.5] node[near start, below] {3} (b) ;
	\draw[draw=white,double=black] (a) to node[near start]{2} (b);
\end{tikzpicture}
\caption{Brauer graph $G_4$.} \label{Kalck Example}
\end{figure}

We set $m(a) = m(b) =1$. The cyclic ordering at vertex $a$ is given by $1 < 2 < 3 < 1$ and at vertex  $b$ by $ 1 < 2 < 3 < 1$ and val($a$) = val($b$)  = 3.

}
\end{Examples}

\subsection{Quivers from Brauer graphs}

Let $a$ be a vertex of some Brauer graph $G$, and let $i_1, i_2, \ldots, i_n$ be the edges in $G$ incident with $a$ (note that we might have $i_j = i_k$, for some $j, k$, if the corresponding edge is a loop). 
Suppose that the cyclic ordering at $a$ is given by  $i_1 < i_2 < \ldots < i_n < i_1$. We call $i_1, i_2, \ldots, i_n$ the {\it successor} sequence at $a$ and $i_{j+1}$ is the successor of $i_{j}$, for $ 1 \leq j \leq n-1$ and $i_1$ is the successor of $i_n$, and $i_{k-1}$ is the {\it predecessor} of $i_k$ for $ 2 \leq k \leq n$ and $i_n$ is the predecessor of $i_1$.  

Note that if $v$ is a vertex at edge $i$  with ${\rm val}(v) =1$ and if $m(v)  >1$ then $i > i$ and the successor (and predecessor) of $i$ is $i$, if $m(v)  =1$ then $i$ does not have a successor or predecessor.  

In order to specify more precisely the cyclic ordering at  a given vertex, especially if there is a loop at that vertex,  half-edges are often used. 
In particular, the language of {\it ribbon graphs}, see Section~\ref{Section Ribbon Graphs}, introduced to Brauer graph algebras in \cite{MS}, has the advantage of making the notion of half-edge and their cyclic orderings  very precise.

Given a Brauer graph $G = (G_0, G_1, m, \calo)$ we define a quiver $Q_G = (Q_0, Q_1)$ in the following way. The set of vertices $Q_0$ is given by the set of edges $G_1$ of $G$, denoting the vertex in $Q_0$ corresponding to the edge $i$ in $G_1$ also by $i$. The arrows in $Q$ are induced by the orientation $\calo$. More precisely, let $i$ and $j$ be two edges in $G_0$ incident with a common vertex $v$ and such that $j$ is a direct successor of $i$ in the cyclic ordering of the edges at $v$. Then there is an arrow $\alpha : i \to j$ in $Q_G$.  

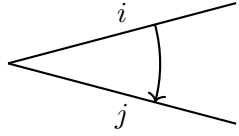
\begin{figure}[H]
	\centering
\begin{tikzpicture}[auto, thick]
	\draw (0,0) to [above] node {$i$} (3,0.8);
	\draw (0,0) to [below] node {$j$} (3,-.8);
	\draw[->] (0,0) +(15:2cm) arc (15:-15:2cm);
\end{tikzpicture}
\caption{Arrow $\alpha$ defined by successor relation $(i,j)$.}
\end{figure}

We say that $\alpha$ is given by the \textit{successor relation $(i,j)$}. 
Since every arrow of $Q_G$ starts and ends at an edge of $G$, there are at most two arrows starting and ending at every vertex of $Q_G$. Every vertex $v \in G_0$  such that $m(v) {\rm val}(v) \geq 2$, gives rise to an oriented cycle $C_v$ in $Q_G$, which is unique up to cyclic permutation.  We call $C_v$ a {\it special cycle at $v$}. Let $C_v$ be such a special cycle at $v$. Then if $C_v$ is a representative in its cyclic permutation class such that $s(C_v) = i = t(C_v)$, $i \in Q_0$,  we say that $C_v$ is a {\it a special $i$-cycle at $v$}. To simplify notation we will simply write $C_v$ for the special $i$-cycle at $v$ and specify if necessary that $s(C_v) = i$ or if more detail is needed, we will specify the first or last arrow in $C_v$.

For $i \in Q_0$ and $v \in G_0$, a special $i$-cycle at $v$ is not necessarily unique,  however, there are at most two special $i$-cycles at $v$ for any $i \in G_1$ and $v \in G_0$ and this happens exactly when $i$ is a loop. An example of this is given in Example~\ref{Example Special Cycles}(2).  

\begin{Examples}\label{Example Special Cycles} {\rm In this example we give the quivers of the Brauer graphs in Example~\ref{Examples Brauer graph}  and list some of the special cycles. 

(1) 

\begin{figure}[H]
	\centering
\begin{tikzpicture}[auto, thick,->]
\node (1) at (0,0) {1};
\node (3) at ($(1)+(40:2.5)$) {3};
\node (2) at ($(3)+(140:2.5)$) {2};
\node (4) at ($(3)+(40:2.5)$) {4};
\node (6) at ($(3)+(-40:2.5)$) {6};
\node (5) at ($(6)+(+40:2.5)$) {5};

\draw (1) to [out=300,in=240,looseness=5] node[pos=0.15] {$\varepsilon$} (1);
\draw (1) to node {$\alpha_1$} (2);
\draw (2) to node {$\alpha_2$} (3);
\draw (3) to node {$\alpha_3$} (1);
\draw (3) to node {$\beta_1$} (4);
\draw (4) to node {$\beta_2$} (5);
\draw (5) to node {$\beta_3$} (6);
\draw (6) to node {$\beta_4$} (3);

\end{tikzpicture}
\caption{Quiver $Q_{G_1}$ of $G_1$.}
\end{figure}
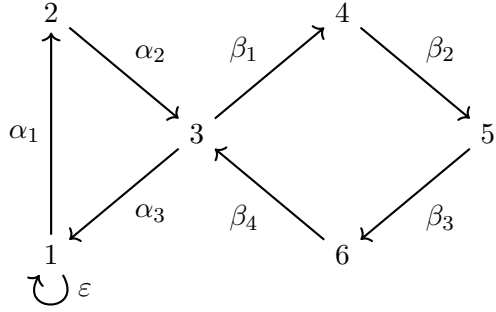

Special cycles in $Q_{G_1}$ are given by $C_a = \epsilon$, the special 1-cycle at $c$ given by $ \alpha_1 \alpha_2 \alpha_3$, the the special 2-cycle at $c$ given by $\alpha_2 \alpha_3 \alpha_1$, the special 3-cycle at $c$ given by $\alpha_3 \alpha_1 \alpha_2$, and the special 3-cycle at $d$ given by  $\beta_1 \beta_2 \beta_3 \beta_4$, etc.

(2) 

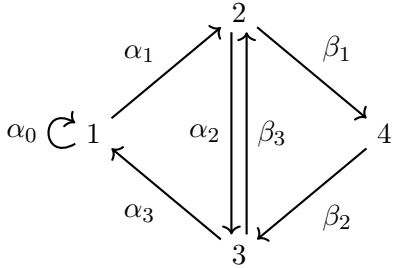
\begin{figure}[H]
\centering
\begin{tikzpicture}[auto, thick,->]
\node (1) at (0,0) {1};
\node (2) at ($(1)+(40:2.5)$) {2};
\node (3) at ($(1)+(-40:2.5)$) {3};
\node (4) at ($(2)+(-40:2.5)$) {4};

\draw (1) to [out=210,in=150,looseness=5] node[left] {$\alpha_0$} (1);

\draw (1) to node {$\alpha_1$} (2);
\draw[transform canvas={xshift=-1mm}] (2) to node[left] {$\alpha_2$} (3);
\draw (3) to node {$\alpha_3$} (1);

\draw (2) to node {$\beta_1$} (4);
\draw (4) to node {$\beta_2$} (3);
\draw[transform canvas={xshift=1mm}] (3) to node[right] {$\beta_3$} (2);

\end{tikzpicture}
\caption{Quiver $Q_{G_2}$ of $G_2$.}
\end{figure}

Special cycles in $Q_{G_2}$ are the special 1-cycles at $a$ corresponding to $ \alpha_0 \alpha_1 \alpha_2 \alpha_3$, and $ \alpha_1 \alpha_2 \alpha_3 \alpha_0$, the special 2-cycle at $a$ given by $\alpha_2 \alpha_3 \alpha_0 \alpha_1$, the special 2-cycle at $b$ given by $ \beta_1 \beta_2 \beta_3$, etc. Note that  there are two distinct 1-cycles at the vertex $a$, namely $\alpha_0 \alpha_1 \alpha_2 \alpha_3$ and $\alpha_1 \alpha_2 \alpha_3 \alpha_0$.

(3) 

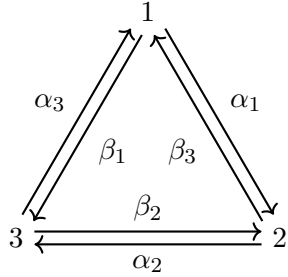
\begin{figure}[H]
\centering
\begin{tikzpicture}[auto, thick,->]
  \node (1) at (90:2) {1};
  \node (2) at (-30:2) {2};
  \node (3) at (210:2) {3};

  \draw[transform canvas={shift={(30:0.085)}}] (1) to  node {$\alpha_1$} (2);
  \draw[transform canvas={shift={(210:0.085)}}] (2) to node {$\beta_3$} (1);

  \draw[transform canvas={shift={(-90:0.085)}}] (2) to node {$\alpha_2$} (3);
  \draw[transform canvas={shift={(90:0.085)}}] (3) to node {$\beta_2$} (2);

  \draw[transform canvas={shift={(150:0.085)}}] (3) to node {$\alpha_3$} (1);
  \draw[transform canvas={shift={(-30:0.085)}}] (1) to node {$\beta_1$} (3);

\end{tikzpicture}
\caption{Quiver $Q_{G_3}$ of $G_3$.}
\end{figure}

The special 1-cycle at $a$ is given by $\alpha_1 \alpha_2 \alpha_3$, the special 1-cycle at $b$ is given by $\beta_1 \beta_2 \beta_3$, etc.

(4) 

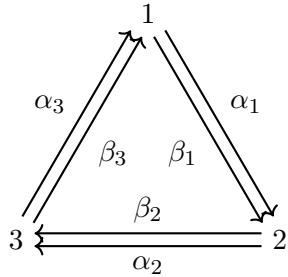
\begin{figure}[H]
	\centering
	\begin{tikzpicture}[auto, thick,->]
	\node (1) at (90:2) {1};
	\node (2) at (-30:2) {2};
	\node (3) at (210:2) {3};
	
	\draw[transform canvas={shift={(30:0.085)}}] (1) to  node {$\alpha_1$} (2);
	\draw[transform canvas={shift={(210:0.085)}}] (1) to node[swap] {$\beta_1$} (2);
	
	\draw[transform canvas={shift={(-90:0.085)}}] (2) to node {$\alpha_2$} (3);
	\draw[transform canvas={shift={(90:0.085)}}] (2) to node[swap] {$\beta_2$} (3);
	
	\draw[transform canvas={shift={(150:0.085)}}] (3) to node {$\alpha_3$} (1);
	\draw[transform canvas={shift={(-30:0.085)}}] (3) to node[swap] {$\beta_3$} (1);
	
	\end{tikzpicture}
	\caption{Quiver $Q_{G_4}$ of $G_4$.}
\end{figure}

The special 1-cycle at $a$ is given by $\alpha_1 \alpha_2 \alpha_3$, the special 1-cycle at $b$ is given by $\beta_1 \beta_2 \beta_3$, etc.

}
\end{Examples}

\subsection{Set of relations and definition of  Brauer graph algebras} We define an ideal of relations $I_G$ in $KQ_G$ generated by three types of relations. For this recall that we identify the set of edges $G_1$ of a Brauer graph $G$ with the set of vertices $Q_0$ of the corresponding quiver $Q_G$ and that we denote the set of vertices of the Brauer graph by $G_0$.

{\it Relations of type I:}     $$C_v^{m(v)} - {C}_{v'}^{m(v')},$$ for any $i \in Q_0$ and for any special $i$-cycles $C_v$ and $C_{v'}$ at $v$ and $v'$ such that both $v$ and $v'$ are not truncated (i.e. ${\rm val}(v) m(v) \neq 1$ and ${\rm val}(v') m(v') \neq 1$).   

{\it Relations of type II:}  $$C_v^{m(v)} \alpha_1,$$ for any $i \in Q_0$, any $v \in G_0$ and where $C_v = 
\alpha_1 \alpha_2 \ldots \alpha_n$ is any special $i$-cycle $C_v$. 

 {\it Relations of type III:} $$\alpha \beta,$$ for any $\alpha, \beta \in Q_1$ such that $\alpha \beta$ is not a subpath of any special cycle except if $\alpha = \beta$ is a loop associated to a vertex $v$ of valency one and multiplicity $m(v) >1$.

The algebra $A_G= KQ_G/I_G$ is called the {\it Brauer graph algebra} associated to the Brauer graph $G$. 

We note that the relations generating $I_G$ do not constitute a minimal set of relations. Many of the relations, in particular many of the relations of type II, are redundant. In \cite{GSST} for every Brauer graph algebra a minimal set of relations is determined: The relations of type I and III are always minimal and the only relations of type II appearing in a minimal generating set of relations are those corresponding to an edge $i$ with vertices $v$ and $w$, such that  $i$ is truncated at vertex $v$ and such that the  immediate  successor of $i$ in the cyclic ordering at $w$ is also truncated at its other endpoint.

\begin{Examples}
{\rm Sets of relations for the examples in~\ref{Examples Brauer graph}.   

(1) Set of relations of the three types defining the Brauer graph algebra $B_1 = KQ_{G_1}/I_{G_1}$: 
 
{\it Type I:} $(\alpha_1 \alpha_2 \alpha_3)^3 - \epsilon^2, (\alpha_3 \alpha_1 \alpha_2)^3 - \beta_1\beta_2\beta_3 \beta_4$

 {\it Type II:} \sloppy $(\alpha_1 \alpha_2 \alpha_3)^3 \alpha_1, (\alpha_2 \alpha_3 \alpha_1)^3 \alpha_2, (\alpha_3 \alpha_1 \alpha_2)^3 \alpha_3$, $ \beta_1 \beta_2 \beta_3\beta_4\beta_1 $, $\beta_2 \beta_3\beta_4\beta_1\beta_2 $, $ \beta_3\beta_4\beta_1\beta_2\beta_3, $ \newline $ \beta_4\beta_1\beta_2\beta_3\beta_4, \epsilon^3 $    

 {\it Type III:} $\epsilon \alpha_1, \alpha_3 \epsilon, \alpha_2\beta_1, \beta_4 \alpha_3$
 
 A minimal set of relations is given by all relations of types I and III and the relations $\beta_2 \beta_3\beta_4\beta_1\beta_2 $ and  $ \beta_3\beta_4\beta_1\beta_2\beta_3$.

In the following, by abuse of notation, we will abbreviate the relations of type II as follows, for $\beta_i \beta_{i+1} \beta_{i+2} \beta_{i+3} \beta_{i+4}$ we write $\beta^5$, etc.

(2)  Set of relations of the three types defining the Brauer graph algebra $B_2 = KQ_{G_2}/I_{G_2}$: 

{\it Type I:} $\alpha_0 \alpha_1 \alpha_2 \alpha_3 - \alpha_1 \alpha_2 \alpha_3 \alpha_0$, $\alpha_2 \alpha_3 \alpha_0 \alpha_1 - \beta_1\beta_2\beta_3, \alpha_3 \alpha_0 \alpha_1 \alpha_2 - \beta_3 \beta_1 \beta_2$ 

{\it Type II:} $\alpha^5, \beta^4$  

{\it Type III:} $\alpha_0^2, \alpha_1 \beta_1, \alpha_2 \beta_3, \alpha_3 \alpha_1, \beta_2 \alpha_3, \beta_3 \alpha_2$

A minimal set of relations is given by all relations of types I and III. 

(3)  Set of relations of the three types defining the Brauer graph algebra $B_3 = KQ_{G_3}/I_{G_3}$: 

{\it Type I:} $\alpha_1 \alpha_2 \alpha_3 - \beta_1 \beta_2 \beta_3$, $\alpha_2 \alpha_3 \alpha_1 - \beta_3 \beta_1 \beta_2$, $\alpha_3 \alpha_1 \alpha_2 - \beta_2 \beta_3 \beta_1$

{\it Type II:} $\alpha^4$, $\beta^4$

{\it Type III:} $\alpha_1 \beta_3, \beta_3 \alpha_1, \alpha_2 \beta_2, \beta_2 \alpha_2, \alpha_3 \beta_1, \beta_1 \alpha_3$ 

A minimal set of relations is given by all relations of types I and III.

(4) Set of relations of the three types defining the Brauer graph algebra $B_4 = KQ_{G_4}/I_{G_4}$: 

{\it Type I:} $\alpha_1 \alpha_2 \alpha_3 - \beta_1 \beta_2 \beta_3$, $\alpha_2 \alpha_3 \alpha_1 - \beta_2 \beta_3 \beta_1$, $\alpha_3 \alpha_1 \alpha_2 - \beta_3 \beta_1 \beta_2$

{\it Type II:} $\alpha^4$, $\beta^4$

{\it Type III:} $\alpha_i \beta_{i+1}, \beta_i \alpha_{i+1}$ for $i  = 1,2$ and $\alpha_3 \beta_1, \beta_3 \alpha_1$

A minimal set of relations is given by all relations of types I and III.

}
\end{Examples}

\begin{remark} {\rm 
Any symmetric Nakayama algebra is a Brauer graph algebra. Let $Q$ be the quiver given by a non-zero cycle $\alpha_1 \alpha_2 \ldots \alpha_n$ with $s(\alpha_1) = t(\alpha_n)$. Let $I = (\alpha^{kn+1})$ for any $k \in \mathbb{Z}_{>0}$. Then $A= KQ/I$ is a Brauer tree algebra with Brauer tree $T$ given  by a star with $n$ edges and multiplicity 1 everywhere, except for the central vertex whose multiplicity is equal to $k$. 

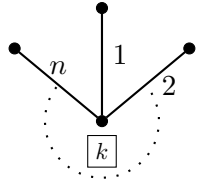
\begin{figure}[H] 
\centering
\begin{tikzpicture}[auto, thick]
	\node[cb] (0) at (0,0) {};
	\node[cb] (1) at ($(0)+(140:1.5)$) {};
	\node[cb] (2) at ($(0)+(90:1.5)$) {};
	\node[cb] (3) at ($(0)+(40:1.5)$) {};
	\node[cw] () at  ($(0)+(-90:.4)$) {$k$};
	\draw (0) to [above, pos=.5] node {$n$}(1);
	\draw (0) to [right, pos=.6] node {$1$}(2);
	\draw (0) to [below, pos=.8] node {$2$}(3);
	\draw[loosely dotted] (0)+(-217:.75) arc (-217:37:.75){};
	
\end{tikzpicture}
	\caption{Brauer graph of a Nakayma algebra with $n$ arrows $\alpha_1, ..... , \alpha_n$ and relations of the form 
	$(\alpha_{i_1} \ldots \alpha_{i_n})^k\alpha_{i_1}$.}
\end{figure}

}
\end{remark}

\subsection{First properties of Brauer graph algebras}

\begin{theorem}
Given a Brauer graph $G= (G_0, G_1, m, \calo)$, the associated Brauer graph algebra $A_G = K_G/I_G$ is a finite dimensional symmetric algebra.
\end{theorem}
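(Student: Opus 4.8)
The plan is to treat the two claims separately: that $A_G$ is finite dimensional, and that it admits a nondegenerate symmetric linear form. For finite dimensionality I would exhibit a basis of $A_G$ consisting of nonzero paths. The relations of type III force every nonzero path to follow the successor relation, hence to be a subpath of a power of a special cycle, while the relations of type II bound its length: if $C_v = \alpha_1 \cdots \alpha_n$ is a special $i$-cycle at $v$ (so $n = {\rm val}(v)$), then $C_v^{m(v)} \alpha_1 = 0$, so every nonzero path starting at $i$ is a prefix of $C_v^{m(v)}$ for one of the at most two special cycles through $i$, or $C_v^{m(v)}$ itself. In particular every nonzero path has length at most $\max_v m(v){\rm val}(v)$, so $\mathcal{R}^N \subseteq I_G$ for $N$ exceeding this bound; thus $I_G$ is admissible and $A_G$ is finite dimensional. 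The degenerate single-edge case $A_G = k[x]/(x^2)$ is symmetric by inspection.

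Next I would identify the socle and use it to build the form. For each $i \in Q_0$ the longest nonzero paths starting at $i$ are the powers $C_v^{m(v)}$ with $v$ an endpoint of the edge $i$; the relations of type I identify $C_v^{m(v)} = C_{v'}^{m(v')}$ for the two endpoints of $i$, so $\soc(e_i A_G)$ is one dimensional, spanned by an element $z_i \in e_i A_G e_i$. I then define $f \colon A_G \to K$ to send each $z_i$ to $1$ and every other path-basis element (including the $e_i$) to $0$. To check $f(pq) = f(qp)$ it suffices to take basis paths $p \colon i \to j$ and $q \colon j \to i$: if $f(pq) \neq 0$ then $pq = z_i$, so $p$ and $q$ are complementary arcs of a single power of a special cycle, and cyclically rotating that cycle to begin at $j$ shows $qp$ is the maximal cycle at $j$, which equals $z_j$ by the type I identification; hence $f(qp) = 1 = f(pq)$, while all remaining products pair to $0$ on both sides.

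For nondegeneracy I would verify that the associated bilinear form $(a,b) \mapsto f(ab)$ has trivial radical, namely that multiplication into the socle pairs $e_i A_G e_j$ perfectly with $e_j A_G e_i$: each prefix $p \colon i \to j$ of a cycle power $C_v^{m(v)}$ has a unique complementary suffix $q \colon j \to i$ with $pq = z_i$, so for any nonzero $a$, choosing a basis path $p$ occurring in $a$ and its complement $q$ yields $f(aq) \neq 0$. Hence $\Ker f$ contains no nonzero one-sided ideal and $f$ witnesses the symmetry of $A_G$. I expect the main obstacle to be the careful bookkeeping in these last two steps when $i$ is a loop, where there are two distinct special $i$-cycles at the same vertex (as in Example~\ref{Example Special Cycles}(2)), and when $i$ is incident to a valency-one vertex of multiplicity greater than one, where the special cycle degenerates; one must confirm that the complementary-arc argument and the one-dimensionality of the socle persist in each of these cases.
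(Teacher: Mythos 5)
Your proposal is correct and follows essentially the same route as the paper: admissibility of $I_G$ via the length bound $N = \max_v \ell(C_v^{m(v)})$ coming from the type II and III relations, and symmetry via the linear form sending each $C_v^{m(v)}$ to $1$ and all other basis paths to $0$. The paper simply asserts that this form has no nonzero one-sided ideal in its kernel, whereas you supply the complementary-arc verification; the two arguments are otherwise identical.
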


\begin{proof}
We start by showing that $I_G$ is admissible. 
 By the definition of a special cycle $C_v$, we have that either $\ell(C_v) >1$ or $m(v) >1$. Thus $I_G \subset KQ^2$. It follows from the relations of type  I, II and III that,  for  $N = {\rm max}_{v \in G_0}\ell( C_v^{m(v)})$, any path $p$ in $Q$ with  $\ell(p) \geq N+1$ is such that   $p \in  I_G$.  
 
 To show that $A_G$ is symmetric, define the following symmetric linear function $f: A_G \to K$ by setting
$$ f(p) = \left\{ \begin{array} {ll}
1 \mbox{ if } p = C_v^{m(v)} \mbox{ for some $v \in G_0$}  \\
0 \mbox{ otherwise.}
\end{array} \right.$$ 
Then it has no non-zero left-ideal in the kernel and thus $A_G$ is symmetric.
\end{proof}

\begin{remark}
{\rm A Brauer graph algebra is indecomposable if and only if its Brauer graph is connected as a graph. }
\end{remark}

The following Theorem is well-known. It also follows from recent work on special multiserial and Brauer configuration algebras in  \cite{GS1, GS2}.

\begin{theorem}\label{BGAspecialbiserial}  Brauer graph algebras are special biserial. 
\end{theorem}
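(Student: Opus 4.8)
The plan is to verify directly that the quiver $Q_G$ together with the ideal $I_G$ satisfies the two defining conditions (S0) and (S1) of a special biserial algebra. Since $A_G = KQ_G/I_G$ is already presented in exactly the form $KQ/I$, no passage to a Morita-equivalent algebra is needed: it suffices to exhibit $Q_G$ and $I_G$ as a witness. Admissibility of $I_G$ (and hence finite-dimensionality) was established in the preceding theorem, so the only work left is to check (S0) and (S1) against the construction of $Q_G$ and the relations of type III.

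For (S0) I would use that the vertices of $Q_G$ are the edges of $G$ and that each arrow out of a vertex $i$ is a successor relation $(i,j)$. An edge $i$ has at most two endpoints $v,w \in G_0$, and at each endpoint $i$ has a unique immediate successor in the cyclic ordering $\calo$; hence there are at most two arrows starting at $i$. Dually, an arrow into $i$ is a successor relation $(h,i)$, i.e. $h$ is the predecessor of $i$ at one of its endpoints, and $i$ has at most two predecessors, one per endpoint. So there are at most two arrows ending at $i$. This is (S0), and it is essentially the observation already recorded after the construction of $Q_G$.

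For (S1) I would argue via the relations of type III. Fix an arrow $\alpha : i \to j$. It comes from a successor relation $(i,j)$ at a single vertex $v$ of $G$ (each arrow of $Q_G$ records the vertex at which its successor relation occurs), so $\alpha$ lies in the special cycle $C_v$, whose cyclic sequence of arrows is determined up to cyclic permutation. Now if $\beta : j \to k$ satisfies $\alpha\beta \notin I_G$, then by the type III relations $\alpha\beta$ must be a subpath of some special cycle (or else $\alpha=\beta$ is the loop exception). Since every arrow occurring in a special cycle at $v$ is a successor relation at $v$, the arrow $\alpha$ cannot appear in any $C_{v''}$ with $v'' \neq v$; hence the cycle containing $\alpha\beta$ is $C_v$, and $\beta$ is forced to be the unique arrow following $\alpha$ in the cyclic word $C_v$. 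Thus there is at most one such $\beta$. The symmetric argument, using the unique predecessor of $\alpha$ in $C_v$, shows there is at most one $\gamma$ with $\gamma\alpha \notin I_G$, giving (S1).

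The main obstacle will be the bookkeeping around loops and parallel edges, where one must ensure that ``successor'', ``predecessor'', and ``the arrow following $\alpha$ in $C_v$'' stay unambiguous: a vertex incident to a loop carries two special $i$-cycles, and parallel edges produce parallel arrows in $Q_G$. The resolution is that each arrow of $Q_G$ is tied to one vertex, so the relevant special cycle (as a cyclic sequence of arrows) and its unique continuation and unique predecessor are well-defined even in these degenerate cases. The one point requiring care is the loop exception in the type III relations: when $\alpha=\beta=\varepsilon$ is a loop at a vertex $v$ of valency one with $m(v)>1$, the product $\varepsilon^2 = \alpha\beta$ is nonzero, but since this is a single arrow it keeps the count at ``at most one'' rather than violating (S1). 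With this checked, $Q_G$ and $I_G$ satisfy (S0) and (S1), so $A_G$ is special biserial.
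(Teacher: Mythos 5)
Your proposal is correct, and it is worth noting that the paper itself offers no argument for this statement: it records the theorem as ``well-known'' and points to \cite{GS1, GS2}, where it follows from the more general fact that Brauer configuration algebras are special multiserial. Your direct verification is therefore a genuine (and welcome) addition rather than a variant of an existing proof. The two pillars of your argument are sound: (S0) follows because an arrow starting (resp.\ ending) at the vertex $i$ of $Q_G$ is a successor relation $(i,j)$ (resp.\ $(h,i)$) at one of the at most two half-edges of $i$, each of which admits a unique successor and predecessor; and (S1) follows because each arrow $\alpha$ is attached by construction to a single vertex $v$ of $G$, so the only special cycles containing $\alpha$ are those at $v$, all of which are cyclic permutations of one another, forcing the continuation and the predecessor of $\alpha$ modulo the type III relations to be unique. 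Your handling of the two delicate points --- the two special $i$-cycles at a vertex carrying a loop (which agree as cyclic words) and the exceptional loop $\varepsilon$ at a truncated-valency vertex with $m(v)>1$ (where $\beta=\varepsilon$ is still the unique continuation) --- is exactly what is needed. The only case your argument does not literally cover is the degenerate Brauer graph consisting of a single edge truncated at both ends, where the algebra is \emph{defined} to be $k[x]/(x^2)$ rather than produced by the quiver construction; since $k[x]/(x^2)$ visibly satisfies (S0) and (S1), a one-line remark disposes of it.
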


\begin{corollary}
Brauer graph algebras are of tame representation type. A Brauer graph algebra is of finite representation type  if and only if it is a Brauer tree algebra. 
\end{corollary}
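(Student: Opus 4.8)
The plan is to treat the two assertions separately. Tameness is immediate from results already in hand: by Theorem~\ref{BGAspecialbiserial} every Brauer graph algebra is special biserial, and by the theorem of Wald and Waschb\"usch \cite{WW} special biserial algebras are of tame representation type. So the only real content is the dichotomy between finite and (tame) infinite type. Here I would use that the indecomposable modules of a special biserial algebra are exactly the string and band modules \cite{WW, BR, GP}, and that every band already yields an infinite family of pairwise non-isomorphic indecomposables. Hence a connected Brauer graph algebra $A_G$ is representation-finite if and only if it admits \emph{no bands}: conversely, in the absence of bands an unbounded supply of strings would, by a pigeonhole argument on the finite set of arrows-with-orientation of $Q_G$, force a repeated local configuration and therefore a band (this is the standard fact that for a string algebra, finite type $\Leftrightarrow$ finitely many strings $\Leftrightarrow$ no bands). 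Thus the whole statement reduces to the combinatorial claim that $A_G$ has a band if and only if $G$ is not a Brauer tree.

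For the ``only if'' direction (not a Brauer tree $\Rightarrow$ a band exists $\Rightarrow$ infinite type) I would argue by the contrapositive and split into the two ways in which $G$ can fail to be a Brauer tree. First, if $(G_0,G_1)$ is not a tree, choose a minimal graph-theoretic cycle of edges and build a closed reduced walk in $Q_G$ that follows this cycle, inserting at each traversed vertex the relevant portion of the special cycle dictated by the cyclic ordering $\calo$; because it genuinely turns around the graph cycle it mixes direct and inverse arrows and is not a power of a shorter walk, hence is a band. Second, if $(G_0,G_1)$ is a tree but carries two vertices $v,w$ with $m(v),m(w)>1$, I would build a band that winds forward around $v$ and backward around $w$, joined along the unique tree path from $v$ to $w$; the prototype is the one-edge case $v-e-w$ with both multiplicities $>1$, where $Q_G$ is a single vertex with two loops $\alpha=C_v$, $\beta=C_w$ and $\alpha\beta^{-1}$ is a band. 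In each case the band produces a $K^{\ast}$-indexed family of indecomposables, so $A_G$ is of infinite representation type.

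For the ``if'' direction (Brauer tree $\Rightarrow$ no band $\Rightarrow$ finite type) I would show that a tree with $m(v)=1$ for all but at most one vertex admits no band. A band is a cyclic reduced walk that is mixed, i.e.\ uses arrows in both senses, and in particular is not a power of a directed special cycle. Since $(G_0,G_1)$ is a tree it has no graph cycle, so the only oriented cycles available in $Q_G$ are the special cycles $C_u$, each of which merely rotates once around a single vertex $u$. To assemble a \emph{mixed} closed walk one needs two independent ``turning points'' at which the walk reverses orientation, and—backtracking $\alpha\alpha^{-1}$ being forbidden in a reduced string—such turning points must come from two distinct vertices around which one can wind, precisely the configuration ruled out when at most one vertex is exceptional (all non-exceptional vertices of valency $\geq 2$ still give directed $C_u$'s, but these cannot be reversed to close up on a tree). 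Hence there is no band, only finitely many strings remain, and $A_G$ is representation-finite.

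I expect the main obstacle to be exactly this last step: turning the ``two independent turning points'' intuition into a rigorous proof that a Brauer tree admits no band, as opposed to merely exhibiting bands in the remaining cases. Two clean ways around it are available: one may invoke the classical fact, going back to the cyclic-defect situation of Dade \cite{Dade} and Janusz \cite{J}, that Brauer tree algebras are of finite representation type; or, once the ribbon-graph/surface model of Section~\ref{Section Ribbon Graphs} is available, one identifies bands with essential closed curves on the thickened surface and notes that a Brauer tree thickens to a disk with at most one puncture, which carries no such curve.
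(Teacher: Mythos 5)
Your first sentence is handled exactly as the paper handles it: Theorem~\ref{BGAspecialbiserial} (Brauer graph algebras are special biserial) combined with the Wald--Waschb\"usch theorem that special biserial algebras are tame. For the second sentence the paper offers no argument at all --- the finite-type characterisation is treated as classical, going back to the cyclic-defect setting of Dade and Janusz (the paper's remark that the blocks of cyclic defect are precisely the blocks of finite representation type points to the same source) --- so your string-and-band argument is a genuinely different, self-contained route. The framework is the right one: for a special biserial algebra representation-finiteness is equivalent to the absence of bands, and reducing the corollary to the combinatorial claim that $A_G$ admits a band if and only if $G$ is not a Brauer tree is the correct core. Your band constructions in the two failure cases are sound, and the one-edge prototypes are exactly the right minimal examples: a single loop in $G$ gives the local algebra $K\langle\alpha,\beta\rangle/(\alpha^2,\beta^2,\alpha\beta-\beta\alpha)$ and two exceptional vertices on a single edge give $K\langle\alpha,\beta\rangle/(\alpha\beta,\beta\alpha,\alpha^{m(v)}-\beta^{m(w)})$, each carrying the band $\alpha\beta^{-1}$.

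The one place you would genuinely have to work is the direction you flag yourself: Brauer tree $\Rightarrow$ no bands. The ``two independent turning points'' heuristic is not yet a proof --- what has to be shown is that a reduced cyclic walk must alternate between special cycles at distinct vertices of $G$, and that on a tree with at most one exceptional vertex any such alternation forces the walk to retrace an edge of the tree and hence to backtrack, which is forbidden for strings. Of your two escape routes, citing the classical finiteness of Brauer tree algebras is the cleanest and is effectively what the paper does. The surface-model alternative needs one correction: a once-punctured disk \emph{does} carry an essential closed curve (the loop around the puncture), so the argument should instead say that every closed curve on the ribbon surface of a tree is contractible or boundary-parallel and that such peripheral curves do not produce bands; note also that the multiplicity function does not alter the ribbon surface, so the exceptional vertex contributes no puncture in the first place.
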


\begin{remark}{\rm 
It is possible to introduce scalars in the relations of a Brauer graph algebra. In this case, the resulting Brauer graph algebra is no longer necessarily  symmetric, but instead it is weakly symmetric. The case of a weakly symmetric Brauer graph algebra is treated in detail, for example, in \cite{GSS} and \cite{GSST}. We note that many interesting examples, such as the existence of non-periodic modules with complexity one  appear precisely in the weakly symmetric (non-symmetric) case, see  \cite{Erdmann}. However, in these notes we will focus on the case of symmetric Brauer graph algebras and refer for a rigorous definition of weakly-symmetric Brauer graph algebras to \cite{GSS}.}
\end{remark}

\subsection{Indecomposable projective modules of Brauer graph algebras}

We now list the Loewy series of the projective indecomposable modules of the Brauer graph algebras in the examples above. Since Brauer graph algebras are special biserial, they are biserial and the composition factors of the maximal uniserial submodules of the  indecomposable projectives can conveniently be read off the Brauer graph. In the following the projective indecomposable module at vertex $i$ of $Q_0$ will be denoted by $P_i$ and we denote by $i$ the simple module at vertex $i$.

\begin{Examples} {\rm Let $B_1, \ldots, B_4$ be the Brauer graph algebras with Brauer graphs $G_1, \ldots, G_4$ respectively, given in Example~\ref{Examples Brauer graph}. Then indecomposable projective modules are given by

(1) 

 \begin{figure}[H]
	\centering	
	\begin{tikzpicture}[auto, thick]
	\node (P1) at (0,0) {$P_1$:};
	\node (P11) at (1,0) {$\matrixTwo{1}{1}{\dimEight23123123}{1}$};
	
	\node (P2) at (2.5,0) {$P_2$:};
	\node (P21) at (3.2,0) {$
		\begin{smallmatrix}
		2 \\
		3 \\
		1 \\
		2 \\
		3 \\
		1 \\
		2 \\
		3 \\
		1 \\
		2
		\end{smallmatrix}
		$};
	
	\node (P3) at (4.7,0) {$P_3$:};
	\node (P31) at (5.7,0) {$\matrixTwo{3}{\dimEight12312312}{\dimThree456}{3}$};
	
	\node (P4) at (7.2,0) {$P_4$:};
	\node (P41) at (8,0) {$\dimFive45634$};
	
	\node (P5) at (9.5,0) {$P_5$:};
	\node (P51) at (10.3,0) {$\dimFive56345$};
	
	\node (P6) at (11.8,0) {$P_6$:};
	\node (P61) at (12.6,0) {$\dimFive63456$};
	
	\end{tikzpicture}
	\caption{Indecomposable projective modules over $B_1$.}
\end{figure}

(2)  

\begin{figure}[H]
	\centering	
	\begin{tikzpicture}[auto, thick]
	\node (P1) at (0,0) {$P_1$:};
	\node (P11) at (1,0) {$\matrixTwo{1}{\dimThree123}{\dimThree231}{1}$};
	
	\node (P2) at (3,0) {$P_2$:};
	\node (P21) at (4,0) {$\matrixTwo{2}{\dimThree311}{\dimTwo43}{2}$};
	
	\node (P3) at (6,0) {$P_3$:};
	\node (P31) at (7,0) {$\matrixTwo{3}{\dimThree112}{\dimTwo24}{3}$};
	
	\node (P4) at (9,0) {$P_4$:};
	\node (P41) at (10,0) {$\dimFour4324$};
	
	\end{tikzpicture}
	\caption{Indecomposable projective modules over $B_2$.}
\end{figure}

(3)~\begin{figure}[H]
	\centering	
	\begin{tikzpicture}[auto, thick]
	\node (P1) at (0,0) {$P_1$:};
	\node (P11) at (1,0) {$\matrixTwo{1}{\dimTwo23}{\dimTwo32}{1}$};
	
	\node (P2) at (3,0) {$P_2$:};
	\node (P21) at (4,0) {$\matrixTwo{2}{\dimTwo31}{\dimTwo13}{2}$};
	
	\node (P3) at (6,0) {$P_3$:};
	\node (P31) at (7,0) {$\matrixTwo{3}{\dimTwo12}{\dimTwo21}{3}$};

	\end{tikzpicture}
	\caption{Indecomposable projective modules over $B_3$.}
\end{figure}

(4)  

\begin{figure}[H]
	\centering
	
	\begin{tikzpicture}[auto, thick]
	\node (P1) at (0,0) {$P_1$:};
	\node (P11) at (1,0) {$\matrixTwo{1}{\dimTwo23}{\dimTwo23}{1}$};
		
	\node (P2) at (3,0) {$P_2$:};
	\node (P21) at (4,0) {$\matrixTwo{2}{\dimTwo31}{\dimTwo31}{2}$};
	
	\node (P3) at (6,0) {$P_3$:};
	\node (P31) at (7,0) {$\matrixTwo{3}{\dimTwo12}{\dimTwo12}{3}$};
		
	\end{tikzpicture}
	\caption{Indecomposable projective modules over $B_1$.}
\end{figure}

}
\end{Examples}

\subsection{Brauer graphs as ribbon graphs}\label{Section Ribbon Graphs}

In \cite{MS} the point of view of interpreting a Brauer graph as a ribbon graph has been introduced. A ribbon graph uniquely defines a compact oriented surface and connects the representation theory of Brauer graph algebras to combinatorial surface geometry. We recall the details of this construction. More details on ribbon graphs and associated surfaces, as well as the proofs of the below statements can be found, for example, in \cite{Labourie}.

In order to define ribbon graphs, we start by giving a more formal definition of a graph. 

A  {\it graph} is a tuple $\Gamma=(V, E, s, \iota)$ where 
\begin{enumerate}
\item $V$ is a finite set, called the set of vertices, 
\item $E$ is  finite set, called the set of half-edges
\item $s: E \to V$ is a function, 
\item $ \iota: E \to E$ is an involution without fixed points.
\end{enumerate}

We think of $s$ as the function sending a half-edge to the vertex it is incident with and for an edge that is not a loop, we think of $\iota$ as the function sending a half-edge incident with a vertex to the  half-edge incident with the other vertex  of the same edge. In the case of a loop, $\iota$ sends a half edge incident with the only vertex of the corresponding edge to the other half-edge of the same edge. That is,  a loop in $G$ corresponds to a pair of half-edges $e, e' \in E$ such that $\iota(e) = e'$ and $s(e) = s(\iota(e'))$.

A \emph{ribbon graph} is a graph $\Gamma=(V, E, s, \iota)$ together with a permutation $\sigma: E \to E$ such that the cycles of $\sigma$ correspond to the sets $s^{-1} (v)$, for all $v \in V$. 
In other words a ribbon graph is a graph together with, for every vertex $v \in V$, a cyclic ordering of the half-edges incident with $v$.

\begin{remark} {\rm

1) It follows from the above definitions that a Brauer graph can be regarded
as a (vertex) weighted ribbon graph where the weights are given by the multiplicity function.  

2) The language of ribbon graphs in the context of Brauer graph algebras is particularly useful in the treatment of loops in the Brauer graph. 
}
\end{remark}

Examples of ribbon graphs are planar graphs and graphs locally embedded in
the (oriented) plane such as the Brauer graphs in Example~\ref{Examples Brauer graph}. Note that as abstract graphs $G_3$ and $G_4$ in Example~\ref{Examples Brauer graph} are isomorphic, but that as  ribbon graphs they are not isomorphic. 

 Any embedding of a graph into an oriented surface  gives a
ribbon graph structure on the graph, where the cyclic orderings are induced from the
embedding of the edges around each vertex and the orientation of the surface.

On the other hand, we obtain surfaces from ribbon graphs. For this we first define the geometric realisation of a ribbon graph and then we embed it into an open oriented surface. 

The {\it geometric realisation $\vert \Gamma \vert$} of a ribbon graph $\Gamma$ is the topological space $$\vert \Gamma \vert = (E \times [0,1] )/\sim$$ where $\sim$ is the equivalence relation defined by $(e,t) \sim (\iota(e), 1-t$ for all $t \in [0,1]$ and 
$(e,0) \sim (f,0)$ if $s(e) = f(e)$ and $(e,1) \sim (f,1)$ if $s(\iota(e)) = f(\iota(e))$, for all 
$e,f, \in E$.

\begin{lemma} \cite[2.2.4]{Labourie}
Every ribbon graph $\Gamma$ can be embedded in an open oriented surface with boundary
in such a way that the cyclic orderings around each of its vertices arise from
the orientation of the surface.
\end{lemma}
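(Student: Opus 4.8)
The plan is to prove this by an explicit \emph{thickening} (or ``fattening'') of the ribbon graph: I will replace each vertex by an oriented disk and each edge by an oriented band, glue the bands to the disks in the cyclic order prescribed by $\sigma$, and then exhibit $|\Gamma|$ as a spine (deformation retract) of the resulting surface. The ribbon structure $\sigma$ is precisely the extra data needed to perform this thickening in an orientable way, so the construction is designed to make both conclusions — embeddability and recovery of the cyclic orderings — transparent.

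First I would fix, for each vertex $v \in V$, an oriented closed disk $D_v$, with orientation say that of the counterclockwise plane. Writing the $\sigma$-cycle at $v$ as $s^{-1}(v) = \{e_1, e_2, \ldots, e_k\}$ with $\sigma(e_j) = e_{j+1}$ (indices modulo $k$), I place $k$ disjoint closed arcs $a_1, \ldots, a_k$ on $\partial D_v$, occurring in exactly this cyclic order as $\partial D_v$ is traversed in the boundary direction induced by the orientation of $D_v$; the arc $a_j$ is reserved as the attaching site for the half-edge $e_j$. For each edge, that is each $\iota$-orbit $\{e, \iota(e)\}$, I take a band $B = [0,1] \times [0,1]$ with its product orientation, designating the ends $\{0\}\times[0,1]$ and $\{1\}\times[0,1]$ for $e$ and $\iota(e)$ respectively (for a loop both ends attach to the same disk).

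Next I would glue the end of $B$ corresponding to $e$ to the arc $a(e) \subset \partial D_{s(e)}$, and its other end to $a(\iota(e)) \subset \partial D_{s(\iota(e))}$, by homeomorphisms chosen to be orientation-reversing on the gluing arcs. Call the result $\Sigma$; deleting $\partial\Sigma$ gives the required open oriented surface. It then remains to verify the two assertions. For the embedding, collapsing each band $[0,1]\times[0,1]$ to its core $[0,1]\times\{1/2\}$ and each disk $D_v$ to its centre is a deformation retraction of $\Sigma$ onto a space homeomorphic to $|\Gamma|$, so $|\Gamma|$ embeds in $\Sigma$ (hence in the open surface) as a spine. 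For the cyclic orderings, the orientation of $\Sigma$ restricts to the fixed orientation of $D_v$, and the order in which the bands emanate from $D_v$, read off from the induced orientation of $\partial D_v$, is by construction exactly $a_1, a_2, \ldots, a_k$, i.e. the $\sigma$-cycle at $v$. Thus the cyclic ordering recovered from the surface orientation coincides with $\sigma$.

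The only genuinely delicate point is the orientation bookkeeping. One must attach every band by an orientation-reversing map on each gluing arc, since that is exactly the condition under which two oriented surfaces-with-boundary glued along a boundary arc produce an oriented result; concretely this forbids giving any band a half-twist, which would create a M\"obius strip and destroy orientability. The reason no global obstruction arises is that orientations are assigned \emph{independently} on each disk and each band rather than propagated around cycles, so compatibility is checked one edge at a time and there is no monodromy to obstruct it. This is precisely what guarantees that a ribbon graph always yields an orientable — indeed oriented — surface, and it is the crux of the argument.
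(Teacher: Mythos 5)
Your proposal is correct and follows essentially the same route as the paper, which cites the result and then exhibits the ribbon surface $S^{\circ}_{\Gamma}$ obtained by assigning an oriented disc to each vertex and an oriented band to each edge and gluing them according to the cyclic orderings (Figure~\ref{fig:localsurface}). Your version simply makes the orientation bookkeeping in that thickening construction explicit, which is a welcome addition but not a different argument.
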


An example of such a surface is the  \emph{ribbon surface} $S^{\circ}_{\Gamma}$ of $\Gamma$ which is constructed in the following way:  Firstly, an oriented surface is
associated to each vertex and edge of $\Gamma$ as in
Figure~\ref{fig:localsurface}.  Then the structure and orientation of the ribbon graph determines a
gluing of the corresponding surfaces giving an oriented surface  $S^{\circ}_{\Gamma}$ together with an
embedding of $\Gamma$ into $S^{\circ}_{\Gamma}$.

\begin{figure}[H]
	\centering
	\tikzstyle{lines}+=[dashed]
	\tikzstyle directed=[postaction={decorate,decoration={markings,
	    mark=at position .55 with {\arrow[]{Stealth}}}}]
\subfigure{	
	\begin{tikzpicture}[auto, thick, scale=1.1]
	\draw[directed] (0,0.5)--(0,1.5);
	\draw[directed] (0,1.5)--(3,1.5);
	\draw[directed] (3,1.5)--(3,0.5);
	\draw[directed] (3,0.5)--(0,0.5);
	\draw[style=lines] (0.15,1) --(2.85,1);
	\end{tikzpicture}}
\hspace{2cm}
\subfigure{	
	\begin{tikzpicture}[auto, thick, scale=.6, rotate=-60]
	\node (0) at (0,0){};
	\draw[style=lines] (0) circle (0.3cm);
	\draw ($(0)+(-20:2)$) to node {}($(0)+(20:2)$);
	\draw ($(0)+(100:2)$) to node {}($(0)+(140:2)$);
	\draw ($(0)+(220:2)$) to node {}($(0)+(260:2)$);
	\draw[directed] ($(0)+(100:2)$) to  [bend right] node {}($(0)+(20:2)$);
	\draw[directed] ($(0)+(220:2)$) to  [bend right] node {}($(0)+(140:2)$);
	\draw[directed] ($(0)+(-20:2)$) to  [bend right] node {} ($(0)+(260:2)$);
	\draw[style=lines] ($(0)+(0:0.35)$) to node {}($(0)+(0:1.88)$);
	\draw[style=lines] ($(0)+(120:0.35)$) to node {}($(0)+(120:1.88)$);
	\draw[style=lines] ($(0)+(240:0.35)$) to node {}($(0)+(240:1.88)$);
\end{tikzpicture}}	
	\caption{Oriented surfaces corresponding to vertices and edges in $\Gamma$.}\label{fig:localsurface}
\end{figure}
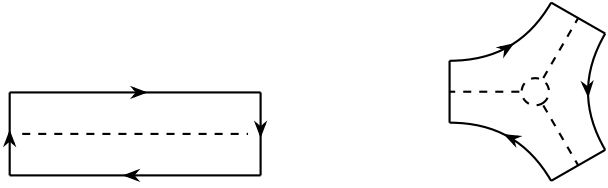

Note that $S^{\circ}_{\Gamma}$ has a number of boundary components corresponding to the faces of $\Gamma$. A
\emph{face} of $\Gamma$ is an equivalence class, up to cyclic permutation, of
$n$-tuples $(e_1,e_2,\ldots ,e_n)$ of half-edges satisfying 
$$
e_{p+1} = \left\{ \begin{array}{lll}
\iota (e_p) & \mbox{ if } & s(e_p ) = s(e_{p-1}) \\ 
\sigma (e_p) &\mbox{ if }& s(e_p ) \neq s(e_{p-1})\\
\end{array} \right.
$$
for all $p$ with $1\leq p\leq n$ (where subscripts are taken
 modulo $n$).

Now let $S$ be a compact oriented surface. An embedding $\Gamma \hookrightarrow S$ is said to be a \emph{filling
  embedding} if
$S\setminus \vert \Gamma \vert =\bigsqcup_{f\in F} D_f,$ where 
each $D_f$ is a disc, and $F$ is a finite set. That is, the
complement of the embedding is a disjoint union of finitely many discs.

\begin{proposition}~\cite[2.2.7]{Labourie} \label{pro:embeddingexists}
Every ribbon graph has a filling embedding  into a compact oriented surface such
that the connected components of $S\setminus \vert \Gamma \vert $ are in bijection with the
faces of $\Gamma$ in the above sense.
\end{proposition}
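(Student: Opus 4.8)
The plan is to construct $S$ by capping off the ribbon surface $S^{\circ}_{\Gamma}$ built above. The embedding $\Gamma \hookrightarrow S^{\circ}_{\Gamma}$ supplied by Lemma~\cite[2.2.4]{Labourie} already realises the cyclic orderings through the orientation of the surface, so the only real work is (i) to match the boundary circles of $S^{\circ}_{\Gamma}$ with the faces of $\Gamma$, and (ii) to glue in discs without destroying compactness or orientability.

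First I would analyse the boundary of $S^{\circ}_{\Gamma}$. By construction $S^{\circ}_{\Gamma}$ is assembled from one oriented band per edge and one oriented disc per vertex, glued along the arcs indicated in Figure~\ref{fig:localsurface}. I would trace a single boundary arc: along an edge-band the boundary runs from a half-edge $e$ to its partner $\iota(e)$, and on reaching a vertex-disc the orientation forces the walk to continue along the next half-edge in the cyclic order, that is along $\sigma(e)$. Recording the half-edges encountered in order therefore produces exactly a tuple $(e_1, \ldots, e_n)$ with $e_{p+1} = \iota(e_p)$ when $s(e_p) = s(e_{p-1})$ and $e_{p+1} = \sigma(e_p)$ otherwise. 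Closing up this walk yields a cyclic tuple, i.e.\ a face in the sense defined above, and conversely every face arises from exactly one boundary circle. This gives a bijection between the connected components of $\partial S^{\circ}_{\Gamma}$ and the set $F$ of faces.

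Next I would cap off. Each boundary circle inherits an orientation from $S^{\circ}_{\Gamma}$, and gluing a $2$-disc $D_f$ to the boundary circle indexed by the face $f$ along an orientation-reversing homeomorphism of the circle produces, by the standard gluing construction for oriented surfaces, a compact oriented surface $S$ whose orientation extends that of $S^{\circ}_{\Gamma}$. Since $\Gamma$ already sits inside $S^{\circ}_{\Gamma} \subset S$, this furnishes the required embedding $\Gamma \hookrightarrow S$. To identify the complement, note that the bands and vertex-discs deformation retract onto $\Gamma$, so $|\Gamma|$ is a deformation retract of $S^{\circ}_{\Gamma}$ and $S^{\circ}_{\Gamma} \setminus |\Gamma|$ is a disjoint union of half-open collar annuli, one per boundary circle. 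Adjoining the capping disc $D_f$ to the corresponding collar gives an open disc, so $S \setminus |\Gamma| = \bigsqcup_{f \in F} D_f$ with each $D_f$ a disc and the index set in bijection with the faces, which is precisely a filling embedding.

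The main obstacle is step (i): making the boundary-tracing argument precise enough that the two cases of the combinatorial recursion are seen to correspond, respectively, to crossing an edge-band and to turning at a vertex-disc. Once the local picture in Figure~\ref{fig:localsurface} is read off correctly, this is a bookkeeping argument, but it is where all the content of the statement lies; the capping and retraction steps are then formal.
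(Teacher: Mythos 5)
Your proposal is correct and follows essentially the same route as the paper, which simply states that the result ``is proved by gluing discs onto the ribbon surface of the ribbon graph to fill in the boundary components.'' Your additional detail --- tracing a boundary arc of $S^{\circ}_{\Gamma}$ through edge-bands and vertex-discs to match the two cases of the combinatorial face recursion, then capping off and checking the complement retracts to collar-plus-disc pieces --- is exactly the bookkeeping the paper leaves to the reference \cite{Labourie}.
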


This is proved by gluing discs onto the ribbon surface of the ribbon graph to
fill in the boundary components. Such an embedding has the following uniqueness
property. We first recall that morphisms
and isomorphisms of ribbon graphs are defined in the natural way, that is they are maps of graphs respecting the permutations.

\begin{proposition}~\cite[2.2.10]{Labourie} \label{pro:fillinguniqueness}
Let $ \Gamma\rightarrow S$ and  $ \Gamma'\rightarrow S'$ be filling embeddings of
ribbon graphs $\Gamma$ and $\Gamma'$ into compact oriented surfaces $S$ and $S'$ and let $f:\Gamma\rightarrow
\Gamma'$ be an isomorphism of ribbon graphs.  Then $f$ induces an
orientation-preserving homeomorphism $f: \vert \Gamma \vert  \rightarrow \vert \Gamma' \vert$
extending to a homeomorphism from $S$ to $S'$.
\end{proposition}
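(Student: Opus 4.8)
The plan is to build the homeomorphism in three stages---first on the graph, then on a regular neighbourhood (the ribbon surface), and finally on the complementary discs---checking orientation at each step. First I would promote the combinatorial isomorphism $f$ to a homeomorphism of geometric realisations. Writing $f$ also for the induced bijection $E \to E'$ of half-edge sets, the assignment $(e,t) \mapsto (f(e), t)$ is compatible with the equivalence relation defining $\vert \Gamma \vert$ and $\vert \Gamma' \vert$, precisely because $f$ commutes with $s$ and with the involution $\iota$. This yields a homeomorphism $\vert f \vert \colon \vert \Gamma \vert \to \vert \Gamma' \vert$.

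Second, I would extend $\vert f \vert$ to the ribbon surfaces. Since $S^{\circ}_{\Gamma}$ is assembled by thickening each vertex and each edge as in Figure~\ref{fig:localsurface} and gluing the resulting pieces according to $\iota$ and the cyclic orderings encoded by $\sigma$, and since an isomorphism of ribbon graphs respects $\sigma$, the local surface pieces of $\Gamma$ are carried to the corresponding pieces of $\Gamma'$ compatibly with the gluings. Because such an isomorphism preserves the cyclic ordering of half-edges at each vertex (rather than reversing it), the local orientations match, so the resulting homeomorphism $S^{\circ}_{\Gamma} \to S^{\circ}_{\Gamma'}$ is orientation-preserving and extends $\vert f \vert$.

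Third, I would identify the boundary behaviour with faces and then fill in the discs. The faces of $\Gamma$ are defined purely in terms of $s$, $\iota$ and $\sigma$, so $f$ induces a bijection between the faces of $\Gamma$ and those of $\Gamma'$; under the bijection between boundary components of $S^{\circ}_{\Gamma}$ and faces, the homeomorphism $S^{\circ}_{\Gamma} \to S^{\circ}_{\Gamma'}$ restricts to an orientation-compatible homeomorphism of boundary circles matching corresponding faces. For the given filling embeddings, a regular-neighbourhood argument identifies a neighbourhood of $\vert \Gamma \vert$ in $S$ with $S^{\circ}_{\Gamma}$, and similarly in $S'$, so that $S$ is recovered from $S^{\circ}_{\Gamma}$ by gluing one disc $D_f$ along each boundary circle, indexed by the set $F$ of faces, and likewise for $S'$. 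Finally, each boundary homeomorphism of circles extends over the bounding discs by coning (the Alexander trick), and these extensions glue to the homeomorphism already defined on $S^{\circ}_{\Gamma}$ to produce the desired homeomorphism $S \to S'$, orientation-preserving since every piece was.

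I expect the main obstacle to lie in the regular-neighbourhood identification underlying the second and third steps: one must verify that any filling embedding induces exactly the prescribed ribbon structure, so that its regular neighbourhood is homeomorphic to $S^{\circ}_{\Gamma}$, and that the components of the complement are indexed precisely by the faces. This is where the hypothesis that the embedding is \emph{filling}, together with the bijection supplied by Proposition~\ref{pro:embeddingexists}, does the real work. The other delicate point is the orientation bookkeeping throughout, ensuring that each local homeomorphism and each disc extension preserves orientation so that the assembled map is globally orientation-preserving.
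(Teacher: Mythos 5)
The paper offers no proof of this proposition: it is quoted verbatim from \cite[2.2.10]{Labourie}, and the only hint of an argument in the surrounding text (building the ribbon surface as a regular neighbourhood and gluing discs onto its boundary circles) concerns the existence statement, Proposition~\ref{pro:embeddingexists}. Your three-stage argument --- descend $f$ to the geometric realisations via $(e,t)\mapsto(f(e),t)$, extend over the ribbon surfaces using compatibility with $\sigma$ to get an orientation-preserving homeomorphism of regular neighbourhoods, then match boundary circles to faces and fill in the complementary discs by the Alexander trick --- is exactly the standard proof and is consistent with the strategy the paper indicates, so I have nothing to correct; you also rightly flag the two points where care is needed, namely that the filling embedding must induce the given cyclic orderings (otherwise the conclusion is false, since the same underlying graph with different ribbon structures, such as $G_3$ and $G_4$ in Example~\ref{Examples Brauer graph}, fills different surfaces) and that the complement of the open regular neighbourhood really is a disjoint union of closed discs indexed by the faces.
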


\begin{corollary} \label{c:fillingembedding}
If $\Gamma$ is a ribbon graph, then there is a compact oriented surface
$S_{\Gamma}$ together with a filling embedding $\Gamma\rightarrow S_{\Gamma}$,
unique up to homeomorphism.
\end{corollary}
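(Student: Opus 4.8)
The plan is to obtain the corollary as an immediate consequence of the two preceding propositions: Proposition~\ref{pro:embeddingexists} supplies existence, while Proposition~\ref{pro:fillinguniqueness} supplies uniqueness up to homeomorphism. Since both propositions are already established (quoting \cite[2.2.7]{Labourie} and \cite[2.2.10]{Labourie}), essentially no new geometric work is required; the task is purely to assemble the two statements correctly.

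For existence, I would simply invoke Proposition~\ref{pro:embeddingexists} applied to $\Gamma$ itself. This produces a compact oriented surface, which I name $S_\Gamma$, together with a filling embedding $\Gamma \hookrightarrow S_\Gamma$ whose complementary discs are in bijection with the faces of $\Gamma$. This is exactly the surface asserted to exist in the corollary.

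For uniqueness, suppose $\Gamma \rightarrow S_\Gamma$ and $\Gamma \rightarrow S_\Gamma'$ are two filling embeddings of the \emph{same} ribbon graph $\Gamma$ into compact oriented surfaces. I would apply Proposition~\ref{pro:fillinguniqueness} with the second ribbon graph taken to be $\Gamma$ again and with $f$ taken to be the identity isomorphism $\mathrm{id}_\Gamma \colon \Gamma \rightarrow \Gamma$. The proposition then guarantees that $\mathrm{id}_\Gamma$ extends to an orientation-preserving homeomorphism $S_\Gamma \rightarrow S_\Gamma'$. Hence any two compact oriented surfaces admitting filling embeddings of $\Gamma$ are homeomorphic, which is precisely the claimed uniqueness up to homeomorphism, and justifies writing $S_\Gamma$ for ``the'' surface attached to $\Gamma$.

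The only point needing care --- and it is the closest thing to an obstacle --- is verifying that the hypotheses of Proposition~\ref{pro:fillinguniqueness} are genuinely met when one specialises $\Gamma' = \Gamma$ and $f = \mathrm{id}_\Gamma$. This is immediate: the identity map on the vertex set $V$ and the half-edge set $E$ trivially commutes with $s$, with the involution $\iota$, and with the permutation $\sigma$, so it is an isomorphism of ribbon graphs in the sense recalled just before the proposition. No surface classification, Euler-characteristic computation, or further geometric estimates are needed.
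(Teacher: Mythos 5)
Your proposal is correct and is exactly the deduction the paper intends: existence comes from Proposition~\ref{pro:embeddingexists}, and uniqueness comes from applying Proposition~\ref{pro:fillinguniqueness} to the identity isomorphism of $\Gamma$, which trivially commutes with $s$, $\iota$ and $\sigma$. The paper records the statement as an immediate corollary of these two propositions without further argument, so there is nothing to add.
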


Thus we see that there is a filling embedding of an arbitrary Brauer graph
(without, or ignoring, multiplicity) into a compact oriented surface, unique up to homeomorphism,  in such a way
that the cyclic ordering around each vertex arises from the orientation of
the surface. 
Conversely, we have the following:

\begin{proposition} \cite[2.2.12]{Labourie}
\label{pro:admitsfilling}
Every compact oriented surface admits a filling ribbon graph.
\end{proposition}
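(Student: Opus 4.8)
The plan is to realise the desired filling ribbon graph as the $1$-skeleton of a cell decomposition of $S$ whose $2$-cells are open discs. Once such a decomposition is in hand, the $1$-skeleton is automatically a graph embedded in $S$, the orientation of $S$ supplies it with a ribbon structure, and the complement of the embedding is exactly the union of the $2$-cells, hence a disjoint union of discs. So the entire topological content of the proposition is pushed into the existence of a disc-cell decomposition, and everything afterwards is formal.

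First I would invoke the fact that every (closed) compact oriented surface $S$ admits a finite triangulation, or, more economically, that $S$ of genus $g$ is obtained from a single polygon by identifying its boundary edges in pairs according to the standard normal form $a_1 b_1 a_1^{-1} b_1^{-1} \cdots a_g b_g a_g^{-1} b_g^{-1}$. Either description yields a CW-structure on $S$ whose open $2$-cells are homeomorphic to open discs and whose $1$-skeleton is a graph $\Gamma$ embedded in $S$; in the polygon model $\Gamma$ is just the wedge of $2g$ circles carrying the images of the identified edges, attached at the single vertex to which all edge-endpoints are glued.

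Next I would equip $\Gamma$ with its ribbon structure and verify the filling condition. Since $S$ is oriented and $\Gamma\hookrightarrow S$ is an embedding, the orientation of $S$ induces at each vertex $v$ a cyclic ordering of the half-edges incident with $v$; this is precisely the construction recalled earlier that turns any graph embedded in an oriented surface into a ribbon graph, so $\Gamma$ acquires the required permutation $\sigma$ of its half-edge set. By construction the complement $S\setminus \vert \Gamma \vert$ of the $1$-skeleton is exactly the disjoint union of the open $2$-cells of the decomposition, each an open disc, whence $S\setminus \vert \Gamma \vert = \bigsqcup_{f\in F} D_f$ with every $D_f$ a disc and $F$ finite. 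This is the definition of a filling embedding, so $\Gamma$ is a filling ribbon graph for $S$. If one wishes to match this with the combinatorial notion of face, one may note that by Proposition~\ref{pro:embeddingexists} the ribbon graph $\Gamma$ fills some compact oriented surface whose discs are in bijection with its combinatorial faces, and by the uniqueness in Corollary~\ref{c:fillingembedding} that surface is homeomorphic to $S$; hence the combinatorial faces of $\Gamma$ correspond to the $2$-cells we glued in.

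The main obstacle is entirely in the first step: guaranteeing a cell decomposition of $S$ all of whose $2$-cells are discs. This rests on the triangulability of surfaces (equivalently, for the closed oriented case, on the classification of surfaces), which is a genuinely nontrivial classical theorem rather than a formal consequence of the ribbon-graph machinery built up so far. Once triangulability is granted, the identification of the complement with a union of discs and the checking of the ribbon axioms are routine, and the proof is complete.
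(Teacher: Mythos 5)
Your argument is correct, and there is nothing in the paper to compare it against: the survey states this proposition with a citation to Labourie and gives no proof of its own. Your route --- take a triangulation (or the standard $4g$-gon normal form), let $\Gamma$ be the $1$-skeleton with the ribbon structure induced by the orientation of $S$, and observe that the complement is the disjoint union of the open $2$-cells --- is the standard argument and is essentially the one in the cited reference; you are also right to flag that the only nontrivial input is triangulability (equivalently, the classification of compact surfaces), with everything else formal. One small point worth making explicit is the sphere case, where the normal form $aa^{-1}$ yields a single arc whose complement is one open disc, so the construction does not degenerate to an empty graph.
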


\begin{remark}{\rm  (1)
Note that in~\cite{Antipov} there also is a surface  associated to a Brauer graph. This surface arises from  a CW-complex associated to  the corresponding Brauer graph algebra. Comparing the definitions,
it can be seen that this gives rise to the same surface as the ribbon graph construction in 
(Corollary~\ref{c:fillingembedding}) above. In particular, the $G$-cycles
in~\cite[\S2]{Antipov} correspond to the faces of the Brauer graph as a
ribbon graph.

(2) In Sections 3 and 4 (ideal) triangulations of marked oriented surfaces play an important role (we refer the reader to Section~\ref{gentle algebra mutation} or to \cite{FST} for the definition of an ideal triangulation), these are connected to ribbon graphs in the following way. Let $T$ be a triangulation of a marked oriented surface $(S,M)$ where $S$   might have a boundary and punctures. Then $T$ (including the boundary edges) can be considered as a ribbon graph with boundary where the cyclic ordering of the edges around each vertex is induced by the orientation of $S$ and where the boundary of $S$ induces the set of boundary faces of $T$. Then there is an  embedding $T \hookrightarrow S$ mapping the boundary faces to the boundaries of the boundary components, for more details see for example Section 2.2 in \cite{MS}. 

}
\end{remark}

 \section{Brauer graph algebras and gentle algebras}\label{Section gentle}

In this section we give an explicit construction to show how gentle algebras and Brauer graph algebras are related via trivial extensions. We do this by constructing a ribbon graph for every gentle algebra.

\begin{definition}\label{Defgentle}
{\rm A finite dimensional algebra is {\it gentle} if it is Morita equivalent to a special biserial algebra  $A = KQ/I$, that is, (S0) and (S1), hold as well as 

{\bf (S2)} For every arrow $\alpha$ in $Q$ there exists at most one arrow $\beta$ such that $\alpha \beta \in I$ where $t(\alpha)  = s  (\beta)$ and  there exists at most one arrow $\gamma$ such that $\gamma \alpha \in I$ where $t(\gamma) = s(\alpha)$.

{\bf (S3)} The ideal $I$ is generated by paths of length 2. }
\end{definition}

\begin{Example} {\rm Examples of gentle algebras: 

(1) Tilted algebras of type $A$, see for example, \cite[Chapter VIII]{ASS} for a definition of tilted algebras. 

(2) Gentle algebras arising as Jacobian algebras associated to marked oriented surfaces where all marked points lie in the boundary (see \cite{ABCP}). 

(3) Discrete derived algebras as classified in \cite{V}.
}
\end{Example}

The {\it trivial extension} $T(A) = A \ltimes D(A)$ of an algebra $A$ by its injective cogenerator $D(A) = \Hom_K(A, K)$ is the algebra whose underlying $K$-vector space is given by $A \oplus D(A)$ and where the multiplication is given as follows: 
$$ (a,f)(b,g) = (ab, ag + fb) \;\;\;\; \mbox{ for all } a, b \in A \mbox { and } g,f \in D(A).$$

The trivial extension $T(A)$ is symmetric (see for example \cite{Sch}). 

The following result shows the connection between gentle algebras and Brauer graph algebras. 

\begin{theorem}\cite{PS}
Let $A$ be a finite dimensional $K$-algebra. Then $A$ is gentle if and only if $T(A)$ is special biserial. 
\end{theorem}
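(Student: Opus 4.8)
The plan is to translate everything into the quiver and relations of $T(A)$ and to read off the special biserial conditions from the combinatorics of $A$. The first step is the standard computation of $Q_{T(A)}$. Writing $\rad$ for the radical, one has $\rad(T(A)) = \rad(A)\oplus D(A)$ and hence an identification of the arrows of $Q_{T(A)}$, i.e.\ of a basis of $\rad(T(A))/\rad(T(A))^2$, with the disjoint union of a basis of $\rad(A)/\rad(A)^2$ (the arrows of $Q_A$, which I call \emph{old}) and a basis of $D(A)/(\rad(A)D(A)+D(A)\rad(A))$. A direct dual-basis calculation shows that the latter space has as basis the classes $p^*$ of the \emph{maximal nonzero paths} $p$ of $A$ (those that cannot be prolonged on either end), and that the corresponding \emph{new} arrow $\bar p$ runs from $t(p)$ to $s(p)$. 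Since $D(A)\cdot D(A)=0$ in $T(A)$, every nonzero path of $T(A)$ involves at most one new arrow, and $a\mapsto(a,0)$ realises $A$ both as a subalgebra of $T(A)$ and as the quotient $T(A)/D(A)$; in particular, for old arrows, $\alpha\beta\notin I_A$ iff $\alpha\beta\notin I_{T(A)}$, and $\alpha\bar p\neq 0$ (resp.\ $\bar p\alpha\neq 0$) exactly when $p$ ends (resp.\ begins) with $\alpha$.

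For the forward implication I assume $A$ gentle and verify (S0) and (S1) for this presentation of $T(A)$. The key point is that the new arrows out of a vertex $v$ biject with the maximal paths ending at $v$, equivalently with the in-arrows of $v$ that are right-maximal, and dually for new arrows into $v$. Gentleness is then used exactly to bound these: if $v$ has two old out-arrows, conditions (S1) and (S2) together force every in-arrow of $v$ to admit an old prolongation, so no maximal path ends at $v$ and no new arrow leaves $v$; if $v$ has one old out-arrow, (S2) bounds the right-maximal in-arrows by one; and if $v$ is a sink the old out-degree is $0$. In every case the old and new out-degrees sum to at most $2$, giving (S0); here (S3) guarantees that ``right-maximal'' is detected by the length-two relations alone, so that each arrow $\alpha$ is the last arrow of at most one maximal path, and of none when it has an old prolongation. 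Condition (S1) for $T(A)$ then follows arrow by arrow: an old $\alpha$ has a new right-neighbour precisely when it has no old one (the two are mutually exclusive), a new $\bar p$ has $\bar p\alpha\neq 0$ only for the first arrow $\alpha$ of $p$, and $\bar p\bar q=0$ always; the symmetric statements hold on the left.

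For the converse I argue by contraposition, assuming $A$ is not gentle and producing a failure of (S0) or (S1) in $T(A)$. Because $Q_A$ is a subquiver of $Q_{T(A)}$ with the same vertices, and $A\hookrightarrow T(A)$ detects the nonvanishing of products of old arrows, the conditions (S0) and (S1) for $T(A)$ immediately restrict to (S0) and (S1) for $A$; so I may assume $A$ is already special biserial and must only locate the obstruction coming from (S2) and (S3). If (S2) fails, there is an arrow $\alpha$ whose endpoint $w=t(\alpha)$ carries two old out-arrows $\beta_1,\beta_2$ with $\alpha\beta_1,\alpha\beta_2\in I_A$; then $\alpha$ is right-maximal, it is the last arrow of some maximal path $p$ with $t(p)=w$, and $\bar p$ is a third out-arrow at $w$, violating (S0) for $T(A)$ (the in-half of (S2) is handled dually). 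If instead the relations of $A$ cannot be taken to be quadratic monomials, the mechanism is a failure of (S1): a configuration $\alpha\beta\notin I_A$, $\beta\gamma\notin I_A$, $\alpha\beta\gamma\in I_A$ makes $\alpha\beta$ (extended to the left to a maximal path $p$) a maximal path ending in $\beta$, whence $\beta$ acquires the two distinct right-neighbours $\gamma$ (old) and $\bar p$ (new) in $T(A)$; a commutativity relation $p-q$ with $p,q$ maximal and parallel merges into a single new arrow $\bar p$ that inherits both the first arrow of $p$ and the first arrow of $q$ as right-neighbours. Either way (S1) fails in $T(A)$.

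The main obstacle is the very last step of the converse: turning ``$A$ is special biserial and satisfies (S2) but is not gentle'' into one of the two explicit local configurations above in full generality. This needs the structure theory of relations in special biserial algebras, namely that after a suitable choice of presentation $I_A$ is generated by monomials and by binomial commutativity relations between parallel paths, together with the fact that gentleness is independent of the chosen presentation, so that non-gentleness really does force a non-quadratic monomial or a genuine commutativity relation. Establishing this reduction, and checking that the induced maximal paths produce the claimed extra continuations without accidental cancellation inside $D(A)$, is where the real work lies; the degree and continuation bookkeeping in the other steps is routine once the description of $Q_{T(A)}$ and of multiplication by the new arrows is in place.
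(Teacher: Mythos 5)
The paper does not actually prove this statement: it is quoted from \cite{PS} without proof, so there is no argument in the text to compare yours against. Judged on its own terms, your strategy --- compute the quiver of $T(A)$ as old arrows plus one new arrow $\bar p\colon t(p)\to s(p)$ for each basis element of $D(A)/(\rad(A)D(A)+D(A)\rad(A))\cong D(\soc_{A^e}A)$, and read off (S0), (S1) --- is the standard route, and your forward direction is essentially complete: the bijection between new out-arrows at $v$ and right-maximal in-arrows at $v$, the use of (S2) to show that a vertex with two old out-arrows admits no right-maximal in-arrow, and the mutual exclusivity of old and new right-continuations of a given arrow all check out.

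The converse, however, has a gap beyond the one you acknowledge. \emph{Special biserial} is an existential condition: it asks for \emph{some} presentation $KQ/I$ (of some Morita representative) satisfying (S0) and (S1). In your (S2)-failure case you produce a third arrow at a vertex of $Q_{T(A)}$, and since the Gabriel quiver is an isomorphism invariant this genuinely rules out special biseriality. But in your (S3)-failure cases you only exhibit a violation of (S1) for one particular choice of arrow representatives (the old arrows together with the $p^*$), and this does not yet show that $T(A)$ is not special biserial: a different lift of a basis of $\rad T(A)/\rad^2 T(A)$ could in principle kill one of the two offending products. To close this you need either an invariant reformulation of (S1) --- e.g.\ that for every arrow $\alpha$ one has $\dim\bigl((\alpha\cdot\rad T(A)+\rad^3 T(A))/\rad^3 T(A)\bigr)\le 1$, which fails for every presentation as soon as the two right-continuations $\alpha\beta$ and $\alpha\bar p$ (or $\bar p\delta$ and $\bar p\delta'$) are linearly independent modulo $\rad^3 T(A)$ --- or a direct check that this independence holds; neither appears in your sketch. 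On top of this sits the gap you do flag: the reduction of ``special biserial, satisfies (S2), not gentle'' to one of your two local configurations requires the structure theorem for the ideal of a special biserial algebra (monomial generators plus binomial relations between the two parallel maximal paths of a biserial projective--injective), and the fact that in the binomial case the two paths $p,q$ are identified in $A$ and hence contribute a \emph{single} new arrow with two continuations. Until both points are supplied, the converse is a plausible outline rather than a proof.
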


Therefore if $A$ is gentle, its trivial extension $T(A)$ is a symmetric special biserial algebra, that is, it is a Brauer graph algebra. 

\begin{Question}
Given a gentle algebra, what is the Brauer graph of its trivial extension? 
\end{Question}

We will answer this question in Theorem~\ref{trivialextension}. 

\subsection{Graph of a gentle algebra} In \cite{S}, to any gentle algebra $A$ is associated a ribbon graph $\Gamma_A$ whose underlying graph structure is determined by the set of maximal paths in $A$. The ribbon graph structure in the form of the cyclic orderings of the edges around the vertices are  also induced by the maximal paths. 

Let $A = KQ/I$ and $\cM$ be the set of maximal paths in $A$. That is $\cM$ consists of all images in $KQ/I$ of paths $p \in KQ$ such that $p \notin I$, but  $\alpha p  \in I$ and $ p \alpha \in  I$, for all non-trivial $\alpha \in KQ$. 

Set $\cM_0 = \{ e_v, v \in V_0\}$ where $V_0$ is the set of vertices $v \in Q_0$ such that one of the following holds: 

(1) There exist unique arrows $\alpha$ and $\beta$ such that $t(\alpha) = v = s(\beta)$ and, furthermore,  $\alpha \beta \notin I$. 

(2) $v$ is a source and there is a single arrow starting at $v$. 

(3) $v$ is a sink and there is a single arrow ending at $v$. 

Set $\overline{\cM} = \cM \cup \cM_0$. 

\begin{Example}\label{discretederived}
{\rm Let $A$ be the gentle algebra with quiver 

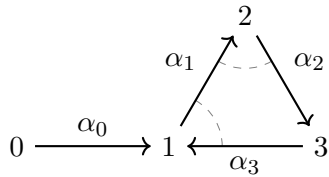
\begin{figure}[H]
\centering
\tikzstyle{help lines}+=[dashed]
\begin{tikzpicture}[auto, thick]+=[dashed]
	\node (1) at (0:0) {1};
	\node (2) at ($(1)+(60:2)$) {2};
	\node (3) at ($(1)+(0:2)$) {3};
	\node (0) at ($(1)+(180:2)$) {0};

	\draw[style=help lines] (1) +(0:.7cm) arc (0:60:.7cm);
	\draw[style=help lines] (2) +(240:.7cm) arc (240:300:.7cm);
	\draw[->] (0) to node {$\alpha_0$} (1);
	\draw[->] (1) to node {$\alpha_1$} (2);
	\draw[->] (2) to node {$\alpha_2$} (3);
	\draw[->] (3) to node {$\alpha_3$} (1);
\end{tikzpicture}
\caption{The dashed lines indicate the relations of length 2.}
\end{figure}

and relations $\alpha_1 \alpha_2$ and $\alpha_3 \alpha_1$. 

Then $V_0 = \{ 0,  3\}$ and  $\overline{\cM} = \{  e_0, \alpha_0 \alpha_1, \alpha_2 \alpha_3, e_3 \}  $.

}
\end{Example}

{\bf Construction of the ribbon graph $\Gamma_A$ of a gentle algebra. }

Let  $A = KQ/I$ be a gentle algebra. 
The vertices of $\Gamma_A$ are in bijection with the elements in $\overline{\cM}$ and the edges of  $\Gamma_A$ are in bijection with the vertices of $Q_0$. Furthermore, there exists an edge
$E_v$ for $v \in Q_0$ between the vertices $m, m' \in \overline{\cM}$ if $v$ as a vertex in $Q_0$ lies in both the paths $m$ and $m'$.  That is if there exists paths $p, q, p', q'$ such that $m = p e_v q$ and $m' = p' e_v q$ where $e_v$ is the trivial path at $v$ and $p, q, p', q'$ might also possibly be trivial paths at $v$. 

We note that every vertex $v$ in $Q_0$ lies in exactly two elements of $\overline{\cM}$ which are not necessarily distinct. If they are not distinct the corresponding edge $E_v$ in $\Gamma_A$ is a loop.  

Note that if a vertex in $\Gamma_A$ corresponds to a trivial path in $Q$ then as a vertex in $\Gamma_A$ it has valency 1, that is, there is only one edge of $\Gamma_A$ incident with that vertex. 
In all other case there are at least two edges incident with every vertex of $\Gamma_A$. 

Let $v$ be a vertex of $\Gamma_A$ such that there are at least two edges of $\Gamma_A$ incident at $v$ and let $a_1 \stackrel{\alpha_{1}}{\longrightarrow} a_2 \stackrel{\alpha_{2}}{\longrightarrow} ... a_{k-1} \stackrel{\alpha_{k-1}}{\longrightarrow} a_k$ be the element in $\overline{\cM}$ corresponding to $v$. 
Let $E_1, E_2, \ldots, E_k$ be the edges in $\Gamma_A$ corresponding to the vertices $a_1, a_2, \ldots, a_k$ respectively. Note that they all have the vertex $v$ in common. Then 
we linearly order $E_1, E_2, \ldots, E_k$ as follows: $E_1 < E_2 < \cdots < E_k$. We complete this linear order to a cyclic order by adding $E_k < E_1$ and we do this at every vertex of $\Gamma_A$. 
This equips the graph $\Gamma_A$ with the structure of a  ribbon graph. 

\begin{Example}
{\rm For the algebra in example~\ref{discretederived} above, we get the the following ribbon graph.

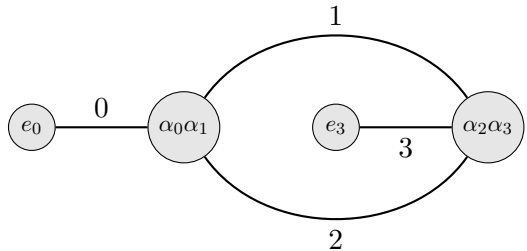
\begin{figure}[H]
	\centering
\begin{tikzpicture}[auto, thick,bend angle=55]
	\node[cblack] (a) at (-2,0) {$e_0$};
	\node[cblack] (b) at ($(a)+(0:2)$) {$\alpha_0\alpha_1$};
	\node[cblack] (c) at ($(b)+(0:4)$) {$\alpha_2\alpha_3$};
	\node[cblack] (d) at ($(b)+(0:2)$) {$e_3$};
	
	\draw (a) to node {$0$}(b);
	\draw (b) to [bend left] node {$1$}(c);
	\draw (b) to [bend right] node[below] {$2$} (c);
	\draw (c) to  node {$3$} (d);
\end{tikzpicture}
\caption{Graph $\Gamma_A$ of the gentle algebra $A$ in Example~\ref{discretederived}.}
\end{figure}

The cyclic ordering here corresponds to the ordering given by the embedding into the clockwise oriented plane induced by the maximal paths containing the given vertex. That is at vertex $\alpha_0 \alpha_1$ we have cyclic ordering $0 < 1< 2 < 0$ at vertex 
$\alpha_2 \alpha_3$ we have cyclic ordering $1 < 2< 3 < 1$, at vertex $e_0$ it is $0$ and at vertex $e_3$ it is $3$. 
}
\end{Example}

\begin{theorem}\label{trivialextension}\cite{S}
Let $A = KQ/I$ be a gentle algebra with ribbon graph $\Gamma_A$. Then $T(A)$ is the Brauer graph algebra with Brauer graph $\Gamma_A$ (and with multiplicity function identically equal to one). 
\end{theorem}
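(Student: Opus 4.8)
The plan is to prove the statement by constructing an explicit isomorphism between the trivial extension $T(A)$ and the Brauer graph algebra $A_{\Gamma_A}$. Since both are symmetric special biserial algebras (the former by Theorem~\ref{trivialextension}'s hypotheses together with the fact that $T(A)$ is symmetric, the latter by construction), the cleanest route is to compare them via quiver and relations and then invoke Gabriel's theorem. First I would describe the quiver of $T(A)$: since $T(A) = A \ltimes D(A)$ shares the same vertices as $A$, its arrows are the arrows of $Q$ together with additional ``connecting'' arrows coming from the $D(A)$-component. These extra arrows close up the maximal paths of $A$ into oriented cycles — concretely, for each maximal path $a_1 \xrightarrow{\alpha_1} \cdots \xrightarrow{\alpha_{k-1}} a_k$ in $A$, the summand $D(A)$ contributes an arrow $a_k \to a_1$ completing it to a cycle.

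Next I would compute the quiver $Q_{\Gamma_A}$ of the Brauer graph algebra directly from the construction of $\Gamma_A$ and compare it with the quiver of $T(A)$. By construction, the edges of $\Gamma_A$ are the vertices of $Q_0$, and the cyclic ordering of half-edges at a vertex $v \in \overline{\cM}$ is precisely the linear order $E_1 < E_2 < \cdots < E_k$ (closed up cyclically) coming from the maximal path $a_1 \xrightarrow{\alpha_1} \cdots \xrightarrow{\alpha_{k-1}} a_k$. Therefore the successor relations at each Brauer-graph vertex reproduce exactly the arrows $\alpha_i \colon a_i \to a_{i+1}$ of the maximal path, and the wrap-around successor $E_k < E_1$ produces exactly the connecting arrow that the $D(A)$-summand of $T(A)$ supplies. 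The upshot is that the arrows of $Q_{\Gamma_A}$ biject with the arrows of the quiver of $T(A)$, with special cycles at the vertices of $\Gamma_A$ corresponding to the completed maximal paths.

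It then remains to match the relations. Since the multiplicity function on $\Gamma_A$ is identically one, the type~I relations read $C_v - C_{v'}$ (equality of the two special cycles through each edge), the type~II relations read $C_v \alpha_1$, and the type~III relations kill any product of two consecutive arrows not lying in a common special cycle. I would verify that under the identification of arrows above, the type~III relations correspond precisely to the relations generating $I$ in the gentle algebra $A$ (the length-two relations forbidden by (S3)), together with the relations in $T(A)$ forcing products of two connecting arrows, or a connecting arrow times a ``wrong'' $A$-arrow, to vanish; the type~I and type~II relations encode the socle identifications intrinsic to the trivial extension, namely that the two maximal cyclic paths through each vertex agree and that extending a full cycle by one more arrow is zero.

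The main obstacle I anticipate is the careful bookkeeping around loops and around the special vertices $\cM_0$ of valency one: when a vertex $v \in Q_0$ lies in only one element of $\overline{\cM}$ the corresponding edge $E_v$ of $\Gamma_A$ is a loop, and when $v$ is a source or sink giving rise to a trivial path $e_v \in \cM_0$ the associated Brauer-graph vertex has valency one. In these degenerate cases one must check that the successor relations and the three relation types still reproduce the local structure of $T(A)$ — in particular that truncated edges and the convention $i<i$ for a valency-one vertex of multiplicity one interact correctly with the single arrow of $Q$ at a source or sink. Verifying the isomorphism is compatible with these boundary cases, and confirming that dimensions agree so that a surjection of algebras is forced to be an isomorphism, is where the real work lies; the generic interior structure follows transparently from the explicit dictionary between maximal paths and special cycles established above.
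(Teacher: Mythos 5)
The survey does not reproduce a proof of this theorem (it is quoted from \cite{S}), but your outline is correct and is essentially the argument of that reference: identify the quiver of $T(A)$ (arrows of $Q$ plus one connecting arrow $t(p)\to s(p)$ per maximal path $p$) with the quiver read off from the successor relations of $\Gamma_A$, match the type I/II/III relations with the relations of $I$ and the multiplication rule $D(A)\cdot D(A)=0$ together with the socle identifications, and finish with a dimension count. You also correctly locate where the real work lies, namely the degenerate cases of loops, trivial maximal paths in $\cM_0$, and truncated edges.
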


There are many situations when there are naturally associated graphs to gentle algebras, given for example by triangulations or partial triangulations of marked unpunctured surfaces.  These graphs often  coincide with the ribbon graphs defined above. More precisely, using the admissible cuts in the next section we get the following Corollary to Theorem~\ref{trivialextension}. 

\begin{corollary}\label{Cor graphs of gentle algebras}
1) If $A$ is a gentle algebra arising from a triangulation  $T$ of an oriented marked surface with marked points in the boundary as defined in \cite{ABCP} then $\Gamma_A = T$ as ribbon graphs. 

2) If $A$ is a surface algebra as defined in \cite{SR} of a partial triangulation $T$ of an oriented marked surface with marked points in the boundary then $\Gamma_A  = T$ as ribbon graphs. 

3) If $A$ is a tiling algebra as defined in \cite{SimoesParsons} with partial triangulation $T$ then $\Gamma_A = T$. 

\end{corollary}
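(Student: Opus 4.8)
The plan is to treat the three parts uniformly, since in each case $A = KQ/I$ is a gentle algebra whose quiver and relations are read directly off the (partial) triangulation $T$, and the task is to verify that the construction $A \mapsto \Gamma_A$ of the preceding subsection returns $T$. That construction produces exactly three pieces of combinatorial data—the edges of $\Gamma_A$, its vertices, and the cyclic orderings at the vertices—so I would establish $\Gamma_A = T$ by matching these against the three pieces of data carried by the ribbon graph $T$: its (internal) arcs, its marked points, and the rotation of the arcs around each marked point induced by the orientation of the surface.

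First I would settle the edges and the vertices. By definition the edges of $\Gamma_A$ are in bijection with $Q_0$, and in all three references $Q_0$ is precisely the set of internal arcs of $T$, which gives the edge identification at once. For the vertices, the key point to establish is that the elements of $\overline{\cM} = \cM \cup \cM_0$ are in natural bijection with the marked points of $T$. This rests on the single observation that the arrows and relations of $A$ are set up so that a maximal nonzero path rotates about one marked point: an arrow $i \to j$ records that the arc $j$ follows the arc $i$ in the fan of arcs at a common endpoint, in the cyclic order induced by the orientation, while the defining relations—the length-two paths supported on a single triangle of $T$—forbid a nonzero path from switching the marked point about which it rotates. Hence a maximal path $a_1 \xrightarrow{\alpha_1} \cdots \xrightarrow{\alpha_{k-1}} a_k$ lists exactly the arcs incident to a fixed marked point $p$ in their rotational order, and the trivial paths in $\cM_0$ account for the marked points at which this fan degenerates to a single arc (this is precisely conditions (2) and (3) in the definition of $\cM_0$).

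With the vertex bijection in hand, both the incidence and the ribbon structure follow. An arc $v$ lies in exactly two elements of $\overline{\cM}$, namely the fans of its two endpoints (which coincide precisely when $v$ is a loop), so $E_v$ joins the two vertices of $\Gamma_A$ corresponding to the endpoints of $v$ in $T$, as required. For the cyclic order, the construction orders the edges at a vertex of $\Gamma_A$ by the order in which the corresponding arcs occur along the maximal path, and by the previous paragraph this is the rotational order of the arcs around the marked point, which is exactly the ribbon structure that $T$ inherits from the orientation of the surface; closing the linear order by $E_k < E_1$ matches closing the fan around the marked point. This yields $\Gamma_A = T$ as ribbon graphs.

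The main obstacle is the bookkeeping of the boundary segments, which are not vertices of $Q$ and hence not edges of $\Gamma_A$: I would have to check that they are exactly what truncates a maximal path into a linear sweep and forces the corresponding fan to close, so that the cyclic ordering at each boundary marked point is correctly recovered and the faces of $\Gamma_A$ agree with those of $T$. The second delicate point is that parts (2) and (3) use partial triangulations, in which some triangles are replaced by larger polygonal tiles; there I would redo the local fan computation for these more general pieces, observing that in the surface algebras of \cite{SR} and the tiling algebras of \cite{SimoesParsons} the arrows and relations are again arranged so that maximal paths sweep about a single marked point, whence the previous two paragraphs apply verbatim. Finally, I note the conceptual route flagged in the text: using the admissible cuts of the next section one identifies $T(A)$ with the Brauer graph algebra whose Brauer graph is $T$, and then $\Gamma_A = T$ is immediate from Theorem~\ref{trivialextension} together with the fact that a Brauer graph algebra determines its Brauer graph.
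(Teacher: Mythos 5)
Your proposal is correct, but its main line of argument is genuinely different from the one the paper intends. The paper offers no direct verification: it derives the statement from the admissible-cut machinery of the following subsection together with Theorem~\ref{trivialextension} --- one checks that the Jacobian, surface, or tiling algebra $A$ of $T$ is an admissible cut of the Brauer graph algebra whose Brauer graph is $T$ (with multiplicity one), whence $\Gamma_A = \Gamma_{A_\Delta} = T$ by the cut theorem. You instead unwind the construction of $\Gamma_A$ and match its three layers of data (edges, vertices, cyclic orderings) against the arcs, marked points, and rotational structure of $T$, with the key observation that the defining relations are exactly the length-two paths that would switch the pivot point, so that maximal nonzero paths enumerate the fan of arcs at a single marked point in rotational order. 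Your route is longer but self-contained and makes the geometric content of the bijection $\overline{\cM}\leftrightarrow\{\text{marked points incident to an internal arc}\}$ explicit, including the role of $\cM_0$ for degenerate fans; the paper's route is shorter given the cut theorem but hides the same combinatorial check inside the verification that $A$ is a cut algebra. You correctly identify the two delicate points (boundary segments truncating the fans, and the larger tiles in parts (2) and (3)); for the tiles the argument does go through because in both \cite{SR} and \cite{SimoesParsons} arrows join consecutive sides of a tile at a common marked point and relations are again the pivot-switching compositions, exactly as you indicate. Your closing appeal to "a Brauer graph algebra determines its Brauer graph" is fine but should be stated as an isomorphism of ribbon graphs obtained from the quiver and special cycles, since that is the level at which the corollary is asserted.
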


 We will give three more examples of the graphs of gentle algebras. In particular,  the first two examples illustrate  each of the statements in Corollary~\ref{Cor graphs of gentle algebras}(1) and (3), respectively. In all three examples the orientation of the graph of the corresponding gentle algebra (induced by the maximal paths) is given by the clockwise orientation of the plane. 

\begin{Example}
{\rm Example illustrating Corollary~\ref{Cor graphs of gentle algebras} (1). Let $A= KQ / (\alpha_1 \beta, \beta \gamma, \gamma \alpha_1)$ be the gentle algebra with quiver $Q$ given by  

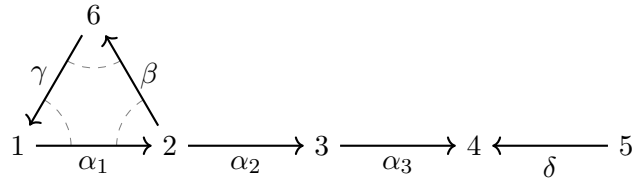
\begin{figure}[H]
\begin{center}
\tikzstyle{help lines}+=[dashed]
\begin{tikzpicture}[auto, thick]+=[dashed]
	\node (1) at (0:0) {1};
	\node (2) at ($(1)+(0:2cm)$) {2};
	\node (3) at ($(2)+(0:2cm)$) {3};
	\node (4) at ($(3)+(0:2cm)$) {4};
	\node (5) at ($(4)+(0:2cm)$) {5};
	\node (6) at ($(1)+(60:2)$) {6};

	\draw[style=help lines] (1) +(0:.7cm) arc (0:60:.7cm);
	\draw[style=help lines] (2) +(120:.7cm) arc (120:180:.7cm);
	\draw[style=help lines] (6) +(240:.7cm) arc (240:300:.7cm);
	
	\draw[->] (1) to [below] node {$\alpha_1$} (2);
	\draw[->] (2) to [below] node {$\alpha_2$} (3);
	\draw[->] (3) to [below] node {$\alpha_3$} (4);
	\draw[->] (5) to node {$\delta$} (4);
	\draw[->] (2) to [pos=0.55, right] node {$\beta$} (6);
	\draw[->] (6) to [pos=0.45, left] node {$\gamma$} (1);
	
\end{tikzpicture}
\caption{Gentle algebra $A$ associated to the triangulation of the 9-gon in Figure~\ref{9-gon}.}
\end{center}
\end{figure}

Then the graph $\Gamma_A$ of $A$ is given by 

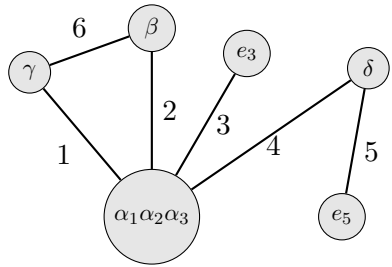
\begin{figure}[H]
\begin{center}
\begin{tikzpicture}[auto, thick]
	\node[cblack] (0) at (-2,0) {$\alpha_1\alpha_2\alpha_3$};
	\node[cblack] (1) at ($(a)+(130:2.5)$) {$\gamma$};
	\node[cblack] (2) at ($(a)+(90:2.5)$) {$\beta$};
	\node[cblack] (3) at ($(a)+(60:2.5)$) {$e_3$};
	\node[cblack] (5) at ($(a)+(0:2.5)$) {$e_5$};
	\node[cblack] (4) at ($(5)+(80:2)$) {$\delta$};
	
	\draw (0) to node {$1$}(1);
	\draw (0) to [right] node {$2$}(2);
	\draw (0) to [right] node {$3$}(3);
	\draw (1) to [pos=0.6] node {$6$}(2);
	\draw (0) to [pos=0.4, right] node {$4$}(4);
	\draw (5) to [pos=0.4, right] node {$5$}(4);
\end{tikzpicture}
\caption{Graph $\Gamma_{A}$ of $A$}
\end{center}
\end{figure}

The graph $\Gamma_{A}$ coincides with the following triangulation of the 9-gon and the gentle algebra $A$ corresponds to the associated Jacobian algebra arising from the associated quiver with potential.

\begin{figure}[H]
\begin{center}
\begin{tikzpicture}[auto, thick,bend angle=55]
	\node (0) at (0,0) {};
	\node[cb] (1) at ($(0)+(180:3)$){};
	\node[cb] (2) at ($(0)+(140:3)$){};
	\node[cb] (3) at ($(0)+(100:3)$){};
	\node[cb] (4) at ($(0)+(60:3)$){};
	\node[cb] (5) at ($(0)+(20:3)$){};
	\node[cb] (6) at ($(0)+(340:3)$){};
	\node[cb] (7) at ($(0)+(300:3)$){};
	\node[cb] (8) at ($(0)+(260:3)$){};
	\node[cb] (9) at ($(0)+(220:3)$){};

	\draw (0) circle (3cm);
	
	\draw[->] (2)+(8:0.75cm) arc (8:-88:.75cm) {node[pos=0.5]{$\gamma$}};
	\draw[->] (4)+(228:1cm) arc (228:192:1cm){node[pos=0.5]{$\beta$}};
	\draw[->] (9)+(88:1.5cm) arc (88:52:1.5cm){node[pos=0.3]{$\alpha_1$}};
	\draw[->] (9)+(48:1.5cm) arc (52:36:1.5cm){node[pos=0.5]{$\alpha_2$}};
	\draw[->] (9)+(28:1.5cm) arc (28:12:1.5cm){node[pos=0.8]{$\alpha_3$}};
	\draw[<-] (6)+(192:1.5cm) arc (192:208:1.5cm){node[pos=0.8, left]{$\delta$}};
	
	\draw (9) to [pos=.55] node {$1$}(2);
	\draw (9) to [pos=.5] node {$2$}(4);
	\draw (9) to [pos=.55] node {$3$}(5);
	\draw (9) to [pos=.55] node {$4$}(6);
	\draw (8) to [pos=.3] node {$5$}(6);
	\draw (2) to node {6}(4);
\end{tikzpicture}
\caption{Triangulation of the 9-gon corresponding to   $\Gamma_{A}$.}\label{9-gon}
\end{center}
\end{figure}
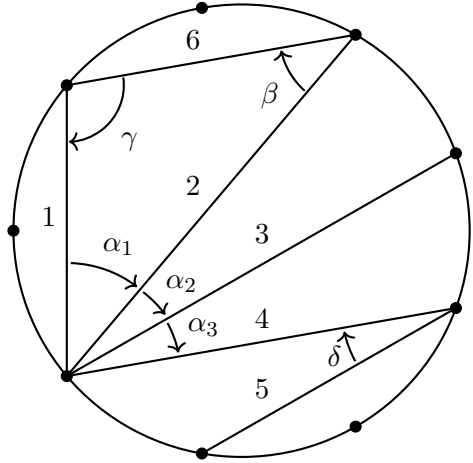

}
\end{Example}

\begin{Example}
{\rm Example illustrating Corollary~\ref{Cor graphs of gentle algebras} (3):
Example of a tiling algebra as defined in \cite{SimoesParsons} (and using the notation of \cite{SimoesParsons}). In Figure~\ref{A10} is the algebra $A_{10}$ corresponding to a tiling of $P_{10,1}$ as in Figure~\ref{Tiling P10}. The algebra $A_{10}$ is defined to be the algebra $KQ_{10} /I$ where $Q_{10}$ is the quiver in Figure~\ref{A10} and where $I= (\alpha_1 \beta_3, \alpha_2 \gamma_1, \gamma_1 \delta_2, \delta_2 \beta_2, \beta_2 \alpha_2, \delta_1 \gamma_2, \epsilon \delta_1)$. 

\begin{figure}[H]
\centering
\tikzstyle{help lines}+=[dashed]
\begin{tikzpicture}[auto, thick]+=[dashed]
\node (1) at (0,0) {1};
\node (2) at ($(1)+(-30:2.5)$) {2};
\node (3) at ($(2)+(30:2.5)$) {3};
\node (4) at ($(2)+(210:2.5)$) {4};
\node (5) at ($(4)+(-30:2.5)$) {5};
\node (6) at ($(5)+(30:2.5)$) {6};
\node (7) at ($(6)+(30:2.5)$) {7};
\node (8) at ($(5)+(210:2.5)$) {8};
\node (9) at ($(5)+(-30:2.5)$) {9};
\node (10) at ($(8)+(-30:2.5)$) {10};

\draw[->] (1) to [pos=.4, below] node {$\alpha_1$} (2);
\draw[->] (2) to [pos=.6, below] node {$\beta_3$} (3);
\draw[->] (2) to [pos=.6, above] node {$\alpha_2$} (4);
\draw[->] (4) to [pos=.4, below] node {$\gamma_1$} (5);
\draw[->] (5) to [pos=.6, below] node {$\delta_2$} (6);
\draw[->] (5) to [pos=.6, above] node {$\gamma_2$} (8);
\draw[->] (6) to [pos=.4, above] node {$\beta_2$} (2);
\draw[->] (7) to [pos=.6, above] node {$\beta_1$} (6);
\draw[->] (9) to [pos=.4, above] node {$\delta_1$} (5);
\draw[->] (10) to node [pos=.6, below] {$\epsilon$} (9);

\draw[style=help lines] (2) +(30:.7cm) arc (30:150:.7cm);
\draw[style=help lines] (2) +(210:.7cm) arc (210:330:.7cm);
\draw[style=help lines] (4) +(30:.7cm) arc (30:-30:.7cm);
\draw[style=help lines] (6) +(150:.7cm) arc (150:210:.7cm);
\draw[style=help lines] (5) +(30:.7cm) arc (30:150:.7cm);
\draw[style=help lines] (5) +(210:.7cm) arc (210:330:.7cm);
\draw[style=help lines] (9) +(150:.7cm) arc (150:210:.7cm);
\end{tikzpicture}
\caption{The algebra $A_{10}$ of a tiling of $P_{10,1}$ \cite{SimoesParsons}} \label{A10}
\end{figure}
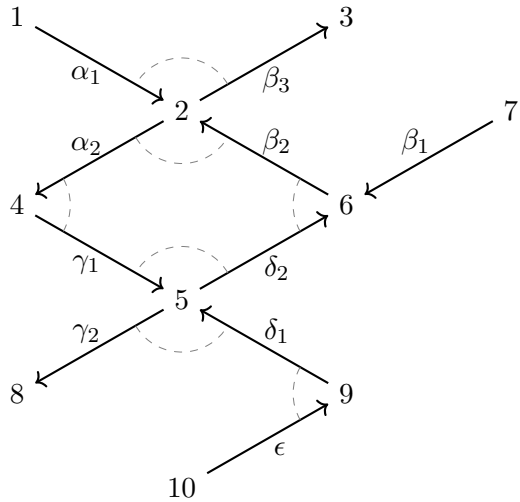

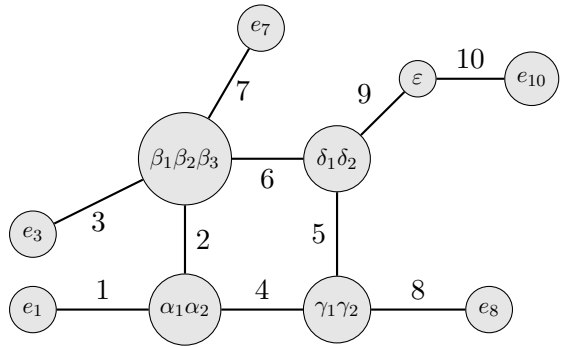
\begin{figure}[H]
\centering
\begin{tikzpicture}[auto, thick]
	\node[cblack] (1) at (0:0) {$e_1$};
	\node[cblack] (2) at ($(1)+(0:2cm)$) {$\alpha_1\alpha_2$};
	\node[cblack] (3) at ($(2)+(0:2cm)$) {$\gamma_1\gamma_2$};
	\node[cblack] (4) at ($(3)+(0:2cm)$) {$e_8$};
	\node[cblack] (5) at ($(1)+(90:1cm)$) {$e_3$};
	\node[cblack] (6) at ($(2)+(90:2cm)$) {$\beta_1\beta_2\beta_3$};
	\node[cblack] (7) at ($(3)+(90:2)$) {$\delta_1\delta_2$};
	\node[cblack] (8) at ($(7)+(45:1.5)$) {$\varepsilon$};
	\node[cblack] (9) at ($(8)+(0:1.5)$) {$e_{10}$};
	\node[cblack] (10) at ($(6)+(60:2)$) {$e_7$};
	
	\draw (1) to node {$1$} (2);
	\draw (2) to node {$4$} (3);
	\draw (3) to node {$8$} (4);
	\draw (5) to [below] node {$3$} (6);
	\draw (2) to [right] node {$2$} (6);
	\draw (6) to [below] node {$6$} (7);
	\draw (3) to node {$5$} (7);
	\draw (6) to [pos=.4, right] node {$7$} (10);
	\draw (7) to node {$9$} (8);
	\draw (8) to node {$10$} (9);
\end{tikzpicture}
\caption{The Graph $\Gamma_{A_{10}}$ of the algebra $A_{10}$.}
\end{figure}

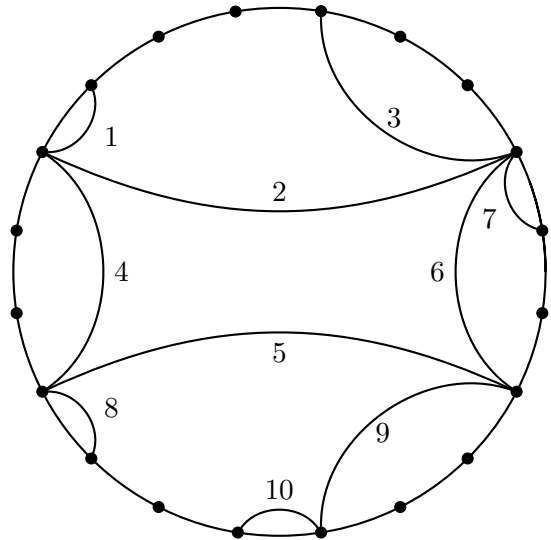
\begin{figure}[H]
	\centering
\begin{tikzpicture}[auto, thick,bend angle=55]
	\node (0) at (0,0) {};
	\node[cb] (1) at ($(0)+(9:3.5)$){};
	\node[cb] (2) at ($(0)+(27:3.5)$){};
	\node[cb] (3) at ($(0)+(45:3.5)$){};
	\node[cb] (4) at ($(0)+(63:3.5)$){};
	\node[cb] (5) at ($(0)+(81:3.5)$){};
	\node[cb] (6) at ($(0)+(99.5:3.5)$){};
	\node[cb] (7) at ($(0)+(117:3.5)$){};
	\node[cb] (8) at ($(0)+(135:3.5)$){};
	\node[cb] (9) at ($(0)+(153:3.5)$){};
	\node[cb] (10) at ($(0)+(171:3.5)$){};
	\node[cb] (11) at ($(0)+(189:3.5)$){};
	\node[cb] (12) at ($(0)+(207:3.5)$){};
	\node[cb] (13) at ($(0)+(225:3.5)$){};
	\node[cb] (14) at ($(0)+(243:3.5)$){};
	\node[cb] (15) at ($(0)+(261:3.5)$){};
	\node[cb] (16) at ($(0)+(279:3.5)$){};
	\node[cb] (17) at ($(0)+(297:3.5)$){};
	\node[cb] (18) at ($(0)+(315:3.5)$){};
	\node[cb] (19) at ($(0)+(333:3.5)$){};
	\node[cb] (20) at ($(0)+(351:3.5)$){};

	\draw (0)+(0:3.5) arc (0:380:3.5);
	
	\draw (1) to [bend left] node {$7$} (2);
	\draw (2) to [bend left, above, pos=0.45] node {$3$} (5);
	\draw (2) to [out=205,in=-25, above] node {$2$} (9);
	\draw (2) to [bend right, left] node {$6$} (19);
	\draw (8) to [bend left, pos=0.3] node {$1$} (9);
	\draw (9) to [bend left] node {$4$} (12);
	\draw (12) to [out=25,in=155, below] node {$5$} (19);
	\draw (12) to [bend left, pos=0.7] node {$8$} (13);
	\draw (15) to [bend left] node {$10$} (16);
	\draw (16) to [bend left, below] node {$9$} (19);
		
\end{tikzpicture}

\caption{The  tiling of $P_{10,1}$ (see \cite{SimoesParsons}) giving rise to the gentle algebra $A_{10}$. The graph $\Gamma_{A_{10}}$ of $A_{10}$ gives the internal arcs of the tiling of  $P_{10,1}$.} \label{Tiling P10}
\end{figure}

}
\end{Example}

\begin{Example}
{\rm  
Kalck \cite{Kalck} has shown that the algebras $A_1 = KQ_1/ (\alpha_1 \alpha_2, \beta_1 \beta_2) $ and $A_2 = KQ_2 /(\alpha_1 \alpha_2, \gamma \alpha_1, \alpha_2 \beta)$ are not derived equivalent. However, both algebras  have the same Avella-Geiss invariants \cite{AG} (these are invariants of gentle algebras such that if two gentle algebras are derived equivalent then they have the same Avella-Geiss invariants) and they also have the same graph as gentle algebras. This graph is the ribbon graph given in Figure~\ref{Kalck Example2} (compare also with the Brauer graph in Figure~\ref{Kalck Example}. Note that the associated Brauer graph algebra is the trivial extension of both $A_1$ and $A_2$).

\begin{figure}[H]
	\centering
	\tikzstyle{help lines}+=[dashed]
\begin{tikzpicture}[auto, thick]
	\node (1) at (-2,0) {1};
	\node (2) at ($(a)+(0:2)$) {2};
	\node (3) at ($(b)+(0:2)$) {3};
	
	\draw[->] (1) to [bend left] node {$\alpha_1$}(2);
	\draw[->] (1) to [bend right] node[below] {$\beta_1$} (2);
	\draw[->] (2) to [bend left] node {$\alpha_2$}(3);
	\draw[->] (2) to [bend right] node[below] {$\beta_2$} (3);

	\draw[style=help lines] (2) +(35:.7cm) arc (35:145:.7cm);
	\draw[style=help lines] (2) +(-35:.7cm) arc (-35:-145:.7cm);
\end{tikzpicture}
\caption{Quiver $Q_1$}
\end{figure}
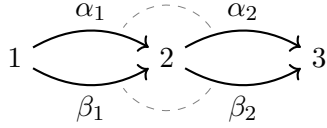

\begin{figure}[H]
	\centering
	\tikzstyle{help lines}+=[dashed]
\begin{tikzpicture}[auto, thick]
	\node (1) at (0,0) {1};
	\node (2) at ($(1)+(120:3)$) {2};
	\node (3) at ($(1)+(180:3)$) {3};
	
	\draw[style=help lines] (1) +(125:.7cm) arc (125:168:.7cm);
	\draw[style=help lines] (2) +(-62:.7cm) arc (-62:-117:.7cm);
	\draw[style=help lines] (3) +(70:.5cm) arc (70:345:.5cm);
	
	\draw[->] (1) to [right] node {$\alpha_1$}(2);
	\draw[->] (2) to [left] node {$\alpha_2$}(3);
	\draw[transform canvas={shift={(-90:0.085)}},<-] (1) to node {$\beta$} (3);
	\draw[transform canvas={shift={(90:0.085)}},->] (3) to node {$\gamma$} (1);
	\end{tikzpicture}
\caption{Quiver $Q_2$}
\end{figure}
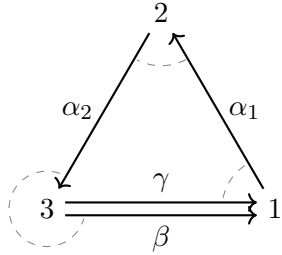

\begin{figure}[H]
	\centering
\begin{tikzpicture}[auto, thick, bend angle=50]
	\node[cblack] (a) at (0,0) {a};
	\node[cblack] (b) at ($(a)+(0:2.5)$) {b};
	
	\draw (a) to [bend left] node {1} (b);
	\draw (a) to [out=320,in=158,looseness=2.5] node[near start, below] {3} (b) ;
	\draw[draw=white,double=black] (a) to node[near start]{2} (b);
\end{tikzpicture}
\caption{The graph $\Gamma_{A_1} = \Gamma_{A_2}$.} \label{Kalck Example2}
\end{figure}
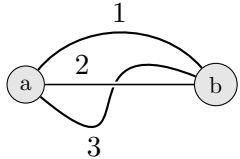

}
\end{Example}

\subsection{Gentle algebras from admissible cuts of Brauer graph algebras} 

Admissible cuts first appear in the PhD thesis of Fern\'andez \cite{F} where they were defined in the context of showing that two schurian algebras $\Lambda$ and $\Lambda'$ have isomorphic trivial extensions if and only if $\Lambda'$ is an admissible cut of $T(\Lambda)$. Recall that $\Lambda$ is a {\it schurian algebra} if for each pair of primitive idempotents $e$ and $f$ in $\Lambda$, we have $\dim_K (e\Lambda f) \leq 1$. 

In \cite{S},  {\it admissible cuts of Brauer graph algebras} are defined as  a slight modification of Fern\'andez notion. 

\begin{definition}
{\rm Let $A = KQ/I$ be a Brauer graph algebra with Brauer graph $G$ and multiplicity function $m$ identically equal to one. An {\it admissible cut} $\Delta$ of $Q$ is a set of arrows containing exactly one arrow from every special cycle $C_v$ (up to rotation), where $v$ is not of valency one as a vertex of $G$. 

We define the {\it cut algebra with cut $\Delta$} to be the algebra $A_\Delta = KQ / \langle I \cup \Delta \rangle $ where $\langle I \cup \Delta \rangle$ is the ideal of $KQ$ generated by $I \cup \Delta$. 
}
\end{definition}

\begin{theorem}\cite{S}
Let $A = KQ/I$ be a Brauer graph algebra with Brauer graph $G$ with multiplicity function identically equal to one. Let $\Delta$ be an admissible cut of $Q$. Then $A_\Delta$ is a gentle algebra and $\Gamma_{A_\Delta} = G$. 
\end{theorem}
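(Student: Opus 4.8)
The plan is to first rewrite $A_\Delta = KQ/\langle I\cup\Delta\rangle$ in the isomorphic form $KQ_\Delta/I_\Delta$, where $Q_\Delta$ is obtained from $Q$ by deleting the arrows in $\Delta$ (killing an arrow in a path-algebra quotient is the same as erasing it from the quiver), and then to show that $I_\Delta$ is generated by paths of length two. Since $m\equiv 1$ the special cycles carry no powers, and by definition an admissible cut $\Delta$ contains exactly one arrow of each special cycle $C_v$ with $\val(v)\geq 2$. Hence every type~II relation $C_v\alpha_1$ lies in $\langle\Delta\rangle$, and in every type~I relation $C_v-C_{v'}$ both summands lie in $\langle\Delta\rangle$; so all type~I and type~II relations are redundant modulo $\Delta$, giving $\langle I\cup\Delta\rangle=\langle I_{\mathrm{III}}\cup\Delta\rangle$. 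After deleting the cut arrows the surviving generators are exactly the type~III relations $\alpha\beta$ with $\alpha,\beta\notin\Delta$, all of length two, which yields condition (S3); since no arrow lies on two special cycles the maximal non-zero paths have bounded length, so $I_\Delta$ is admissible.

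Next I would check the remaining gentle conditions for $A_\Delta\cong KQ_\Delta/I_\Delta$. Condition (S0) is inherited because $Q_\Delta\subseteq Q$ and $Q$ already has at most two arrows starting and ending at each vertex. For (S1), $A$ is special biserial by Theorem~\ref{BGAspecialbiserial}, so each arrow has at most one non-zero continuation on each side, and deleting arrows can only remove such continuations. The step needing care is (S2): at a vertex $t(\alpha)$ there are at most two outgoing arrows; in $A$ the successor $\beta$ of $\alpha$ in its special cycle is the unique one with $\alpha\beta\notin I$, while any second outgoing arrow $\beta'$ belongs to the special cycle at the other endpoint of the edge $t(\alpha)$, whence $\alpha\beta'\in I$. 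As each arrow is the successor relation at a \emph{unique} vertex, distinct special cycles share no arrow, so $\beta\neq\beta'$ and there is at most one zero relation $\alpha\beta'$; if the non-zero continuation $\beta$ is itself cut, only $\beta'$ survives and the count is still at most one. The dual argument gives the other half of (S2), so $A_\Delta$ is gentle.

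Having established gentleness I would build the ribbon graph $\Gamma_{A_\Delta}$ and match it to $G$. For $\val(v)\geq 2$, deleting the cut arrow of $C_v$ turns the cycle into a single linear maximal path $M_v$ that runs once through every edge incident with $v$; it is non-zero (it uses no cut arrow) and cannot be prolonged on either side, since the only non-zero prolongation in $A$ was the deleted arrow. As every non-zero path of $A_\Delta$ is a sub-arc of some $C_v$ avoiding the cut, one gets $\cM=\{M_v:\val(v)\geq 2\}$. For a leaf $v$ with edge $i$ and other endpoint $w$, the vertex $i\in Q_0$ carries arrows only from $C_w$, and according as the cut of $C_w$ removes the arrow into $i$, the arrow out of $i$, or neither, $i$ satisfies condition (2), (3) or (1) defining $V_0$; hence $e_i\in\cM_0$. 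Checking that each $i\in Q_0$ lies in exactly two members of $\overline{\cM}$ (namely $M_v,M_w$ when both endpoints have valency $\geq 2$, and $M_w,e_i$ when $v$ is a leaf) produces a bijection $\overline{\cM}\leftrightarrow G_0$ with $M_v\mapsto v$ and $e_i\mapsto$ the leaf at $i$; since the edges of $\Gamma_{A_\Delta}$ are by construction $Q_0=G_1$, the two graphs have the same edges with matching incidences.

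Finally the ribbon (orientation) data must agree: the maximal path $M_v$ visits the edges at $v$ in the order $i_{k+1}<\dots<i_{n}<i_1<\dots<i_{k}$ obtained by reading the cyclic order $\calo$ at $v$ starting just after the deleted arrow, so completing this linear order cyclically reproduces exactly the successor ordering of $\calo$ at $v$; at leaves the ordering is trivial. This gives $\Gamma_{A_\Delta}=G$ as ribbon graphs. The main obstacle throughout will be loops: when $G$ has a loop at $v$ an edge occurs twice in the cyclic order at $v$, the special cycle meets it twice, and there may be two special $i$-cycles at $v$, so phrases such as ``the arrow into/out of $i$'' and ``the one cut arrow of $C_v$'' are ambiguous at the level of edges. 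Here I would pass to the half-edge/ribbon-graph formalism of Section~\ref{Section Ribbon Graphs} to make these statements precise and to confirm that a looped edge still lies in exactly two members of $\overline{\cM}$ with the correct local orientation.
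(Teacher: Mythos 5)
Your argument is correct, and since the survey states this theorem without proof (it is quoted from \cite{S}), there is no in-paper argument to compare against; your route --- observing that with $m\equiv 1$ every type I and type II relation lies in $\langle\Delta\rangle$ so that only the length-two type III relations survive, checking (S0)--(S2) arrow by arrow, and identifying the maximal paths of $A_\Delta$ with the special cycles cut open at the deleted arrow so that $\overline{\cM}$ recovers $G_0$ and the induced linear orders recover $\calo$ --- is exactly the argument of the cited reference. Your closing caveat about loops is the right one to flag, and passing to half-edges as in Section~\ref{Section Ribbon Graphs} does resolve it.
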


\begin{corollary}\cite{S}
Every gentle algebra is the cut algebra of a unique Brauer graph algebra, its trivial extension, and conversely, every Brauer graph algebra with multiplicity function identically equal to one is the trivial extension of a (not necessarily unique) gentle algebra. 
\end{corollary}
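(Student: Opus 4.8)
The plan is to derive the Corollary by stitching together Theorem~\ref{trivialextension} with the immediately preceding theorem on admissible cuts; the only genuine work sits in a single explicit ``round-trip'' computation in the forward direction.

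I would dispose of the converse statement first, as it uses only the graph-level results. Let $B$ be a Brauer graph algebra with Brauer graph $G$ and multiplicity identically one. Each arrow of $Q_G$ arises from a unique successor relation at a unique vertex, so it belongs, up to rotation, to exactly one special cycle; since the special cycles at distinct vertices are arrow-disjoint, selecting one arrow from each special cycle $C_v$ with $\mathrm{val}(v)\ge 2$ is an unconstrained choice and hence an admissible cut $\Delta$ exists. By the admissible-cut theorem $B_\Delta$ is gentle with $\Gamma_{B_\Delta}=G$, and by Theorem~\ref{trivialextension} the algebra $T(B_\Delta)$ is the Brauer graph algebra of $\Gamma_{B_\Delta}=G$ with multiplicity one, which is $B$ by the construction of $A_G$. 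Thus $B\cong T(B_\Delta)$ is the trivial extension of a gentle algebra, and the non-uniqueness is exhibited by the freedom in choosing $\Delta$ (indeed, Kalck's example shows distinct cuts can produce non-derived-equivalent gentle algebras).

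For the forward direction I would first make the quiver of $T(A)=A_{\Gamma_A}$ explicit for a gentle $A=KQ/I$. A maximal path $a_1\xrightarrow{\alpha_1}\cdots\xrightarrow{\alpha_{k-1}}a_k$ of positive length corresponds to a vertex $v$ of $\Gamma_A$ of valency $k$, and the special cycle at $v$ runs $a_1\to a_2\to\cdots\to a_k\to a_1$; its first $k-1$ arrows are precisely the original arrows $\alpha_1,\dots,\alpha_{k-1}$ of $Q$, while the final arrow $a_k\to a_1$ is a new closing arrow. Trivial maximal paths give valency-one vertices and contribute no special cycle. Let $\Delta_0$ be the set of all closing arrows; it meets each special cycle in exactly one arrow and is therefore an admissible cut. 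Cutting $\Delta_0$ deletes exactly those arrows not already in $Q_1$, so the quiver of $(T(A))_{\Delta_0}$ is $Q$. It then remains to check that $I_{\Gamma_A}$, read modulo $\Delta_0$, collapses to $I$: the relations of types I and II all contain a closing arrow and so become trivial, while a type III relation $\alpha\beta$ with $\alpha,\beta\in Q_1$ is imposed exactly when $\alpha\beta$ is not a subpath of a special cycle, i.e.\ exactly when $\alpha\beta\in I$ in $A$; this matches the length-two generators of the gentle ideal precisely, with no spurious relations surviving. Hence $(T(A))_{\Delta_0}\cong A$, so $A$ is a cut algebra of $T(A)$. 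Uniqueness is then formal: if $A\cong B_\Delta$ for some Brauer graph algebra $B$ of multiplicity one, the admissible-cut theorem gives that $\Gamma_A$ equals the Brauer graph $G$ of $B$, and Theorem~\ref{trivialextension} forces $T(A)$ to be the Brauer graph algebra of $G$, which is $B$; thus $B\cong T(A)$.

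The main obstacle is exactly the relation-matching in the forward direction, namely verifying that cutting $\Delta_0$ recovers the ideal $I$ on the nose. This step cannot be inferred from the graph-level statements alone, since (as Kalck's example shows) the ribbon graph $\Gamma_A$ does not determine the gentle algebra $A$ up to isomorphism; the content lies in tracking how the type III relations of $A_{\Gamma_A}$ restrict to the gentle relations of $A$ and in confirming that the type I and II relations leave no residue after the cut.
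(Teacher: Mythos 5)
Your argument is correct and follows the route the survey intends: the corollary is stated without proof as an immediate consequence of Theorem~\ref{trivialextension} together with the preceding admissible-cut theorem, and both of your directions are exactly that combination. The only material you add beyond the paper is the explicit round-trip verification that cutting the closing arrows of $T(A)$ recovers $A$ (the relation-matching for the type I, II, III relations), which the survey delegates to \cite{S}; that verification is carried out correctly.
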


\begin{Example}\label{Example Cut Algebras}
{\rm 
Let $\Lambda = KQ/I$ be the Brauer graph algebra with Brauer graph given by 

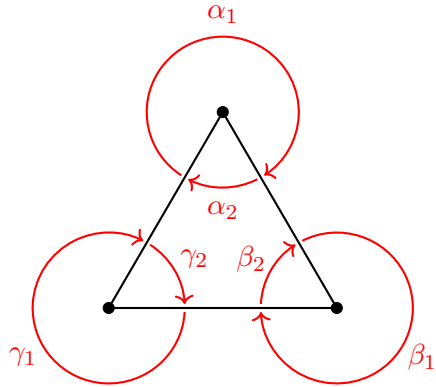
\begin{figure}[H]
	\centering
	\tikzstyle{help lines}+=[dashed]
\begin{tikzpicture}[auto, thick]
	\node[cb] (1) at (0,0) {};
	\node[cb] (2) at ($(1)+(0:3)$) {};
	\node[cb] (3) at ($(1)+(60:3)$) {};
	\node[red] () at ($(1)+(210:1.3)$){$\gamma_1$};
	\node[red] () at ($(1)+(30:1.3)$){$\gamma_2$};
	\node[red] () at ($(3)+(90:1.3)$){$\alpha_1$};
	\node[red] () at ($(3)+(-90:1.3)$){$\alpha_2$};
	\node[red] () at ($(2)+(-30:1.3)$){$\beta_1$};
	\node[red] () at ($(2)+(150:1.3)$){$\beta_2$};
	\draw (1)--(2)--(3)--(1);
	\draw[->, red] (1) +(-3:1cm) arc (-3:-297:1cm);
	\draw[->, red] (1) +(57:1cm) arc (57:3:1cm);
	\draw[->, red] (2) +(177:1cm) arc (177:123:1cm);
	\draw[->, red] (2) +(117:1cm) arc (117:-177:1cm);
	\draw[->, red] (3) +(-63:1cm) arc (-63:-117:1cm);
	\draw[->, red] (3) +(237:1cm) arc (237:-57:1cm);
\end{tikzpicture}
\caption{Example 4.14}
\end{figure}

Let $\Delta_1 = \{\alpha_1, \beta_1, \gamma_1 \}$ and $\Delta_2 = 
\{ \alpha_1, \beta_1, \gamma_2 \}$ be admissible cuts of $Q$. Then the cut algebras are isomorphic to the following algebras
$A_{\Delta_1} \simeq K (Q \setminus \Delta_1) / ( \beta_2 \alpha_2,  \gamma_2 \beta_2,  \alpha_2 \gamma_2)$ and $A_{\Delta_2} \simeq K(Q \setminus \Delta_2) / (\beta_2 \alpha_2)$ and it is easy to see that they are gentle. }
\end{Example}

\begin{remark}
{\rm Let $A = KQ/I$ be a Brauer graph algebra and $\Delta$ and $\Delta'$ be two distinct admissible cuts of $Q$. Then $A_\Delta$ and $A_{\Delta'}$ have the same number of simple modules but they are not necessarily isomorphic nor derived equivalent. It is possible that  $A_\Delta$ is of finite global dimension and $A_{\Delta'}$ is of infinite global dimension as is the case in Example~\ref{Example Cut Algebras} where $A_{\Delta_1}$ is of infinite global dimension whereas $A_{\Delta_2}$ is of finite global dimension.   
}
\end{remark}

\subsection{Gentle surface algebras and quiver mutation}\label{gentle algebra mutation}

In \cite{Ladkani1, Ladkani2} Ladkani studies when two mutation-equivalent Jacobian algebras arising from quivers with potential associated to marked surfaces with all marked points on the boundary are derived equivalent. Note that in this case the Jacobian algebras are gentle algebras \cite{ABCP}.  We will start by recalling Ladkani's result and relate it to the corresponding Brauer graph algebras (given by the trivial extensions of the Jacobian algebras).

For this, recall the Fomin-Zelevinsky quiver mutation. 

\begin{definition}\label{Def Quiver Mutation}
Let $Q$ be a quiver without loops or two cycles and let $k$ be a vertex of $Q$. The \textit{mutation  of $Q$ at $k$} is the quiver  $\mu_k(Q)$ obtained from $Q$ as follows: 

1) for each subquiver $i \to k \to j$,  add a new arrow $ i \to j$; 

2) all arrows with source or target $k$ are reversed

3) remove all newly-created 2-cycles. 
\end{definition}

Quiver mutation is an involution, that is $\mu_k(\mu_k(Q)) = Q$. 

Let $S$ be an oriented surface and let $M$ be a set of marked points in $S$. An {\it arc} is a curve  without self intersections with endpoints in $M$, considered up to homotopy,  and such that it does not cut out a monogon or bigon and such that it is disjoint from the boundary except for the endpoints.  An {\it ideal triangulation} is a maximal collection of non-crossing arcs. Let $T$ be an ideal triangulation of the \textit{marked surface $(S,M)$}. We associate to $T$ a quiver $Q(T)$ such that the vertices of $Q(T)$ correspond to the internal (=non-boundary) edges of $T$ and  where the arrows are given by the immediate successor relations of two edges incident with a common vertex induced by the orientation of the surface.  For more details on triangulations and the associated quivers we refer the reader to \cite{FST}. In what follows we will often refer to an ideal triangulation as a triangulation. 

Let $s$ be an internal edge in $T$. Then the \textit{flip of  $T$ at $s$} is defined by removing $s$ and replacing it by $s'$ such that we obtain the triangulation $T' =  (T\setminus s) \cup s'$, where $s' \neq s$ is the unique edge completing $T \setminus s$ to a triangulation of $(S,M)$.

\begin{proposition}\cite{FST}
Let $T$ be a triangulation of a marked surface $(S,M)$ and let $T'$ be the triangulation obtained by flipping an internal $s$ edge in $T$. Then $Q(T') = \mu_s(Q(T))$. 
\end{proposition}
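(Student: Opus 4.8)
The plan is to prove the identity by a completely local computation, using that a flip and a quiver mutation each alter only a bounded amount of the data. The flip of $T$ at the internal edge $s$ replaces $s$ by the opposite diagonal $s'$ of the quadrilateral $\Pi_s$ which is the union of the two triangles of $T$ having $s$ as a side. Every arc of $T$ other than $s$ that is not a side of $\Pi_s$ survives the flip, and every corner of a triangle of $T$ not incident with $s$ is untouched; hence the arrows of $Q(T)$ and of $Q(T')$ between vertices different from $s$ and not lying in $\Pi_s$ already agree. On the mutation side, $\mu_s$ changes only the arrows incident with $s$ together with the arrows created as composites through $s$. Thus I would reduce the whole statement to comparing the two quivers on the arrows involving $s$ (respectively $s'$) together with the arrows among the sides of $\Pi_s$.

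Next I would fix local coordinates. Label the corners of $\Pi_s$ cyclically (in the orientation induced by $S$) as $P_1,P_2,P_3,P_4$ and its sides as $a=P_1P_2$, $b=P_2P_3$, $c=P_3P_4$, $d=P_4P_1$, so that $s=P_1P_3$ and the triangles of $T$ are $(a,b,s)$ and $(c,d,s)$, while after the flip $s'=P_2P_4$ and the triangles of $T'$ are $(b,c,s')$ and $(d,a,s')$. Reading the immediate-successor rule defining $Q(T)$ around the common vertices, one records the arrows incident with $s$ (in the generic case $b\to s$, $d\to s$ into $s$ and $s\to a$, $s\to c$ out of $s$) and the arrows $a\to b$, $c\to d$ internal to the two triangles; the analogous reading for $T'$ gives the arrows of $Q(T')$ around $s'$. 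The claim then splits into three effects matching the three steps of Definition~\ref{Def Quiver Mutation}. (Reversal.) Since $s'$ joins exactly the two corners of $\Pi_s$ not lying on $s$, every arrow pointing into $s$ becomes an arrow pointing out of $s'$ and conversely, reproducing step~(2). (Creation.) Each length-two path $i\to s\to j$ of $Q(T)$ has $i$ and $j$ becoming two sides of a single triangle of $T'$, hence produces a direct successor arrow $i\to j$ in $Q(T')$, reproducing step~(1). (Cancellation.) Finally one checks that the only superfluous arrows produced this way are antiparallel to arrows already present among $a,b,c,d$ (for instance the created $b\to a$ against the pre-existing $a\to b$), and these are exactly the $2$-cycles deleted in step~(3); geometrically they disappear because $s'$ now separates the corresponding pair of sides.

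The part that requires genuine care, and which I expect to be the main obstacle, is the exhaustive treatment of the degenerate shapes of $\Pi_s$, where the clean quadrilateral picture fails. These are the cases in which one or more of $a,b,c,d$ is a boundary edge — then it is not a vertex of the quiver and the corresponding arrow is simply absent — and the cases in which two of the four sides are identified as the same arc of $T$, which is precisely what forces coinciding arrows and hence additional $2$-cycles to be cancelled. The most delicate situation is when $s$ or one of its neighbouring edges bounds a self-folded triangle at a puncture: there both the naive flip and the naive successor rule break down, and one must either pass to the tagged-arc normalisation of $(S,M)$ as in \cite{FST} or verify $Q(T')=\mu_s(Q(T))$ by hand in each of the finitely many puncture configurations. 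My expectation is that, once the non-degenerate quadrilateral is settled by the reversal/creation/cancellation matching above, what remains is a finite but somewhat tedious case analysis, with the self-folded case being the only genuinely subtle point.
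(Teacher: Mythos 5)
The survey does not prove this proposition; it is quoted from \cite{FST}, so there is no in-paper argument to compare against. Your proposal is the standard proof and is essentially sound: the locality reduction, the explicit bookkeeping on the quadrilateral $\Pi_s$ (the two $3$-cycles $a\to b\to s\to a$ and $c\to d\to s\to c$ turning, after reversal, creation of $b\to a,\,b\to c,\,d\to a,\,d\to c$, and cancellation of the two antiparallel pairs, into the two $3$-cycles $b\to c\to s'\to b$ and $d\to a\to s'\to d$ of $T'$), and the enumeration of degeneracies are exactly the ingredients of the argument in \cite{FST}.

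Two small points of precision. First, the inner arc of a self-folded triangle is not flippable at all (no arc $s'\neq s$ completes $T\setminus s$ to a triangulation), so that configuration never arises as a choice of $s$ in the statement; the genuinely delicate self-folded case is only when a \emph{neighbouring} side of $\Pi_s$ is the folded arc or its enclosing loop, and there the naive successor-rule definition of $Q(T)$ used in this survey must be read with the convention of \cite{FST} (replacing a folded arc by its enclosing loop) for the identity to hold literally. Second, in the case where two opposite sides of $\Pi_s$ are identified (e.g.\ an annulus with one marked point on each boundary component), you should check not only that extra $2$-cycles get cancelled but also that genuinely \emph{double} arrows are produced consistently on both sides; your framework handles this, but it deserves an explicit line in the case analysis rather than being folded into the cancellation step.
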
 

From now on let $(S,M)$ be such that $S$ is a surface with boundary $\partial S$ 
and $M$ is a set of marked points such that $M \subset \partial S$. Let $T$ be a triangulation of $(S,M)$ and $A_T$  
the gentle algebra associated to $(S, M, T)$ \cite{ABCP, Labardini} with quiver $Q(T)$. Let $g$ be the genus of $S$ and $b$ the number of boundary components in $S$. 

\begin{definition} {\rm  1) An (ideal) triangle in $T$ is a {\it boundary triangle} if exactly two of its sides are arcs that lie in the boundary of $S$.

2)  Let $B_1, \ldots, B_b$ be the boundary components in $S$. Let $n_i$ be the number of marked points on $B_i$ and let $d_i$ be the number of boundary triangles incident with $B_i$, that is triangles where exactly two sides lie in the boundary component $B_i$.  Note that the $d_i$ depend on $T$. We call the sequence
$$ g, b, (n_1, d_1), \ldots, (n_b, d_b)$$ the {\it parameters} of $T$. }
\end{definition}

For a finite dimensional algebra $A$ denote by $ \cD^\flat(A)$ the bounded derived category of finitely generated $A$-modules. For more details on the definition of bounded derived categories and their properties we refer the reader to the Appendix in Section~\ref{Appendix} and to standard textbooks such as \cite{Happel, Weibel}. 

\begin{theorem} \cite{Ladkani2} \label{Thm Derived eq surface gentle}
Let $(S,M)$ be a marked surface with all marked points in the boundary of $(S,M)$. 
Let $T$ and $T'$ be triangulations of $(S,M)$ having the same parameters. Let $A_T$ and $A_{T'}$ be the associated gentle algebras.  Then 
$$ \cD^\flat(A_T) \simeq   \cD^\flat(A_{T'}).$$
\end{theorem}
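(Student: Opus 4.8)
The plan is to reduce the statement to a single flip and then realise each relevant flip as a derived equivalence through an explicit tilting complex. Since $g$, $b$ and the $n_i$ are invariants of the surface $(S,M)$ itself, two triangulations of the same $(S,M)$ automatically agree on these, so the hypothesis ``same parameters'' amounts to requiring that, after a suitable matching of the boundary components $B_1,\dots,B_b$, the numbers $d_i$ of boundary triangles agree. The first and combinatorially most delicate step is therefore to prove the following refinement of the classical connectivity of the flip graph: any two triangulations of $(S,M)$ with the same sequence $(d_1,\dots,d_b)$ can be joined by a finite sequence of flips each of which preserves \emph{all} the $d_i$. I would establish this by introducing a standard (normal-form) triangulation $T_{(d_1,\dots,d_b)}$ for each admissible value of the parameters and showing, by induction on a suitable complexity measure (for instance the number of arcs not shared with the normal form), that every triangulation with those parameters can be transported to $T_{(d_1,\dots,d_b)}$ using only flips that neither create nor destroy a boundary triangle. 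That such a parameter-preserving flip always exists as long as $T \ne T_{(d_1,\dots,d_b)}$ is the crux: one must locate an internal arc whose flip strictly decreases the complexity while keeping every $d_i$ fixed, which requires a careful local analysis of the triangles adjacent to the boundary.

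For the second step, fix a single parameter-preserving flip at an internal edge $s$, so that $Q(T') = \mu_s(Q(T))$ by the flip--mutation Proposition of \cite{FST}. I would realise the derived equivalence via a two-term tilting complex $P^{\bullet}$ over $A_T = KQ(T)/I_T$, built from the mutation at $s$: in degree $0$ it is $\bigoplus_{i \ne s} P_i$, and the summand replacing $P_s$ is the complex $P_s \to P^{0}_s$ concentrated in degrees $-1$ and $0$, where $P_s \to P^{0}_s$ is the minimal left $\mathrm{add}(\bigoplus_{i\ne s}P_i)$-approximation of $P_s$ (an Okuyama--Rickard, i.e.\ silting-mutation, type complex). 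One then verifies that $P^{\bullet}$ is a tilting complex, namely that it generates $\cD^{\flat}(A_T)$ as a triangulated category and has no self-extensions in nonzero degrees, and that its endomorphism algebra is isomorphic to $A_{T'}$; both verifications are explicit string-combinatorial computations using the gentle structure, the projective resolutions being directly readable from $Q(T)$ and $I_T$. Rickard's Morita theorem for derived categories then yields $\cD^{\flat}(A_T) \simeq \cD^{\flat}(A_{T'})$, and composing the equivalences obtained along the flip sequence of Step~1 proves the theorem.

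The key point tying the two steps together is that parameter-preservation is exactly the condition guaranteeing that the endomorphism algebra of $P^{\bullet}$ is again the gentle surface algebra of a triangulation (rather than some other algebra), and that $P^{\bullet}$ is genuinely \emph{tilting} rather than merely silting. A flip that changes some $d_i$ alters the number of boundary triangles and hence the local projective/injective configuration near the boundary in a way that obstructs the approximation above from producing a tilting complex, consistent with the $d_i$ being a derived invariant (the converse direction, which is not needed here). I expect the main obstacle to be Step~1: the derived-equivalence verification for an individual good flip is by now a standard, if somewhat lengthy, tilting-complex computation in the gentle setting, whereas the topological--combinatorial claim that triangulations with equal $d_i$ are connected \emph{through} parameter-preserving flips is where the real content lies, since one must control the flip path so that it never leaves the parameter class.
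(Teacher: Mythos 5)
The survey itself does not prove this theorem --- it is quoted from Ladkani's classification paper \cite{Ladkani2} --- so there is no in-paper argument to measure your attempt against; I can only compare it with the strategy of the cited source. Your two-step plan (connect $T$ and $T'$ by a chain of flips each of which fixes every $d_i$, then realise each such flip as a derived equivalence via a two-term approximation complex at the flipped arc) is indeed the route taken there: flips preserving the number of boundary triangles are exactly Ladkani's ``good mutations''. But as written both steps are roadmaps rather than proofs, and together they constitute essentially the entire content of the theorem. For the tilting step, remember that a gentle algebra is not self-injective, so the Okuyama--Rickard-type complex $P^{\bullet}$ is a priori only silting: you must check $\Hom(P^{\bullet},P^{\bullet}[-1])=0$ as well as $\Hom(P^{\bullet},P^{\bullet}[1])=0$, you should allow both the left and the right approximation complexes ($\mu_s^{-}$ and $\mu_s^{+}$), since for a given parameter-preserving flip one of the two versions can fail to be tilting while the other works, and you must verify that the relations of $\End(P^{\bullet})$ --- not merely its quiver, which is controlled by the flip--mutation proposition of \cite{FST} --- coincide with those of $A_{T'}$. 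For the combinatorial step, the connectivity of the set of triangulations with fixed $(d_1,\dots,d_b)$ under parameter-preserving flips is a genuine theorem (already nontrivial for the disc, where it underlies the Buan--Vatne classification \cite{BV}), and your normal-form induction hinges on producing, at every non-normal triangulation, a flip that strictly decreases the complexity without creating or destroying a boundary triangle; that existence claim is precisely where a careful local analysis, not just an expectation, is required.
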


\begin{remark} {\rm 1) If $(S,M)$ is a marked disc and $T$ and $T'$ are two triangulations of $(S,M)$ then $ \cD^\flat(A_T) \simeq   \cD^\flat(A_{T'})$ if and only if $T$ and $T'$ have the same number of boundary triangles. Note that by \cite{ABCP} the algebras $A_T$ and $A_{T'}$ are cluster-tilted algebras of Dynkin type $A$ and their derived equivalence classification was already done in \cite{BV}. For the notion of a cluster-tilted algebras, we refer, for example, to \cite{Sch}.  

2) If $T'$ is obtained from $T$ by flipping one arc then $ \cD^\flat(A_T) \simeq   \cD^\flat(A_{T'})$ if and only if $T$ and $T'$ have the same number of boundary triangles. 

3) In \cite{Ladkani2} a boundary triangle is called a {\it dome}. }\end{remark}

\begin{corollary}\label{Der triv ext}
Let $A_T$ and $A_{T'}$ be gentle algebras associated to triangulations $T$ and $T'$ of $(S,M)$  such that $T$ and $T'$ have the same parameters and let $\Lambda_T $ and $\Lambda_{T'} $ be the associated Brauer graph algebras, that is $\Lambda_T = A_T \ltimes D(A_T)$ and $\Lambda_{T'} = A_{T'} \ltimes D(A_{T'})$. Then 
 $$\cD^\flat(\Lambda_T) \simeq  \cD^\flat(\Lambda_{T'}).$$
\end{corollary}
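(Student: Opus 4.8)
The plan is to reduce the statement to Ladkani's Theorem~\ref{Thm Derived eq surface gentle} together with the general principle that a derived equivalence is inherited by trivial extensions. Since $T$ and $T'$ have the same parameters, Theorem~\ref{Thm Derived eq surface gentle} already provides a derived equivalence $\cD^\flat(A_T) \simeq \cD^\flat(A_{T'})$ of the two gentle algebras. By definition $\Lambda_T = A_T \ltimes D(A_T) = T(A_T)$ and $\Lambda_{T'} = T(A_{T'})$, so it suffices to establish the following general fact, due to Rickard: if two finite dimensional $K$-algebras $A$ and $B$ are derived equivalent, then their trivial extensions $T(A)$ and $T(B)$ are derived equivalent.

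First I would upgrade the derived equivalence of Theorem~\ref{Thm Derived eq surface gentle} to a \emph{standard} one, that is, realise it by an invertible two-sided tilting complex $X$ of $A_T$-$A_{T'}$-bimodules, with quasi-inverse $Y$; this is always possible by Rickard's Morita theory for derived categories. The crucial point is that a standard derived equivalence commutes with the Nakayama (Serre) functor, so that in the derived category it matches the dual bimodule $D(A_T)$ with $D(A_{T'})$. Since the trivial extension $T(A) = A \oplus D(A)$ is assembled precisely from $A$ and the bimodule $D(A)$, this compatibility is exactly what is needed to glue $X$ together with the identification of the duals into a two-sided tilting complex over $T(A_T)$ and $T(A_{T'})$, whose endomorphism algebra is the correct trivial extension.

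Equivalently, and perhaps more transparently, one lifts the tilting complex to the repetitive algebras $\widehat{A_T}$ and $\widehat{A_{T'}}$, over which the derived equivalence is known to extend (Happel~\cite{Happel}, Rickard), and uses that $T(A)$ is the orbit algebra of $\widehat{A}$ under the $\mathbb{Z}$-action generated by the shift, i.e.\ $\widehat{A} \to T(A)$ is a Galois covering with group $\mathbb{Z}$. The main obstacle is precisely to make this lift \emph{equivariant} for that $\mathbb{Z}$-action, so that the equivalence between the repetitive algebras descends along the coverings $\widehat{A_T} \to T(A_T)$ and $\widehat{A_{T'}} \to T(A_{T'})$. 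Once the equivariance --- equivalently, the compatibility with the Nakayama functor above --- is secured, passing to orbit algebras yields $\cD^\flat(\Lambda_T) \simeq \cD^\flat(\Lambda_{T'})$. Everything else (invoking Theorem~\ref{Thm Derived eq surface gentle}, recognising $\Lambda_T$ and $\Lambda_{T'}$ as trivial extensions, and the bimodule bookkeeping) is routine.
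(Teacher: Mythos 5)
Your proposal is correct and follows essentially the same route as the paper: invoke Theorem~\ref{Thm Derived eq surface gentle} for the derived equivalence of $A_T$ and $A_{T'}$, identify $\Lambda_T$ and $\Lambda_{T'}$ as the trivial extensions $T(A_T)$ and $T(A_{T'})$, and conclude by Rickard's theorem that derived equivalence is inherited by trivial extensions. The paper simply cites Rickard \cite{Rickard91} for that last step, whereas you additionally sketch its proof (via standard equivalences compatible with the Nakayama functor, or via repetitive algebras); this elaboration is sound but not needed for the corollary.
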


\begin{proof}
This immediately follows from Theorem~\ref{trivialextension} showing that  $\Lambda_T = T(A_T)$ and $\Lambda_{T'} = T(A_{T'})$ and the  result by Rickard \cite{Rickard91} that if two finite dimensional $K$-algebras $A$ and $B$ are derived equivalent then their trivial extensions $T(A)$ and $T(B)$ are derived equivalent. 
\end{proof}

In particular,  in the setting of Corollary~\ref{Der triv ext}, the quiver $Q_{T'}$ of the Brauer graph algebra $\Lambda_{T'}$ is obtained from the quiver $Q_T$ of the Brauer graph algebra $\Lambda_T$ by successive Fomin-Zelevinsky quiver mutations.

\begin{Example}
{\rm Let $T, T'$ and $T''$ be the following triangulations of the $9$-gon giving rise to the associated gentle algebras $A_T, A_{T'}$ and $A_{T''}$. 

\begin{figure}[H]
	\centering
\subfigure{
\begin{tikzpicture}[auto, thick, scale=.75]
	\draw (0) circle (2cm);
	\node (0) at (0,0) {};
	\node[g] (1) at ($(0)+(180:2)$){};
	\node[g] (2) at ($(0)+(140:2)$){};
	\node[g] (3) at ($(0)+(100:2)$){};
	\node[g] (4) at ($(0)+(60:2)$){};
	\node[cb] (5) at ($(0)+(20:2)$){};
	\node[g] (6) at ($(0)+(340:2)$){};
	\node[g] (7) at ($(0)+(300:2)$){};
	\node[g] (8) at ($(0)+(260:2)$){};
	\node[g] (9) at ($(0)+(220:2)$){};

	\draw[green]  (0)+(60:2cm) arc (60:220:2){};
	\draw[green]  (0)+(340:2cm) arc (340:260:2){};
	\draw[red,->] (2)+(8:0.75cm) arc (8:-88:.75cm){};
	\draw[red,->] (4)+(228:1cm) arc (228:192:1cm){};
	\draw[red,->] (9)+(88:1.5cm) arc (88:52:1.5cm){};
	\draw[red,->] (9)+(48:1.5cm) arc (52:36:1.5cm){};
	\draw[red,->] (9)+(28:1.5cm) arc (28:12:1.5cm){};
	\draw[red,<-] (6)+(192:1.5cm) arc (192:208:1.5cm){};
	
	\draw[green] (9) to node {}(2);
	\draw (9) to node {}(4);
	\draw (9) to node {}(5);
	\draw (9) to node {}(6);
	\draw[green] (8) to node {}(6);
	\draw[green] (2) to node {}(4);
	\node () at ($(0)+(0:2.5)+(0:.5)$){$\simeq$};
	\node () at ($(0)+(-10:2.6)+(0:.5)$){$in$ ${\mathcal D}^b$};
	\node () at ($(0)+(-90:2.6)$){$A_T$};
\end{tikzpicture}}
\hspace{-.5cm}
\subfigure{
\begin{tikzpicture}[auto, thick,scale=.75]
	\draw (0) circle (2cm);
	\node (0) at (0,0) {};
	\node[g] (1) at ($(0)+(180:2)$){};
	\node[g] (2) at ($(0)+(140:2)$){};
	\node[g] (3) at ($(0)+(100:2)$){};
	\node[g] (4) at ($(0)+(60:2)$){};
	\node[cb] (5) at ($(0)+(20:2)$){};
	\node[g] (6) at ($(0)+(340:2)$){};
	\node[g] (7) at ($(0)+(300:2)$){};
	\node[g] (8) at ($(0)+(260:2)$){};
	\node[g] (9) at ($(0)+(220:2)$){};

	\draw[green]  (0)+(60:2cm) arc (60:220:2){};
	\draw[green]  (0)+(340:2cm) arc (340:260:2){};
	\draw[red,->] (2)+(8:1cm) arc (8:-7:1	cm){};
	\draw[red,->] (2)+(-14:1cm) arc (-14:-88:1cm){};
	\draw[red,->] (5)+(207:1cm) arc (207:173:1cm){};
	\draw[red,->] (9)+(88:1cm) arc (88:36:1cm){};
	\draw[red,->] (9)+(28:1cm) arc (28:12:1cm){};
	\draw[red,<-] (6)+(192:1.5cm) arc (192:208:1.5cm){};
	
	\draw[green] (9) to node {}(2);
	\draw (2) to node {}(5);
	\draw (9) to node {}(5);
	\draw (9) to node {}(6);
	\draw[green] (8) to node {}(6);
	\draw[green] (2) to node {}(4);
	\node () at ($(0)+(0:2.5)+(0:.5)$){$\not\simeq$};
	\node () at ($(0)+(-10:2.6)+(0:.5)$){$in$ ${\mathcal D}^b$};
	\node () at ($(0)+(-90:2.6)$){$A_{T'}$};
\end{tikzpicture}}
\hspace{-.5cm}
\subfigure{
\begin{tikzpicture}[auto, thick,scale=.75]
	\draw (0) circle (2cm);
	\node (0) at (0,0) {};
	\node[g] (1) at ($(0)+(180:2)$){};
	\node[g] (2) at ($(0)+(140:2)$){};
	\node[cb] (3) at ($(0)+(100:2)$){};
	\node[cb] (4) at ($(0)+(60:2)$){};
	\node[cb] (5) at ($(0)+(20:2)$){};
	\node[g] (6) at ($(0)+(340:2)$){};
	\node[g] (7) at ($(0)+(300:2)$){};
	\node[g] (8) at ($(0)+(260:2)$){};
	\node[g] (9) at ($(0)+(220:2)$){};

	\draw[green]  (0)+(140:2cm) arc (140:220:2){};
	\draw[green]  (0)+(340:2cm) arc (340:260:2){};
	\draw[red,->] (9)+(88:1.5cm) arc (88:72:1.5cm){};
	\draw[red,->] (9)+(68:1.5cm) arc (68:52:1.5cm){};
	\draw[red,->] (9)+(48:1.5cm) arc (52:36:1.5cm){};
	\draw[red,->] (9)+(28:1.5cm) arc (28:12:1.5cm){};
	\draw[red,<-] (6)+(192:1.5cm) arc (192:208:1.5cm){};
	
	\draw[green] (9) to node {}(2);
	\draw (9) to node {}(3);
	\draw (9) to node {}(4);
	\draw (9) to node {}(5);
	\draw (9) to node {}(6);
	\draw[green] (8) to node {}(6);
	\node () at ($(0)+(-90:2.6)$){$A_{T''}$};
\end{tikzpicture}}
\end{figure}

Then $T$ and $T'$ have the same number of boundary triangles and  $A_T$ and $ A_{T'}$ are derived equivalent. Furthermore, the corresponding Brauer graph algebras  $\Lambda_T = T(A_T)$ and $\Lambda_{T'} = T(A_{T'})$ are derived equivalent. 
But $A_T$ and $A_{T''}$ are not derived equivalent. Note that here we also have that the corresponding Brauer graph algebras  $\Lambda_T$ and $\Lambda_{T^{''}} = T(A_{T^{''}})$ are not derived equivalent (since $T$ is a graph and $T^{''}$ is a tree so that  $\Lambda_{T}$ is of tame representation type whereas  $\Lambda_{T^{''}}$ is of finite representation type). However, this does not always hold, see for example in Example~\ref{Example Cut Algebras} where the gentle algebras $A_{\Delta_1}$ and $A_{\Delta_2}$ are not derived equivalent but have isomorphic trivial extensions, i.e. they give rise to the same Brauer graph algebra. 
}
\end{Example}

 \section{Derived equivalences and mutation of Brauer graph algebras}\label{Section Mutation Brauer graph MS}

Gentle algebras have a remarkable property. Namely their class is closed under derived equivalence, that is any algebra derived equivalent to a gentle algebra is again a gentle algebra. That is any algebra derived equivalent to a gentle algebra is again a gentle algebra \cite{SZ}. 

The same is expected to hold for Brauer graph algebras \cite{AZ}. 

\subsection{Mutation of Brauer graph algebras}

In \cite{Kauer} Kauer defined tilting complexes for Brauer graph algebras and showed that they give rise to derived equivalences in such a way that the endomorphism algebra of the tilting complex is again a Brauer graph algebra where the Brauer graph is obtained from the original one by a simple geometric move on the Brauer graph. We note that the proof at the time of writing was not complete, in that Kauer assumed, but omitted to show, that the tilted algebra has again the structure of a Brauer graph algebra. However, it follows from \cite{Rickard2} that this holds for Brauer tree algebras, see also \cite{B}. In the general case of a Brauer graph algebra that is not necessarily a Brauer tree algebra, this has been shown to hold, for example, in  \cite{Demonet} and also in \cite{AAC}. 

We will describe Kauer's tilting complex construction as well as  the corresponding geometric moves on the Brauer graphs below. We note that Kauer's construction corresponds to a more general construction of two-term tilting complexes  by Okuyama \cite{Okuyama} (based on a manuscript by Rickard \cite{Rickard3}). We will state a reformulation of this result due to \cite{Linckelmann}. For a module $M$, denote by $P(M)$ the projective cover of $M$. The definition of a tilting complex is recalled in the Appendix in Section~\ref{Appendix}.

\begin{theorem}\cite{Okuyama, Rickard3}
Let $A$ be a symmetric $K$-algebra. Let $I, I'$ be disjoint sets of simple $A$-modules such that $I \cup I'$ is a complete set of representatives of the 
isomorphism classes of simple $A$-modules such that for any $S, U \in I$, we have 
$\Ext^1_A(S,U)= \{0\}$.

For any $S \in I$, let $T_S$ be the complex $P(\rad(P(S))) \stackrel{\pi_S}{\rightarrow} P(S)$ with non-zero terms in degrees zero and one, and such that $\Im \pi_S = \rad (P(S))$. For any $S' \in I'$ consider $P(S')$ as a complex concentrated in degree zero.  

Then the complex $T = (\bigoplus_{S \in I} T_S) \oplus (\bigoplus_{S' \in I'} 
P(S'))$ is a tilting complex for $A$. 
\end{theorem}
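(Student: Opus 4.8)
The plan is to verify that $T$ is a tilting complex by checking the two defining conditions of a tilting complex directly: first, that $T$ generates the perfect derived category $K^b(\operatorname{proj} A)$ as a triangulated category (equivalently, that the smallest thick subcategory containing $T$ is everything), and second, that $\Hom_{K^b(\operatorname{proj} A)}(T, T[n]) = 0$ for all $n \neq 0$. Since $A$ is symmetric and the summands $T_S$ and $P(S')$ are built from projectives, working in the homotopy category of complexes of projectives is the natural setting.

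\smallskip

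\noindent\textbf{Generation.} First I would show that each indecomposable projective $P(S)$ for $S \in I$ is generated by the summands of $T$. The complex $T_S = (P(\rad P(S)) \xrightarrow{\pi_S} P(S))$ fits into a triangle with $\Im \pi_S = \rad P(S)$, so that the mapping cone and the projective resolution of $\rad P(S)$ let me reconstruct $P(S)$ from $T_S$ together with the projective covers of the composition factors of $\rad P(S)$. Because $\operatorname{Top}(\rad P(S))$ consists of simple modules, and any such simple module $U$ with $\Ext^1_A(S,U) \neq 0$ would have to lie outside $I$ (by the hypothesis $\Ext^1_A(S,U)=\{0\}$ for $S,U \in I$), the projective covers appearing are exactly the $P(S')$ for $S' \in I'$, which are already direct summands of $T$. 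Reconstructing all $P(S)$ and all $P(S')$ shows $T$ generates, since the $P$'s generate $K^b(\operatorname{proj} A)$.

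\smallskip

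\noindent\textbf{Self-orthogonality.} This is the technical heart. I would split the $\Hom$ computation into the three types of pairs of summands. For $\Hom(P(S'), P(S'')[n])$ with both summands in degree zero, the complexes are stalk complexes, so $\Hom$ vanishes for $n \neq 0$ automatically. For $\Hom(T_S, P(S')[n])$ and its transpose, a short calculation with the two-term complex $T_S$ shows that a nonzero map in degree $n\neq 0$ would force a nonzero homotopy class, which one rules out using that $\Im\pi_S = \rad P(S)$ and that $P(S')$ sits in degree zero. The genuinely delicate case is $\Hom(T_S, T_U[n])$ for $S, U \in I$: here one must show that every chain map $T_S \to T_U[n]$ for $n\neq 0$ is null-homotopic. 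Unwinding the definition, a degree-one component of such a map yields a map $P(S) \to \rad P(U) \hookrightarrow P(U)$ whose image lies in $\rad P(U)$; the obstruction to homotoping it away is precisely measured by $\Ext^1_A(S,U)$, which vanishes by hypothesis. \textbf{The hard part will be} bookkeeping the homotopies carefully enough to see that the $\Ext^1$-vanishing condition is exactly what kills the degree-$\pm 1$ maps, while the stalk structure kills all $|n|\geq 2$; this is where Okuyama's original argument does the real work, and I expect to invoke the symmetry of $A$ (so that $D(A)\cong A$ as bimodules) to convert between left and right module computations and to ensure the construction is self-dual.

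\smallskip

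\noindent Once both conditions are established, $T$ is a tilting complex by definition, and I would conclude by citing that the endomorphism algebra is the one producing the derived equivalence, though that final identification is not part of the present statement.
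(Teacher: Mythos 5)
The paper does not prove this statement: it is quoted from Okuyama and Rickard (in Linckelmann's reformulation) with no argument given, so there is no in-paper proof to compare against. Judged on its own, your outline is the standard direct verification and its skeleton is sound: generation follows because the $\Ext^1$-hypothesis forces $\operatorname{Top}(\rad P(S))$ to have all composition factors in $I'$, so $P(\rad P(S))$ is a direct sum of the $P(S')$, $S'\in I'$, already present as summands of $T$, and the defining triangle of $T_S$ then puts $P(S)$ in the thick subcategory generated by $T$. Your identification of the degree-$\pm1$ obstruction for $\Hom(T_S,T_U[\pm 1])$ with $\Ext^1$ between members of $I$ is also essentially correct: in one degree the obstruction is $\Hom(P(\rad P(S)),\Top P(U))\cong\Ext^1_A(S,U)$, and in the other it is $\Hom(S,\soc P(\rad P(U)))$, which vanishes by the same hypothesis once you know $\soc P(V)\cong V$ for $A$ symmetric.

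Two points in your sketch deserve a warning. First, the vanishing of $\Hom(T_S,P(S')[n])$ for $n\neq 0$ is \emph{not} convention-free: writing $T_S$ as a shift of $\operatorname{cone}(\pi_S)$ and running the long exact sequence, one of the two possible placements of $P(\rad P(S))$ and $P(S)$ in degrees $0$ and $1$ gives $\Hom(T_S,P(S')[1])\cong\operatorname{coker}\bigl(\Hom(P(S),P(S'))\to\Hom(P(\rad P(S)),P(S'))\bigr)$, which is genuinely nonzero whenever $P(S')$ is a summand of $P(\rad P(S))$; only the other orientation makes all four cross-terms vanish. So the ``short calculation'' you defer is where the statement could silently become false if the degrees are set up wrongly, and it also needs the disjointness $I\cap I'=\emptyset$ (to rule out split surjections $P(S')\to P(S)$) and self-injectivity (to compute $\Hom(S,P(S'))=\Hom(S,\soc P(S'))=0$). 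Second, the role of symmetry is precisely to give $\soc P(V)\cong\Top P(V)\cong V$; it is not used to pass between left and right modules, and describing it that way obscures the one computation where it is indispensable. With those corrections made explicit, your plan completes to a full proof.
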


\subsection{Kauer moves on the Brauer graph}

An example of an Okuyama-Rickard tilting complex is given by the following complex constructed by Kauer in \cite{Kauer}.
Let $A$ be a Brauer graph algebra with Brauer graph $G$. For $I = \{S_0\}$ where $S_0$ is a simple $A$-module  such that the corresponding edge in the Brauer graph is not a loop where the corresponding half edges are such that one is a direct successor of the other, set 
$T = T_{S_0} \oplus \bigoplus_{ S \neq S_0, S \mbox{ simple }}  
P(S)$. Then in \cite{Kauer} this is shown to be a tilting complex for $A$ by verifying its properties one by one. However, that $T$ is a tilting complex also follows directly from the fact that $T$ is an Okuyama tilting complex for $A$. Kauer further shows that  the Brauer graph  algebra $B = \End_{\cK^\flat(P_A)} (T)$ has 
 Brauer graph $G' =(G \setminus {s}) \cup {s'}$ where $s$ is the edge in $G$ corresponding to $S_0$ and where $s'$ is obtained  by one of the following local moves on $G$:

\begin{figure}[H] 
	\centering
\subfigure{	
	\begin{tikzpicture}[auto, thick,scale=.75]
	\node (0) at (-1,1){$(i)$};
	\node[cb] (1) at (0,0){};
	\node[cb] (2) at ($(1)+(45:.7)$){};
	\node[cb] (3) at ($(1)+(135:.7)$){};
	\node () at ($(1)+(87:.7)$){$\cdots$};
	\node[cb] (4) at ($(1)+(0:2)$){};
	\node[cb] (5) at ($(4)+(135:.7)$){};
	\node[cb] (6) at ($(4)+(45:.7)$){};
	\node () at ($(4)+(87:.7)$){$\cdots$};
	\node[cb] (7) at ($(4)+(-90:2)$){};
	\node[cb] (8) at ($(7)+(-45:.7)$){};
	\node[cb] (9) at ($(7)+(-135:.7)$){};
	\node () at ($(7)+(-87:.5)$){$\cdots$};
	\node[cb] (10) at ($(7)+(0:2)$){};
	\node[cb] (11) at ($(10)+(-45:.7)$){};
	\node[cb] (12) at ($(10)+(-135:.7)$){};
	\node () at ($(10)+(-87:.7)$){$\cdots$};
	\node () at ($(10)+(70:1.4)+(0:1)$){$\simeq$};
	\node () at ($(10)+(58:1.2)+(0:1)$){$in$ $D^b$};)
	
	\draw (3)--(1)--(2);
	\draw (1)--(4);
	\draw (5)--(4)--(6);
	\draw (4) to node {$s$}(7);
	\draw (8)--(7)--(9);
	\draw (7)--(10);
	\draw (11)--(10)--(12);
	\end{tikzpicture}}
\hspace{-.5cm}
\subfigure{	
	\begin{tikzpicture}[auto, thick, scale=.75]
	\node[cb] (1) at (0,0){};
	\node[cb] (2) at ($(1)+(45:.7)$){};
	\node[cb] (3) at ($(1)+(135:.7)$){};
	\node () at ($(1)+(87:.7)$){$\cdots$};
	\node[cb] (4) at ($(1)+(0:2)$){};
	\node[cb] (5) at ($(4)+(135:.7)$){};
	\node[cb] (6) at ($(4)+(45:.7)$){};
	\node () at ($(4)+(87:.7)$){$\cdots$};
	\node[cb] (7) at ($(4)+(-90:2)$){};
	\node[cb] (8) at ($(7)+(-45:.7)$){};
	\node[cb] (9) at ($(7)+(-135:.7)$){};
	\node () at ($(7)+(-87:.7)$){$\cdots$};
	\node[cb] (10) at ($(7)+(0:2)$){};
	\node[cb] (11) at ($(10)+(-45:.7)$){};
	\node[cb] (12) at ($(10)+(-135:.7)$){};
	\node () at ($(10)+(-87:.7)$){$\cdots$};
	
	\draw (3)--(1)--(2);
	\draw (1)--(4);
	\draw (5)--(4)--(6);
	\draw (1) to [out=-45, in=135]node {$s'$}(10);
	\draw (8)--(7)--(9);
	\draw (7)--(10);
	\draw (11)--(10)--(12);
	\end{tikzpicture}}	
\end{figure}

\begin{figure}[H] 
	\centering
\subfigure{	
	\begin{tikzpicture}[auto, thick]
	\node (0) at (-1,.5){$(ii)$};
	\node[cb] (1) at (0,0){};
	\node[cb] (2) at ($(1)+(-90:2)$){};
	\node[cb] (3) at ($(2)+(-45:.7)$){};
	\node[cb] (4) at ($(2)+(-135:.7)$){};
	\node () at ($(2)+(-87:.7)$){$\cdots$};
	\node[cb] (5) at ($(2)+(0:2)$){};
	\node[cb] (6) at ($(5)+(-45:.7)$){};
	\node[cb] (7) at ($(5)+(-135:.7)$){};
	\node () at ($(5)+(-87:.7)$){$\cdots$};
	
	\draw (1) to node {$s$}(2);
	\draw (3)--(2)--(4);
	\draw (2)--(5);
	\draw (7)--(5)--(6);
	
	\node () at ($(5)+(70:1.4)+(0:1)$){$\simeq$};
	\node () at ($(5)+(60:1.2)+(0:1)$){$in$ $D^b$};
	\end{tikzpicture}}
\hspace{-.3cm}
\subfigure{	
	\begin{tikzpicture}[auto, thick]
		\node[] (1) at (0,0){};
		\node[cb] (2) at ($(1)+(-90:2)$){};
		\node[cb] (3) at ($(2)+(-45:.7)$){};
		\node[cb] (4) at ($(2)+(-135:.7)$){};
		\node () at ($(2)+(-87:.7)$){$\cdots$};
		\node[cb] (5) at ($(2)+(0:2)$){};
		\node[cb] (6) at ($(5)+(-45:.7)$){};
		\node[cb] (7) at ($(5)+(-135:.7)$){};
		\node () at ($(5)+(-87:.7)$){$\cdots$};
		\node[cb] (8) at ($(5)+(90:2)$){};
		
		\draw (5) to node {$s'$}(8);
		\draw (3)--(2)--(4);
		\draw (2)--(5);
		\draw (7)--(5)--(6);	
	\end{tikzpicture}}	
\end{figure}

\begin{figure}[H] 
	\centering
\subfigure{	
	\begin{tikzpicture}[auto, thick]
	\node (1) at (-1,1.5){$(iii)$};
	\node[cb] (2) at (0,0){};
	\node[cb] (3) at ($(2)+(-45:.7)$){};
	\node[cb] (4) at ($(2)+(-135:.7)$){};
	\node () at ($(2)+(-87:.7)$){$\cdots$};
	\node[cb] (5) at ($(2)+(0:2)$){};
	\node[cb] (6) at ($(5)+(-45:.7)$){};
	\node[cb] (7) at ($(5)+(-135:.7)$){};
	\node () at ($(5)+(-87:.7)$){$\cdots$};
	
	\draw (2) to [out=140, in=40, looseness=50] node {$s$}(2);
	\draw (3)--(2)--(4);
	\draw (2)--(5);
	\draw (7)--(5)--(6);
	
	\node () at ($(5)+(10:1)+(0:.5)$){$\simeq$};
	\node () at ($(5)+(-10:1.1)+(0:.5)$){$in$ $D^b$};
	\end{tikzpicture}}
\hspace{0cm}
\subfigure{	
	\begin{tikzpicture}[auto, thick]
		
		\node[cb] (2) at (0,0){};
		\node[cb] (3) at ($(2)+(-45:.7)$){};
		\node[cb] (4) at ($(2)+(-135:.7)$){};
		\node () at ($(2)+(-87:.7)$){$\cdots$};
		\node[cb] (5) at ($(2)+(0:2)$){};
		\node[cb] (6) at ($(5)+(-45:.7)$){};
		\node[cb] (7) at ($(5)+(-135:.7)$){};
		\node () at ($(5)+(-87:.7)$){$\cdots$};
				
		\draw (5) to [out=140, in=40, looseness=50] node {$s'$}(5);
		\draw (3)--(2)--(4);
		\draw (2)--(5);
		\draw (7)--(5)--(6);	
	\end{tikzpicture}}	
\end{figure}

\begin{definition} {\rm We call a local move as in (i)-(iii) above a {\it Kauer move at $s$} and we adopt the following notation $\mu^+_s(G) = G' = (G \setminus {s}) \cup {s'}$.} \end{definition}

\begin{remark}
{\rm 1) In \cite{Aihara} Aihara refers to the above moves (i)-(iii) as flips of Brauer graphs and he describes them in terms of quiver combinatorics.

2) In \cite{AiharaIyama} Aihara and Iyama introduce the notion of  a (silting) mutation of an algebra in terms of left and right approximations and in \cite{Dugas} the Okuyama-Rickard tilting complexes giving rise to the Kauer moves are expressed in the more general case of weakly symmetric special biserial algebras in terms of silting mutations, that is in terms of left and right approximations. 
}
\end{remark}

\begin{Example}
{\rm  Let $A$ be the Brauer graph algebra with Brauer graph as given by the left hand graph in Figure~\ref{Example Mutation Square}. The Kauer move at $s = 0$ gives rise to the Brauer graph on  the right hand graph in Figure~\ref{Example Mutation Square}. 
 
  \begin{figure}[H] 
	\centering
\subfigure{	
	\begin{tikzpicture}[auto, thick]
	\node[cb] (1) at (0,0){};
	\node[cb] (2) at ($(1)+(0:2)$){};
	\node[cb] (3) at ($(2)+(-90:2)$){};
	\node[cb] (4) at ($(1)+(-90:2)$){};
	\node[cb] (5) at ($(1)+(90:2)$){};
	
	\draw (1) to node {$1$}(2);
	\draw (2) to node {$2$}(3);
	\draw (3) to node {$3$}(4);
	\draw (1) to [left] node {$4$}(4);
	\draw (1) to [left, pos=.6] node {$0$}(3);
	\draw (1) to node {$5$}(5);
	
	\node () at ($(3)+(55:1.5)+(0:1)$){$\simeq$};
	\node () at ($(3)+(40:1.2)+(0:1)$){$in$ $D^b$};
	\end{tikzpicture}}
\hspace{0cm}
\subfigure{	
	\begin{tikzpicture}[auto, thick]
	\node[cb] (1) at (0,0){};
	\node[cb] (2) at ($(1)+(0:2)$){};
	\node[cb] (3) at ($(2)+(-90:2)$){};
	\node[cb] (4) at ($(1)+(-90:2)$){};
	\node[cb] (5) at ($(1)+(90:2)$){};
	
	\draw (1) to node {$1$}(2);
	\draw (2) to node {$2$}(3);
	\draw (3) to node {$3$}(4);
	\draw (1) to [left] node {$4$}(4);
	\draw (2) to [right, pos=.6] node {$0'$}(4);
	\draw (1) to node {$5$}(5);
	\end{tikzpicture}}	
	
	\caption{Example of a Kauer move of type (i). The Brauer graph algebras associated to the two graphs are derived equivalent. }\label{Example Mutation Square}
\end{figure}
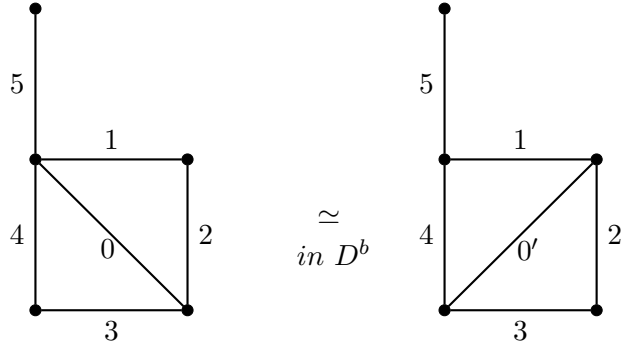

The Okuyama-Rickard complex $T$ giving rise to the equivalence of 
$\cD^\flat(A)$ and $\cD^\flat(B)$, where $B = \End_{\cK^\flat(A)}(T)$ is given by 
$T = T_0 \oplus P_1 \oplus P_2 \oplus P_3 \oplus P_4 \oplus P_5$ where, for $i = 1,2,3,4, 5$, $P_i$ is the stalk complex concentrated in degree zero of the indecomposable projective associated to the edge $i$  and  $T_0 = P_4 \oplus P_2 \stackrel{\pi}{\longrightarrow} P_0$, that is, $T_0$ is given by the following complex

\begin{figure}[H]
	\centering
\begin{tikzpicture}[auto, thick]
	\node (0) at (0,0) {$T_0=$};
	\node (1) at ($(0)+(0:1)$) {$\matrixTwo{4}{\dimThree510}{3}{4}$};
	\node (2) at ($(1)+(0:.7)$) {$\oplus$};
	\node (3) at ($(2)+(0:.7)$) {$\matrixTwo{2}{\dimTwo30}{1}{2}$};
	\node (4) at ($(3)+(0:2)$)  {$\matrixTwo{0}{\dimThree451}{\dimTwo23}{0}$};
	\draw[->] (3) to node {{\scriptsize $\pi_0$}}(4);
	
\end{tikzpicture}
\end{figure}

concentrated in degrees zero and one and where $\Im(\pi_0) = \rad(P_0) = $ 
\begin{tikzpicture}[auto, thick]
	\node (0) at (0,0) {$\matrixTwo{}{\dimThree451}{\dimTwo23}{0}$};
\end{tikzpicture}.

}
\end{Example}

\subsection{Brauer graph algebras associated to triangulations of marked oriented surfaces}

Let $S$ be an oriented surface with boundary and let $M$ be a set of marked points on $S$. Note here we don't necessarily require that $M$ lies in the boundary of $S$, however, we do require that on each boundary component of $S$ there is at least one marked point of $M$.  
We first recall that any two triangulations of $(S,M)$ are flip-connected, that is, one can be obtained from the other by a series of flips of diagonals, see for example \cite{Bu,FST}.

\begin{theorem}\label{flipconnected}
Let $T$ and $T'$ be triangulations of a marked oriented surface $(S,M)$. Then the triangulations $T$ and $T'$ are flip connected. 
\end{theorem}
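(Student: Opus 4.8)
The plan is to prove flip-connectivity by a double induction: an outer induction on the topological complexity of $(S,M)$, which reduces the problem to the case where $T$ and $T'$ share a common arc, and an inner induction on geometric intersection numbers, which is what actually produces such a common arc. First I would record the standard fact that the number of arcs in any ideal triangulation of $(S,M)$ is a topological invariant $n = n(S,M)$, computable from the genus $g$, the number of boundary components $b$, the number of punctures and the number of boundary marked points. This follows from an Euler characteristic count: the number of triangles $t$ is determined by $(S,M)$, and $3t = 2n_{\mathrm{int}} + n_{\partial}$ fixes the number of internal arcs. In particular $T$ and $T'$ have the same number of arcs, so the problem is numerically consistent and we have a quantity to induct on.

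The heart of the argument is the following realization lemma: given a triangulation $T$ and any single arc $\gamma$, there is a sequence of flips carrying $T$ to a triangulation that contains $\gamma$. I would prove this by induction on the geometric intersection number $|\gamma \cap T|$, that is the minimal number of transverse crossings of $\gamma$ with the arcs of $T$ after isotoping $\gamma$ into minimal position. If this number is $0$ then $\gamma$ crosses no arc of $T$; since $T$ is a maximal collection of pairwise non-crossing arcs, $\gamma$ must already belong to $T$. If it is positive, I would locate an edge $e$ of $T$ crossed by $\gamma$ whose two adjacent triangles form a quadrilateral $Q$ through which $\gamma$ passes in such a way that flipping $e$ to the opposite diagonal $e'$ strictly decreases the total number of crossings of $\gamma$ with the triangulation. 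Applying the inductive hypothesis to the resulting triangulation then completes the lemma.

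With the realization lemma in hand, I would finish by inducting on the complexity of $(S,M)$. Pick any arc $\gamma \in T'$. By the lemma there is a sequence of flips taking $T$ to a triangulation $T_1$ containing $\gamma$, and $T'$ also contains $\gamma$, so $T_1$ and $T'$ share the arc $\gamma$. Cutting $(S,M)$ along $\gamma$ produces a marked surface, possibly disconnected, of strictly smaller complexity, on which $T_1 \setminus \gamma$ and $T' \setminus \gamma$ are triangulations; by the inductive hypothesis these are flip-connected, and every flip of the cut surface that does not involve $\gamma$ lifts to a flip of $(S,M)$ fixing $\gamma$. Hence $T_1$ and $T'$ are flip-connected, and therefore so are $T$ and $T'$. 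The base case is a surface carrying essentially a unique triangulation, for instance a triangle, where there are no internal arcs at all.

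The step I expect to be the main obstacle is the crossing-reduction inside the realization lemma. The naive assertion that flipping the first crossed edge always reduces the intersection number requires a careful enumeration of how $\gamma$ can traverse the quadrilateral $Q$ around $e$, and, more seriously, it interacts badly with the degeneracies allowed by ideal triangulations of punctured surfaces, namely self-folded triangles whose enclosed arc admits no flip, and arcs incident to a puncture of valency one. Handling these cases cleanly is exactly the technical content developed in \cite{FST} and is the reason tagged arcs are introduced there. In the unpunctured setting $M \subset \partial S$ relevant to the gentle-algebra discussion of Section~\ref{Section gentle}, no self-folded triangles arise and the reduction goes through directly.
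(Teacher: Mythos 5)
The paper does not actually prove this statement: it is recalled from the literature, with the reader referred to \cite{Bu} and \cite{FST}, so there is no in-house argument to compare yours against. Your sketch reconstructs the standard proof from exactly those sources: an Euler-characteristic count fixing the number of arcs, a realization lemma (any prescribed arc can be flipped into a given triangulation, by induction on geometric intersection number), and then cutting along a common arc and inducting on complexity. The global structure is sound: the base case of the realization lemma correctly uses maximality of $T$, the cut-and-induct step is valid because arcs of the cut surface correspond to arcs of $(S,M)$ compatible with $\gamma$ and flips commute with cutting, and induction on the number of internal arcs terminates.

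The genuine gap is the one you yourself flag: the crossing-reduction step. The assertion that some edge $e$ of $T$ crossed by $\gamma$ admits a flip strictly decreasing $|\gamma\cap T|$ is the entire content of the realization lemma, and the naive version (flip the first crossed edge) is false in general; establishing it requires the case analysis of how $\gamma$ traverses the quadrilateral, and in the punctured case one must additionally rule out being stuck at the non-flippable interior arc of a self-folded triangle and handle once-punctured degenerate pieces. As written, you defer precisely this step to \cite{FST}, so the proposal is an accurate roadmap rather than a complete proof. One small correction: tagged arcs in \cite{FST} are introduced to make every arc of every triangulation flippable (so the exchange graph is regular, matching cluster mutation), not to prove connectivity of the ordinary flip graph, which holds for ideal triangulations and goes back to Harer, Mosher and Hatcher. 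Your closing remark is right that in the setting $M\subset\partial S$ of Section~\ref{Section gentle} none of these degeneracies occur; note, however, that the theorem as used in Section~\ref{Section Mutation Brauer graph MS} explicitly allows punctures, so the punctured case cannot be discarded.
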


Recall that every triangulation of $(S,M)$ is a ribbon graph where the cyclic ordering is induced by the orientation of $S$. 
In \cite{MS} we consider  triangulations $T$ of $(S,M)$  (including the boundary arcs) as  Brauer graphs.  Note that the Brauer graph algebras we consider in \cite{MS} are different from the  Brauer graph algebras associated to triangulations in Section~\ref{Section gentle}. Namely, in Section~\ref{Section gentle}, the Brauer graphs consist of the internal arcs of the triangulation whereas in \cite{MS} the boundary edges of the triangulation are part of the Brauer graphs, see Example~\ref{Frozen}. 
 It is observed in \cite{MS} that when regarding $T$ (including the boundary edges)
as a Brauer graph, the flip of $T$ at an internal edge $s$ coincides with applying the Kauer move to $T$ at $s$. Combining this with Theorem~\ref{flipconnected} we get

\begin{theorem}\cite{MS}\label{Thm Flip Brauer Graph}
Let $T$ and $T'$ be two triangulations of a marked oriented surface $(S,M)$. Then the associated Brauer graph algebras $\Lambda_T$ and $\Lambda_{T'}$ are derived equivalent, that is the bounded derived categories of finitely generated modules $\cD^\flat (\Lambda_T)$ and $\cD^\flat (\Lambda_{T'})$ are  equivalent as triangulated categories. 
\end{theorem}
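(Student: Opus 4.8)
\emph{Strategy.} The plan is to reduce the statement to the case of a single flip and then compose derived equivalences. By Theorem~\ref{flipconnected} the triangulations $T$ and $T'$ are flip connected, so I can choose a finite sequence
$$ T = T_0, \; T_1, \; \ldots, \; T_r = T' $$
of triangulations of $(S,M)$ in which each $T_{k+1}$ is obtained from $T_k$ by flipping a single internal edge $s_k$. Derived equivalence of finite dimensional algebras is an equivalence relation: a triangle equivalence of bounded derived categories can be inverted and two such can be composed. Hence it suffices to prove, for each $k$, that $\cD^\flat(\Lambda_{T_k}) \simeq \cD^\flat(\Lambda_{T_{k+1}})$; composing these $r$ equivalences then yields $\cD^\flat(\Lambda_T) \simeq \cD^\flat(\Lambda_{T'})$.

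\emph{Identifying the two Brauer graphs.} Fix $k$ and write $s = s_k$, regarding $T_k$ and $T_{k+1}$ (including their boundary arcs) as Brauer graphs in the sense of \cite{MS}. The first step is the combinatorial observation from \cite{MS} that flipping the internal edge $s$ is, at the level of Brauer graphs, exactly the Kauer move at $s$: that is, $\Lambda_{T_{k+1}}$ is the Brauer graph algebra with Brauer graph $\mu^+_s(T_k) = (T_k \setminus s) \cup s'$, where $s'$ is produced by one of the local moves (i)--(iii). Establishing this amounts to checking that the arc $s$, viewed as an edge of the Brauer graph, satisfies the hypothesis required for Kauer's construction (it is not a loop of the excluded type, so that $\Ext^1(S_0,S_0)=0$ for the corresponding simple $S_0$) and that the three move types (i)--(iii) exhaust the ways an internal arc can be situated in a triangle of $T_k$.

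\emph{The single flip is a derived equivalence.} Granting this identification, I would realize the equivalence for one flip by Kauer's tilting complex, which is an instance of the Okuyama--Rickard construction \cite{Okuyama, Rickard3}. Let $S_0$ be the simple $\Lambda_{T_k}$-module corresponding to the edge $s$, take $I = \{S_0\}$ and let $I'$ consist of the remaining simples, and form
$$ X = T_{S_0} \oplus \bigoplus_{S \neq S_0} P(S). $$
Since $\Lambda_{T_k}$ is symmetric, the Okuyama--Rickard theorem shows that $X$ is a tilting complex for $\Lambda_{T_k}$, so by Rickard's theorem on tilting complexes (recalled in Section~\ref{Appendix}) the algebra $B = \End_{\cK^\flat(P_{\Lambda_{T_k}})}(X)$ satisfies $\cD^\flat(\Lambda_{T_k}) \simeq \cD^\flat(B)$. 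Kauer's computation \cite{Kauer} identifies $B$ as the Brauer graph algebra of $\mu^+_s(T_k)$, which by the previous step is precisely $\Lambda_{T_{k+1}}$; hence $\cD^\flat(\Lambda_{T_k}) \simeq \cD^\flat(\Lambda_{T_{k+1}})$, completing the single-flip case.

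\emph{The main obstacle.} The formal reduction and the passage from a tilting complex to a derived equivalence are routine; the genuine content — and the step I expect to be hardest — is the geometric matching in the second paragraph. One must verify that \emph{every} internal arc $s$ of a triangulation of $(S,M)$ yields, after flipping, exactly the Brauer graph obtained by a Kauer move, uniformly over all local configurations. Because $M$ is not required to lie in $\partial S$, punctures are allowed, so the delicate cases are those in which $s$ is a loop at a vertex of the Brauer graph or lies in a self-folded triangle; handling these cleanly is most naturally done in the ribbon-graph and half-edge language of Section~\ref{Section Ribbon Graphs}, and ensuring that Kauer's hypothesis on $S_0$ holds in each of them is where the real work of \cite{MS} lies.
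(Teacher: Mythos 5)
Your proposal is correct and follows essentially the same route as the paper: the proof given in Section~\ref{Section Mutation Brauer graph MS} is precisely the observation from \cite{MS} that the flip of an internal arc coincides with the Kauer move on the Brauer graph (whose derived equivalence comes from the Okuyama--Rickard tilting complex), combined with flip-connectedness of triangulations via Theorem~\ref{flipconnected}. You have also correctly located where the real work lies, namely the case-by-case geometric matching of flips with the local moves (i)--(iii), which the survey delegates to \cite{MS}.
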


Furthermore, we have

\begin{proposition}\cite{MS}\label{KauerFlipTriangulation}
Let $Q_T$ be the quiver of the Brauer graph algebra associated to a triangulation $T$  of $(S,M)$ and let $T' = (T \setminus s) \cup s'$. Then $Q_T'$ is obtained from $Q_T$ by Fomin-Zelevinsky quiver mutation, that is $Q_{T'} = \mu_s(Q_T)$.
\end{proposition}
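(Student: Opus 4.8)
The plan is to reduce the statement to a purely local computation inside the quadrilateral determined by $s$ and then to match, arrow by arrow, the effect of the flip against the three steps of Fomin--Zelevinsky mutation in Definition~\ref{Def Quiver Mutation}. The key observation is that both operations are supported on the same small piece of the quiver. Since an internal edge $s$ is the common side of exactly two triangles, removing $s$ and inserting $s'$ alters only the two triangles of this quadrilateral; and $\mu_s$ alters only the arrows incident to $s$ together with arrows between two vertices that are joined to $s$ by a path of length two. Both of these are confined to $s$ and to the edges $a,b,c,d$ bounding the quadrilateral, so away from the quadrilateral $Q_T$ and $Q_{T'}$ agree and $\mu_s$ does nothing.

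First I would record the local shape of $Q_T$. Writing the quadrilateral with corners $1,2,3,4$ cyclically and diagonal $s=13$, each of the two triangles $123$ and $134$ contributes to $Q_T$ the three arrows coming from the clockwise successor relations of the ribbon (Brauer graph) structure at its three corners; a short check of the cyclic orderings shows that these three arrows close up into an oriented $3$-cycle around each triangle. Thus, with $a=12$, $b=23$, $c=34$, $d=41$, the local quiver $Q_T$ consists of the two $3$-cycles $a\to b\to s\to a$ and $s\to c\to d\to s$ glued along the vertex $s$; in particular the arrows at $s$ are $b\to s$, $d\to s$, $s\to a$, $s\to c$.

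Next I would apply $\mu_s$ to this local quiver. Step (1) adds the arrows $b\to a$, $b\to c$, $d\to a$, $d\to c$ coming from the four paths through $s$; step (2) reverses the four arrows at $s$ to $a\to s$, $c\to s$, $s\to b$, $s\to d$; step (3) cancels the two $2$-cycles $a\to b\to a$ and $c\to d\to c$ formed against the pre-existing arrows $a\to b$ and $c\to d$, leaving the surviving new arrows $b\to c$ and $d\to a$. The outcome is two $3$-cycles $a\to s\to d\to a$ and $b\to c\to s\to b$ meeting at $s$. On the other hand, performing the flip replaces $s$ by $s'=24$ and splits the quadrilateral into the triangles $124$ and $234$; computing the corresponding successor $3$-cycles exactly as before yields $a\to s'\to d\to a$ and $b\to c\to s'\to b$. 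Identifying the flipped edge $s'$ with the mutated vertex $s$, these two local quivers coincide, so $Q_{T'}=\mu_s(Q_T)$.

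The main obstacle is the treatment of the degenerate configurations in which the quadrilateral fails to be embedded: when two of the boundary edges $a,b,c,d$ are identified, when a corner has valency two, or when $s$ borders a self-folded triangle (a possibility here since $M$ may contain interior punctures). In these cases some of the $3$-cycles degenerate into shorter cycles or loops, and the mutation produces additional $2$-cycles whose cancellation must be shown to correspond precisely to the geometric identifications present in $T'$. I would dispose of each such coincidence pattern separately, checking in every case that the arrows surviving step (3) of the mutation are exactly those read off from the new triangles after the flip. This bookkeeping in the non-generic cases, rather than the generic computation above, is where the real work lies.
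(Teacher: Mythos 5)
The survey itself gives no proof of this proposition --- it is quoted from \cite{MS} --- so there is nothing in the paper to compare against line by line; I can only assess your argument on its own terms. Your generic computation is correct and is the natural one: the arrows of $Q_T$ decompose into one oriented $3$-cycle per triangle (plus boundary arrows at boundary marked points, which are untouched here), both the flip and $\mu_s$ are supported on the quadrilateral with diagonal $s$, and your arrow-by-arrow matching of the three mutation steps against the re-triangulated quadrilateral is accurate. One small point worth making explicit: the cancellation in step (3) uses that the arrows $a\to b$ and $c\to d$ sit at the corners $2$ and $4$ of the quadrilateral and are therefore \emph{deleted} by the flip (the new arc $s'$ is inserted between $a$ and $b$ in the cyclic order at $2$, and between $c$ and $d$ at $4$), so the removal of the $2$-cycles really does mirror what happens geometrically rather than being a formal coincidence.

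The gap is in the degenerate cases, and I think you underestimate it by calling it bookkeeping. The remark immediately following the proposition stresses that \emph{no} restrictions are placed on $T$: self-folded triangles and punctures with exactly two incident arcs are allowed. In precisely those configurations the two sides of the asserted equality are themselves in question, not merely harder to compute. If $T$ contains a self-folded triangle with loop $\ell$ and radius $r$, the cyclic order at the base point reads $\ldots,\ell,r,\ell,\ldots$, so $Q_T$ contains a genuine $2$-cycle between $\ell$ and $r$; likewise a punctured digon produces a $2$-cycle between its two sides. But Definition~\ref{Def Quiver Mutation} only defines $\mu_s$ for quivers without loops or $2$-cycles, so before any case-checking you must say what $\mu_s(Q_T)$ even means there (e.g.\ argue that mutation at a vertex not lying on a pre-existing $2$-cycle is still well defined and that $s$ can be taken to be such a vertex, or adopt the convention used in \cite{MS}). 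Similarly, the flip of the radius of a self-folded triangle is not defined for ideal triangulations in the sense of \cite{FST}, so the meaning of $T'=(T\setminus s)\cup s'$ for such $s$ needs to be pinned down before the identity can be verified. Your plan of ``disposing of each coincidence pattern separately'' is the right instinct, but as written the proof covers only the embedded-quadrilateral case, and the statement as the survey intends it is strictly stronger.
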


Note that in Proposition~\ref{KauerFlipTriangulation} we do not have any restrictions on the triangulation $T$. In particular, it can contain self-folded triangles or punctures with exactly two incident arcs.  


\begin{Example}\label{Frozen}
{\rm 
The Brauer graph algebras associated to the following two Brauer graphs given by the triangulations $T$ and $T'$ of a hexagon (together with the boundary arcs) are derived equivalent.  Note that here, as opposed to the examples in Section~\ref{gentle algebra mutation},  $T$ and $T'$ do not have the same number of boundary triangles, yet the corresponding Brauer graph algebras are  derived equivalent. 
\begin{figure}[H]
\scalebox{.1}
\centering
	\subfigure{
	\begin{tikzpicture}[auto, thick, rotate=30]
	\node (0) at (0,0) {};
	\node[cb] (1) at ($(0)+(30:2)$) {};
	\node[cb] (2) at ($(0)+(90:2)$) {};
	\node[cb] (3) at ($(0)+(150:2)$) {};
	\node[cb] (4) at ($(0)+(210:2)$) {};
	\node[cb] (5) at ($(0)+(270:2)$) {};
	\node () at ($(0)+(240:3)$) {$T$};
	\node[cb] (6) at ($(0)+(330:2)$) {};
			
	\draw (1)--(2)--(3)--(4)--(5)--(6)--(1);
	
	\draw[->, red] (1)+(147:.7) arc (147:-87:.7cm);
	\draw[->, red] (1)+(-93:.7) arc (-93:-207:.7cm);
	\draw[->, red] (2)+(207:.7) arc (207:-27:.7cm);
	\draw[->, red] (2)+(-33:.7) arc (-33:-57:.7cm);
	\draw[->, red] (2)+(-63:.7) arc (-63:-117:.7cm);
	\draw[->, red] (2)+(-123:.7) arc (-123:-147:.7cm);
	\draw[->, red] (3)+(267:.7) arc (267:33:.7cm);
	\draw[->, red] (3)+(27:.7) arc (27:-87:.7cm);
	\draw[->, red] (4)+(327:.7) arc (327:93:.7cm);
	\draw[->, red] (4)+(87:.7) arc (87:63:.7cm);
	\draw[->, red] (4)+(57:.7) arc (57:3:.7cm);
	\draw[->, red] (4)+(-3:.7) arc (-3:-27:.7cm);
	\draw[->, red] (5)+(27:.7) arc (27:-207:.7cm);
	\draw[->, red] (5)+(147:.7) arc (150:33:.7cm);
	\draw[->, red] (6)+(87:.7) arc (87:-147:.7cm);
	\draw[->, red] (6)+(117:.7) arc (117:93:.7cm);
	\draw[->, red] (6)+(177:.7) arc (177:123:.7cm);
	\draw[->, red] (6)+(207:.7) arc (207:183:.7cm);
	
	\draw (4) to node {}(2);
	\draw (4) to node {}(6);
	\draw (2) to node {}(6);

\end{tikzpicture}}
\hspace{-.3cm}
\subfigure{	
	\begin{tikzpicture}
	{\footnotesize 
	\node (mu) at (0,2) {$\mu^{-}$};
	\node () at ($(mu)+(-90:.4)$) {$\simeq$};
	\node () at ($(mu)+(-90:.8)$) {$in$ $D^b$};
	}
	\node () at ($(mu)+(-90:3.5)$) {};
	\end{tikzpicture}}
\hspace{-.3cm}	
\subfigure{
\begin{tikzpicture}[auto, thick, rotate=30]
	\node (0) at (0,0) {};
	\node[cb] (1) at ($(0)+(30:2)$) {};
	\node[cb] (2) at ($(0)+(90:2)$) {};
	\node[cb] (3) at ($(0)+(150:2)$) {};
	\node[cb] (4) at ($(0)+(210:2)$) {};
	\node[cb] (5) at ($(0)+(270:2)$) {};
	\node () at ($(0)+(240:3)$) {$T'$};
	\node[cb] (6) at ($(0)+(330:2)$) {};
			
	\draw (1)--(2)--(3)--(4)--(5)--(6)--(1);
	
	\draw[->, red] (1)+(147:.7) arc (147:-87:.7cm);
	\draw[->, red] (1)+(-93:.7) arc (-93:-147:.7cm);
	\draw[->, red] (1)+(-153:.7) arc (-153:-207:.7cm);
	\draw[->, red] (2)+(207:.7) arc (207:-27:.7cm);
	\draw[->, red] (2)+(-33:.7) arc (-33:-117:.7cm);
	\draw[->, red] (2)+(-123:.7) arc (-123:-147:.7cm);
	\draw[->, red] (3)+(267:.7) arc (267:33:.7cm);
	\draw[->, red] (3)+(27:.7) arc (27:-87:.7cm);
	\draw[->, red] (4)+(327:.7) arc (327:93:.7cm);
	\draw[->, red] (4)+(87:.7) arc (87:63:.7cm);
	\draw[->, red] (4)+(57:.7) arc (57:33:.7cm);
	\draw[->, red] (4)+(27:.7) arc (27:3:.7cm);
	\draw[->, red] (4)+(-3:.7) arc (-3:-27:.7cm);
	\draw[->, red] (5)+(27:.7) arc (27:-207:.7cm);
	\draw[->, red] (5)+(147:.7) arc (150:33:.7cm);
	\draw[->, red] (6)+(87:.7) arc (87:-147:.7cm);
	\draw[->, red] (6)+(177:.7) arc (177:93:.7cm);

	\draw[->, red] (6)+(207:.7) arc (207:183:.7cm);
	\draw (4) to node {}(1);
	\draw (4) to node {}(2);
	\draw (4) to node {}(6);
\end{tikzpicture}}
\caption{Example of two triangulations where the Brauer graph algebras associated to the  Brauer graphs given by the triangulations including the boundary arcs are derived equivalent.}\label{Quiver BGA Frozen}
\end{figure}
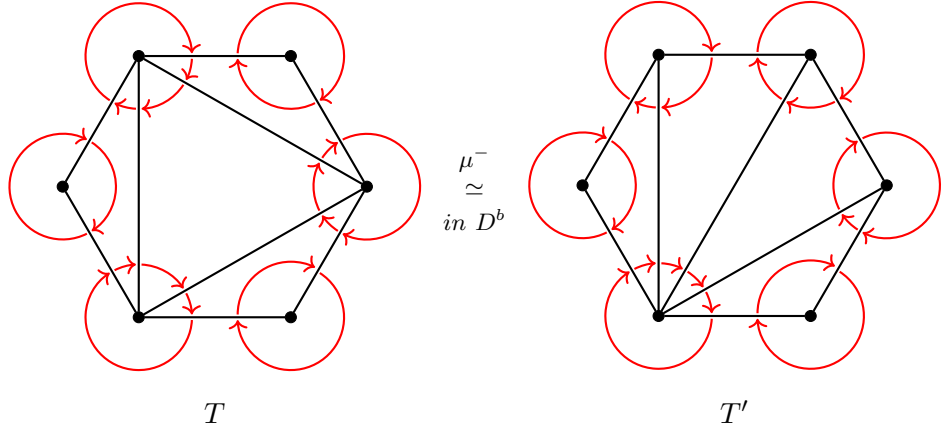

  }
\end{Example}

\begin{remark} {\rm By Theorem~\ref{Thm Flip Brauer Graph} up to derived equivalence there is a unique Brauer graph algebra associated to any marked surface $(S,M)$. This is similar to the corresponding situation for generalised cluster categories as defined by Amiot, see for example \cite{Amiot}. Namely,   if $\cC$ and $\cC' $ are two cluster categories associated to two quivers with potential $(Q,W)$ and $(Q',W')$ associated to two triangulations
$T$ and $T'$ of $(S,M)$, then  
$\cC$ and $\cC'$ are triangle equivalent \cite{KY}.   }
\end{remark}

\subsection{Brauer graph algebras and frozen Jacobian algebras}

In this section we show that the frozen Jacobian algebra associated to a triangulation of a marked disc is closely related to the Brauer graph algebra whose Brauer graph corresponds to the triangulation (including the boundary arcs).  

\begin{definition}\cite{BIRS} {\rm An {\it ice quiver with potential} $(Q,W, F)$ is a quiver with potential $(Q,W)$ and a subset $F$ of vertices of $Q$, called the {\it frozen vertices}. 

The complete path algebra $\widehat{KQ}$ is the completion of the path algebra $ KQ$ with respect to the
ideal $\mathcal R$ generated by the arrows of $Q$.
A potential on $Q$ is an element of the closure $Pot(KQ)$ of the space generated by all non-trivial
cyclic paths of $Q$. We say two potentials are cyclically equivalent if their difference is in the
closure of the space generated by all differences $ a_1 a_2 \ldots a_s - a_2 \ldots a_sa_1$, where $ a_1 a_2 \ldots a_s $ is a cycle.

For a path $p$ in $ Q$, let $\partial_p: Pot(KQ) \to \widehat{KQ} $ be the unique continuous linear map which for a cycle $c$ is defined by 
$\partial_p (c) = \sum_{ upv = c} vu $ where $u$ and $v$ might be the trivial path at $s(c)$ and $t(c)$, respectively.  If $p = a $ for some arrow $a$ in $Q$ then $\partial_a $ is called the \emph{cyclic derivative with respect to  $a$}.

The {\it frozen  Jacobian algebra of $(Q,W,F)$} is given by
$ {\mathcal J}(Q,W,F) = \widehat{KQ}/I_F$
where $\widehat{KQ}$ is the complete path algebra,  $I_F$ is the closure of the  ideal  $\langle \partial_a w | w \in W, a \in Q_1, s(a) \notin F \mbox{ or } t(a) \notin F \rangle$.
}
\end{definition}

In the case of a triangulation of a marked disc $(S,M,T)$, consider the  ice quiver $(Q,W, F)$ given by the quiver $Q=(Q_0, Q_1)$ where $Q_0$ corresponds to the edges of $T$ and where the set $F$ of frozen vertices is given by the boundary arcs and $Q_1$ is induced by the orientation. So in particular $Q_1$ includes a boundary arrow for all marked points incident to more than 2 arcs. The potential is given by 
$$ W = \sum_i C_i - \sum_j D_j$$ 
where the cycles $C_i$ are given by (all) triangles  in $T$ and the cycles $D_j$ are given by the cycles containing a boundary arrow.  Then ${\mathcal J}_T = KQ/\langle \partial_a w | w \in W, a \in Q_1, s(a) \notin F \mbox{ or } t(a) \notin F \rangle$ is the frozen Jacobian algebra of $(Q,W, F)$.  

\begin{remark} {\rm 
All relations  in $I_F$ are commutativity relations and the algebra ${\mathcal J}(Q, W, F)$ is infinite dimensional. }
\end{remark}

\begin{Example}\label{Frozen2} {\rm
Given the following  triangulation of a hexagon, consider the ice quiver with arrows as given in Figure~\ref{Quiver Frozen}  where the frozen vertices correspond to the boundary arcs.  We note that the quiver is almost identical to the quiver of the Brauer graph algebra in Example~\ref{Frozen} associated to the same triangulation of the hexagon, the difference being the absence in the ice quiver of boundary arrows  around vertices of valency two.

\begin{figure}[H]
\centering
\begin{tikzpicture}[auto, thick, rotate=30]
	\node (0) at (0,0) {};
	\node[cb] (1) at ($(0)+(30:2)$) {};
	\node[cb] (2) at ($(0)+(90:2)$) {};
	\node[cb] (3) at ($(0)+(150:2)$) {};
	\node[cb] (4) at ($(0)+(210:2)$) {};
	\node[cb] (5) at ($(0)+(270:2)$) {};
	\node[cb] (6) at ($(0)+(330:2)$) {};
			
	\draw (1)--(2)--(3)--(4)--(5)--(6)--(1);
	{\footnotesize 
	\draw[->, red] (1)+(-93:.7) arc (-93:-207:.7cm);
	\node[red] () at ($(1)+(-140:.5)$){$\delta$};
	\draw[->, red] (2)+(207:.7) arc (207:-27:.7cm);
	\node[red] () at ($(2)+(90:1)$){$\alpha_0$};
	\draw[->, red] (2)+(-33:.7) arc (-33:-57:.7cm);
	\node[red] () at ($(2)+(-45:1)$){$\alpha_1$};
	\draw[->, red] (2)+(-63:.7) arc (-63:-117:.7cm);
	\node[red] () at ($(2)+(-90:1)$){$\alpha_2$};
	\draw[->, red] (2)+(-123:.7) arc (-123:-147:.7cm);
	\node[red] () at ($(2)+(-135:1)$){$\alpha_3$};
	\draw[->, red] (3)+(27:.7) arc (27:-87:.7cm);
	\node[red] () at ($(3)+(-40:.5)$){$\eta$};
	\draw[->, red] (4)+(327:.7) arc (327:93:.7cm);
	\node[red] () at ($(4)+(-130:1)$){$\gamma_0$};
	\draw[->, red] (4)+(87:.7) arc (87:63:.7cm);
	\node[red] () at ($(4)+(75:1)$){$\gamma_1$};
	\draw[->, red] (4)+(57:.7) arc (57:3:.7cm);
	\node[red] () at ($(4)+(30:1)$){$\gamma_2$};
	\draw[->, red] (4)+(-3:.7) arc (-3:-27:.7cm);
	\node[red] () at ($(4)+(-15:1)$){$\gamma_3$};
	\draw[->, red] (5)+(147:.7) arc (150:33:.7cm);
	\node[red] () at ($(5)+(90:.5)$){$\varepsilon$};
	\draw[->, red] (6)+(87:.7) arc (87:-147:.7cm);
	\node[red] () at ($(6)+(-45:1)$){$\beta_0$};
	\draw[->, red] (6)+(117:.7) arc (117:93:.7cm);
	\node[red] () at ($(6)+(195:1)$){$\beta_1$};
	\draw[->, red] (6)+(177:.7) arc (177:123:.7cm);
	\node[red] () at ($(6)+(150:1)$){$\beta_2$};
	\draw[->, red] (6)+(207:.7) arc (207:183:.7cm);
	\node[red] () at ($(6)+(105:1)$){$\beta_3$};
	}
	\draw (4) to node {}(2);
	\draw (4) to node {}(6);
	\draw (2) to node {}(6);
\end{tikzpicture}
\caption{Quiver of the frozen Jacobian algebra associated to this triangulation of the hexagon where the frozen vertices correspond to the boundary arcs of the triangulation.}\label{Quiver Frozen}
\end{figure}
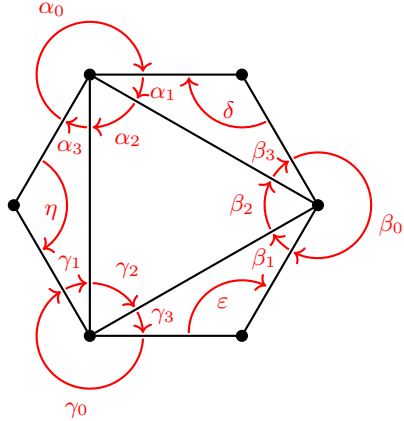

The arrows $\alpha_0, \beta_0, \gamma_0, \delta, \varepsilon, \eta$ are frozen arrows and the boundary arrows are $\alpha_0, \beta_0, \gamma_0$. The potential is given by 
$$W = \alpha_1 \beta_3 \delta + \gamma_3 \varepsilon \beta_1 + \alpha_2 \gamma_2 \beta_2  + \eta \gamma_1 \alpha_3 - \alpha_0 \alpha_1 \alpha_2 \alpha_3 - \beta_0 \beta_1 \beta_2 \beta_3 + \gamma_0 \gamma_1 \gamma_2 \gamma_3.$$ Then 
\begin{align*} I_F =  \langle \beta_3 \delta - \alpha_2 \alpha_3 \alpha_0, & \gamma_2 \beta_2 - \alpha_1 \alpha_0 \alpha_1, \eta \gamma_1 - \alpha_0 \alpha_1 \alpha_2, \gamma_3 \varepsilon - \beta_2 \beta_3 \beta_0,  \alpha_2 \gamma_2 - \beta_3 \beta_0 \beta_1, \\  & \delta \alpha_1 - \beta_0 \beta_1 \beta_2, 
\alpha_3 \eta - \gamma_2 \gamma_3 \gamma_0, \beta_2 \alpha_2 - \gamma_3 \gamma_0 \gamma_1, \varepsilon \beta_1 - \gamma_0 \gamma_1 \gamma_2
 \rangle.\end{align*}

}
\end{Example}

Let $\cF$ be the category of maximal Cohen-Macaulay modules over the Gorenstein tiled $K[x]$-order defined in \cite{DL}.  Then Demonet and Luo show the following

\begin{theorem}\cite{DL}\label{DL}
Let $P_n$ be the disc with $n$ marked points in the boundary and let ${\mathcal F}$ be the associated category of maximal Cohen-Macaulay modules over the Gorenstein tiled $K[x]$-order defined in \cite{DL}. Then 

1) The stable category $\underline{\mathcal F}$ is triangle equivalent to the cluster category of type $A_{n-3}$. 

2) There are bijections between 
the indecomposable objects in $\mathcal F$ and the arcs between marked points in $P_n$, including boundary arcs. 

3) The bijection in 2) induces a bijection between triangulations $T$ in $P_n$ and the cluster-tilting objects $M_T$ in $\cF$.  

3) The frozen Jacobian algebra ${\mathcal J}(Q,W, F)$ associated to a triangulation $T$ of $P_n$ is isomorphic to ${\rm End}_{\mathcal F}(M_T)^{\rm op}$. 
\end{theorem}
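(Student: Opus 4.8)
The plan is to treat the four assertions as facets of a single structural result, exploiting the fact that $\cF$, being the category of maximal Cohen--Macaulay modules over a Gorenstein order, is a Frobenius exact category whose projective and injective objects coincide. First I would record this Frobenius structure: over a Gorenstein $R$-order $\Lambda$ (with $R = K[x]$ or its completion) of Krull dimension one, the lattices form an exact category in which the projectives are exactly the injectives, namely the summands of $\Lambda$ itself. By the theorem of Happel and Buchweitz the stable category $\underline{\cF}$ is then triangulated, with suspension given by the cosyzygy. To obtain part (1) I would verify that $\underline{\cF}$ is $2$-Calabi--Yau and exhibit a cluster-tilting object whose stable endomorphism algebra is the path algebra of a linearly oriented $A_{n-3}$ quiver (coming from a ``fan'' triangulation, all of whose internal arcs share one marked point, so that no relations survive in the stable category). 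The recognition theorem of Keller--Reiten for algebraic $2$-Calabi--Yau categories with a cluster-tilting object of hereditary Dynkin endomorphism type then identifies $\underline{\cF}$ with the cluster category of type $A_{n-3}$.

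For part (2) I would classify the indecomposable objects of $\cF$ explicitly. Because $\Lambda$ is a tiled order, its lattices admit a combinatorial description in terms of the exponent data defining $\Lambda$; I would match this data with the arcs of $P_n$, assigning to each arc (internal diagonal or boundary segment) a rank-one lattice, and then check that every indecomposable arises this way and that distinct arcs give non-isomorphic modules. The boundary arcs should correspond precisely to the indecomposable projective-injective lattices, i.e.\ the summands of $\Lambda$; passing to $\underline{\cF}$ annihilates exactly these, leaving the internal diagonals, which biject with the indecomposables of the cluster category of type $A_{n-3}$, consistently with part (1).

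Parts (3) and (4) I would handle by transporting the standard cluster combinatorics of type $A$. Triangulations of $P_n$ are exactly the maximal collections of pairwise non-crossing internal arcs, and in the cluster category of type $A_{n-3}$ these are in bijection with the basic cluster-tilting objects; adjoining the (frozen) projective-injective summands corresponding to the boundary arcs lifts each such object to a basic cluster-tilting object $M_T$ of $\cF$, giving part (3). For part (4) I would compute $\End_{\cF}(M_T)\op$ directly: its quiver has a vertex for each arc of $T$, with arrows given by the minimal non-invertible morphisms between the corresponding lattices, which the combinatorial model reads off from the successor relations around each marked point; this reproduces the quiver $Q$, the boundary arcs supplying the frozen vertices. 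It then remains to match the relations, showing that the relation space of $\End_{\cF}(M_T)\op$ is spanned by the differences of the two paths running around each triangle and each boundary cycle, which are exactly the cyclic derivatives $\partial_a W$ for unfrozen arrows $a$, yielding the isomorphism with $\cJ(Q,W,F)$.

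The main obstacle I expect to be the precise computation in part (4): matching the relations in the endomorphism algebra with the frozen Jacobian relations requires a careful analysis of composition of morphisms between the rank-one lattices, together with a verification that the commutativity relations forced by the order structure coincide term-by-term with the $\partial_a W$, with the correct signs and the correct bookkeeping of frozen versus boundary arrows in the potential $W$. The classification in part (2) is the technical backbone, but it is essentially combinatorial; the delicate point is that the $R$-order must produce \emph{exactly} the Jacobian relations, neither more nor fewer, so that $\End_{\cF}(M_T)\op$ is genuinely the frozen Jacobian algebra and not merely a quotient or extension of it.
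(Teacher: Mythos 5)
First, a remark on the ground rules of this comparison: the survey states this theorem purely as a quotation of Demonet--Luo \cite{DL} and supplies no proof, so there is no in-paper argument to measure your proposal against. Judged against the strategy of the cited reference, your outline has the right architecture: Frobenius structure on $\cF$ with projective--injectives $\operatorname{add}\Lambda$, a combinatorial classification of indecomposable lattices matching arcs (boundary arcs giving the projective--injectives), an $\Ext^1$-versus-crossing dictionary yielding the triangulation/cluster-tilting bijection, an explicit computation of $\End_{\cF}(M_T)^{\rm op}$, and an identification of $\underline{\cF}$ with the cluster category. This is essentially how the result is proved in \cite{DL}.

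There are, however, two places where your sketch glosses over the real content. First, the $2$-Calabi--Yau property of $\underline{\cF}$ is not a formality, and your plan both asserts it without a mechanism and uses it too early. Over a one-dimensional base, Auslander--Reiten duality for a Gorenstein order gives a Serre functor of the form $\tau[1]$, and the ``generic'' Calabi--Yau dimension one gets this way is not $2$ (for a symmetric order in dimension one it is $0$); establishing $2$-CY here amounts to proving $\tau\cong[1]=\Omega^{-1}$ on $\underline{\cF}$ for this particular order, which is a genuine computation. Moreover, to invoke Keller--Reiten you must already know that the fan triangulation produces a cluster-tilting object of $\underline{\cF}$ with the claimed hereditary stable endomorphism algebra, and that knowledge rests precisely on the classification of indecomposables and of $\Ext^1$ that you defer to part (2). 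So the logical order should be reversed: (2) and the $\Ext^1$-computation first, then (3), and only then (1) --- either via Keller--Reiten once $2$-CY is in hand, or by computing the stable Auslander--Reiten quiver and using standardness of finite-type algebraic triangulated categories. Second, as you yourself flag, the identification of $\End_{\cF}(M_T)^{\rm op}$ with $\cJ(Q,W,F)$ requires showing that the relation ideal is \emph{exactly} the closure of the ideal generated by the cyclic derivatives $\partial_a W$ for unfrozen $a$ --- both that these relations hold among morphisms of lattices and that they generate all relations --- and this two-sided check is where most of the work in \cite{DL} actually sits. As an outline the proposal is faithful to the known proof; as it stands it names the two hard steps rather than carrying them out.
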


We note that $\underline{\mathcal F}$ is a Hom-finite 2-Calabi-Yau triangulated category. So the following Theorem by Palu can be applied. 

\begin{theorem}\cite{Palu}\label{Palu}
Let $\underline{\mathcal C}$ be a Hom-finite 2-Calabi-Yau triangulated category which is the stable category of a Frobenius category $\cC$. Let $\overline{M}, \overline{M}'$ be cluster-tilting objects in $\underline{\mathcal C}$ with pre-images $M, M'$ in $\cC$. Then there is a triangle equivalence 
$$\cD(\End_\cC(M)^{\rm op}{\rm -Mod}) \simeq \cD(\End_\cC(M')^{\rm op}{\rm -Mod})$$ 
where, for $A$ an algebra, $\cD(A  {\rm -Mod})$ denotes the derived category of all $A$-modules. 
\end{theorem}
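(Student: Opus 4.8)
The plan is to reduce the statement to a single mutation step and then to exhibit an explicit tilting complex realising the derived equivalence at each such step. Write $B = \End_\cC(M)^{\rm op}$ and $B' = \End_\cC(M')^{\rm op}$; note that these algebras are in general infinite dimensional, which is why the statement is phrased in terms of the derived category $\cD(B\text{-Mod})$ of all modules. The functor $\Hom_\cC(M,-)\colon \cC \to B\text{-Mod}$ sends the indecomposable summands $M_k$ of $M$ to the indecomposable projective $B$-modules $P_k$, and sends the projective-injective objects of $\cC$ (which are common summands of $M$ and $M'$) to a common ``frozen'' set of projectives. First I would recall Iyama--Yoshino mutation in $\underline{\cC}$: replacing a non-projective-injective summand $M_k$ of $\overline{M}$ by $M_k^{*}$ is governed by the two exchange triangles $M_k \to E \to M_k^{*} \to M_k[1]$ and $M_k^{*} \to E' \to M_k \to M_k^{*}[1]$. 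In the surface setting of Theorem~\ref{DL}, any two cluster-tilting objects are connected by a finite chain of such mutations, mirroring the flip-connectedness of triangulations (Theorem~\ref{flipconnected}), so it suffices to treat one mutation and then compose.

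For a single mutation I would lift the first exchange triangle to a conflation in the Frobenius category $\cC$ and apply $\Hom_\cC(M,-)$ to obtain a two-term complex $T_k$ of projective $B$-modules in degrees $0$ and $1$ occupying the $k$-th slot, keeping the stalk complexes $P_j$ for $j \neq k$. This is the same shape of construction as the Okuyama--Rickard complex of \cite{Okuyama, Rickard3} and the Kauer moves, adapted to the (complete, infinite-dimensional) frozen setting. I would then verify that $T = T_k \oplus \bigoplus_{j \neq k} P_j$ is a tilting complex for $B$ in the sense of Rickard's derived Morita theory: self-orthogonality $\Hom_{\cD(B\text{-Mod})}(T, T[n]) = 0$ for $n \neq 0$ reduces, via $\Hom_\cC(M,-)$, to the rigidity of $M$ and $M'$ together with the $2$-Calabi--Yau duality $\Ext^1_{\underline{\cC}}(X,Y) \simeq D\,\Ext^1_{\underline{\cC}}(Y,X)$, while generation follows because the exchange triangles recover $M_k$ from $E$ and $M_k^{*}$. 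Identifying the endomorphism algebra of $T$ with $B'$ is then a direct translation: chain maps of $T$ correspond under $\Hom_\cC(M,-)$ and the exchange triangles to morphisms among the summands of $M'$ in $\cC$.

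The hard part will be this last identification together with the generation property, both of which hinge on controlling how the mutated summand $M_k^{*}$ interacts homologically with the frozen projective-injective objects of $\cC$; it is precisely the presence of these frozen objects --- which become zero in the stable category $\underline{\cC}$ --- that forces the Frobenius endomorphism algebras to be derived equivalent, even though the stable endomorphism algebras of distinct cluster-tilting objects generally are not. A further point requiring care is the infinite-dimensionality of $B$ and $B'$, so that one must work throughout with the derived category of all modules and a genuinely one-sided tilting object rather than the finite-dimensional symmetric setup of \cite{Okuyama, Rickard3}. Once the single-step equivalence is established, composing the derived equivalences along any mutation chain connecting $\overline{M}$ to $\overline{M}'$ yields the claimed triangle equivalence $\cD(B\text{-Mod}) \simeq \cD(B'\text{-Mod})$.
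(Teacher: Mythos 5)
First, a point of reference: the survey does not prove this statement, it quotes it from \cite{Palu}, so the comparison below is with Palu's published argument. Your single-step mechanism is essentially the correct one and essentially coincides with that argument: the two-term complex $T_k$ obtained by applying $\Hom_\cC(M,-)$ to the lifted exchange conflation is quasi-isomorphic to the $\End_\cC(M)^{\rm op}$-module $\Hom_\cC(M,M')$, so your ``tilting complex'' is just a projective presentation of a tilting \emph{module} of projective dimension at most one; self-orthogonality amounts to $\Ext^1(\Hom_\cC(M,M'),\Hom_\cC(M,M'))=0$, which follows from $\Ext^1_\cC(M',M')=0$ alone (the $2$-Calabi--Yau duality is not actually needed for this step), generation follows from a two-term ${\rm add}(M')$-coresolution of each $M_j$, and the identification of the endomorphism ring uses that $\Hom_\cC(M,-)$ is fully faithful on objects admitting a two-term ${\rm add}(M)$-resolution. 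Your remarks on the frozen projective-injectives and on working with $\cD(B\text{-}{\rm Mod})$ for a possibly infinite-dimensional $B$ are both apposite.

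The genuine gap is the reduction to a chain of mutations. The theorem is stated for an \emph{arbitrary} Hom-finite $2$-Calabi--Yau stable category $\underline{\cC}$ and \emph{arbitrary} cluster-tilting objects $\overline{M},\overline{M}'$, and it is not known in this generality that any two cluster-tilting objects are connected by Iyama--Yoshino mutations; this is an open problem outside special classes such as acyclic cluster categories or the surface situation of Theorem~\ref{DL}. Your appeal to flip-connectedness therefore covers the polygon application in Theorem~\ref{Derived Frozen} but not the theorem as stated. Moreover the detour is unnecessary: the cluster-tilting property of $\overline{M}$ already guarantees that \emph{every} object of $\cC$ --- in particular every indecomposable summand of $M'$, whether or not $M'$ is a mutation of $M$ --- admits a conflation $0\to M_1\to M_0\to X\to 0$ with $M_0,M_1\in{\rm add}(M)$, and dually every summand of $M$ admits a two-term ${\rm add}(M')$-coresolution obtained by rotating the corresponding triangle in $\underline{\cC}$ and lifting. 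Running your single-step argument with these conflations in place of the exchange conflations shows directly that $\Hom_\cC(M,M')$ is a tilting module for any pair of cluster-tilting objects, which is Palu's proof. So the fix is to delete the mutation chain and perform your one step in this slightly more general form.
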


Combining Theorem~\ref{DL} and Theorem~\ref{Palu}, it was observed in \cite{MS} that we obtain the following:

\begin{theorem}\label{Derived Frozen}
Let ${\mathcal J}(Q,W,F)$ and ${\mathcal J}(Q', W', F')$ be two frozen Jacobian algebras associated to two triangulations of a polygon. Then there is a triangle equivalence
 $$\cD({\mathcal J}(Q,W,F)-{\rm Mod}) \simeq \cD({\mathcal J}(Q',W',F')-{\rm Mod}).$$ 
\end{theorem}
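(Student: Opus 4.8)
The plan is to combine the two preceding theorems directly, with the Frobenius category $\cF$ of maximal Cohen--Macaulay modules over the Gorenstein tiled $K[x]$-order of \cite{DL} serving as the bridge. First I would verify the hypotheses needed to invoke Palu's theorem: by Theorem~\ref{DL}(1) the stable category $\underline{\cF}$ is triangle equivalent to the cluster category of type $A_{n-3}$, so $\underline{\cF}$ is a Hom-finite $2$-Calabi--Yau triangulated category, and by construction it is the stable category of the Frobenius category $\cF$. This places us exactly in the setting of Theorem~\ref{Palu}.

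Next I would pass from the two triangulations to cluster-tilting objects. By Theorem~\ref{DL}(3) the triangulations $T$ and $T'$ of the polygon $P_n$ underlying $(Q,W,F)$ and $(Q',W',F')$ correspond to cluster-tilting objects $M_T$ and $M_{T'}$ of $\cF$; their images $\overline{M_T}$ and $\overline{M_{T'}}$ in $\underline{\cF}$ are then cluster-tilting objects of the stable category, and $M_T$, $M_{T'}$ are pre-images of these in $\cF$ in the sense of Theorem~\ref{Palu}. Applying that theorem with $\cC = \cF$, $M = M_T$ and $M' = M_{T'}$ yields a triangle equivalence
$$\cD(\End_{\cF}(M_T)^{\rm op}{\rm -Mod}) \simeq \cD(\End_{\cF}(M_{T'})^{\rm op}{\rm -Mod}).$$

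Finally I would invoke the isomorphisms of Theorem~\ref{DL}(3), namely ${\mathcal J}(Q,W,F) \cong \End_{\cF}(M_T)^{\rm op}$ and ${\mathcal J}(Q',W',F') \cong \End_{\cF}(M_{T'})^{\rm op}$, to rewrite the two sides and obtain the asserted equivalence
$$\cD({\mathcal J}(Q,W,F){\rm -Mod}) \simeq \cD({\mathcal J}(Q',W',F'){\rm -Mod}).$$

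The only genuine point requiring care, and the step I expect to be the main obstacle, is the compatibility of the cluster-tilting correspondence with the stabilisation functor $\cF \to \underline{\cF}$: one must confirm that $M_T$ really is a pre-image in Palu's sense, i.e. that it contains every indecomposable projective-injective of $\cF$ (precisely the objects corresponding to the boundary arcs of $T$) so that $\overline{M_T}$ is cluster-tilting in $\underline{\cF}$ and no summand is lost upon stabilisation. Once this is in place, the remainder is a formal substitution into the two cited theorems.
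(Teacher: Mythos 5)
Your proposal is correct and follows exactly the route the paper takes: the paper derives this theorem precisely by combining Theorem~\ref{DL} (the bijection between triangulations and cluster-tilting objects in $\cF$, together with the identification ${\mathcal J}(Q,W,F)\cong \End_{\cF}(M_T)^{\rm op}$) with Palu's Theorem~\ref{Palu} applied to the Frobenius category $\cF$ whose stable category is the $2$-Calabi--Yau cluster category of type $A_{n-3}$. Your extra remark about checking that $M_T$ is a genuine pre-image of a cluster-tilting object under stabilisation is a sensible point of care, but it is already guaranteed by the statement of Theorem~\ref{DL}.
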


{\bf Summary.} We will summarise the comparison between frozen Jacobian algebras and Brauer graph algebras associated to the same triangulation of a polygon. 
Given a triangulation $T$ of a polygon, let ${\mathcal J}_T$ be the the frozen Jacobian algebra associated to $T$  and let $A_T$ be the Brauer graph algebra where the Brauer graph is given by  $T$ including the boundary arcs. Then the quiver of ${\mathcal J}_T$   is almost identical to the quiver of $A_T$ (compare, for example the quiver in Figure~\ref{Quiver Frozen} with the quiver in  left hand side of Figure~\ref{Quiver BGA Frozen}). The only difference is the absence in the quiver of the frozen Jacobian algebra of boundary arrows around vertices that are incident to only 2 arcs (i.e. around vertices that are not incident with any internal arcs).  

Furthermore, given triangulations $T, T'$ of the same polygon, by Theorem~\ref{Derived Frozen} we have a derived equivalence of the unbounded derived categories of the module category of all modules over the associated frozen Jacobian algebras ${\mathcal J}_{T}$ and ${\mathcal J}_{T'}$, 
$$\cD({\mathcal J}_T-{\rm Mod}) \simeq \cD({\mathcal J}_{T'}-{\rm Mod})$$
and by Theorem~\ref{Thm Flip Brauer Graph} we have a derived equivalence of the bounded derived categories of the module categories of finitely generated modules over the associated Brauer graph algebras $A_T$ and $A_{T'}$
$$\cD^\flat (A_T) \simeq \cD^\flat (A_{T'}).$$
This might indicate that there is a structural connection between the frozen Jacobian algebra and the Brauer graph algebra associated to the same triangulation (of a polygon). That this should be the case has also been highlighted by the results in \cite{Demonet} and in \cite{Ladkani3}.

\section{Auslander-Reiten components}\label{Section AR components}

Auslander-Reiten theory for self-injective special biserial algebras has been well studied. Erdmann and Skowro\'nski classify in \cite{ES} the Auslander-Reiten components for self-injective special biserial algebras. As Brauer graph algebras are symmetric special biserial, this gives the general structure of the Auslander-Reiten quiver for Brauer graph algebras. The result for representation-infinite algebras depends on the notions of domestic and polynomial growth. For the convenience of the reader, we recall these notions.

\subsection{Finite, tame and wild representation type, domestic algebras and algebras of polynomial growth}\label{Section Representation Type}

Let $A$ be a finite dimensional $K$-algebra, then we say that $A$ is of \textit{finite representation type}, if up to isomorphism there are only finitely many distinct indecomposable $A$-modules. 
 
 We say $A$ is  of \textit{tame representation type}, if for any positive integer $n$, there exists a finite number of  $A -K[x]$-bimodules $M_i$, for $1 \leq i \leq d_n$ such that $M_i$ is finitely generated and  is free as a left $K[x]$-module and such that all but a finite number of isomorphism classes of indecomposable $n$-dimensional $A$-modules are isomorphic 
  to $M_i \otimes_{K[x]} K[x]/ (x -a)$, for $a \in K$ and $1 \leq i \leq d_n$. For each $n $, let $\mu(n)$ be the least number of such $A -K[x]$-bimodules. 
   Following \cite{Skowronski1}, we say that $A$ is of {\it polynomial growth} if there exists a positive integer $m$ such that $\mu(n) \leq  n^m$ for all $n \geq 2$. We say that $A$ is {\it domestic} if $\mu(n) \leq m$ for all $n \geq 1$ \cite{Ringel2} and we say that $A$ is {\it $d$-domestic} if $d$ is the least such integer $m$.

Note that every domestic algebra is of polynomial growth \cite{Skowronski2}.

Let $K \langle x,y \rangle$ be the group algebra of the free group in two generators. 
We say that $A$  is of {\it wild representation type} if there is a $K\langle x,y \rangle -A$ bimodule M such that $M$ is free as a $K \langle x,y \rangle$-module, and if $X$ is an indecomposable $K \langle x,y \rangle$-module then $M \otimes_K X$ is an indecomposable $A$-module, and if for some $K \langle x,y \rangle$-module $Y$ we have $M \otimes_K X \simeq M \otimes_K Y$ then $X \simeq Y$.

By \cite{Drozd} an algebra of infinite representation type that is not of tame representation type is of wild representation type.  

\subsection{Domestic Brauer graph algebras}

In the following let $\Lambda$ be a representation-infinite Brauer graph algebra with Brauer graph $G = (G_0, G_1, m, \mathfrak{o})$. Bocian and Skowro\'nski give a characterization of the domestic Brauer graph algebras. 

\begin{theorem}\cite{BS}\label{DomesticBGA}
Let $\Lambda$ be a Brauer graph algebra with Brauer graph $G$. Then 
\begin{itemize}
\item[(1)] $\Lambda$ is $1$-domestic if and only if one of the following holds
\begin{itemize}
\item[$\bullet$] $G$ is a tree with $m(i) =2$ for exactly two vertices $i_0, i_1 \in G_0$ and $m(i) = 1$ for all $ i \in G_0, i \neq i_0, i_1$. 
\item[$\bullet$] $G$ is a graph with a unique cycle of odd length and $m \equiv 1$. 
\end{itemize}
\item[(2)] $\Lambda$ is $2$-domestic if and only if $G$ is a graph with a unique cycle of even length and $m \equiv 1$. 
\item[(3)] There are no $n$-domestic Brauer graph  algebras for $n \geq 3$.  
\end{itemize}
\end{theorem}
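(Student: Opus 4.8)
The plan is to exploit that a Brauer graph algebra $A_G$ is symmetric special biserial (Theorem~\ref{BGAspecialbiserial}), so that by the work of Wald--Waschb\"usch and Butler--Ringel recalled above its indecomposables are exhausted by string and band modules. In each fixed dimension there are only finitely many string modules, and they sit in tubes that do not contribute to the growth of $\mu$; the growth is therefore governed entirely by the band modules, which occur in one-parameter families $M(B,\lambda,k)$ indexed by $\lambda\in K^\ast$ and $k\geq 1$, with $\dim M(B,\lambda,k)=k\,\ell(B)$. Up to the standard reduction for tame algebras this means that $\mu(n)$ is, up to a bounded error, the number of equivalence classes of bands $B$ whose length $\ell(B)$ divides $n$. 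In particular $A_G$ is domestic if and only if $G$ carries only finitely many bands, and then the relevant domesticity constant is the least $m$ with $\#\{B:\ell(B)\mid n\}\leq m$ for all $n$. The whole theorem thus reduces to a combinatorial count of bands in terms of $G$.

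The second step is an explicit description of bands as closed walks on the Brauer graph. A band is a cyclically reduced, non-periodic walk that alternates between direct and inverse letters in the sense forced by the special cycles and the relations of type I--III, and such a walk must traverse $G$ along a genuinely closed circuit. There are exactly two ways to produce such circuits: (a) an actual cycle in the underlying graph $(G_0,G_1)$, and (b) a pair of non-truncated vertices of multiplicity $>1$ joined by a path, which behave like the two ends of a circuit even when $(G_0,G_1)$ is a tree, because the local relation $C_v^{m(v)}$ at such a vertex lets the walk turn around. When $(G_0,G_1)$ is a tree with $m\equiv 1$ no band exists (this is precisely the Brauer tree / finite-type case), whereas each independent cycle of the graph and each exceptional vertex beyond the first enlarges the supply of circuits.

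Third, I would establish the dichotomy underlying parts (1)--(3). If the first Betti number $b_1(G)=|G_1|-|G_0|+1$ is at least $2$, or if a cycle coexists with a vertex of multiplicity $>1$, or if a tree carries three or more exceptional vertices, then two independent circuits can be freely concatenated, producing bands parametrised (modulo rotation and inversion) by words in a rank-two free monoid; the number of bands of bounded length then grows without bound and forces $\mu(n)\to\infty$, so $A_G$ is not domestic. This also gives part (3): the jump from finitely many to infinitely many bands is sharp, and in the finite regime the number of primitive bands will be shown to be $1$ or $2$, leaving no room for a $3$-domestic algebra. The two finite regimes to treat are then exactly a tree with precisely two multiplicity-$2$ vertices, where the connecting path supports a single primitive band (so $\mu(n)\leq 1$, hence $1$-domestic), and a graph with a unique cycle and $m\equiv 1$.

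The parity refinement inside the unique-cycle case is the delicate heart of the argument and the step I expect to be the main obstacle. Traversing a cycle of length $\ell$ once yields an alternating closed walk only when the cyclic pattern of direct/inverse letters closes up consistently, which is a parity condition on $\ell$. For an odd cycle the circuit must be traversed twice before it closes as a band, and the two candidate starting phases become identified under the shift, leaving a single primitive band and hence $1$-domesticity. For an even cycle the walk closes after one traversal and the two phases of the alternation give genuinely inequivalent primitive bands, hence $\mu(n)\leq 2$ with equality for suitable $n$, i.e. $2$-domesticity. Verifying that these are all the bands---that none are hidden in the truncated edges or in the interaction between the cycle and its attached trees, that the two even-cycle bands are truly inequivalent, and that the odd-cycle double traversal is primitive---is the careful string-combinatorial computation on which the precise constants depend; the remainder is bookkeeping.
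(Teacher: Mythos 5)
The survey does not actually prove this theorem: it is quoted from Bocian and Skowro\'nski \cite{BS}, so there is no in-paper argument to measure your proposal against. That said, your overall strategy --- reduce domesticity to counting primitive bands, since string modules contribute only finitely many indecomposables in each dimension while each band yields one one-parameter family in every dimension divisible by its length, and then translate bands into closed alternating walks on $G$ --- is the standard and correct route to this classification, and the dichotomy you set up (finitely many bands versus a free-monoid explosion once two independent circuits coexist) is the right mechanism for part (3).

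As written, however, the argument has two genuine gaps. The first is in your case analysis for non-domesticity: you exclude trees carrying three or more exceptional vertices, but you never rule out a tree with exactly two vertices of multiplicity greater than one where one of the multiplicities is at least $3$. The theorem demands $m(i_0)=m(i_1)=2$ exactly, and this is not cosmetic: if, say, $m(v)=3$ at a leaf $v$ of an edge $i$, then the loop $\varepsilon$ at $i$ in $Q_G$ satisfies $\varepsilon^2\notin I_G$, so $\varepsilon\varepsilon$ is an admissible substring of strings and one obtains bands winding around $v$ a variable number of times --- infinitely many primitive bands and hence a non-domestic algebra. Your claim that ``a pair of non-truncated vertices of multiplicity $>1$ joined by a path'' supports a single primitive band is therefore false as stated and must be restricted to multiplicity exactly $2$. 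The second gap is the one you flag yourself: the parity computation (one primitive band, of length twice the cycle, for an odd cycle; two inequivalent primitive bands for an even cycle; and the verification that attached exceptional subtrees and truncated edges contribute no further bands) is precisely where the constants $1$ and $2$ in parts (1) and (2) come from. Deferring that string-combinatorial verification leaves the proposal as an outline of the right proof rather than a proof.
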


\begin{Example}\label{Example of a 1-domestic BGA}
{\rm (1) Example of a Brauer graph of a 1-domestic Brauer graph algebra. 

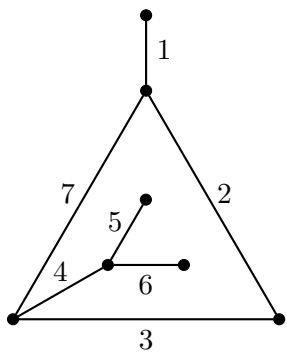
\begin{figure}[H]
	\centering
	\begin{tikzpicture}[auto, thick]
	\node[cb] (1) at (0,0) {};
	\node[cb] (2) at ($(1)+(0:3.5)$) {};
	\node[cb] (3) at ($(1)+(60:3.5)$) {};
	\node[cb] (4) at ($(3)+(90:1)$) {};
	\node[cb] (5) at ($(1)+(30:1.44)$) {};
	\node[cb] (6) at ($(5)+(0:1)$) {};
	\node[cb] (7) at ($(5)+(60:1)$) {};
	
	\draw (1) to [below] node {$3$}(2);
	\draw (2) to [right, pos=.55] node {$2$}(3);
	\draw (3) to [right, pos=.55] node {$1$}(4);
	\draw (3) to [left, pos=.45] node {$7$}(1);
	\draw (1) to [above] node {$4$}(5);
	\draw (5) to [below] node {$6$}(6);
	\draw (5) to [left, pos=.7] node {$5$}(7);
	
	\end{tikzpicture}
	\caption{Brauer graph of a 1-domestic Brauer graph algebra}
\end{figure}

(2) Example of Brauer graph of a 2-domestic Brauer graph algebra.

\begin{figure}[H]
	\centering
	\begin{tikzpicture}[auto, thick]
	\node[cb] (1) at (0,0) {};
	\node[cb] (2) at ($(1)+(40:2)$) {};
	\node[cb] (3) at ($(1)+(-40:2)$) {};
	\node[cb] (4) at ($(1)+(0:1.53)$) {};
	
	\node[cb] (5) at ($(3)+(40:2)$) {};
	\node[cb] (6) at ($(5)+(0:2)$) {};
	\node[cb] (7) at ($(6)+(40:2)$) {};
	\node[cb] (8) at ($(6)+(-40:2)$) {};
		
	\draw (1) to [above, pos=.45] node {$1$}(2);
	\draw (1) to [below, pos=.45] node {$7$}(3);
	\draw (1) to [above, pos=.7] node {$8$}(4);
	\draw (2) to [above, pos=.55] node {$2$}(5);
	\draw (3) to [below, pos=.55] node {$6$}(5);
	\draw (5) to [above] node {$3$}(6);
	\draw (6) to [above, pos=.45] node {$4$}(7);
	\draw (6) to [below, pos=.45] node {$5$}(8);
	
	\end{tikzpicture}
	\caption{Brauer graph of a 2-domestic Brauer graph algebra}
\end{figure}
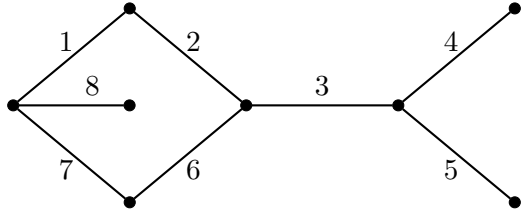
}
\end{Example}

\subsection{The stable Auslander-Reiten quiver of a self-injective special biserial algebra}

We refer the reader to the textbooks \cite{ASS, ARS, Sch} for the definition and general set-up of Auslander-Reiten theory. Let $\Lambda$ be a self-injective $K$-algebra (recall that a finite-dimensional $K$-algebra is self-injective if every projective module is injective). 
In the following denote by  $\Gamma_\Lambda$  the Auslander-Reiten quiver of $\Lambda$ and by $_s\Gamma_\Lambda$  the stable Auslander-Reiten quiver of $\Lambda$. Denote by $\mod - \Lambda$ the module category of finitely generated $\Lambda$-modules and by  $\underline{\mod} -\Lambda$ the stable category of $\Lambda$.  Let $\nu : \Lambda -\mod \to \Lambda-\mod$ be the {\it Nakayama functor},  given by  sending  $N \in \Lambda - \mod$ to $D(\Hom _\Lambda (N, \Lambda)) $. Let $\Omega$ be {\it Heller's syzygy functor}, that is $\Omega : \underline{\mod} -\Lambda \to \underline{\mod} -\Lambda$  given by $M \in \underline{\mod} \Lambda$, $\Omega(M) = \ker \pi$ where $\pi : P_M \rightarrow M$ is the projective cover of $M$. Then  the {\it Auslander-Reiten translate}  $\tau$ is given by $\tau M = \nu \Omega^2 M$, for  $M \in \mod - \Lambda$. Recall that if $\Lambda$ is a symmetric $K$-algebra then $\nu =$ Id and $\tau = \Omega^2$. 

Following Riedtmann \cite{Riedtmann}, any connected component of $_s\Gamma_\Lambda$ 
is of the form $\mathbb{Z} T/G$ where $T$ is an oriented tree and $G$ is an 
admissible automorphism group.  We call $T$ the tree class of $_s\Gamma_\Lambda$. 
The tree class of any component of the stable Auslander-Reiten quiver of an algebra 
containing a periodic module, that is a module $M$ such that $\tau^n M \simeq M$, 
for some $n$, is equal to $A_\infty$ \cite{HPR}. The shapes of the translations 
quivers $\mathbb{Z}A_\infty, \mathbb{Z}A_\infty / \langle \tau^n \rangle, 
\mathbb{Z}A_\infty^\infty, \mathbb{Z}\tilde{A}_{p,q}$ are described in \cite{HPR}. 
By $\tilde{A}_{p,q}$ we denote the following orientation of the quiver with 
underlying extended Dynkin diagram of type $\tilde{A}_n$ 

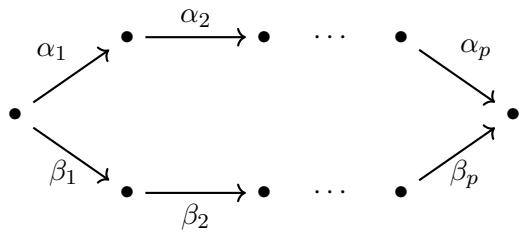
\begin{figure}[H]
	\centering
\begin{tikzpicture}[auto, thick]
	\node (0) at (0,0) {$\bullet$};
	\node (1) at ($(0)+(35:1.8)$) {$\bullet$};
	\node (11) at ($(0)+(-35:1.8)$) {$\bullet$};
	\node (2) at ($(1)+(0:1.8)$) {$\bullet$};
	\node (22) at ($(11)+(0:1.8)$) {$\bullet$};
	\node (x) at ($(2)+(0:.9)$) {$\cdots$};
	\node (3) at ($(2)+(0:1.8)$) {$\bullet$};
	\node (xx) at ($(22)+(0:.9)$) {$\cdots$};
	\node (33) at ($(22)+(0:1.8)$) {$\bullet$};
	\node (4) at ($(3)+(-35:1.8)$) {$\bullet$};
	
	\draw[->] (0) to [pos=0.6] node {$\alpha_1$}(1);
	\draw[->] (0) to [below, pos=0.4] node {$\beta_1$}(11);
	\draw[->] (1) to node {$\alpha_2$}(2);
	\draw[->] (11) to [below] node {$\beta_2$}(22);
	\draw[->] (3) to [pos=.4]node {$\alpha_p$}(4);
	\draw[->] (33) to [below, pos=0.6] node {$\beta_p$}(4);

\end{tikzpicture}
\caption{$\tilde{A}_{p,q}$}
\end{figure}

If a connected component of  $_s\Gamma_\Lambda$ does not contain any projective or injective module, we call that component {\it regular}.

The following two theorems give a complete description of the possible components of the stable Auslander-Reiten quiver of a self-injective special biserial algebra of tame representation type.

\begin{theorem}\cite[Theorem 2.1]{ES}\label{Thm ES domestic}
Let $\Lambda$ be a self-injective special biserial algebra. Then the following are equivalent: 

\begin{itemize}
\item[(1)] $\Lambda$ is representation-infinite domestic. 
\item[(2)] $\Lambda$ is representation-infinite of polynomial growth. 
\item[(3)] $_s\Gamma_\Lambda$ has a component of the form $
\mathbb{Z}\tilde{A}_{p,q}$.
\item[(4)] $_s\Gamma_\Lambda$ is infinite but has no component of the form 
$\mathbb{Z}A_\infty^\infty$.
\item[(5)] All but a finite number of components of $\Gamma_\Lambda$ are of the form $ \mathbb{Z}A_\infty / \langle \tau \rangle$.
\item[(6)] $_s\Gamma_\Lambda$ is a disjoint union of $m$ components of the form $\mathbb{Z}\tilde{A}_{p,q}$, $m$ components of the form $\mathbb{Z}A_{\infty}/ \langle \tau^p \rangle$, and $m$ components of the form $\mathbb{Z}A_{\infty}/ \langle \tau^q \rangle$. 
\end{itemize}
\end{theorem}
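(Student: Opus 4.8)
The plan is to route the entire equivalence through a single combinatorial invariant, the set $\mathcal{B}$ of \emph{bands} of $\Lambda$ (primitive cyclic reduced walks, taken up to cyclic rotation and inversion), and to prove that each of (1)--(6) is equivalent to $\mathcal{B}$ being finite. First I would invoke the classification of the indecomposable $\Lambda$-modules into string and band modules \cite{WW}, together with the combinatorial description of the irreducible maps and Auslander--Reiten sequences \cite{BR, WW}. Since $\Lambda$ is self-injective, each band $b$ yields a $\mathbb{P}^1$-family of band modules $M(b,\lambda)$ that are $\tau$-periodic of period one; by Riedtmann's theorem \cite{Riedtmann} and by \cite{HPR} these assemble into homogeneous tubes $\mathbb{Z}A_\infty/\langle\tau\rangle$, one for each $\lambda$. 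The string modules account for the remaining components, and the shape of each such component is dictated by the string combinatorics via the known form of the AR-sequences.

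Next I would establish the key dichotomy: $\Lambda$ is domestic if and only if $\mathcal{B}$ is finite. If $\mathcal{B}$ is finite, then for each dimension $n$ the one-parameter families of indecomposables arise only from these finitely many bands, so $\mu(n)$ is bounded and $\Lambda$ is domestic; this is the substance of (2)$\Rightarrow$(1). Conversely, if $\mathcal{B}$ is infinite, I would concatenate band letters to manufacture, in all large enough dimensions $n$, a number of pairwise non-isomorphic one-parameter families growing at least exponentially in $n$, so that $\mu(n)$ cannot be polynomially bounded and $\Lambda$ is not of polynomial growth. I expect this exponential estimate to be the main obstacle: one must check that distinct concatenations give genuinely distinct parametrized families and do not collapse under the string/band identifications, and it is precisely here that the rigidity of the special biserial structure (conditions (S0), (S1)) is indispensable.

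To link the growth with the component shapes I would analyse the non-periodic string components in each case. When $\mathcal{B}$ is finite, the non-periodic string modules organise into finitely many components of extended Dynkin type $\mathbb{Z}\tilde{A}_{p,q}$, the periodic strings contribute finitely many exceptional tubes $\mathbb{Z}A_\infty/\langle\tau^r\rangle$ with $r\geq 2$, and the band families give infinitely many homogeneous tubes; this yields (3), the statement (5) that all but finitely many components of $\Gamma_\Lambda$ are homogeneous tubes, and the refined description (6) pairing each $\mathbb{Z}\tilde{A}_{p,q}$ with its two associated exceptional tubes of ranks $p$ and $q$. When $\mathcal{B}$ is infinite the same analysis instead produces components of type $\mathbb{Z}A_\infty^\infty$, which by the list of admissible tree classes cannot coexist with any $\mathbb{Z}\tilde{A}_{p,q}$ component; hence the presence of a $\mathbb{Z}\tilde{A}_{p,q}$ component (3) is equivalent to the absence of an $\mathbb{Z}A_\infty^\infty$ component (4), and both are equivalent to finiteness of $\mathcal{B}$.

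Finally I would assemble the equivalences. The implication (1)$\Rightarrow$(2) is the general fact recalled before the theorem that domestic algebras are of polynomial growth \cite{Skowronski2}, while (2)$\Rightarrow$(1) is the contrapositive of the exponential estimate; thus (1)$\Leftrightarrow$(2). The analysis of the previous paragraph shows that each of (3), (4), (5), (6) is likewise equivalent to $\mathcal{B}$ being finite, closing the circle. The remaining work is bookkeeping: reading off the integers $p$, $q$ and the multiplicity $m$ in (6) from the lengths of the finitely many bands and periodic strings, and confirming via Drozd's dichotomy \cite{Drozd} that we remain throughout in the tame regime where this string and band description is valid.
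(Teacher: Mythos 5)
The survey does not actually prove this statement---it is quoted from Erdmann--Skowro\'nski \cite[Theorem 2.1]{ES} without proof---so your proposal can only be measured against the strategy of the cited source. Your overall architecture, namely reducing all six conditions to the finiteness of the set $\mathcal{B}$ of bands and using the string/band classification of \cite{WW,BR} together with the fact that band modules are $\tau$-invariant and hence sit in homogeneous tubes, is the correct one and matches how the result is established in the literature. But as written the plan has genuine gaps at precisely the two places where the theorem is hard. (i) The implication ``$\mathcal{B}$ infinite $\Rightarrow$ $\Lambda$ not of polynomial growth'' is only announced: you say you would ``concatenate band letters'' to get exponentially many one-parameter families, and you correctly flag that one must check primitivity and non-collapse under rotation and inversion, but none of this is carried out. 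Since $(2)\Rightarrow(1)$ is exactly this estimate, the equivalence $(1)\Leftrightarrow(2)$ is not yet proved. (ii) The determination of the shapes of the components containing non-periodic string modules---$\mathbb{Z}\tilde{A}_{p,q}$ when $\mathcal{B}$ is finite versus $\mathbb{Z}A_\infty^\infty$ when it is infinite---is asserted rather than derived; this is the core content of $(3)$, $(4)$ and $(6)$ and requires an actual analysis of the $\tau$-orbits of strings (in \cite{ES} this occupies most of the proof, and in the domestic case it is what produces the pairing of each Euclidean component with its two exceptional tubes of ranks $p$ and $q$).

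There is also one incorrect justification: you claim that a component of type $\mathbb{Z}A_\infty^\infty$ ``cannot coexist with any $\mathbb{Z}\tilde{A}_{p,q}$ component'' by ``the list of admissible tree classes.'' Riedtmann's theorem and \cite{HPR} constrain the tree class of an individual component (in particular forcing $A_\infty$ for components containing periodic modules), but they impose no restriction on which combinations of component types can occur in a single algebra. The mutual exclusion of $\mathbb{Z}\tilde{A}_{p,q}$ and $\mathbb{Z}A_\infty^\infty$ components is a \emph{consequence} of the full case analysis in (ii), not a shortcut to it, so invoking it before that analysis is done makes the argument for $(3)\Leftrightarrow(4)$ circular. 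If you repair (i) and (ii), the exclusion falls out and the logical assembly in your final paragraph goes through.
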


\begin{theorem}\cite[Theorem 2.2]{ES}\label{Thm ES non-domestic}
Let $\Lambda$ be a self-injective special biserial algebra. Then the following are equivalent: 
\begin{itemize}
\item[(1)] $\Lambda$ is not of polynomial growth. 
\item[(2)] $_s\Gamma_\Lambda$ has a component  of the form 
$\mathbb{Z}A_\infty^\infty$.
\item[(3)] $\Gamma_\Lambda$ has infinitely many (regular) components of the form 
$\mathbb{Z}A_\infty^\infty$.
\item[(4)]  $_s\Gamma_\Lambda$ is a disjoint union of a finite number of components of the form $\mathbb{Z}A_{\infty}/ \langle \tau^n \rangle$, with $n >1$, infinitely many components of the form $\mathbb{Z}A_{\infty}/ \langle \tau \rangle$ and infinitely many components of the form $\mathbb{Z}A_\infty^\infty$.
\end{itemize}
\end{theorem}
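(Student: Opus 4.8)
The plan is to treat this as the logical complement of the polynomial-growth description in Theorem~\ref{Thm ES domestic}, supplying the genuinely new content — the abundance and exact census of the $\mathbb{Z}A_\infty^\infty$ components — by a direct analysis of string and band modules. First I would dispose of the degenerate case: if $\Lambda$ is of finite representation type then $_s\Gamma_\Lambda$ is finite, $\mu(n)=0$ for all $n$ (there are no one-parameter families), so $\Lambda$ is trivially of polynomial growth and none of (1)--(4) can hold; the four statements are then vacuously equivalent. Hence I may assume $\Lambda$ is representation-infinite. By \cite{WW} every special biserial algebra is tame, so $\Lambda$ is a representation-infinite tame self-injective special biserial algebra; in particular $_s\Gamma_\Lambda$ has infinitely many vertices and Theorem~\ref{Thm ES domestic} applies.

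The equivalence $(1)\Leftrightarrow(2)$ then drops out of Theorem~\ref{Thm ES domestic} almost formally. Statement (4) of that theorem reads ``$_s\Gamma_\Lambda$ is infinite but has no component of the form $\mathbb{Z}A_\infty^\infty$'', and it is equivalent there to ``$\Lambda$ is of polynomial growth''. Since $\Lambda$ is representation-infinite, $_s\Gamma_\Lambda$ is infinite, so the clause ``infinite'' is automatic and Theorem~\ref{Thm ES domestic}(4) reduces to ``no $\mathbb{Z}A_\infty^\infty$ component''. Negating this, $\Lambda$ is \emph{not} of polynomial growth if and only if $_s\Gamma_\Lambda$ \emph{has} a $\mathbb{Z}A_\infty^\infty$ component, which is exactly $(1)\Leftrightarrow(2)$. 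The implications $(3)\Rightarrow(2)$ and $(4)\Rightarrow(2)$ are immediate, since both (3) and (4) assert the existence of (in fact infinitely many) $\mathbb{Z}A_\infty^\infty$ components. Thus the whole theorem reduces to upgrading the single component produced by (2) to the full structural statement (4), from which (3) is then read off directly.

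For this upgrade I would pass to the combinatorial model of the module category. By the Wald--Waschb\"usch and Butler--Ringel theory \cite{WW, BR} every indecomposable non-projective $\Lambda$-module is a string or a band module, and the middle terms of the Auslander--Reiten sequences are described purely in terms of adding and deleting hooks and cohooks at the ends of strings. The band modules $M(b,\lambda,n)$ organise, for each band $b$ and each $\lambda \in K^{*}$, into a $\tau$-periodic family and so fill out the tubes $\mathbb{Z}A_\infty/\langle\tau^{n}\rangle$: generically these are homogeneous ($n=1$), with only finitely many exceptional ones ($n>1$) arising from the finitely many bands that are short relative to the relations. The string modules split according to the end behaviour of their defining strings: a string all of whose one-sided infinite continuations are eventually periodic sits in a tube or, in the Euclidean situation, in a $\mathbb{Z}\tilde{A}_{p,q}$ component (the domestic case of Theorem~\ref{Thm ES domestic}), whereas a string admitting an aperiodic two-sided infinite continuation generates a component whose tree class is $A_\infty^\infty$ with trivial admissible group — that is, a $\mathbb{Z}A_\infty^\infty$ component — by the Riedtmann form of stable components \cite{Riedtmann} together with \cite{HPR}. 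The key lemma to isolate is therefore: $\Lambda$ fails to be of polynomial growth exactly when its underlying combinatorial data is not one of the domestic configurations parallel to Theorem~\ref{DomesticBGA}, and in that case one can construct an infinite family of pairwise non-isomorphic, $\tau$-aperiodic two-sided infinite strings lying in distinct $\mathbb{Z}A_\infty^\infty$ components.

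The main obstacle is precisely this last combinatorial dichotomy and the bookkeeping of multiplicities it entails. Distinguishing, at the level of a single string, a $\mathbb{Z}A_\infty^\infty$ component from a Euclidean $\mathbb{Z}\tilde{A}_{p,q}$ component and from a tube requires a careful analysis of how hooks and cohooks propagate along the string under $\tau$, and showing that the aperiodic strings occur in infinitely many distinct components — rather than accumulating in finitely many — is what ultimately defeats the bound $\mu(n)\le n^{m}$ and so forces non-polynomial growth. In practice this is cleanest to carry out after a Galois-covering reduction that replaces $\Lambda$ by a directed algebra whose repetitive structure is transparent, transporting the component analysis back to $\Lambda$ along the covering functor. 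Assembling the exact census in (4) — finitely many $\mathbb{Z}A_\infty/\langle\tau^{n}\rangle$ with $n>1$, infinitely many homogeneous tubes, and infinitely many $\mathbb{Z}A_\infty^\infty$ — is the technical heart of \cite{ES}, and it is the step I expect to absorb essentially all of the work.
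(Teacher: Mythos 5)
The first thing to say is that the paper does not prove this statement at all: it is quoted verbatim from Erdmann--Skowro\'nski \cite{ES} as background for the description of Auslander--Reiten components of Brauer graph algebras, so there is no in-paper argument to measure you against. Judged on its own terms, your proposal gets the easy part right and leaves the hard part unproved. Your reduction of $(1)\Leftrightarrow(2)$ to Theorem~\ref{Thm ES domestic} is clean and correct: for a representation-infinite $\Lambda$ the equivalence of (2) and (4) there says exactly that polynomial growth is equivalent to the absence of $\mathbb{Z}A_\infty^\infty$ components, and negating gives what you want (and the representation-finite case is indeed vacuous). The implications $(3)\Rightarrow(2)$ and $(4)\Rightarrow(2)$ are trivial, as you say. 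But the entire content of the theorem beyond Theorem~\ref{Thm ES domestic} is the implication $(2)\Rightarrow(4)$ (and hence $(3)$): that a single $\mathbb{Z}A_\infty^\infty$ component forces infinitely many of them, that the tubes of rank $>1$ are finite in number, and that everything else is a homogeneous tube. Your proposal describes the string-and-band framework in which this is proved and then explicitly defers the construction of infinitely many pairwise distinct $\mathbb{Z}A_\infty^\infty$ components to \cite{ES}. That deferral is precisely the theorem; without it you have only re-derived $(1)\Leftrightarrow(2)$, which is a repackaging of the previously quoted result.

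Two factual slips in the heuristic part are worth flagging, since they would derail an attempt to fill the gap along the lines you sketch. For self-injective special biserial algebras the band modules lie exclusively in homogeneous tubes $\mathbb{Z}A_\infty/\langle\tau\rangle$; the finitely many exceptional tubes, including all those of rank $n>1$, consist of \emph{string} modules (in the Brauer graph setting their mouths are the uniserial simples and the maximal uniserial submodules of projectives, counted by double-stepped Green walks as in Theorem~\ref{Thm Du exceptional}). So your claim that the rank~$>1$ tubes ``arise from the finitely many bands that are short relative to the relations'' is wrong, and ``generically homogeneous'' understates the situation: band modules are never in tubes of higher rank here. Correcting this matters because the census in $(4)$ is obtained by partitioning the string modules between exceptional tubes and $\mathbb{Z}A_\infty^\infty$ components, not by analysing bands.
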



\subsection{Green walks, double stepped Green walks and exceptional tubes}

In \cite{Gr} Sandy Green defined a {\it walk around the Brauer tree} and showed that for certain modules it encodes  their minimal projective resolution.

 Let $A$ be a Brauer graph algebra. We call a simple $A$-module with uniserial projective cover a {\it uniserial simple $A$-module}. The uniserial simple $A$-modules correspond exactly to the truncated edges in the Brauer graph. Recall that a truncated edge is a leaf in the Brauer graph where the leaf vertex has multiplicity one. Green showed that the successive terms in the minimal projective resolution of a uniserial simple $A$-module are given by 'walking' around the Brauer tree. Roggenkamp showed  that the same is true for a Brauer graph algebra \cite{Ro}.

\subsubsection{Definition of Green's walk around the Brauer graph}

A Green walk  on a Brauer graph is given by a graph theoretic path on the graph underlying the Brauer graph. It is defined in terms of successor relations. Let $A = KQ/I$ be a Brauer graph algebra with Brauer graph $G$. 

{\bf Definition of Green walks.} 
 Suppose first that $G$ consists of a single loop $i_0$. Then there are two Green walks of period 1 on $G$  both given by $i_0$, see  Figure~\ref{Green walk single loop}.
\begin{figure}[H] 
	\centering
	\begin{tikzpicture}[auto, thick]

	\node[] (0) at (0,0) {};
	\node[cb] (1) at ($(0,0)+(30:2)$) {};
	\node[] (2) at ($(1)+(-30:2)$) {};
	\node (i1) at ($(1)+(90:2)$){$i_0$};
	\node (a1) at ($(1)+(-90:.3)$){$a_0=a_1$};
	
	\draw (1) to [out=140, in=40, looseness=70] node {}(1);
	\draw[->,red] ($(1)+(40:.4cm)$) to [out=-30, in=210, looseness=10] node {}($(1)+(-220:.4)$);
	\draw[->,green] (1) +(133:.7cm) arc (133:45:.7cm);
		
	\end{tikzpicture}
	\caption{Green walk if the Brauer graph consists of a single loop}\label{Green walk single loop}
\end{figure}
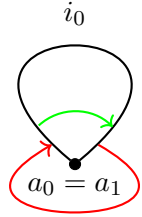

Now suppose that $G$ contains more than one edge and 
let $i_0$ be an edge in $G$ with vertices $a_0$ and $a_1$. Suppose first that $a_0 \neq a_1$, that is suppose that $i_0$ is not a loop.  Let $i_1$ be the successor of $i_0$ at the vertex $a_1$. 
Now let $a_2$ be the other vertex of $i_1$ and let $i_2$ be the successor of $i_1$ at vertex $a_2$.  Note that if $i_1$ is a loop then $a_1 = a_2$ and the successor $i_2$ of $i_1$ appears as in Figure~\ref{Green walk loops 1 and 2} (1) and (2). 
\begin{figure}[H]
	\centering
	\begin{tikzpicture}[auto, thick]
	\node (a) at (0,4) {(1)};
	\node[cb] (0) at (0,0) {};
	\node[cb] (1) at ($(0)+(30:2)$) {};
	\node[cb] (2) at ($(1)+(-30:2)$) {};
	\node (3) at ($(1)+(110:1.5)$) {};
	\node (4) at ($(1)+(70:1.5)$) {};
	\node (p1) at ($(1)+(89:1)$){$\cdots$};
	\node (i1) at ($(1)+(90:2.7)$){$i_1$};
	\node (a0) at ($(0)+(-30:.5)$){$a_0$};
	\node (a3) at ($(2)+(210:.5)$){$a_3$};
	\node (aa) at ($(1)+(-90:.5)$){$a_1=a_2$};
	
	\draw (3)--(1)--(4);
	\draw (0) to node {$i_0$}(1);
	\draw (1) to node {$i_2$}(2);
	
	\draw (1) to [out=140, in=40, looseness=100] node {}(1);
	\draw[loosely dotted] (3) to node {}($(3)+(105:.6)$);
	\draw[loosely dotted] (4) to node {}($(4)+(75:.6)$);
	\draw[loosely dotted] ($(0)+(210:.05)$) to node {}($(0)+(210:.6)$);
	\draw[loosely dotted] ($(2)+(-30:.05)$) to node {}($(2)+(-30:.6)$);
	
	\draw[->,red] (1) +(207:.7cm) arc (207:142:.7cm);
	\draw[->,red] (1) +(38:.7cm) arc (38:-27:.7cm);
	\node (b) at ($(a)+(0:6)$) {$(2)$};
	\node[cb] (0b) at ($(0)+(0:6)$)  {};
	\node[cb] (1b) at ($(0b)+(30:2)$) {};
	\node[cb] (2b) at ($(1b)+(-30:2)$) {};
	\node[cb] (3b) at ($(1b)+(110:1.5)$) {};
	\node[cb] (4b) at ($(1b)+(70:1.5)$) {};
	\node (pb) at ($(1b)+(89:1)$){$\cdots$};
	\node (i1b) at ($(1b)+(90:2.7)$){$i_1$};
	\node (a1a2b) at ($(1b)+(-90:.5)$){$a_1=a_2$};
	\node (pb) at ($(1b)+(-89:1)$){$\cdots$};
	
	\draw (0b)--(1b)--(2b);
	\draw (1b) to [left, pos=.7] node {$i_2$}(3b);
	\draw (1b) to [right,pos=.7] node {$i_0$}(4b);
	\draw (1b) to [out=140, in=40, looseness=100] node {}(1b);
	\draw[loosely dotted] ($(3b)+(105:.05)$) to node {}($(3b)+(105:.6)$);
	\draw[loosely dotted] ($(4b)+(75:.05)$) to node {}($(4b)+(75:.6)$);
	\draw[loosely dotted] ($(0b)+(210:.05)$) to node {}($(0b)+(210:.6)$);
	\draw[loosely dotted] ($(2b)+(-30:.05)$) to node {}($(2b)+(-30:.6)$);
	
	\draw[->,green] (1b) +(135:.7cm) arc (135:113:.7cm);
	\draw[->,green] (1b) +(67:.7cm) arc (67:45:.7cm);
	
	\end{tikzpicture}
	\caption{}\label{Green walk loops 1 and 2}
\end{figure}

  Let $a_3$ be the other vertex of $i_2$ and let $i_3$ be the successor of $i_2$ at $a_3$ (treating the case that $i_2$ is a loop as above). 
 In this way, we define an infinite sequence of edges $i_0, i_1, i_2, \ldots$, called a \textit{Green walk starting at $i_0$}.  Let $k$ be the minimal number $n$ such that $i_{n+k}=i_k$ for all $k \geq 0$.  Then we say that $(i_0, i_1, \ldots, i_{k-1})$ is a \textit{Green walk of period $k$}. Since $G$ is a finite graph, every Green walk is periodic.  
 
 Note that if   $i_0$ is a loop, that is if $a_0 = a_1$, then there are two distinct Green walk starting at $i_0$ depending on the  successor $i_1$  of $i_0$ chosen. This is illustrated in Figure~\ref{Green walks starting on loops}. 

\begin{figure}[H]
	\centering
	\begin{tikzpicture}[auto, thick]
	\node (a) at (0,4) {$(1)$};
	\node[] (0) at (0,0) {};
	\node[cb] (1) at ($(0,0)+(30:2)$) {};
	\node[] (2) at ($(1)+(-30:2)$) {};
	\node (i1) at ($(1)+(90:2)$){$i_0$};
	\node (a1) at ($(1)+(-90:.5)$){$a_0=a_1$};
	
	\draw ($(0)+(90:.5)$) to [bend left] node {}($(2)+(90:.5)$);
	\draw (1) to [out=140, in=40, looseness=70] node {}(1);
	\draw[->,red] (1) +(183:.7cm) arc (183:142:.7cm);
	\draw[->,red] (1) +(38:.7cm) arc (38:-3:.7cm);
	\draw[->,green] (1) +(133:.7cm) arc (133:45:.7cm);
	
	\node (b) at ($(a)+(0:4.5)$) {$(2)$};
	\node (0b) at ($(0)+(0:4.5)$) {};
	\node[cb] (1b) at ($(0b)+(30:2)$) {};
	\node (3b) at ($(1b)+(110:1.5)$) {};
	\node (4b) at ($(1b)+(70:1.5)$) {};
	\node (pb) at ($(1b)+(89:1)$){$\cdots$};
	\node (i0b) at ($(1b)+(90:2.7)$){$i_0$};
	\node (a0b) at ($(1b)+(-90:.5)$){$a_0=1$};
		
	\draw (1b) to [left, pos=.7] node {}(3b);
	\draw (1b) to [right,pos=.7] node {}(4b);
	\draw (1b) to [out=140, in=40, looseness=100] node {}(1b);
	\draw[loosely dotted] ($(3b)+(105:.05)$) to node {}($(3b)+(105:.6)$);
	\draw[loosely dotted] ($(4b)+(75:.05)$) to node {}($(4b)+(75:.6)$);
	\draw[->,green] (1b) +(135:.7cm) arc (135:113:.7cm);
	\draw[->,green] (1b) +(67:.7cm) arc (67:45:.7cm);
	\draw[->,red] (1b) +(40:1cm) arc (43:-220:1cm);
	\node (c) at ($(b)+(0:4.5)$) {$(3)$};
	\node (0c) at ($(0b)+(0:4.5)$) {};
	\node[cb] (1c) at ($(0c)+(30:2)$) {};
	\node[] (2c) at ($(1c)+(-30:2)$) {};
	\node (3c) at ($(1c)+(110:1.5)$) {};
	\node (4c) at ($(1c)+(70:1.5)$) {};
	\node (pc) at ($(1c)+(89:1)$){$\cdots$};
	\node (i0c) at ($(1c)+(90:2.7)$){$i_0$};
	
	\draw ($(0c)+(90:.5)$) to [bend left] node {}($(2c)+(90:.5)$);	
	\draw (1c) to [left, pos=.7] node {}(3c);
	\draw (1c) to [right,pos=.7] node {}(4c);
	\draw (1c) to [out=140, in=40, looseness=100] node {}(1c);
	\draw[loosely dotted] ($(3c)+(105:.05)$) to node {}($(3c)+(105:.6)$);
	\draw[loosely dotted] ($(4c)+(75:.05)$) to node {}($(4c)+(75:.6)$);
	\draw[->,red] (1c) +(183:.7cm) arc (183:142:.7cm);
	\draw[->,red] (1c) +(38:.7cm) arc (38:-3:.7cm);
	\draw[->,green] (1c) +(135:.7cm) arc (135:113:.7cm);
	\draw[->,green] (1c) +(67:.7cm) arc (67:45:.7cm);
	
	\end{tikzpicture}
	\caption{Green walk starting on loops}\label{Green walks starting on loops}
\end{figure}
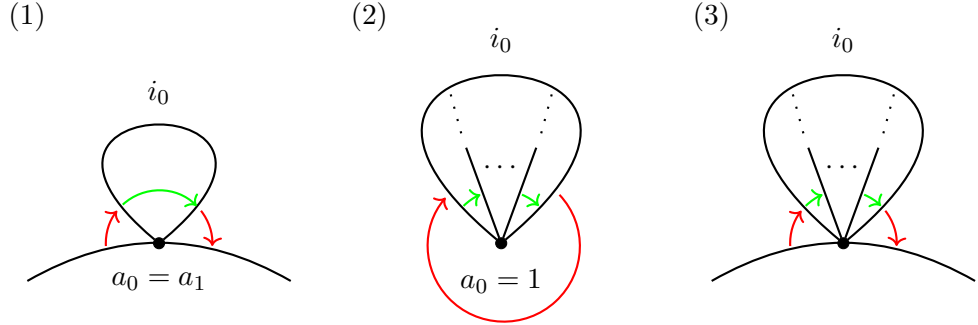

Every edge appears  in exactly two (not necessarily distinct) Green walks, corresponding to the two successors of a given edge. Any  edge corresponding to a leaf (truncated or not) appears twice in the same Green walk.

In order to avoid the complication with loops in the definition of a Green walk,  many authors consider half-edges instead of edges using, for example, the language of ribbon graphs, see \cite{DF, Ro, Du, AAC}. 

\begin{remark}{\rm Green walks are independent of the multiplicities in the Brauer graph, they only depend on the graph and the orientation. 
}
\end{remark}

Given a Green walk $i_0, i_1, i_2, \ldots $ at some edge $i_0$ of a Brauer graph $G$, a {\it double stepped Green walk at $i_0$} is given by the sequence of edges $i_0, i_2, i_4, \ldots $ in $G$.

\begin{theorem}\cite{Gr, Ro}\label{Thm Green walk}  Let $A$ be a Brauer graph algebra with Brauer graph $G$. 

(1) Let $S$ be a uniserial simple $A$-module and let $i_0$ be the edge in $G$ corresponding to $S$ and let $i_0, i_1, \ldots$ be the Green walk starting at $i_0$. Then  the $n$th projective in a minimal projective resolution of $S$ is given by $P_{i_n}$, the indecomposable projective associated to the edge $i_n$. 

(2) Let $j_0$ be the edge in $G$ corresponding to the simple top of a maximal uniserial submodule $M$ of some projective indecomposable module and let $j_0, j_1, \ldots$ be the Green walk starting at $j_0$.  Then the $n$th projective in a minimal projective resolution of $M$ is given by $P_{j_n}$. 
\end{theorem}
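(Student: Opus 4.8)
The plan is to reduce both statements to a single first-syzygy computation for a family of uniserial modules and then to iterate. Recall that since $A_G$ is special biserial (Theorem~\ref{BGAspecialbiserial}) and symmetric, each indecomposable projective $P_i$ has simple top and simple socle, both equal to $S_i$, and $\rad P_i = U + V$ with $U \cap V = \soc P_i = S_i$, where $U$ and $V$ are the uniserial submodules obtained by reading the two special $i$-cycles at the two vertices incident to $i$ (if $i$ is a loop the two arms sit at the same vertex and are distinguished by their initial half-edge). For an edge $i$ and an incident non-truncated vertex $v$ (so $m(v)\val(v)\ge 2$) write $U_{i,v}$ for the corresponding maximal uniserial submodule of $P_i$; its top is $S_{i'}$ with $i' = \mathrm{succ}(i,v)$ the successor of $i$ at $v$, and its socle is $S_i$. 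I would first note that (1) follows from (2): if $i_0$ is truncated then $P_{i_0}$ is uniserial, so $\Omega S_{i_0} = \rad P_{i_0} = U_{i_0,a_1}$, where $a_1$ is the non-truncated vertex of $i_0$, a module with top $S_{i_1}$ and $i_1 = \mathrm{succ}(i_0,a_1)$; hence the resolution of $S_{i_0}$ is $P_{i_0}$ followed by the resolution of $U_{i_0,a_1}$.

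The engine of the proof is then the identity
\[
\Omega\, U_{i,v} \cong U_{i',v'},
\]
where $i' = \mathrm{succ}(i,v)$ is the top of $U_{i,v}$ and $v'$ is the vertex of $i'$ \emph{other} than $v$ (again read off half-edges if $i'$ is a loop). To prove it I would take the projective cover $\pi\colon P_{i'}\to U_{i,v}$ (legitimate, as $U_{i,v}$ is local with top $S_{i'}$) and identify $\Ker\pi$ with $U_{i',v'}$ by comparing composition series. Both $U_{i,v}$ and the quotient $P_{i'}/U_{i',v'}$ are uniserial with top $S_{i'}$, and reading the cyclic ordering at $v$ shows that they have exactly the same composition factors in the same order: $S_{i'}$ followed by the edges met on $m(v)$ full turns of the special cycle at $v$, ending at $S_i$. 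The relations of type~I (which close each arm at the common socle after $m(v)$ turns) and type~II (which forbid any longer path) guarantee that the two uniserial modules have the \emph{same length} $m(v)\,\val(v)$, while the relations of type~III guarantee that no branching occurs inside an arm; the isomorphism $U_{i,v}\cong P_{i'}/U_{i',v'}$, hence the short exact sequence $0\to U_{i',v'}\to P_{i'}\to U_{i,v}\to 0$, then reads off the explicit path basis.

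With the Core Lemma in hand, the passage from $(i,v)$ to $(i',v')=(\mathrm{succ}(i,v),\ \text{other vertex of } i')$ is precisely one step of Green's walk in its half-edge formulation, so the tops of the successive syzygies form exactly the walk: $\Omega^n S_{i_0}$ (resp.\ $\Omega^n M$) is uniserial with top $S_{i_n}$ (resp.\ $S_{j_n}$), its projective cover is $P_{i_n}$ (resp.\ $P_{j_n}$), and — since every connecting homomorphism has image in the radical of the next projective — the resolution so built is automatically minimal. It never terminates because an arm module is a proper uniserial submodule of a projective and $A_G$ is self-injective, so no syzygy is projective, and its eventual periodicity mirrors the periodicity of the walk. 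I expect the main obstacle to be the bookkeeping that at every stage $U_{i,v}$ meets its projective cover $P_{i'}$ along a \emph{full} arm rather than a partial one: this is exactly what forces $\Ker\pi$ to be a single (uniserial) arm instead of a two-armed module with decomposable top, and it is precisely where the length constraints coming from relations of types~I and~II are used. The second delicate point is the consistent handling of loops: when $i$ or $i'$ is a loop the vertex $v$ is incident twice, so one must track half-edges to select the correct successor — this is the reason Green's walk bifurcates into two walks at a loop and the reason $U_{i,v}$ must be indexed by half-edges.
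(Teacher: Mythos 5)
The survey states this theorem without proof, citing Green and Roggenkamp, so there is no in-paper argument to compare against; your proposal is essentially the classical proof from those sources, and it is correct. The reduction of (1) to (2), the identification of $\Omega\,U_{i,v}$ via the projective cover $P_{i'}\to U_{i,v}$, $e_{i'}\mapsto \alpha_1$, and the dimension count $\dim \Ker\pi = \bigl(m(v)\val(v)+m(v')\val(v')\bigr)-m(v)\val(v)=m(v')\val(v')=\dim U_{i',v'}$ (using the type~II relation to see that the socle of the $v$-arm of $P_{i'}$ dies and the type~III relations to see that the $v'$-arm is killed outright) all go through as you describe.

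One case should be made explicit, because your Core Lemma as literally stated does not cover it: when $i'=\mathrm{succ}(i,v)$ is truncated at its other vertex $v'$ (i.e.\ $m(v')\val(v')=1$), there is no arm $U_{i',v'}$, and the computation gives $\Omega\,U_{i,v}=\soc P_{i'}=S_{i'}$, a uniserial simple module. The iteration then takes the projective cover $P_{i'}$ a second time and continues with $\Omega S_{i'}=\rad P_{i'}=U_{i',v}$ --- exactly matching the fact that a leaf edge occurs twice in succession in the Green walk (the ``bounce-back'' at a vertex of valency one and multiplicity one). The cleanest fix is to adopt the convention $U_{j,w}:=\soc P_j$ whenever $m(w)\val(w)=1$, so that $U_{j,w}$ always has length $m(w)\val(w)$ and your lemma holds uniformly; this is a different degeneration from the loop case you do discuss, so it deserves its own sentence. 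With that addition the induction, the minimality remark, and the non-termination argument (a proper nonzero submodule of an indecomposable projective over a self-injective algebra cannot be projective) are all in order.
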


\begin{Example}
{\rm 
Example of the two Green walks on the 1-domestic Brauer graph given in Example~\ref{Example of a 1-domestic BGA}  and the  minimal projective resolution of the simple corresponding to the edge 1 given by the walk starting at 1. 
\begin{figure}[H]
	\centering
	\begin{tikzpicture}[auto, thick]
	\node[cb] (1) at (0,0) {};
	\node[cb] (2) at ($(1)+(0:4)$) {};
	\node[cb] (3) at ($(1)+(60:4)$) {};
	\node[cb] (4) at ($(3)+(90:2)$) {};
	\node[cb] (5) at ($(1)+(30:1.55)$) {};
	\node[cb] (6) at ($(5)+(0:1.3)$) {};
	\node[cb] (7) at ($(5)+(60:1.3)$) {};
	
	\draw (1) to [below] node {$3$}(2);
	\draw (2) to [right, pos=.55] node {$2$}(3);
	\draw (3) to [right, pos=.8] node {$1$}(4);
	\draw (3) to [left, pos=.45] node {$7$}(1);
	\draw (1) to [above, pos=.4] node {$4$}(5);
	\draw (5) to [below, pos=.47] node {$6$}(6);
	\draw (5) to [left, pos=.7] node {$5$}(7);
	
	\draw[->, green] (1)+(-5:1.8cm) arc (-5:-295:1.8cm){};
	\draw[->, green] (2)+(115:1.8cm) arc (115:-175:1.8cm){};
	\draw[->, green] (3)+(85:.9cm) arc (85:-40:1.35cm){};
	\draw[<-, green] (3)+(95:.9cm) arc (95:220:1.35cm){};
	\draw[->, green] (4)+(265:.9cm) arc (265:-85:.9cm){};
	
	\draw[->, blue] (5)+(200:.5cm) to [out=80,in=195]node {} ($(5)+(70:.6cm)$);
	\draw[<-, blue] (5)+(220:.45cm) to [out=-20,in=230] node {} ($(5)+(-5:.6cm)$);
	\draw[->, blue] (5)+(55:.6cm) to [out=-20,in=100] node {} ($(5)+(5:.6cm)$);
	
	\draw[->, blue] (5)+(220:.6cm) to [out=-30,in=140] node {} ($(1)+(3:1.9cm)$);
	\draw[->, blue] (1)+(3:2cm) to [out=5,in=230,looseness=.5] node {} ($(2)+(123:.8cm)$);
	\draw[->, blue] (2)+(125:.9cm) to [out=130,in=-20,looseness=.8] node {} ($(3)+(245:.8cm)$);
	\draw[->, blue] (3)+(245:.9cm) to [out=-45,in=120,looseness=0.6] node {} ($(5)+(200:.6cm)$);
	
	\draw[blue] (7)+(230:.5cm) to [out=120,in=150] node {} ($(7)+(60:.3cm)$);
	\draw[<-,blue] (7)+(250:.5cm) to [out=0,in=-30] node {} ($(7)+(60:.3cm)$);
	
	\draw[blue] (6)+(170:.55cm) to [out=60,in=90] node {} ($(6)+(0:.3cm)$);
	\draw[<-,blue] (6)+(-170:.55cm) to [out=-60,in=-90] node {} ($(6)+(0:.3cm)$);
	 	
	\end{tikzpicture}
	\caption{ Brauer graph with two distinct Green walks.}
\end{figure}

\begin{figure}[H]
	\centering
\begin{tikzpicture}[auto, thick]
	\node  (0) at (0,0) {$\dots$};
	\node  (1) at ($(0)+(-55:2)$) {$1$};
	\node  (2) at ($(1)+(55:2)$) {$\dimFour1271$};
	\node  (3) at ($(2)+(-55:2)$) {$\dimThree127$};
	\node  (4) at ($(3)+(55:2)$) {$\matrixTwo{7}{\dimTwo12}{\dimTwo43}{7}$};
	\node  (5) at ($(4)+(-55:2)$) {$\dimThree743$};
	\node  (6) at ($(5)+(55:2)$) {$\matrixTwo{3}{\dimTwo74}{2}{3}$};
	\node  (7) at ($(6)+(-55:2)$) {$\dimTwo32$};
	\node  (8) at ($(7)+(55:2)$) {$\matrixTwo{2}{3}{\dimTwo71}{2}$};
	\node  (9) at ($(8)+(-55:2)$) {$\dimThree271$};
	\node  (10) at ($(9)+(55:2)$) {$\dimFour1271$};
	\node  (11) at ($(10)+(0:1.4)$) {$1$};

	\draw[->>] (0) to node {}(1);
	\draw[right hook ->] (1) to node {}(2);
	\draw[->>] (2) to node {}(3);
	\draw[right hook ->] (3) to node {}(4);
	\draw[->>] (4) to node {}(5);
	\draw[right hook ->] (5) to node {}(6);
	\draw[ ->>] (6) to node {}(7);
	\draw[right hook ->] (7) to node {}(8);
	\draw[->>] (8) to node {}(9);
	\draw[right hook ->] (9) to node {}(10);
	\draw[->] (10) to node {}(11);

\end{tikzpicture}
\caption{Projective resolution of the uniserial simple module associated to edge 1.}
\end{figure}

}
\end{Example}

It follows immediately from Theorem~\ref{Thm Green walk} that 

\begin{corollary}
Any uniserial simple $A$-module and any maximal uniserial submodule of an indecomposable projective $A$-module  is $\Omega$-periodic. 
\end{corollary}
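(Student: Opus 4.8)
The plan is to read off $\Omega$-periodicity directly from Theorem~\ref{Thm Green walk}, using that a Green walk is periodic and that $A$, being a Brauer graph algebra, is symmetric and hence self-injective. Let $\mathcal U$ denote the (finite) set of isomorphism classes consisting of the uniserial simple $A$-modules together with the maximal uniserial submodules of the indecomposable projective $A$-modules; this set is finite because $A$ has only finitely many indecomposable projectives $P_i$, each of which, being special biserial with simple socle, has at most two maximal uniserial submodules. First I would record that every member of $\mathcal U$ is non-projective (a proper maximal uniserial submodule of $P_i$ is a proper submodule, and a uniserial simple is non-projective apart from the trivial case $A \cong K[x]/(x^2)$).

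The core step is to observe that $\Omega$ maps $\mathcal U$ into itself. For $M \in \mathcal U$ with governing Green walk $i_0, i_1, \dots$, Theorem~\ref{Thm Green walk} gives $P(\Omega M) = P_{i_1}$, and the walk description identifies $\Omega M = \ker(P_{i_0} \twoheadrightarrow M)$ as precisely the maximal uniserial submodule of $P_{i_1}$ singled out by the successor relation at the next vertex of the walk; in particular $\Omega M \in \mathcal U$. Thus $\Omega$ restricts to a self-map of the finite set $\mathcal U$. Since $A$ is symmetric, $\Omega$ is an auto-equivalence of $\underline{\mod}\,A$ and hence injective on isomorphism classes of non-projective indecomposables; as $\mathcal U$ consists of such modules, $\Omega|_{\mathcal U}$ is an injection of a finite set into itself, therefore a bijection of finite order.

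Consequently there is an integer $N \ge 1$ with $\Omega^N X \cong X$ for every $X \in \mathcal U$, which is exactly the assertion that each uniserial simple module and each maximal uniserial submodule of an indecomposable projective is $\Omega$-periodic. (If one prefers to avoid the explicit identification of $\Omega M$, the same conclusion can be reached quantitatively: the syzygies $\Omega^n M$ stay uniserial with tops $i_n$, and their lengths $d_n = \dim_K \Omega^n M$ obey $d_{n+1} + d_n = \dim_K P_{i_n}$ with periodic right-hand side, forcing $(d_n)$ to be bounded and hence the $\Omega^n M$ to range over finitely many isomorphism classes.)

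The one point demanding care is the core step: Theorem~\ref{Thm Green walk} as stated records only the projective covers $P_{i_n}$ occurring along the walk, whereas the argument needs the stronger fact, implicit in the walk construction, that the syzygies \emph{themselves} remain within the finite family $\mathcal U$. That the mere periodicity of the projective covers is not by itself sufficient is shown by the weakly symmetric case, where complexity-one non-periodic modules exist (the remark citing \cite{Erdmann}); it is the symmetry of $A$, which forces $\Omega$ to be a stable auto-equivalence, that converts the finiteness of $\mathcal U$ into genuine $\Omega$-periodicity. I therefore expect the only real content to be the bookkeeping that makes the inclusion $\Omega(\mathcal U) \subseteq \mathcal U$ precise; everything else is the pigeonhole principle together with the invertibility of $\Omega$.
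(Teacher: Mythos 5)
Your proof is correct and takes essentially the same route as the paper, which simply asserts that the corollary ``follows immediately'' from Theorem~\ref{Thm Green walk}: the details you supply --- that the syzygies themselves remain inside the finite set $\mathcal{U}$ of uniserial simples and maximal uniserial submodules, and that $\Omega$ is injective on isomorphism classes of non-projective indecomposables because $A$ is symmetric, hence self-injective --- are exactly what make that ``immediately'' legitimate, and you rightly identify the inclusion $\Omega(\mathcal{U})\subseteq\mathcal{U}$ as the only point with real content. (One cosmetic slip: $\Omega M=\ker(P_{i_0}\twoheadrightarrow M)$ is the maximal uniserial submodule of $P_{i_0}$ at the vertex $a_1$, whose projective cover is $P_{i_1}$, rather than a submodule of $P_{i_1}$; this does not affect the conclusion.)
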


As a consequence by \cite{HPR}, it follows that any uniserial simple $A$-module  and any maximal uniserial submodule of an indecomposable projective $A$-module lies in a component of tree class $A_\infty$. 

Moreover, in \cite{Du} the precise location of these modules is given

\begin{proposition}
Let $A$ be a Brauer graph algebra. An indecomposable (string) module $M$ is at the mouth of an exceptional tube in the stable Auslander-Reiten quiver of $A$ if and only if $M$ is uniserial simple or a maximal uniserial submodule of an indecomposable projective $A$-module. 
\end{proposition}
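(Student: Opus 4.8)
The plan is to combine the periodicity results already established with the Butler--Ringel combinatorial description of almost split sequences for string modules. First I would reduce to string modules: by the Wald--Waschb\"usch classification \cite{WW} every indecomposable non-projective $A$-module is a string or a band module, and the band modules fill the homogeneous tubes $\mathbb{Z}A_\infty/\langle \tau\rangle$ of rank one. Since an exceptional tube has rank at least two, every module lying in one is a string module, so the word ``(string)'' in the statement is automatic and both implications may be argued entirely within the class of string modules.

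For the forward direction, let $M$ be either a uniserial simple module or a maximal uniserial submodule of an indecomposable projective. By the Corollary following Theorem~\ref{Thm Green walk}, $M$ is $\Omega$-periodic; as $A$ is symmetric we have $\tau=\Omega^2$, so $M$ is $\tau$-periodic, and by \cite{HPR} its component of $_s\Gamma_A$ has tree class $A_\infty$, hence is a tube $\mathbb{Z}A_\infty/\langle\tau^n\rangle$. It remains to show that $M$ sits at the mouth and that $n\geq 2$. For the first point I would write down the string $w$ attached to $M$: a uniserial simple corresponds to a truncated edge (a string of length zero whose vertex forbids the addition of a hook on the truncated side), while a maximal uniserial submodule corresponds to a maximal direct string terminating at the simple top of a uniserial part of a projective. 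In each case exactly one of the hook/cohook operations of \cite{BR} applies, so the middle term of the almost split sequence $0\to \tau M\to E\to M\to 0$ is a single indecomposable string module; this is precisely the condition for $M$ to lie at the mouth. For $n\geq 2$ I would identify $n$ with the length of the double stepped Green walk through the corresponding edge supplied by Theorem~\ref{Thm Green walk}, and observe that the resulting tube is non-homogeneous (a homogeneous rank-one tube is filled by band modules), so the tube is exceptional.

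For the converse, suppose $M$ lies at the mouth of an exceptional tube. By the reduction above $M=M(w)$ is a string module, and being at the mouth means the almost split sequence ending at $M$ has indecomposable middle term. Consulting the Butler--Ringel rules \cite{BR}, an indecomposable middle term forces $w$ to admit a hook (equivalently a cohook) on exactly one side and to be blocked on the other; in the Brauer-graph dictionary this one-sided maximality says precisely that $w$ is a uniserial string that is either bounded by a truncated edge or is a maximal uniserial submodule sitting at the top of the uniserial part of a projective. Hence $M$ is a uniserial simple module or a maximal uniserial submodule of an indecomposable projective, as required.

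The main obstacle will be the bookkeeping in the two combinatorial matchings, namely translating the hook/cohook analysis of \cite{BR} into the Brauer-graph language so that ``indecomposable middle term'' is shown to be equivalent, on the nose, to ``$w$ is uniserial and one-sidedly maximal,'' and then checking that this class coincides exactly with the uniserial simples and the maximal uniserial submodules of the projectives. A secondary technical point is to rule out the degenerate possibility $n=1$ (a homogeneous tube) for these string modules, which I would handle by computing the $\tau$-period from the Green walk and treating separately the small graphs (such as a single loop or a single edge) where the Green walk has period one.
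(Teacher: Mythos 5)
Your overall architecture --- Green-walk periodicity plus \cite{HPR} for the forward direction, and the Butler--Ringel analysis of the middle term of the almost split sequence for the ``mouth'' characterisation --- is the same route as the cited source \cite{Du}; the survey itself does not reprove this proposition, recording only the $\Omega$-periodicity corollary and the resulting tree class $A_\infty$. There is, however, a genuine error in your treatment of rank-one tubes. You assert that ``an exceptional tube has rank at least two,'' and later propose to ``rule out the degenerate possibility $n=1$ (a homogeneous tube).'' Both statements contradict the definition in force here: an exceptional tube is a tube $\mathbb{Z}A_\infty/\langle\tau^n\rangle$ with $n\geq 1$ consisting of string modules, and the rank-one tubes split into finitely many exceptional ones (string modules) and infinitely many homogeneous ones (band modules); rank one does not imply homogeneous. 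The example following Theorem~\ref{Thm Du exceptional} exhibits an exceptional tube of rank $1$ whose mouth module is the string module attached to the arrow inside the loop, so your argument as written would wrongly exclude such modules. The repair is cheap --- a tube containing a string module is by definition not homogeneous, hence exceptional, whatever its rank; and tubes of rank $\geq 2$ consist of string modules because band modules are $\tau$-invariant --- but your stated reduction ``exceptional $\Rightarrow$ rank $\geq 2$ $\Rightarrow$ string'' is unjustified, and the worry about $n=1$ is a non-issue.

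Beyond that, the combinatorial core --- that, for a string module over a Brauer graph algebra, ``indecomposable middle term'' is equivalent to ``uniserial simple or maximal uniserial submodule of an indecomposable projective'' --- is essentially the entire content of the proposition, and you defer it as ``bookkeeping.'' In particular, in the converse direction you assert that an indecomposable middle term forces the string $w$ to be uniserial; this is precisely the step requiring the Brauer-graph combinatorics, and one must also account for the almost split sequences $0\to\rad P\to \rad P/\soc P\oplus P\to P/\soc P\to 0$ attached to the projectives, whose projective middle summand must be discarded when passing to the stable quiver so that $\rad P$ genuinely sits at a mouth. As it stands the proposal is a correct plan marred by one false claim about rank-one tubes, rather than a complete proof.
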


\subsection{Exceptional Tubes}

Exceptional tubes are Auslander-Reiten components of the form $\mathbb{Z}A_{\infty}/ \langle \tau^{n} \rangle$, where the integer $n \geq 1$ denotes the rank of the tube and where for tubes of rank one, we  distinguish between the exceptional tubes -- that is, the tubes of rank one consisting of string modules, of which there
are finitely many -- and the homogeneous tubes of rank one, which consist only of
band modules.

\begin{theorem}\cite[Theorem 4.3]{Du} \label{Thm Du exceptional}
Let $A$ be a representation-infinite Brauer graph algebra with Brauer graph $G$. 
\begin{itemize}
\item[(1)] The number of exceptional tubes in $_s\Gamma_A$ is given by the number of double-stepped Green walks on $G$. 
\item[(2)] The length of the double-stepped Green walk gives the rank of the associated exceptional tube. 
\end{itemize}
\end{theorem}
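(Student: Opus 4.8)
The plan is to set up a bijection between the exceptional tubes of $_s\Gamma_A$ and the double-stepped Green walks on $G$, and then to read off the rank of each tube as the length of the corresponding walk. First I would recall that, by the Proposition characterising the mouth of exceptional tubes, the modules lying at the mouth of the exceptional tubes are precisely the uniserial simple $A$-modules together with the maximal uniserial submodules of the indecomposable projectives; call these the \emph{mouth modules}. By the Corollary each mouth module is $\Omega$-periodic, so it lies in a tube $\mathbb{Z}A_{\infty}/\langle\tau^{n}\rangle$, and by the general structure theory of stable tubes the modules at the mouth of such a tube of rank $n$ form a single $\tau$-orbit of cardinality $n$. Since $A$ is symmetric we have $\nu=\Id$ and hence $\tau=\Omega^2$, so this orbit is exactly the $\Omega^2$-orbit $\{N_0,\Omega^2 N_0,\ldots,\Omega^{2(n-1)}N_0\}$ and the rank $n$ is the smallest integer with $\Omega^{2n}N_0\cong N_0$.

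Next I would identify the action of $\Omega$ on mouth modules with the Green walk. Because $A$ is self-injective, $\Omega$ is a self-equivalence of $\underline{\mod}\,A$, so it permutes the finite set of mouth modules. To pin down this permutation I would invoke Theorem~\ref{Thm Green walk}: for a mouth module $N_0$ whose associated starting half-edge $h_0=(i_0,v)$ launches the Green walk $i_0,i_1,i_2,\ldots$, the $n$th term of the minimal projective resolution of $N_0$ is $P_{i_n}$. Thus the projective cover of $\Omega^n N_0$ is $P_{i_n}$, and I would argue that $\Omega^n N_0$ is itself the mouth module attached to the half-edge $g^n(h_0)$, where $g=\iota\sigma$ is the Green-walk permutation on half-edges, namely the composite of the successor rotation $\sigma$ at each vertex with the edge involution $\iota$. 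In other words, under the bijection $N_0\leftrightarrow h_0$ between mouth modules and half-edges, the syzygy functor $\Omega$ corresponds to a single Green step $g$, and consequently $\tau=\Omega^2$ corresponds to $g^2$, i.e. to one step of the double-stepped Green walk.

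With this dictionary in place the theorem follows formally. The $\tau$-orbits of mouth modules are exactly the $g^2$-orbits of half-edges, which by definition are the double-stepped Green walks. Since distinct tubes have disjoint mouths, hence disjoint $\tau$-orbits, and since every double-stepped walk arises as the $\tau$-orbit of some mouth module sitting in a unique tube, sending a tube to the double-stepped walk of its mouth is a bijection, which proves (1). For (2), the rank of the tube containing $N_0$ is the $\tau$-period of $N_0$, which equals the smallest $n$ with $g^{2n}(h_0)=h_0$, that is, the length of the double-stepped Green walk through $h_0$. One should note here that the rank-one \emph{homogeneous} tubes consist of band modules and contain no mouth module of our list, so they are correctly excluded from the count.

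The main obstacle is the second step, namely proving rigorously that $\Omega$ restricted to the mouth modules is the single Green step $g$, rather than merely matching the sequence of projective covers. This requires using the biserial Loewy structure of the indecomposable projectives to show that the syzygy of a uniserial simple is the corresponding maximal uniserial submodule (and conversely), and that the top/socle bookkeeping assigns to $\Omega N_0$ exactly the half-edge $g(h_0)$; the bookkeeping is most delicate at loops and at truncated edges, which is precisely why the half-edge (rather than edge) formulation of the Green walk is needed. A secondary point to verify carefully is the passage from the full Green-walk period to the double-stepped period: a walk of odd period $k$ yields a single double-stepped walk of length $k$, whereas a walk of even period $k$ splits into two double-stepped walks of length $k/2$, and one must check that this matches the splitting of the $\Omega$-orbit into $\tau$-orbits.
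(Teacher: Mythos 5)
Your argument is correct and follows the same route as the cited source: the survey itself gives no proof of this theorem (it is quoted from \cite[Theorem 4.3]{Du}), but the ingredients it does record --- Theorem~\ref{Thm Green walk}, the $\Omega$-periodicity corollary, the proposition locating the mouth modules, and $\tau=\Omega^2$ for symmetric algebras --- are exactly the ones you assemble, and the identification of $\Omega$ on mouth modules with a single Green step on half-edges (the point you rightly flag as the crux, including the odd/even splitting of a walk into double-stepped walks) is precisely the content of Duffield's proof.
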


\begin{Example} 
{\rm (1) Suppose that $A$ is a Brauer graph algebra with Brauer graph as given in Figure~\ref{Example tubes}. Then there are three distinct Green walks on $G$ of lengths 1,5, and 6, and thus 4 double-stepped Green walks of lengths 1, 3, 3, and 5 giving rise to four exceptional tubes of respective ranks 1, 3,3, and 5. 

\begin{figure}[H]
	\centering
	\begin{tikzpicture}[auto, thick]
	\node[cb] (1) at (0,0) {};
	\node[cb] (2) at ($(1)+(0:3.5)$) {};
	\node[cb] (3) at ($(2)+(90:3.5)$) {};
	\node[cb] (4) at ($(1)+(90:3.5)$) {};
	\node[cb] (5) at ($(1)+(45:1.5)$) {};
	\node[red] (arcR1) at ($(1)+(45:3.5)$) {1};
	\node[red] (arcR2) at ($(1)+(70:2.65)$) {2};
	\node[red] (arcR4) at ($(1)+(45:2.5)$) {4};
	\node[red] (arcR6) at ($(1)+(20:2.65)$) {6};
	\node[green] (arcG3) at ($(2)+(-45:1.4)$) {3};
	\node[green] (arcR4) at ($(1)+(45:-1.4)$) {4};
	\node[green] (arcR5) at ($(4)+(135:1.4)$) {5};

	\draw (1)--(2)--(3)--(4)--(1)--(5);
	\draw (3)+(45:1cm) circle (1cm);
	\draw[red, ->] ($(1)+(50:.75)$) .. controls ($(5)+(90:1.8)$) and ($(5)+(0:1.8)$) .. ($(1)+(40:.75)$);
	\draw[->, red] (2)+(177:1.7cm) arc (177:93:1.7cm){};
	\draw[->, red] (3)+(267:1.7cm) arc (267:183:1.7cm){};
	\draw[->, red] (4)+(-3:1.7cm) arc (-3:-87:1.7cm){};
	\draw[->, red] (1)+(87:1.7cm) to [out=-10,in=160, pos=.3]node {3} ($(1)+(50:.65cm)$);
	\draw[->, red] (1)+(42:.65cm) to [out=-90,in=90, pos=.7] node {5} ($(1)+(3:1.7cm)$);
	\draw[->, blue] (3)+(90:1.35cm) to [out=-25,in=115,below, pos=.4] node {1} ($(3)+(0:1.35cm)$);
	
	\draw[->, green] (1)+(-3:1.7cm) arc (-3:-267:1.7cm){};
	\draw[->, green] (2)+(87:1.7cm) arc (87:-177:1.7cm){};
	\draw[->, green] (4)+(267:1.7cm) arc (267:3:1.7cm){};
	\draw[->, green] (3)+(45:2.1cm) to [out=-20,in=0,left, pos=.5, looseness=1.3]node{2} ($(2)+(87:1.8)$);
	\draw[->, green] (4)+(0:1.8cm) to [out=90,in=110,below, pos=.5, looseness=1.3]node{1} ($(3)+(48:2.1)$);
		
	\end{tikzpicture}
	\caption{Brauer graph with three distinct Green walks.}\label{Example tubes}
\end{figure}
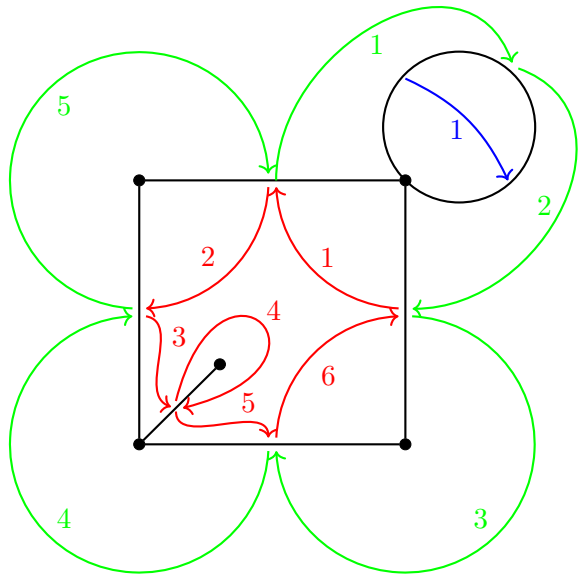
 
 More precisely,  the stable Auslander-Reiten quiver of $A$ is a disjoint union of infinitely many components of the forms $\mathbb{Z}A_{\infty}/ \langle \tau \rangle$ and $\mathbb{Z}A_\infty^\infty$  and  
 two components of the form $\mathbb{Z}A_{\infty}/ \langle \tau^3 \rangle$,  
 one component of the form $\mathbb{Z}A_{\infty}/ \langle \tau^6 \rangle$ and
  one component of the form $\mathbb{Z}A_{\infty}/ \langle \tau \rangle$ (given by a the string corresponding to the arrow inside the unique loop and not a band module as the ones giving rise to the infinitely many other component of the form $\mathbb{Z}A_{\infty}/ \langle \tau \rangle$).
    
(2)  For completeness we include one example of a non-planar Brauer graph $G'$ and the Green walks on $G'$. Suppose that $A'$ is a Brauer graph algebra with Brauer graph $G'$.   
\begin{figure}[H]
	\centering
	\begin{tikzpicture}[auto, thick]
	\node[cb] (1) at (0,0) {};
	\node[cb] (2) at ($(1)+(0:5)$) {};
	\node[cb] (3) at ($(1)+(60:4.1)$) {};
	\node[cb] (4) at ($(3)+(-90:1.5)$){};
	\node[cb] (5) at ($(0)+(25:7)$) {};
	\node[red] (Red1) at ($(5)+(40:.8)$) {1};
	\node[red] (Red2) at ($(2)+(-40:.8)$) {2};
	\node[red] (Red3) at ($(1)+(210:.8)$) {3};
	\node[red] (Red4) at ($(3)+(90:.9)$) {4};
	
	\draw (4)--(2)--(3)--(1);
	\draw (1)--(2)--(5);
	\draw[red, ->] ($(5)+(190:1.9cm)$) .. controls ($(5)+(-260:2.5)$) and ($(5)+(-20:2.5)$) .. ($(5)+(-113:1.9cm)$); 
	\draw[red, ->] ($(5)+(-113:2cm)$) .. controls ($(5)+(-90:3.7)$) and ($(1)+(-20:3.7)$) .. ($(1)+(-2:2.1cm)$); 
	\draw[red, ->] ($(1)+(-2:2cm)$) .. controls ($(1)+(-90:2.5)$) and ($(1)+(150:2.5)$) .. ($(1)+(62:2cm)$);
	\draw[red, ->] ($(3)+(-122:2cm)$) .. controls ($(3)+(-210:2)$) and ($(3)+(45:2)$) .. ($(3)+(-48:1.2cm)$);
	\draw[->, red] ($(2)+(128.5:3.35)$) to [out=-50, in= 170,above, pos=.7] node {$5$} ($(2)+(69:1)$); 
	\draw[->, red] ($(2)+(69:1.1)$) to [out=90, in= -30, right,  pos=.7] node {$6$} ($(5)+(207:1.6)$); 
	\draw[->, red] ($(1)+(23:1.5)$) to [out=-10, in= 120, below,  pos=.2] node {$7$} ($(0)+(2:2)$); 
	\draw[->, red] ($(0)+(2:2.1)$) to [out=45, in= -165,below,  pos=.4] node {$8$} ($(2)+(147:1.6)$); 
	\draw[<-, blue] ($(5)+(193:2)$) to [bend right, right, pos=.4] node {$1$} ($(5)+(204:2)$); 
	\draw[->, blue] ($(2)+(144:2.55)$) to [out=0, in=200, right, pos=.5] node {$3$} ($(2)+(131:1.1)$); 
	\draw[->, blue] ($(2)+(131:1.8)$) to [out=140, in=0, looseness=1, above, pos=.8] node {$4$} ($(1)+(59:3)$); 
	\draw[->, blue] ($(1)+(59:2.9)$) to [out=-70, in=110, looseness=1,right, pos=.6] node {$5$} ($(1)+(27:1.3)$); 
	\draw[draw=white,double=white] ($(2)+(147:1.8)$) to [out=160, in=185,looseness=4.2] node {} ($(5)+(190:2cm)$); 
	\draw[->, red] ($(2)+(147:1.8)$) to [out=160, in=185,looseness=4.2,  below, pos=.25] node {$9$} ($(5)+(190:2cm)$); 
	\draw[draw=white,double=white] ($(5)+(193:2.15)$) to [out=235, in=20] node {} ($(4)+(-33:.9)$); 
	\draw[->, blue] ($(5)+(193:2.15)$) to [out=235, in=20, above, pos=.5] node {$2$} ($(4)+(-32:.9)$); 
	\draw[draw=white,double=black] (1)--(5)--(4);
		 	
	\end{tikzpicture}
\caption{Picture of Brauer graph $G'$ of $A'$ and the two distinct Green walks on $G'$ of lengths 5 and 9, respectively}
\end{figure}
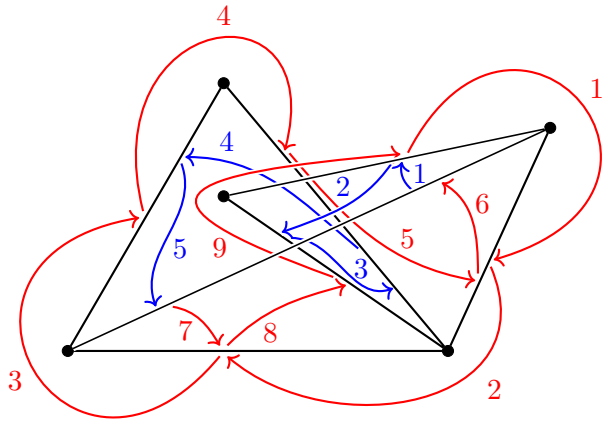

 Then the stable Auslander-Reiten quiver of $A'$ is a disjoint union of infinitely many components of the forms $\mathbb{Z}A_{\infty}/ \langle \tau \rangle$ and $\mathbb{Z}A_\infty^\infty$  and 
  one component of the form $\mathbb{Z}A_{\infty}/ \langle \tau^5 \rangle$
 and
 one component of the form $\mathbb{Z}A_{\infty}/ \langle \tau^9 \rangle$.

}
\end{Example}

\subsection{Auslander-Reiten components of non-domestic Brauer graph algebras}

Let $A$ be a representation-infinite non-domestic Brauer graph algebra with Brauer graph $G$.
It follows directly from Theorem~\ref{Thm ES non-domestic} and Theorem~\ref{Thm Du exceptional} that  
 $_s\Gamma_\Lambda$ is the disjoint union of 
  \begin{itemize}
\item[$\bullet$] $k$ components of the form $\mathbb{Z}A_{\infty}/ \langle \tau^{l_i} \rangle$, where $k$ is the number of double stepped Green walks and $l_i$ is the length of the $i$th double-stepped Green walk, for $1 \leq i \leq k$, \item[$\bullet$] infinitely many components of the form $\mathbb{Z}A_{\infty}/ \langle \tau \rangle$ 
\item[$\bullet$] infinitely many components of the form $\mathbb{Z}A_{\infty}^{\infty}$. 
\end{itemize}

\subsection{Auslander-Reiten components of domestic Brauer graph algebras}

Let $A$ be a representation-infinite $m$-domestic Brauer graph algebra with Brauer graph $G$.
It follows from Theorem~\ref{Thm ES domestic} and Theorem~\ref{Thm Du exceptional} that  
 $_s\Gamma_\Lambda$ is the disjoint union of 
 \begin{itemize}
\item[$\bullet$] $m$ components of the form $\mathbb{Z}\tilde{A}_{p,q}$
\item[$\bullet$] $m$ components of the form $\mathbb{Z}A_{\infty}/ \langle \tau^p \rangle$
\item[$\bullet$] $m$ components of the form $\mathbb{Z}A_{\infty}/ \langle \tau^q \rangle$ 
\item[$\bullet$] infinitely many components of the form $\mathbb{Z}A_{\infty}/ \langle \tau \rangle$. 
 \end{itemize}

More precisely, 
 
 \begin{theorem}\cite[Theorem 4.4, Corollary 4.5]{Du}\label{Thm Drew domestic}
 Let $A$ be a representation-infinite domestic Brauer graph algebra with Brauer graph $G$ with $n$ edges. If $G$ has a cycle then it is unique and let $n_1$ be the number of (additional) edges on the inside of the cycle and  $n_2$ the number of (additional) edges on the outside of the cycle. In the notation above:
 
\begin{itemize}
\item[(1)] If $A$ is 1-domestic, then  $m =1$ and  $p +q = 2n$. Furthermore, 
\begin{itemize}
\item[$\bullet$] if $G$ is a tree, then  $p = q =n$,
\item[$\bullet$] if $G$ has a unique cycle (of odd) length $l$, then  $p = l + 2n_1$ and $q =  l + 2 n_2$.  
\end{itemize} 
\item[(2)] If $A$ is 2-domestic with unique cycle (of even) length $l$, then $m=2$ and  $p = \frac{l}{2} + n_1$ and 
 $q = \frac{l}{2} + n_2$ such that $p +q = n$.
\end{itemize} 
 \end{theorem}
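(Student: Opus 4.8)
The plan is to reduce the statement to a purely combinatorial count of \emph{double-stepped} Green walks and then read off their lengths from the topology of the ribbon surface of $G$. By Theorem~\ref{Thm Du exceptional} the exceptional tubes of $_s\Gamma_A$ are in bijection with the double-stepped Green walks on $G$, and the rank of each tube is the length of the corresponding walk; so it suffices to count these walks and compute their lengths. The key first step is to encode Green walks as a permutation. Writing $\iota$ for the edge-involution on the set $H$ of half-edges of $G$ and $\sigma$ for the cyclic successor permutation coming from the orientation $\mathfrak{o}$, a Green walk that has arrived at a vertex along a half-edge $h$ continues by taking the successor $\sigma(h)$ and then crossing the edge, i.e. applying $\iota$; hence one Green-walk step is the permutation $g = \iota\sigma$ of $H$. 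Its cycles are exactly the Green walks, so their lengths sum to $|H| = 2n$, and the double-stepped Green walks are precisely the cycles of $g^2$.

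Next I would determine the cycle structure of $g$. Since $g = \iota\sigma$ is conjugate to a face permutation $\sigma\iota$ of the ribbon graph, its cycles are the boundary components of the ribbon surface $S^\circ_G$; in particular the multiset of Green-walk lengths coincides with the multiset of boundary lengths. By the classification in Theorem~\ref{DomesticBGA}, a representation-infinite domestic $G$ is either a tree (first Betti number $\beta = 0$) or a graph with a unique cycle ($\beta = 1$). Because $S^\circ_G$ deformation retracts onto $G$, one has $\chi(S^\circ_G) = |G_0| - |G_1| = 1 - \beta$; comparing with the Euler characteristic $2 - 2\mathfrak{g} - b$ of a genus-$\mathfrak{g}$ surface with $b$ boundary circles forces $\mathfrak{g} = 0$ and $b = 1 + \beta$. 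Thus a tree has a single Green walk, necessarily of length $2n$, while a graph with a unique cycle of length $l$ has exactly two Green walks. For the latter I would compute the two boundary lengths directly: each of the $l$ cycle edges borders both faces and contributes $1$ to each boundary, whereas each of the $n_1$ (resp.\ $n_2$) edges of the trees attached on the inside (resp.\ outside) of the cycle is a bridge and contributes $2$ to the boundary on its side. This yields Green-walk lengths $l + 2n_1$ and $l + 2n_2$, which sum to $2n$ as required.

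Finally I would pass from $g$ to $g^2$: a $g$-cycle of odd length survives as a single $g^2$-cycle of the same length, while a $g$-cycle of even length $L$ splits into two $g^2$-cycles of length $L/2$. In the tree case the single walk of even length $2n$ splits into two double-stepped walks of length $n$, giving $p = q = n$ with one tube of each rank, so $m = 1$. In the unique-cycle case the parity of $l + 2n_i$ equals that of $l$: when $l$ is odd both walks are odd and undoubled, producing double-stepped walks of lengths $p = l + 2n_1$ and $q = l + 2n_2$, one of each, so $m = 1$ and $p + q = 2n$; when $l$ is even both walks double, producing two double-stepped walks of length $l/2 + n_1$ and two of length $l/2 + n_2$, so $p = l/2 + n_1$, $q = l/2 + n_2$, $m = 2$ and $p + q = n$. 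In every case the multiplicities agree with item~(6) of Theorem~\ref{Thm ES domestic}, which predicts $m$ tubes of rank $p$ and $m$ of rank $q$, and the identities $p + q = 2n$ and $p + q = n$ follow at once.

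The main obstacle I anticipate is the second step: rigorously identifying the Green walks with the boundary components of $S^\circ_G$ and justifying the count $l + 2n_1$, $l + 2n_2$ uniformly, including the degenerate features permitted in a Brauer graph --- loops lying on the cycle (so that $l$ or some $n_i$ accounts for a half-edge pair at a single vertex) and leaves, at which an edge is traversed twice by the same walk. I expect this is cleanest to handle entirely at the level of the permutation $g = \iota\sigma$ on half-edges, where loops and leaves produce no exceptions, rather than by reasoning from a planar picture. The multiplicity function plays no role throughout, since Green walks depend only on the underlying graph and its orientation.
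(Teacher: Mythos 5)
The paper does not prove this statement: it is quoted verbatim from Duffield \cite[Theorem 4.4, Corollary 4.5]{Du}, so there is no in-text argument to compare against. Judged on its own, your proposal is correct and is very close in spirit to what the cited source does (Duffield likewise combines the tube count of Theorem~\ref{Thm Du exceptional} with the Erdmann--Skowro\'nski structure theorem and then analyses double-stepped Green walks). Your reduction is sound at every step: Green walks are the cycles of the permutation $\iota\sigma$ on the $2n$ half-edges, hence their lengths sum to $2n$; the Euler characteristic computation $\chi(S^{\circ}_G)=1-\beta=2-2\mathfrak{g}-b$ with $b\geq 1$ really does force $\mathfrak{g}=0$ and $b=1+\beta$ when $\beta\leq 1$, so there is one walk for a tree and exactly two for a unicyclic graph; and the parity analysis of $g$ versus $g^2$ correctly produces $m=1$ with $p+q=2n$ in the odd/tree cases and $m=2$ with $p+q=n$ in the even case, matching item (6) of Theorem~\ref{Thm ES domestic}. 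The one step you rightly flag as needing care — that each of the $l$ cycle edges places one half-edge on each of the two boundary components while each bridge places both half-edges on the boundary component of its side — is cleanly justified by your own framework: removing a cycle edge leaves a tree whose ribbon surface is a disc, so reattaching that edge as a handle must separate the single boundary circle into two, putting its two sides on different faces; bridges, by contrast, always have both sides on one face. Working throughout with the involution and the cyclic successor permutation on half-edges, as you propose, is exactly the right way to avoid case distinctions for loops and leaves, and the observation that the multiplicity function is invisible to the whole computation is consistent with the paper's remark that Green walks depend only on the graph and its orientation. I see no gap.
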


\begin{Example} {\rm

(1)  Example of the stable Auslander-Reiten quiver of a 1-domestic Brauer graph algebra with Brauer graph (including the two distinct Green walks) given by 

 \begin{figure}[H]
	\centering
	\begin{tikzpicture}[auto, thick]
	\node[cb] (1) at (0,0) {};
	\node[cb] (2) at ($(1)+(0:4)$) {};
	\node[cb] (3) at ($(1)+(60:4)$) {};
	\node[cb] (4) at ($(3)+(90:2)$) {};
	\node[cb] (5) at ($(1)+(30:1.55)$) {};
	\node[cb] (6) at ($(5)+(0:1.3)$) {};
	\node[cb] (7) at ($(5)+(60:1.3)$) {};
	
	\draw (1) to [below] node {$3$}(2);
	\draw (2) to [right, pos=.55] node {$2$}(3);
	\draw (3) to [right, pos=.8] node {$1$}(4);
	\draw (3) to [left, pos=.45] node {$7$}(1);
	\draw (1) to [above, pos=.4] node {$4$}(5);
	\draw (5) to [below, pos=.47] node {$6$}(6);
	\draw (5) to [left, pos=.7] node {$5$}(7);
	
	\draw[->, red] (1)+(-5:1.8cm) arc (-5:-295:1.8cm){};
	\draw[->, red] (2)+(115:1.8cm) arc (115:-175:1.8cm){};
	\draw[->, red] (3)+(85:.9cm) arc (85:-40:1.35cm){};
	\draw[<-, red] (3)+(95:.9cm) arc (95:220:1.35cm){};
	\draw[->, red] (4)+(265:.9cm) arc (265:-85:.9cm){};
	
	\draw[->, blue] (5)+(200:.5cm) to [out=80,in=195]node {} ($(5)+(70:.6cm)$);
	\draw[<-, blue] (5)+(220:.45cm) to [out=-20,in=230] node {} ($(5)+(-5:.6cm)$);
	\draw[->, blue] (5)+(55:.6cm) to [out=-20,in=100] node {} ($(5)+(5:.6cm)$);
	
	\draw[->, blue] (5)+(220:.6cm) to [out=-30,in=140] node {} ($(1)+(3:1.9cm)$);
	\draw[->, blue] (1)+(3:2cm) to [out=5,in=230,looseness=.5] node {} ($(2)+(123:.8cm)$);
	\draw[->, blue] (2)+(125:.9cm) to [out=130,in=-20,looseness=.8] node {} ($(3)+(245:.8cm)$);
	\draw[->, blue] (3)+(245:.9cm) to [out=-45,in=120,looseness=0.6] node {} ($(5)+(200:.6cm)$);
	
	\draw[blue] (7)+(230:.5cm) to [out=120,in=150] node {} ($(7)+(60:.3cm)$);
	\draw[<-,blue] (7)+(250:.5cm) to [out=0,in=-30] node {} ($(7)+(60:.3cm)$);
	
	\draw[blue] (6)+(170:.55cm) to [out=60,in=90] node {} ($(6)+(0:.3cm)$);
	\draw[<-,blue] (6)+(-170:.55cm) to [out=-60,in=-90] node {} ($(6)+(0:.3cm)$);
	 	
	\end{tikzpicture}
	\caption{Example 5.5.1}
\end{figure}
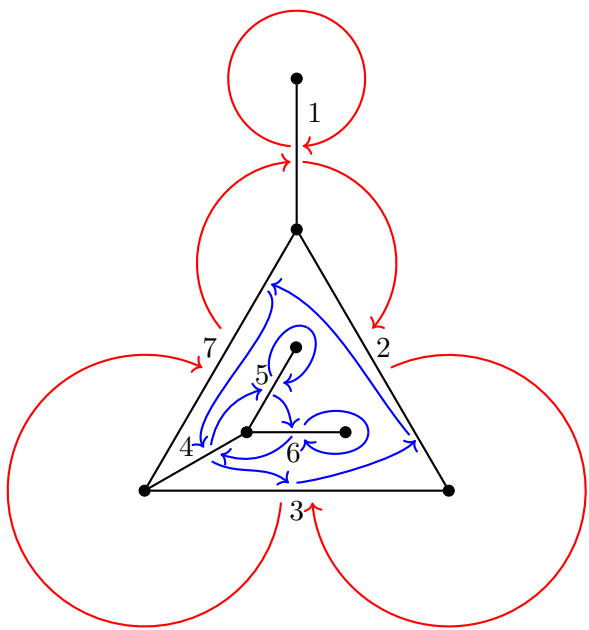

Here $l = 3$, $n_1 = 3$ and $n_2 =1$ and therefore by Theorem~\ref{Thm Drew domestic}, $p = 9$ and $q = 5$ and  the stable Auslander-Reiten quiver consists of  one component  of the form $\mathbb{Z}\tilde{A}_{9,5}$, one of the form
 $\mathbb{Z}A_{\infty}/ \langle \tau^9 \rangle$ and one of the form $\mathbb{Z}A_{\infty}/ \langle \tau^5 \rangle$  as well as infinitely many components of the form $\mathbb{Z}A_{\infty}/ \langle \tau \rangle$. Note that $p =9$ is the length of the unique double stepped Green walk on the inside of the 3-cycle and $q=5$ is the length of the unique double stepped Green walk on the outside of the 3-cycle. 

(2)   Example of the stable Auslander-Reiten quiver of a 2-domestic Brauer graph algebra with Brauer graph (including the two distinct Green walks) given by 

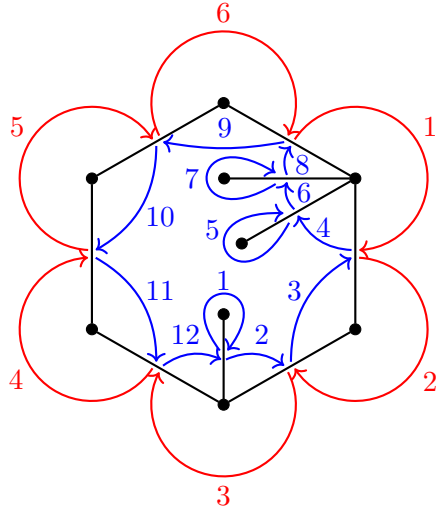
\begin{figure}[H]
	\centering
	\begin{tikzpicture}[auto, thick]
	\node (0) at (0,0) {};
	\node[cb] (1) at ($(0)+(30:2)$) {};
	\node[cb] (2) at ($(0)+(90:2)$) {};
	\node[cb] (3) at ($(0)+(150:2)$) {};
	\node[cb] (4) at ($(0)+(210:2)$) {};
	\node[cb] (5) at ($(0)+(270:2)$) {};
	\node[cb] (6) at ($(0)+(330:2)$) {};
	\node[cb] (7) at ($(1)+(180:1.725)$) {};
	\node[cb] (8) at ($(1)+(210:1.725)$) {};
	\node[cb] (9) at ($(0)+(-90:.8)$) {};
	\node[red] (R1) at ($(1)+(35:1.2)$){1};
	\node[red] (R6) at ($(2)+(90:1.2)$){6};
	\node[red] (R5) at ($(3)+(145:1.2)$){5};
	\node[red] (R4) at ($(4)+(214:1.2)$){4};
	\node[red] (R3) at ($(5)+(-90:1.2)$){3};
	\node[red] (R2) at ($(6)+(-35:1.2)$){2};
	\node[blue] (G1) at ($(5)+(90:1.65)$){1};
	\node[blue] (G2) at ($(1)+(200:2)$){5};
	\node[blue] (G3) at ($(1)+(180:2.15)$){7};
	
	\draw (1)--(2)--(3)--(4)--(5)--(6)--(1);
	\draw (1) to node {}(7);
	\draw (1) to node {}(8);
	\draw (5) to node {}(9);
	
	\draw[->, red] (1)+(147:.95) arc (147:-87:.95cm);
	\draw[->, red] (2)+(207:.95) arc (207:-27:.95cm);
	\draw[->, red] (3)+(267:.95) arc (267:33:.95cm);
	\draw[->, red] (4)+(327:.95) arc (327:93:.95cm);
	\draw[->, red] (5)+(27:.95) arc (27:-207:.95cm);
	\draw[->, red] (6)+(87:.95) arc (87:-147:.95cm);
	
	\draw[blue, ->] ($(5)+(94:.72)$) .. controls ($(5)+(115:1.9)$) and ($(5)+(65:1.9)$) .. ($(5)+(86:.72)$);
	\draw[->, blue] ($(5)+(87:.6)$) to [bend left, above, pos=.6] node {$2$} ($(5)+(33:.95)$);
	\draw[->, blue] ($(6)+(207:.95)$) to [bend left, left, pos=.6] node {$3$} ($(6)+(93:.95)$);
	\draw[->, blue] ($(1)+(-93:.95)$) to [bend left, right, pos=.8] node {$4$} ($(1)+(213:.9)$);
	\draw[blue, ->] ($(1)+(214:1.05)$) .. controls ($(1)+(230:2.4)$) and ($(1)+(190:2.4)$) .. ($(1)+(206:1.05)$);
	\draw[->, blue] ($(1)+(207:.9)$) to [bend left, right, pos=.64] node {$6$} ($(1)+(183:.9)$);
	\draw[blue, ->] ($(1)+(184:1.05)$) .. controls ($(1)+(200:2.4)$) and ($(1)+(160:2.4)$) .. ($(1)+(176:1.05)$);
	\draw[->, blue] ($(1)+(177:.9)$) to [bend left, right, pos=.35] node {$8$} ($(1)+(153:.95)$);
	\draw[->, blue] ($(2)+(-33:.95)$) to [above, pos=.49, out=190, in=350] node {$9$} ($(2)+(-147:.95)$);
	\draw[->, blue] ($(3)+(27:.95)$) to [bend left, right, pos=.6] node {$10$} ($(3)+(-87:.95)$);
	\draw[->, blue] ($(4)+(87:.95)$) to [bend left, right, pos=.4] node {$11$} ($(4)+(-27:.95)$);
	\draw[->, blue] ($(5)+(147:.95)$) to [bend left, above, pos=.4] node {$12$} ($(5)+(93:.6)$);
		
	\end{tikzpicture}
	\caption{Example 5.8}
\end{figure}

Here $l = 6$, $n_1 = 3$ and $n_2 =0$ and therefore by Theorem~\ref{Thm Drew domestic}, $p = 6$ and $q = 3$ and  the stable Auslander-Reiten quiver consists of  two components  of the form $\mathbb{Z}\tilde{A}_{6,3}$, two of the form
 $\mathbb{Z}A_{\infty}/ \langle \tau^6 \rangle$ and one of the form $\mathbb{Z}A_{\infty}/ \langle \tau^3 \rangle$  as well as infinitely many components of the form $\mathbb{Z}A_{\infty}/ \langle \tau \rangle$. Note that $p =6$ is the length of the two double stepped Green walks on the inside of the 6-cycle and $q=3$ is the length of the two double stepped Green walks on the outside of the 6-cycle.

}
\end{Example}

\subsection{Position of modules in the Auslander-Reiten quiver}

In this section we determine, for any representation-infinite Brauer graph algebra (domestic or non-domestic) which simple modules and radicals of indecomposable projective  modules (and hence which projective modules) lie in exceptional tubes and which do not. For that we define the notion of exceptional edges and we will see that the simple modules and the radicals of the projective indecomposable modules associated to the exceptional edges lie in the exceptional tubes of $_s\Gamma_A$.  

\begin{definition}\cite{Du}
{\rm (1) Let $G$ be a Brauer graph such that $G$ is not a Brauer tree (that is if $G$ is a tree then there are at least two vertices of multiplicity greater than 2). An \textit{exceptional subtree} of $G$ is  a subgraph $T$ of $G$ such that
\begin{itemize}
\item[(i)] $T$ is a tree, 
\item[(ii)] there exists at unique vertex $v$ in $T$ such that $(G \setminus T) \cup v$ is connected,  
\item[(iii)] every vertex of $T$ has multiplicity 1 except  perhaps $v$.
\end{itemize}
We call $v$ the \textit{connecting vertex} of $T$.
 
 (2) Let $G$ be a Brauer graph. An edge $e$ in $G$  is called an \textit{exceptional edge} if $e$ lies in an exceptional subtree of $G$ and it is called a \textit{non-exceptional edge} otherwise. 
}
\end{definition}

Note that the non-exceptional edges in a Brauer graph $G$ are all connected to each other and that a non-exceptional edge is never truncated.

\begin{figure}[H]
	\centering
	\begin{tikzpicture}[auto, thick]
	\node[cb, red] (0) at (0,0) {};
	\node[cb] (1) at ($(0)+(0:1.2)$) {};
	\node[cb, blue] (2) at ($(1)+(45:1.2)$) {};
	\node[cb] (3) at ($(1)+(-45:1.2)$) {};
	\node[cb] (4) at ($(3)+(45:1.2)$) {};
	\node[cb] (5) at ($(4)+(0:1.2)$) {};
	\node[cb] (6) at ($(5)+(0:1.2)$) {};
	\node[cb] (7) at ($(6)+(0:1.5)$) {};
	\node[cb,green] (8) at ($(7)+(0:1.2)$) {};
	\node[cb,green] (9) at ($(8)+(0:1.2)$) {};
	\node[cb,green] (10) at ($(9)+(45:1.2)$) {};
	\node[cb,green] (11) at ($(9)+(-45:1.2)$) {};
	\node[cb,red] (12) at ($(0)+(-90:1.2)$) {};
	\node[cb, blue] (13) at ($(2)+(135:1.2)$) {};
	\node[cb, blue] (14) at ($(2)+(45:1.2)$) {};
	\node[cbl] (a) at ($(2)+(0:.7)$){2};
	\node[cw] (b) at ($(7)+(-90:.5)$){2};
	\node[cg] (c) at ($(8)+(-90:.5)$){2};
	
	\draw (0)--(1)--(2);
	\draw[blue] (13) -- (2)--(14);
	\draw (1)--(3)--(4)--(2);
	\draw (4)--(5);
	\draw (5) to node {e} (6);
	\draw (6) to node {f}(7);
	\draw (7)--(8);
	\draw ($(0)+(180:.5)$) circle (.5);
	\draw[red] (12)--(0);
	\draw[green] (8) to node {g} (9)--(10);
	\draw[green] (9)--(11);

	\end{tikzpicture}
	\caption{Example of a Brauer graph with three exceptional subtrees}\label{Figure Exceptional Subtrees}
\end{figure}
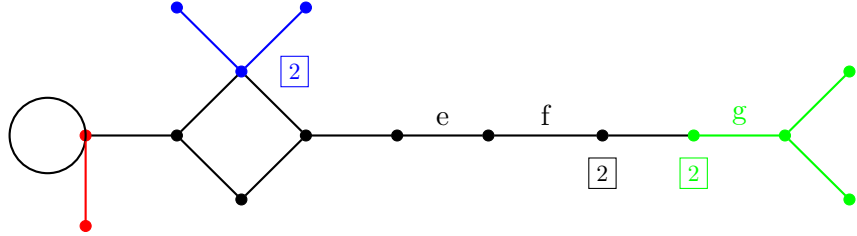

\begin{theorem}\cite[Theorem 4,18]{Du}
Let $A$ be a representation-infinite Brauer graph algebra with Brauer graph $G$ and let $e$ be an edge in $G$. Then the simple module $S_e$ associated to $e$ and the radical  $\rad P_e$ of the indecomposable projective module $P_e$ associated to $e$ lie in an exceptional tube of $_s\Gamma_A$ if and only if $e$ is an exceptional edge. 

Furthermore, $S_e$ and $\rad P_e$ lie in the same exceptional tube if and only if $e$ occurs twice within the same double-stepped Green walk. 
\end{theorem}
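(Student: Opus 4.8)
\emph{Proof idea.} The plan is to translate both assertions into statements about $\Omega$-periodicity together with the combinatorics of Green walks, and then to read off the answer from the Green walk theorem (Theorem~\ref{Thm Green walk}), the classification of exceptional tubes (Theorem~\ref{Thm Du exceptional}) and the Erdmann--Skowro\'nski description of the components (Theorems~\ref{Thm ES domestic} and~\ref{Thm ES non-domestic}). Since $A$ is symmetric we have $\tau = \Omega^2$, so a module lies in a tube exactly when it is $\Omega$-periodic; moreover $S_e$ and $\rad P_e$ are string modules, and a string module lies in an \emph{exceptional} tube if and only if it is $\Omega$-periodic (the homogeneous rank-one tubes consist solely of band modules, while the components $\mathbb{Z}A_\infty^\infty$ and $\mathbb{Z}\tilde A_{p,q}$ contain no periodic module). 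First I would record the reduction $\rad P_e = \Omega S_e$: because $\Omega$ is a self-equivalence of $\underline{\mod}\, A$ that permutes the stable components, preserves the rank of tubes, and sends string modules to string modules, it carries exceptional tubes to exceptional tubes, so $S_e$ is $\Omega$-periodic if and only if $\rad P_e$ is. Hence for the first assertion it suffices to decide when $S_e$ is $\Omega$-periodic.

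Next I would analyse the start of the minimal projective resolution of $S_e$. One has $\Omega S_e = \rad P_e = U + V$, where $U,V$ are the two maximal uniserial submodules of $P_e$ meeting in $\soc P_e = S_e$; one of them is trivial exactly when $e$ is truncated at one of its vertices, in which case $\rad P_e$ is itself a maximal uniserial submodule while $S_e$ is uniserial simple. By Theorem~\ref{Thm Green walk} the arms $U$ and $V$ are $\Omega$-periodic, their syzygies being governed by the two Green walks issuing from $e$ (one at each vertex), and each arm sits at the mouth of an exceptional tube. The heart of the argument is to decide when the amalgam $\rad P_e$, glued from $U$ and $V$ along their common socle, is again $\Omega$-periodic, and the claim is that this happens precisely when $e$ is an exceptional edge. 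I would show that the successive syzygies of $\rad P_e$ remain string modules built from the two walks, and that the number of indecomposable non-projective summands stays bounded (so that $\cx S_e = 1$ and $S_e$ is periodic) exactly when both walks are forced to reflect at multiplicity-one leaves and return without amplification; this is precisely the situation encoded by the defining conditions of an exceptional subtree (a tree, attached along a single connecting vertex, with all interior vertices of multiplicity one). If instead $e$ is non-exceptional it lies on the connected core carrying the cycle or the higher multiplicities (recall that the non-exceptional edges are all connected and never truncated), and there one of the walks runs around the cycle so that the syzygy strings of $\rad P_e$ grow, giving $\cx S_e = 2$ and placing $S_e$ in a $\mathbb{Z}A_\infty^\infty$ component (non-domestic case, Theorem~\ref{Thm ES non-domestic}) or a $\mathbb{Z}\tilde A_{p,q}$ component (domestic case, Theorem~\ref{Thm ES domestic}).

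For the second assertion I would use that $\Omega$ realises a single step of the Green walk whereas $\tau = \Omega^2$ realises the double step, so that, by Theorem~\ref{Thm Du exceptional}, the $\tau$-orbits correspond to double-stepped Green walks and hence to the exceptional tubes. Each edge $e$ carries two successor relations, one at each vertex, and therefore lies on two (possibly equal) Green walks; assuming $e$ exceptional, I would identify the tube containing $S_e$ with the double-stepped walk through one of these successor relations and the tube containing $\rad P_e = \Omega S_e$ with the double-stepped walk through the other, the relevant top advancing by one Green-walk step under $\Omega$ and thereby exchanging the two parities of the double-stepping. Consequently $\Omega$ fixes the tube of $S_e$ if and only if these two double-stepped walks coincide, that is (equivalently, the single Green walk through $e$ having odd period) if and only if $e$ occurs twice within one and the same double-stepped Green walk, which is the assertion.

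The main obstacle is the periodicity dichotomy underlying the first assertion. The Green walk theorem controls only the resolutions of the uniserial building blocks $U$, $V$ (and of $S_e$ itself when $e$ is truncated), not that of the genuinely two-armed module $\rad P_e$; so the real work is a careful string-combinatorial bookkeeping of the syzygies of $\rad P_e$, tracking whether the contributions of the two arms amalgamate into a single bounded family (complexity one, periodic) or separate and proliferate around the core (complexity two), and matching the ``reflect-and-return versus escape-through-the-connecting-vertex'' alternative exactly against the definition of an exceptional subtree. Making the $\Omega$-single-step correspondence precise for the non-uniserial modules $S_e$ and $\rad P_e$, uniformly in the domestic and non-domestic cases, is where I expect the difficulty to concentrate.
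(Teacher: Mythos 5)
A preliminary remark: these lecture notes do not prove this theorem; it is quoted from \cite{Du}, so your attempt has to be judged against what a complete argument would require rather than against a proof in the text. Your reductions are correct and are the natural ones: since $A$ is symmetric, $\tau=\Omega^{2}$, the homogeneous tubes consist of band modules, and the components $\mathbb{Z}A_\infty^\infty$ and $\mathbb{Z}\tilde{A}_{p,q}$ contain no periodic modules, so for the string modules $S_e$ and $\rad P_e$ the condition ``lies in an exceptional tube'' is equivalent to ``$\Omega$-periodic''; and $\rad P_e=\Omega S_e$ together with the fact that $\Omega$ is a stable self-equivalence reduces the first assertion to the periodicity of $S_e$ alone. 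Up to this point the proposal is sound.

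The gap is that both remaining steps are asserted rather than proved, and the invariant you propose for the first one would fail. Because $\Omega$ is a stable self-equivalence of $\underline{\mod}\,A$, every $\Omega^{n}S_e$ is an indecomposable non-projective string module, so ``the number of indecomposable non-projective summands stays bounded'' holds for \emph{every} edge $e$ and cannot detect exceptionality; likewise the implication $\cx S_e=1\Rightarrow S_e$ periodic is, for symmetric Brauer graph algebras, a theorem in its own right (and false in the weakly symmetric non-symmetric case, cf.\ the remark citing \cite{Erdmann}), not something to be invoked in passing. What must be shown is that the \emph{dimensions} of the syzygies $\Omega^{n}S_e$ stay bounded if and only if $e$ is exceptional (boundedness then yields periodicity by pigeonhole plus invertibility of $\Omega$ on the stable category), and that is exactly the string-combinatorial induction over the exceptional subtree --- peeling off truncated edges towards the connecting vertex versus escaping into the non-exceptional core --- which you yourself flag as ``the main obstacle'' but do not carry out. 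For the second assertion, Theorem~\ref{Thm Green walk} controls the resolutions only of the uniserial modules sitting at the mouths of the tubes; when $e$ is exceptional but not truncated, neither $S_e$ nor $\rad P_e$ is at a mouth, so identifying the tube of $S_e$ with the double-stepped walk through one occurrence of $e$ and the tube of $\rad P_e$ with the walk through the other requires a separate argument (a chain of irreducible maps, or an explicit syzygy computation, linking $S_e$ to a specific mouth module in the sense of the preceding Proposition). Your picture that $\Omega$ advances the Green walk by one step and exchanges the two parities of the double-stepping is the right one, but it is justified only in the uniserial case. In short: the scaffolding is correct and consistent with \cite{Du}, but the two central claims remain unestablished.
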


\begin{theorem}\cite[Theorem 4.21]{Du} Let $A$ be a representation-infinite Brauer graph algebra with Brauer graph $G$ and let $e$ and $f$ be two not necessarily distinct non-exceptional edges in $G = (G_0, G_1, m, {\mathfrak o})$. Then $S_e$ and $\rad P_f$ lie in the same component of $_s\Gamma_A$ if and only if $A$ is 1-domestic or there exists an even length path 

\[
\xymatrix{
        & \stackbin[v_0]{}{\bullet} \ar@{-}[r]^{e=e_1} &  \stackbin[v_1]{}{\bullet} \ar@{-}[r]^{e_2} &  \stackbin[v_2]{}{\bullet} \ar@{..}[r] &  \stackbin[v_{2n-1}]{}{\bullet} \ar@{-}[r]^{f = e_{2n}} & \stackbin[v_{2n}]{}{\bullet}  \\
}
\]

of non-exceptional edges  such that 
 \begin{itemize}
\item[(i)] none of the edges $e_i$, for $1  \leq i \leq 2n$,   are a loop, 
\item[(ii)] $e_i$ and $e_{i+1}$ are the only non-exceptional edges incident with $v_i$, for $1 \leq i \leq 2n-1$, 
\item[(iii)] $m(v_i) = 1$, for $1 \leq i \leq 2n-1$,  except if $e_i = e_{i+1}$ in which case $m(v_i) = 2$, for $1 \leq i \leq 2n-1$.  
\end{itemize}
\end{theorem}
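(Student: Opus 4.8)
The plan is to work entirely inside the stable module category, exploiting that both $S_e$ and $\rad P_f$ are string modules and that, since $A$ is symmetric, $\tau = \Omega^2$ and $\rad P_f = \Omega S_f$. The first step is to pin down which components of ${}_s\Gamma_A$ can possibly contain a non-exceptional module. By the preceding theorem, \cite[Theorem 4.18]{Du}, neither $S_e$ nor $\rad P_f$ lies in an exceptional tube, because $e$ and $f$ are non-exceptional. Combining this with the Erdmann--Skowro\'nski classification (Theorems~\ref{Thm ES domestic} and~\ref{Thm ES non-domestic}) and with the fact that the homogeneous rank-one tubes consist solely of band modules, the only components left available to a string module such as $S_e$ or $\rad P_f$ are the $\mathbb{Z}\tilde{A}_{p,q}$-components in the domestic case and the $\mathbb{Z}A_\infty^\infty$-components in the non-domestic case. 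Thus the entire question reduces to deciding, among these finitely or infinitely many components, which one each of $S_e$ and $\rad P_f$ occupies.

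For the $1$-domestic case this already settles the statement: by Theorem~\ref{Thm ES domestic}(6) with $m=1$ there is exactly one component of the form $\mathbb{Z}\tilde{A}_{p,q}$, so every non-exceptional $S_e$ and every $\rad P_f$ must lie in it, and hence in the same component, with no path condition required. This is the source of the isolated \textbf{$1$-domestic} clause in the statement, and handling it first removes it cleanly from the rest of the argument.

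The heart of the proof is therefore the $2$-domestic and non-domestic cases, where there are two $\mathbb{Z}\tilde{A}_{p,q}$-components, respectively infinitely many $\mathbb{Z}A_\infty^\infty$-components, and coincidence of components must be read off combinatorially. Here I would invoke the Wald--Waschb\"usch/Butler--Ringel description of irreducible maps between string modules: two string modules lie in the same component of ${}_s\Gamma_A$ precisely when they are linked by a finite chain of such irreducible maps, each given by adding a hook on one side of a string and deleting a cohook on the other. The aim is to show that an even-length path $e = e_1, e_2, \ldots, e_{2n} = f$ of non-exceptional edges satisfying (i)--(iii) is exactly the combinatorial shadow of such a chain running from $S_e$ to $\rad P_f = \Omega S_f$. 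Concretely, I would traverse the path edge by edge: conditions (ii) and (iii) guarantee that at each interior vertex $v_i$ the two incident non-exceptional edges $e_i, e_{i+1}$ determine a unique irreducible map between the associated string modules without the chain being diverted into an exceptional subtree, while condition (i) rules out loops, for which the Green walk, and hence the shape of the string module, splits ambiguously. The evenness of the length is forced by a parity argument: $\Omega$ shifts the lattice coordinate of a component by a single half-step, and since $\rad P_f = \Omega S_f$ lies one $\Omega$-step away from a simple module, only a walk of even length returns to the correct parity class of the $\mathbb{Z}A_\infty^\infty$- (resp. $\mathbb{Z}\tilde{A}_{p,q}$-) lattice.

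For the converse I would reverse this dictionary: assuming $S_e$ and $\rad P_f$ lie in the same non-tube component, I walk along a chain of irreducible maps realising the connection, record the underlying sequence of edges, and prune it to a reduced path of non-exceptional edges, checking that (i)--(iii) hold automatically, since any loop, exceptional detour, or higher-multiplicity interior vertex would force the chain to enter an exceptional tube or a different component. The main obstacle I anticipate is precisely this translation between graph-theoretic paths and chains of irreducible maps: one must control how the hook/cohook operations interact with the Green-walk and double-stepped-Green-walk bookkeeping that governs the global shape of each component, keep the parity under tight control, and treat the degenerate configurations separately — in particular the case $e_i = e_{i+1}$ with $m(v_i) = 2$ permitted by (iii), and vertices of higher valency. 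The genuinely delicate point is ensuring that no intermediate string module in the chain is projective, which would break the connection in the stable category; conditions (i)--(iii) are exactly what is needed to exclude this.
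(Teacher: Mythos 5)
First, a point of reference: these lecture notes do not prove this theorem at all --- it is quoted verbatim from \cite[Theorem 4.21]{Du} --- so there is no in-paper argument to compare against, and your proposal has to stand on its own. Your opening reductions are sound and are indeed the natural first moves: since $e$ and $f$ are non-exceptional, the preceding theorem (\cite[Theorem 4.18]{Du}) keeps $S_e$ and $\rad P_f$ out of the exceptional tubes; being string modules they cannot sit in homogeneous tubes of band modules; so by Theorems~\ref{Thm ES domestic} and~\ref{Thm ES non-domestic} they live in $\mathbb{Z}\tilde{A}_{p,q}$-components (domestic) or $\mathbb{Z}A_\infty^\infty$-components (non-domestic). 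Your disposal of the $1$-domestic case ($m=1$ forces a unique $\mathbb{Z}\tilde{A}_{p,q}$-component) is correct and does explain the isolated $1$-domestic clause in the statement.

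The genuine gap is that everything after this reduction is a plan rather than a proof. The assertion that two string modules lie in the same component precisely when they are linked by a chain of irreducible maps is just the definition of an Auslander--Reiten component, so the entire content of the theorem is the combinatorial dictionary between such chains and paths in $G$ satisfying (i)--(iii) --- and that dictionary is exactly what you defer. Worse, the one concrete mechanism you offer, the parity argument via ``$\Omega$ shifts the lattice coordinate of a component by a single half-step,'' is not well defined: $\Omega$ permutes the set of components of $_s\Gamma_A$ and need not preserve any given $\mathbb{Z}A_\infty^\infty$-component, so it does not act as a half-step of $\tau=\Omega^2$ inside a fixed component; the evenness in the theorem is a statement about the walk in the Brauer graph, and it has to be extracted from the successor combinatorics (in effect from the Green-walk bookkeeping relating $S_{e_i}$, $\rad P_{e_{i+1}}$ and the maximal uniserial modules at the shared vertex $v_i$), not from an $\Omega$-grading of a component. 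Similarly, your closing claim that (i)--(iii) serve to prevent intermediate modules from being projective misreads their role: in the stable category the projectives are already gone, and (i)--(iii) are the conditions under which the chain of irreducible maps stays within a single non-tube component rather than the condition for the chain to exist. Until the step ``path satisfying (i)--(iii) $\Longleftrightarrow$ chain of hook/cohook moves from $S_e$ to $\rad P_f$'' is actually carried out in both directions, the proof is not there.
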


\begin{Example}
 {\rm For the Brauer graph algebra with Brauer graph given by the graph in Figure~\ref{Figure Exceptional Subtrees}, the simple module $S_e$ associated to the edge $e$ and the $\rad P_f$ where $P_f$ is the indecomposable projective module associated to the edge$f$, are in the same component of the Auslander-Reiten quiver. Since the edge corresponding to $e$ is not exceptional, this component is not an exceptional tube but of the form $\mathbb{Z}A_{\infty}^{\infty}$.  
On the other hand, the simple module $S_g$ as well as $\rad P_g$, the radical of the indecomposable projective associated to the edge $g$ lie in the same exceptional tube (of rank 27) since the edge $g$ is exceptional and occurs twice in the same double-stepped Green walk. }
\end{Example}

 \section{Appendix}\label{Appendix}

\subsection{Derived equivalences: Rickard's Theorem} 

Let $A$ be a finite dimensional $K$-algebra. We denote by $\cD^\flat(A)$ the bounded
derived category of finite dimensional $A$-modules. 

If $M$ is a tilting $A$-module then  
the categories $\cD^\flat(A)$ and $\cD^\flat(B)$, where $B = End_A(M)$, are 
equivalent \cite{Happel, CPS}. 

Rickard showed that all derived equivalences of finite dimensional algebras are of a similar nature. Namely let $\mod$-$A$ be the category of all finitely presented $A$-modules and
$P_A$ the category of all finitely generated projective $A$-modules. We denote by 
$\cK^\flat(P_A)$ the homotopy category of bounded complexes in $P_A$. 

\begin{theorem}\cite[1.1]{Rickard}\label{Rickard}
Let $A$ and $B$ be two finite-dimensional algebras. The following are
equivalent:

(a) $\cD^\flat(A)$ and $\cD^\flat(B)$ are equivalent as triangulated categories.

(b) $\cK^\flat(P_A)$ and $\cK^\flat(P_B)$ are equivalent as triangulated categories.

(c) $B$ is isomorphic to the endomorphism ring of an object $T$ of $\cK^\flat(P_A)$, that is $B \simeq \End_{\cK^\flat(P_A)}(T)$, such that

(i) For $n \neq 0$, $Hom(T, T[n]) = 0$.

(ii) ${\rm add}(T)$, the full subcategory of $\cK^\flat(P_A)$  consisting of direct summands of
direct sums of copies of $T$, generates $\cK^\flat(P_A)$ as a triangulated category.

Moreover, any equivalence as in (a) restricts to an equivalence between the full
subcategories consisting of objects isomorphic to bounded complexes of projectives
(which are equivalent to $\cK^\flat(P_A)$ and $\cK^\flat(P_B)$ respectively). 
\end{theorem}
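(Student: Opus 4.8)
The plan is to prove the cycle of equivalences by first disposing of the inexpensive equivalence (a) $\Leftrightarrow$ (b), then treating (b) $\Rightarrow$ (c) as a transport-of-structure argument, and finally constructing the genuinely new equivalence in (c) $\Rightarrow$ (b); the ``Moreover'' clause will drop out of the first step.

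First I would reduce (a) $\Leftrightarrow$ (b) to an intrinsic description of $\cK^\flat(P_A)$ inside $\cD^\flat(A)$. The natural functor $\cK^\flat(P_A) \to \cD^\flat(A)$ is fully faithful, with essential image the perfect complexes, and the key point is that this image is characterized in purely triangulated terms: an object $X$ is perfect if and only if $\Hom_{\cD^\flat(A)}(X, Y[n]) = 0$ for all but finitely many $n \in \mathbb{Z}$, for every $Y \in \cD^\flat(A)$. One direction is immediate, since a bounded complex of projectives computes $\Hom$ already in the homotopy category; the converse uses that an object of infinite projective dimension has $\Hom_{\cD^\flat(A)}(X, S[n]) \neq 0$ for infinitely many $n$ and a suitable simple module $S$. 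Because any triangle equivalence preserves shifts and $\Hom$-spaces, it must carry perfect complexes to perfect complexes; this yields the restriction asserted in the ``Moreover'' clause and the equivalence (a) $\Leftrightarrow$ (b) simultaneously.

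For (b) $\Rightarrow$ (c) I would fix a triangle equivalence $G : \cK^\flat(P_B) \to \cK^\flat(P_A)$ and set $T = G(B)$, where $B$ denotes the free rank-one module viewed as a stalk complex in degree $0$. As a stalk complex of a projective module, $B$ satisfies (i) trivially, since $\Hom_{\cK^\flat(P_B)}(B, B[n]) = 0$ for $n \neq 0$, and it satisfies (ii) because every bounded complex of projectives is built from $B$ by shifts, cones and direct summands. Since $G$ is a triangle equivalence, both properties transport to $T$, and one obtains the ring isomorphism $\End_{\cK^\flat(P_A)}(T) \cong \End_{\cK^\flat(P_B)}(B) \cong B$, the last step being the fact that self-maps of a stalk projective in the homotopy category are precisely the algebra multiplications.

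The real work is (c) $\Rightarrow$ (b). Given a tilting complex $T$ with $\End_{\cK^\flat(P_A)}(T) \cong B$, I would pass to the unbounded derived category $\cD(\Mod A)$, in which $T$ is a compact object that generates the whole category by (ii). Condition (i) says exactly that the graded endomorphism algebra of $T$ is concentrated in degree $0$, so the differential graded endomorphism algebra $\mathrm{REnd}_A(T)$ is formal and quasi-isomorphic to $B$; a dg-Morita argument then produces a triangle equivalence $\cD(\Mod B) \xrightarrow{\sim} \cD(\Mod A)$ sending $B$ to $T$, which restricts on compact objects to the desired equivalence $\cK^\flat(P_B) \simeq \cK^\flat(P_A)$. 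The main obstacle, and the delicate point in Rickard's original argument, is upgrading the one-sided object $T$ to an honest $B$-$A$-bimodule complex realizing the functor: a priori the $B$-action on $T$ exists only up to homotopy, and rigidifying it (equivalently, producing a two-sided tilting complex) requires either the dg-algebra formalism above or Rickard's inductive construction that assembles the bimodule structure degree by degree. Once the equivalence is in hand, (b) $\Rightarrow$ (a) follows from the first step, closing the cycle.
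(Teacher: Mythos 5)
First, a point of orientation: the survey does not prove this theorem at all — it is quoted in the appendix from Rickard's article \cite[1.1]{Rickard} as background — so there is no internal proof to compare against, and your proposal has to be measured against Rickard's original argument and the standard modern treatments. Measured that way, your architecture is essentially the modern one and is genuinely different from Rickard's. Rickard's 1989 proof uses no dg technology: from a tilting complex $T$ he builds a functor out of $\cK^\flat(P_B)$ sending $B$ to $T$ by an explicit inductive construction on complexes of projectives (the bimodule-theoretic refinement only came in his later paper \cite{Rickard91}), and then propagates the equivalence to the bounded derived categories. Your version — $T$ is a compact generator of $\cD(\Mod A)$ by (ii); by (i) its dg endomorphism algebra has cohomology concentrated in degree zero, hence is quasi-isomorphic to $B$ via truncation; Keller's dg Morita theorem then yields $\cD(\Mod B) \simeq \cD(\Mod A)$, which restricts to compact objects — is the accepted modern proof, and you correctly isolate the crux that forces either approach: the $B$-action on $T$ exists a priori only up to homotopy and must be rigidified. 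The two easy steps, (a)$\Rightarrow$(b) via the intrinsic characterization of perfect complexes by finiteness of $\Hom(X,Y[n])$ in $n$, and (b)$\Rightarrow$(c) by transporting the stalk complex $B$, are also correct, and the first of them does yield the ``Moreover'' clause.

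There is, however, one genuine gap: as written, your cycle of implications never reaches (a). Your first step proves only that an equivalence $\cD^\flat(A) \simeq \cD^\flat(B)$ restricts to the perfect complexes, i.e. (a)$\Rightarrow$(b) together with the ``Moreover'' statement; it cannot give (b)$\Rightarrow$(a), because nothing in that step extends an equivalence of the perfect subcategories to the bounded derived categories. So your closing sentence, that ``(b)$\Rightarrow$(a) follows from the first step,'' is unjustified, and the implication into (a) is simply missing. The repair lives inside your own third step: one must check that the Keller equivalence $F \colon \cD(\Mod B) \to \cD(\Mod A)$ with $F(B) \simeq T$ restricts to $\cD^\flat(B) \simeq \cD^\flat(A)$. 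This requires a second intrinsic characterization, different from the one in your first step: an object $Z$ of $\cD(\Mod A)$ lies in $\cD^\flat(A)$ (bounded complexes with finite-dimensional total cohomology) if and only if $\bigoplus_{n \in \mathbb{Z}} \Hom(C, Z[n])$ is finite-dimensional for some, equivalently every, compact generator $C$; this condition is unchanged when $C$ is replaced by any object generating the same thick subcategory of compact objects, and by (ii) the objects $A$ and $T$ do generate the same thick subcategory, so $F$ and its quasi-inverse preserve the condition. With that paragraph added, your third step gives (c)$\Rightarrow$(a) directly and the cycle closes.
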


A complex $T$ as in Theorem~\ref{Rickard} (c) is called a \textit{tilting complex over $A$}. 

\newpage

\parskip1pt

\bibliographystyle{plain}

\bigskip

\textsc{Department of Mathematics, University of Leicester, University Road, Leicester LE1 7RH, 
United Kingdom} \newline \textit{E-mail address:}  \texttt{schroll@leicester.ac.uk }

\end{addmargin}

\end{document}